\documentclass{amsart}
\usepackage{geometry}
\usepackage{tikz}
\usetikzlibrary{3d,calc,patterns, decorations.pathreplacing}
\usepackage{verbatim}
\usepackage{amssymb}
\usepackage{relsize}
\usepackage{hyperref}
\hypersetup{colorlinks, citecolor=green!50!black, linkcolor=red!50!black}
\usepackage[capitalise]{cleveref}

\newtheorem{thm}{Theorem}[section]
\Crefname{thm}{Theorem}{Theorems}
\newtheorem{lem}[thm]{Lemma}
\Crefname{lem}{Lemma}{Lemmas}
\newtheorem{cor}[thm]{Corollary}
\Crefname{cor}{Corollary}{Corollaries}
\newtheorem{clm}[thm]{Claim}
\Crefname{clm}{Claim}{Claims}

\theoremstyle{definition}
\newtheorem{defn}[thm]{Definition}
\Crefname{defn}{Definition}{Definitions}
\newtheorem{rem}[thm]{Remark}
\Crefname{rem}{Remark}{Remarks}
\newtheorem{ex}[thm]{Example}
\Crefname{ex}{Example}{Examples}

\newcommand{\myfootnote}[1]{{\hypersetup{linkcolor=red}\footnote{\hypersetup{linkcolor=red!50!black}#1}}}
\newcommand{\sepclos}[1]{\overline{#1}}
\newcommand{\sepclosf}{\sepclos{\mathcal{F}}}
\newcommand{\sepinn}[1]{\mathring{#1}}
\newcommand{\sepinnf}{\sepinn{\mathcal{F}}}

\newcommand{\localconn}{\bigsqcap\!}
\newcommand{\bigsqcap}{\mathchoice%
	{\mathop{\mathlarger{\mathlarger{\sqcap}}}}
	{\mathop{\mathlarger{\mathlarger{\mathlarger{\sqcap}}}}}
	{\mathop{\mathlarger{\sqcap}}}
	{\mathop{\mathlarger{\sqcap}}}}

\newcommand{\restrict}{{\upharpoonright}}
\newcommand{\delete}{\backslash}
\newcommand{\contract}{/}
\newcommand{\contractonto}{.}

\newcommand{\rlbracket}{\mathord{[}}
\newcommand{\lrbracket}{\mathord{]}}

\newcommand{\flowerradius}{2cm}
\newcommand{\basicflower}{\draw [thick] (0,0) ellipse [radius=\flowerradius];}
\newcommand{\newedge}[2][]{%options, angle
	\draw[#1] (0,0) -- (#2:\flowerradius);}
\newcommand{\labelofset}[4][white]{%options,start angle,end angle,label
	\fill [#1!10] (0,0)--(#2:\flowerradius) arc [start angle=#2,end angle=#3,radius=\flowerradius] -- (0,0);\path (0.5*#2+0.5*#3:0.6*\flowerradius) node {#4};}
\tikzset{separrow/.pic={\draw [->,thin] (-2mm,0) -- (2mm,0);},
	striped/.style={pattern=north east lines}}

\title{Pseudoflowers in infinite connectivity systems}
\author{Ann-Kathrin Elm}
\address{Research conducted at Universit\"at Hamburg, Department of Mathematics, Bundesstraße 55, 20146 Hamburg, Germany}
\curraddr{Universit\"at Heidelberg, Institut f\"ur Informatik, Im Neuenheimer Feld 205, 69120 Heidelberg, Germany}
\email{elm@informatik.uni-heidelberg.de}
\subjclass[2020]{Primary: 05B35, Secondary: 05C63}
\keywords{Infinite connectivity systems, flowers, infinite matroids, tangle-tree theorems}

\begin{document}
\begin{abstract}
Given a graph or a matroid, a tree of tangles is a tree decomposition that displays the structure of the  connectivity: every edge of the decomposition tree induces a \emph{separation}, that is, a way to divide the graph or matroid into two parts; and for every two highly connected areas (encoded as tangles) that live on different sides of some separation, some separation induced by an edge distinguishes them. Separations induced by a tree of tangles cannot cross. One approach to display even more connectivity structure is to insert even more structure into a tree of tangles, for example the \emph{flowers} that were introduced by Oxley, Semple and Whittle in 2007 for matroids and generalised to finite connectivity systems by Clark and Whittle in 2013. Most of the separations displayed by a flower are crossing. In order to extend this theory to the infinite case, we generalise the notion of flowers to infinite connectivity systems, and show that there are maximal generalised flowers. Also, we show in the special case of infinite matroids that of the two types of flowers (\emph{anemones} and \emph{daisies}) only anemones can be extended to truly infinite objects, and provide for general connectivity systems a characterisation of when infinite daisies exist. Furthermore we describe a more abstract view on the interaction of tangles and separations distinguishing them, which among other things provides additional motivation for why there should be maximal generalised flowers.
\end{abstract}
\maketitle
\section{Introduction}
\subsection{Motivation}

In a matroid on ground set $E$, an (oriented) \emph{separation} is an (oriented) bipartition of the ground set.
(More on graph separations and more general separations can be found in \cite{AbstractSepSys}.)
The \emph{order} of a separation $\{E_1,E_2\}$ is a measure of how strongly the two sides are connected, and in the case of a matroid this is the connectivity of $E_1$.
Roughly speaking, for a positive integer $k$, a $k$-tangle is a choice of one side of every separation of order less than $k$ that satisfies a consistency requirement.
A $k$-tangle encodes a substructure in a matroid or graph that is so highly connected that a separation of order less than $k$ cannot cut right through it, by choosing for every such separation the side on which (most of) the highly connected substructure lies.

Tree decompositions are an important tool in graph theory, and thus matroid theory, to represent the structure of the connectivity.
A \emph{tree decomposition} of a matroid on edge set $E$ is a decomposition tree $T$ together with a map
$E\rightarrow V(T)$.
Deleting an edge $xy$ of the decomposition tree yields two subtrees $T_x$ and $T_y$ containing $x$ and $y$ respectively.
If $E_x$ is the set of elements of $E$ that are mapped to a vertex of $T_x$ and $E_y$ is the set of elements of $E$ mapped to a vertex of $T_y$, then $\{E_x,E_y\}$ is the separation \emph{displayed} by $e$.
Furthermore, $(E_x,E_y)$ is the oriented separation displayed by the orientation of $e$ that is oriented towards $y$.
Similarly, for a vertex $v$ of $T$ that is not the image of any element of $E$, deleting $v$ yields components $T_x$ that each correspond to a subset $E_x$ of $E$, and the set of all $E_x$ forms a partition of $E$.

One important application of tree decompositions is as underlying tree structure of trees of tangles.
In the special case of a set $\mathcal{T}$ of $k$-tangles that contain a common $(k-1)$-tangle, a tree of tangles is a tree decomposition where every two tangles in $\mathcal{T}$ are \emph{distinguished} by a separation displayed by the tree decomposition in the sense that the tangles choose distinct sides of that separation, and where every separation displayed by the tree distinguishes two tangles in $\mathcal{T}$ and thus has connectivity $k$.
In such a tree of tangles, the tangles in $\mathcal{T}$ are naturally mapped injectively to the vertices of the decomposition tree and are in this sense arranged in a tree-like fashion by the tree of tangles.
The tree of tangles, its set of displayed separations and the resulting tree-like structure of $\mathcal{T}$ are by no means unique.
Even if two trees of tangles result in the same tree-like structure of $\mathcal{T}$, the set of displayed separations can vary a lot; but the set of displayed equivalence classes (where two separations distinguishing elements of $\mathcal{T}$ are equivalent if they are contained in the same elements of $\mathcal{T}$) is then fixed.
In general there is no tree decomposition that displays all equivalence classes at once.
The main problem is that no two separations displayed by the same tree decomposition can cross, but there can be equivalence classes such that every separation of the first crosses every separation of the second.

This problem can be solved by introducing the notion of flowers.
Flowers are partitions of $E$ with a cyclic order of the partition classes with certain connectivity constraints.
If a union of partition classes has connectivity $k$, then this set together with its complement forms a separation of order $k$; the separations of this type are the separations \emph{displayed} by the flower.
Flowers display many separations of the same order at once, and most of them cross.
In \cite{ClarkWhittle13} it was shown that there is a tree decomposition where every vertex of $T$ that is not the image of an element of $E$ or of an element of $\mathcal{T}$ induces a partition that is a flower, and such that every equivalence class is displayed either by the tree decomposition itself or by one of the induced flowers\footnote{Actually, this was shown for a finer equivalence relation, which then made an additional technical assumption necessary in order for the stronger theorem to hold.}.
Tree decompositions where vertices of the decomposition tree that do not have tangles instead induce flowers can also be used to make the tree structure of $\mathcal{T}$ unique.

\subsection{Results}

For the definition of separations, tangles, tree decompositions and flowers, one does not really need the structure of a matroid but only its connectivity function.
And most of the results, both of this paper and those in \cite{ClarkWhittle13}, hold in the more general setting of connectivity functions.

The goal of this paper is to extend as much of the theory of \cite{ClarkWhittle13} to the infinite setting as possible.
In this paper that is done for infinite connectivity systems, which are closely related to universes of bipartitions.
For similar results in the more general setting of universes of vertex separations (which also generalise graph separations) see \cite{EH:vertexflowers}.
An important part of \cite{ClarkWhittle13} is finding maximal flowers.
For reasons explained in \cref{sec:defsbips}, in order to find these in the infinite setting we need to work with a weaker notion of flower, which we call pseudo\-flower.
As was done in \cite{AikinOxley08} for flowers, we analyse in \cref{sec:ordersinkpa} what the connectivity of different unions of partition classes of a pseudo\-flower can be.

In the infinite setting, some $k$-tangles are troublesome in the sense that they can contain a chain of separations but not the supremum of the chain.
This could easily cause a problem for our techniques for finding maximal pseudo\-flowers.
However, we show that such $k$-tangles are redeemed by always showing another very useful property: they uniquely extend to $l$-tangles for all $l\geq k$ (\cref{notlimitclosedinducesaleph0}).
With these tools we then show that pseudo\-flowers extend to maximal pseudo\-flowers, in some cases uniquely (\cref{existenceleqmaxkpf,finestrefinement,maxpreccurlyeqkpf,exmaxpreccurlyeqA}).

We then analyse the abstract structure of the equivalence classes of separations, which becomes a unique tree-like structure by adding in flowers and gives rise to a unique tree-like structure of $\mathcal{T}$\footnote{Another way to obtain a unique tree-like structure is by finding a way to canonically construct trees of tangles as is done for example in \cite{ProfilesNew} or \cite{Entanglements}. But then the tree-like structure again depends on the canonical construction chosen. More details can be found in \cref{sec:abstractinfinite}.}.
In the finite case the tree-structure is easily obtained from a result by Cunningham and Edmonds~\cite{CunninghamEdmonds80}.
In the infinite case, again several generalisations have to be made.

There are two types of flowers, anemones and daisies.
We show that in an infinite matroid, there are no infinite daisies and nearly every finite daisy can be extended to a maximal finite daisy (\cref{noinfinitedaisies}).
Lastly, we consider the problem whether an infinite chain of separations of order~$k$ that are pairwise not displayed by a common $k$-anemone can have a supremum whose order is also~$k$.
We show for general infinite connectivity systems that this happens if and only if there are infinite daisies.

\section{Tools and terminology}

For sets $X$ and $e$, $X+e$ and $X-e$ are used as shorthand for $X\cup \{e\}$ and $X\setminus \{e\}$ respectively.
For a function $f$ defined on the power set $2^X$ of $X$ and $x\in X$, $f(x)$ is shorthand for $f(\{x\})$.
For two partitions $\mathcal{P}$ and $\mathcal{Q}$ of a ground set $X$, the \emph{common refinement} of $\mathcal{P}$ and $\mathcal{Q}$ is the coarsest partition of $X$ that is a refinement of both $\mathcal{P}$ and $\mathcal{Q}$.

\subsection{Connectivity systems}
Similarly to \cite{ClarkWhittle13}, the setting of this paper is that of a \emph{connectivity system}: A set $E$ with a map $\lambda$ from the set of subsets of $E$ to $\mathbb{N}\cup \{\infty\}$ that satisfies for all subsets $A$ and $B$ of $E$ that $\lambda(A)=\lambda(E\setminus A)$ (symmetry) and $\lambda(A)+\lambda(B)\geq \lambda(A\cup B)+\lambda(A\cap B)$ (submodularity).
As we work in the infinite setting we additionally ask that $\lambda$ be \emph{limit-closed} in the sense that if $k\in \mathbb{N}$ and $(A_i)_{i\in I}$ is a chain of subsets of $E$ of connectivity at most $k$, then $\bigcup_{i\in I}A_i$ also has connectivity at most $k$.
Note that the ground set of a (possibly infinite) matroid together with its connectivity function forms a connectivity system and that the connectivity function is limit-closed \cite{InfMatFinConn}.
Because of limit-closedness, the connectivity of an infinite $X\subseteq E$ can be related to the connectivity of its finite subsets:
\begin{lem}\label{fintoinfconn}
	Let $k\in \mathbb{N}$ and let $X\subseteq E$ be a set such that all finite subsets have connectivity at most $k$.
	Then also $\lambda(X)\leq k$.
\end{lem}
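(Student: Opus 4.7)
The plan is to prove the statement by transfinite induction on the cardinality $|X|$. The case when $X$ is finite is immediate from the hypothesis, so in the inductive step I would assume $|X| = \kappa$ is an infinite cardinal and well-order $X$ as $\{x_\alpha : \alpha < \kappa\}$. For each ordinal $\alpha \leq \kappa$ I would set $X_\alpha = \{x_\beta : \beta < \alpha\}$, so that $X_\kappa = X$ and $(X_\alpha)_{\alpha < \kappa}$ forms a chain of subsets of $E$ whose union is $X$.

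For each $\alpha < \kappa$ one has $|X_\alpha| = |\alpha| < \kappa$, since $\kappa$ is an initial ordinal. Moreover, every finite subset of $X_\alpha$ is a finite subset of $X$ and therefore has connectivity at most $k$ by hypothesis. Hence the inductive hypothesis applied to $X_\alpha$ yields $\lambda(X_\alpha) \leq k$. Finally, limit-closedness applied to the chain $(X_\alpha)_{\alpha < \kappa}$ gives $\lambda(X) = \lambda(\bigcup_{\alpha < \kappa} X_\alpha) \leq k$, completing the induction.

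The only subtlety I anticipate is making sure the inductive hypothesis is genuinely applicable, i.e.\ that every initial segment $X_\alpha$ is strictly smaller than $X$ itself. This is exactly why one should well-order $X$ by the cardinal $\kappa$ rather than by an arbitrary ordinal of order type $\kappa$: as an initial ordinal, $\kappa$ satisfies $|\alpha| < \kappa$ for every $\alpha < \kappa$. Note in particular that a naive transfinite induction along the well-order itself, advancing one element at a time, breaks down at successor steps of the form $X_{\beta+1} = X_\beta + x_\beta$ with $\beta$ infinite, since knowing $\lambda(X_\beta) \leq k$ does not by itself control $\lambda(X_\beta + x_\beta)$; the cardinality induction sidesteps this by handling arbitrary successor segments in a single stroke via the hypothesis on finite subsets. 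Beyond this bookkeeping point, limit-closedness of $\lambda$ does all the work, so I do not expect any serious obstacle.
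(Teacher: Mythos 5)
Your proof is correct, but it takes a genuinely different route from the paper's. The paper first picks a finite subset $Y\subseteq X$ of maximal connectivity, uses Zorn's Lemma (with limit-closedness supplying upper bounds for chains) to obtain a maximal $Z$ with $Y\subseteq Z\subseteq X$ and $\lambda(Z)\leq\lambda(Y)$, and then uses submodularity, via $\lambda(Z+e)\leq\lambda(Z)+\lambda(Y+e)-\lambda(Y)$, to show $Z=X$. You instead run a transfinite induction on $|X|$: well-ordering $X$ by its initial ordinal guarantees every proper initial segment has strictly smaller cardinality, the inductive hypothesis applies to each segment because its finite subsets are finite subsets of $X$, and limit-closedness finishes the limit step. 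Your argument is sound (including the correct observation that a one-element-at-a-time induction would fail at successor steps), and it is in one respect more general: it never invokes submodularity, so it shows the lemma holds for any symmetric set function that is limit-closed in the paper's sense. What the paper's argument buys in exchange is a single application of Zorn's Lemma with no ordinal bookkeeping, and a proof template (extend a set maximally subject to a connectivity bound, then use submodularity to absorb one more element) that recurs elsewhere in the paper, for instance in the proof of \cref{nearlystrongkpa}.
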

\begin{proof}
	Among all finite subsets of $X$ let $Y$ be one with maximal connectivity.
	By Zorn's Lemma there is a maximal set $Z$ with $Y\subseteq Z\subseteq X$ and $\lambda(Z)\leq \lambda(Y)$.
	If there is $e\in X\setminus Z$, then $\lambda(Z+e)\leq \lambda(Z)+\lambda(Y+e)-\lambda(Y)\leq \lambda(Z)\leq \lambda(Y)$ by submodularity, a contradiction to the choice of $Z$.
	So $X=Z$ and hence $\lambda(X)\leq \lambda(Y)\leq k$.
\end{proof}

Note that the previous lemma can be rephrased as follows: for every $k \in \mathbb{N}$, every set $X \subseteq E$ with $\lambda(X) \geq k$ has a finite subset $Y$ with $\lambda(Y) \geq k$.

Basic facts about connectivity functions for finite sets can also be found in, for example, \cite{GroheSchweitzer}.
In particular the observation that $\lambda(\emptyset) \leq \lambda(X)$ for all $X \subseteq E$ still holds for infinite connectivity systems.
Furthermore, the following variation of \cite[Lemma 2.13]{GroheSchweitzer} can be proven with the same proof idea.

\begin{lem}[Similar statement and proof as {\cite[Lemma 2.13]{GroheSchweitzer}}]\label{minimalsetsmall}
	Let $k\in \mathbb{N}$ and $X \subseteq E$.
	If $\lambda(X) \geq k$, then there is $Y \subseteq X$ with $\lambda(Y) \geq k$ and $|Y| \leq k$.
\end{lem}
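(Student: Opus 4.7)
The plan is to induct on $k$. The base case $k = 0$ is immediate: take $Y = \emptyset$, since $\lambda(\emptyset) \geq 0$ always holds and $|\emptyset| = 0$. For the inductive step, given $X$ with $\lambda(X) \geq k \geq 1$, I would apply the induction hypothesis (to the value $k-1$) to obtain $Y' \subseteq X$ with $\lambda(Y') \geq k - 1$ and $|Y'| \leq k - 1$. If in fact $\lambda(Y') \geq k$ we are done with $Y = Y'$; otherwise $\lambda(Y') = k - 1$, and the remaining task is to find $e \in X \setminus Y'$ with $\lambda(Y' + e) \geq k$, for then $Y = Y' + e$ meets the conclusion.

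To produce such an $e$ I would argue by contradiction: assuming $\lambda(Y' + e) \leq k - 1$ for every $e \in X \setminus Y'$, the goal is to deduce $\lambda(X) \leq k - 1$, contradicting the hypothesis. The main tool is a Zorn-plus-limit-closedness argument in the style of the proof of \cref{fintoinfconn}: among sets $Z$ with $Y' \subseteq Z \subseteq X$ and $\lambda(Z) \leq k - 1$ pick a maximal one. Such a $Z$ exists because the poset is nonempty ($Z = Y'$ qualifies, using $\lambda(Y') = k-1$) and because the union of any chain of such sets is again such a set by limit-closedness. If $Z \neq X$, pick any $e \in X \setminus Z$; then $Y' \subseteq Z$ and $e \notin Z$ make submodularity applied to $Z$ and $Y' + e$ give
\[
\lambda(Z + e) \leq \lambda(Z) + \lambda(Y' + e) - \lambda(Y') \leq (k-1) + (k-1) - (k-1) = k - 1,
\]
contradicting the maximality of $Z$. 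Hence $Z = X$ and $\lambda(X) \leq k - 1$, the desired contradiction.

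The main obstacle compared with the finite version in \cite[Lemma 2.13]{GroheSchweitzer} is the passage from the single-element-addition bound to the global bound on $\lambda(X)$: in the finite case one simply iterates through $X \setminus Y'$, but in the infinite case limit-closedness has to be invoked, which is the same device used in \cref{fintoinfconn}. A minor subtlety to keep in mind while writing up is that the submodular estimate above only closes when $\lambda(Y')$ equals $k - 1$ exactly, which is why the proof first splits off the case $\lambda(Y') \geq k$ and only then arranges the contradiction.
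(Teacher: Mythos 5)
Your proof is correct, but it runs in the opposite direction from the paper's. The paper works top-down: it first invokes \cref{fintoinfconn} to extract a \emph{finite} subset $Y'$ of $X$ with $\lambda(Y')\geq k$, passes to a minimal such subset $Y$, and then bounds $|Y|$ by a purely finite pigeonhole argument --- if $Y=\{y_1,\ldots,y_n\}$ with $n>k$, the integer sequence $\lambda(Y_0),\ldots,\lambda(Y_{n-1})$ (with $Y_i=\{y_1,\ldots,y_i\}$) cannot be strictly increasing, and a non-increasing step $\lambda(Y_{i-1})\geq\lambda(Y_i)$ combined with submodularity shows $\lambda(Y-y_i)\geq\lambda(Y)$, contradicting minimality. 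You instead build $Y$ bottom-up by induction on $k$, adding one element per stage; your key step --- if $\lambda(Y'+e)\leq k-1$ for every $e\in X\setminus Y'$ then $\lambda(X)\leq k-1$ --- is essentially an infinite-ground-set version of \cref{witnessforbiggerconnectivity}, and you prove it by rerunning the Zorn-plus-limit-closedness device from the proof of \cref{fintoinfconn}. The trade-off: the paper's route quarantines all use of limit-closedness inside the single appeal to \cref{fintoinfconn} and then argues entirely within a finite set, staying close to the original finite proof in \cite{GroheSchweitzer}; your route repeats the transfinite argument at each of the $k$ stages but needs no separate finiteness reduction and yields as a byproduct a chain $\emptyset=Y_0\subseteq\cdots\subseteq Y_k=Y$ with $\lambda(Y_i)\geq i$. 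One small inaccuracy in your closing remark: the estimate $\lambda(Z+e)\leq\lambda(Z)+\lambda(Y'+e)-\lambda(Y')\leq k-1$ closes whenever $\lambda(Y')\geq k-1$, not only when equality holds; the case split on $\lambda(Y')\geq k$ is there simply because in that case you are already done. This does not affect correctness.
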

\begin{proof}
	Assume $\lambda(X) \geq k$.
	By \cref{fintoinfconn} there is a finite $Y' \subseteq X$ with $\lambda(Y')\geq k$.
	Let $Y \subseteq Y'$ be minimal with $\lambda(Y) \geq k$.
	
	Assume for a contradiction that $Y$ has more than $k$ elements, and count these elements as $\{y_1, \ldots, y_n\}$ and let $Y_i := \{y_1, \ldots, y_i\}$.
	As every $\lambda(Y_i)$ is a non-negative integer and $\lambda(Y_{n-1}) \leq k-1$, there is $1 \leq i \leq n-1$ such that $\lambda(Y_{i-1}) \geq \lambda(Y_i)$.
	Then
	\begin{displaymath}
		\lambda(Y) \leq \lambda(Y - y_i) + \lambda(Y_i) - \lambda(Y_{i-1}) \leq \lambda(Y - y_i),
	\end{displaymath}
	contradicting the fact that $Y$ is minimal with respect to $\lambda(Y) \geq k$.
\end{proof}

This lemma has the following useful corollary.

\begin{lem}
	Let $X \subseteq E$ and $k \in \mathbb{N}$.
	If all subsets of $X$ of size at most $k+1$ have connectivity at most $k$, then also $X$ has connectivity at most $k$.
\end{lem}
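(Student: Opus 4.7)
The plan is to combine the two preceding lemmas: \cref{fintoinfconn} to reduce to the finite case, and \cref{minimalsetsmall} to extract a small witness of high connectivity.

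First, by \cref{fintoinfconn} (applied to $X$ with the integer $k$), it suffices to show that every finite subset of $X$ has connectivity at most $k$. So the real content of the lemma is the finite case: any finite $Y\subseteq E$ all of whose subsets of size at most $k+1$ have connectivity at most $k$ must itself have connectivity at most $k$.

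For that finite statement I would argue by contradiction. Suppose some finite $Y\subseteq X$ satisfies $\lambda(Y)\geq k+1$. Apply \cref{minimalsetsmall} with the integer $k+1$ in place of $k$: this produces a subset $Z\subseteq Y\subseteq X$ with $\lambda(Z)\geq k+1$ and $|Z|\leq k+1$. But $Z$ is then a subset of $X$ of size at most $k+1$ with connectivity strictly greater than $k$, contradicting the hypothesis of the lemma.

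There is no real obstacle; the only thing worth being careful about is that \cref{minimalsetsmall} is invoked with threshold $k+1$ (not $k$), which is precisely what delivers a witness of size $\leq k+1$. The hypothesis of \cref{fintoinfconn} for $X$ is verified via this contrapositive/contradiction, and the conclusion $\lambda(X)\leq k$ then follows directly.
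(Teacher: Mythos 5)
Your proof is correct and follows essentially the same route as the paper: reduce to finite subsets via \cref{fintoinfconn} and then apply \cref{minimalsetsmall} with threshold $k+1$ to extract a witness of size at most $k+1$, contradicting the hypothesis. The only cosmetic difference is that the paper first chooses a minimal finite subset of high connectivity and then concludes it is small, whereas you apply \cref{minimalsetsmall} directly to produce the small witness; both are fine.
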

\begin{proof}
	Assume for a contradiction that $\lambda(X) > k$, that is, $\lambda(X) \geq k+1$.
	Then $X$ has by \cref{fintoinfconn} a finite subset $Y$ that also has connectivity at least $k+1$, and $Y$ can be chosen minimal with respect to this property.
	Then by \cref{minimalsetsmall} $Y$ has at most $k+1$ many elements, a contradiction.
\end{proof}

\subsection{Cyclic orders and cuts}

In this paper, pseudo\-flowers are partitions of $E$ together with a cyclic order, and infinite cyclic orders are more complicated than finite ones.
Basics about cyclic orders and cuts can be found in e.g. \cite{Novak84}.
Open intervals (of linear or cyclic orders) will be denoted as $\lrbracket x,y\rlbracket$, and the half-open intervals accordingly.
An interval is \emph{non-trivial} if it is neither $\emptyset$ nor the whole ground set.
Every non-trivial interval $Y$ naturally has a linear order induced by the cyclic order as follows:
$x<y$ if there is $z\in X \setminus Y$ such that $y\in \lrbracket x,z\rlbracket$.
Given distinct elements $x$ and $y$ of $X$, $y$ is the \emph{successor} of $x$ if there is no $z\in X$ with $z\in \lrbracket x,y\rlbracket$.
Similarly $y$ is the \emph{predecessor} of $x$ if there is no $z\in \lrbracket x,y \rlbracket$.

For cyclically ordered sets $X$ and $Y$, a \emph{monotone} map $f:X\rightarrow Y$ is a map that satisfies $y\in \lrbracket x,z\rlbracket$ for all $x,y,z\in X$ with $f(y)\in \lrbracket f(x), f(z)\rlbracket$.
Note that if the image of $f$ does not have exactly two elements, then for every interval $I$ of $Y$ the set $f^{-1}(I)$ is an interval of $X$.
Every linearly ordered set $X$ induces a cyclic order where $y\in \lrbracket x,z\rlbracket$ if and only if $x<y<z$ or $y<z<x$ or $z<x<y$.
A \emph{cut} of a cyclically ordered set is a linear order on $X$ that induces the cyclic order.
Not only are the cuts of $X$ naturally ordered cyclically themselves (see e.g. \cite{Novak84}), but if $C(X)$ denotes the disjoint union of $X$ and its cuts, then $C(X)$ has a natural cyclic order where, for $x,y\in X$ and a cut $v$, $v\in \lrbracket x,y\rlbracket$ if $y<x$ in the linear order $v$.
Intuitively, the following happens:
Let $X$ be some cyclically ordered set.
If one envisions $X$ as boxes arranged in a circle according to the cyclic order (see for example \cref{fig:cyccomp}), then it is possible to cut up the circle at two places without cutting through boxes, thereby dividing the set of boxes into two intervals.
Cutting in only one place yields a line that depicts a linear order.
If $X$ is finite, then every one of these ``cut points'' is between two adjacent boxes.
So $X$ and the set of possible cut points form together another cyclically ordered set $C(X)$.
If $X$ is infinite, then not every cut point is between two adjacent boxes, but still $C(X)$ is a cyclically ordered set, the \emph{cycle completion} of $X$.
The construction of $C(X)$ from $X$ is closely related to the Dedekind-construction and to the construction of a pseudo-line from a linear order as it is done in \cite{BCC:graphic_matroids}.
It has several properties that are intuitively clear but tedious to prove formally, the most important one being the following:
for every non-trivial interval $I$ of $X$ there are unique $v,w\in C(X)\setminus X$ such that $I=[v,w]\cap X$.
Also, for $v,w\in C(X)\setminus X$, every subset of $[v,w]$ (the interval taken in $C(X)$) has a supremum in the linear order of $[v,w]$.
Furthermore, if $X$ has at least two elements, then every $x\in X$ has a predecessor and a successor in $C(X)$, and those are contained in $C(X)\setminus X$.
More details can be found in \cite{EH:vertexflowers}.

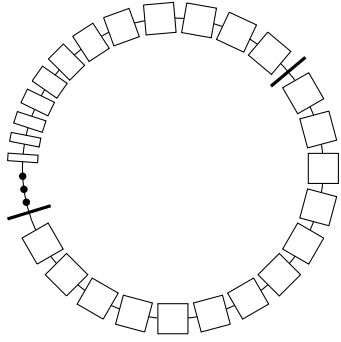
\begin{figure}
	\begin{tikzpicture}
		\draw (0,0) ellipse [radius=2cm];
		\foreach \angle / \width in {
			0/1, 15/1, 30/1,
			50/1, 65/1, 80/1, 95/1,
			110/0.9, 123/0.8, 135/0.7, 145/0.6, 154/0.5, 162/0.4, 169/0.3, 176/0.25,
			345/1, 330/1, 315/1, 300/1, 285/1, 270/1, 255/1, 240/1, 225/1, 210/1
		}
		\path [rotate=\angle,xshift=2cm,yscale=\width, draw=black, fill=white] (0,0) +(0.2,0.2) -- +(0.2,-0.2) -- +(-0.2,-0.2) -- +(-0.2,0.2) -- +(0.2,0.2);
		\foreach \angle in {183, 188, 193}
		\draw (\angle:2cm) node [fill,circle, inner sep=1pt] {};
		\draw [very thick] (40:1.7) -- (40:2.3) (197:1.7) -- (197:2.3);
	\end{tikzpicture}\caption{A set (whose elements are indicated by boxes) which is cyclically ordered (indicated by the arrangement of the boxes on a circle) and two ``cutting points'' dividing the cyclically ordered set into two intervals.}\label{fig:cyccomp}
\end{figure}

\subsection{Separation systems}\label{sec:sepsys}

The definitions for abstract separation systems in this subsection are only needed for \cref{sec:abstractionfromunderlyingsepsys}.
For the rest of the paper (especially \cref{sec:abstract}) it suffices to know the special case of universes of bipartitions which is explained in \cref{sec:prelbips}.
Such a separation system contains subsets of the ground set, and so terminology from separation systems is borrowed for subsets of a ground set (see also \cref{sec:prelbips}).
A detailed introduction to abstract separation systems that also gives an overview of important basic facts can be found in \cite{AbstractSepSys}.

\begin{defn}\cite{AbstractSepSys}
	A \emph{separation system} $(\overrightarrow{S},\leq,\mathord{^*})$ is a set $\overrightarrow{S}$ together with a partial order $\leq$ and an involution $\mathord{^*}$ which is order-reversing, i.e.\ $\overrightarrow{s}\leq \overrightarrow{t}\Leftrightarrow \overrightarrow{s}^*\geq \overrightarrow{t}^*$ for all elements $\overrightarrow{s}$ and $\overrightarrow{t}$ of $\overrightarrow{S}$.
	For an element $\overrightarrow{s}$ of $\overrightarrow{S}$, $\overrightarrow{s}^*$ is also denoted as $\overleftarrow{s}$ and called the \emph{inverse} of $\overrightarrow{s}$.
	The \emph{orientations} of $\overrightarrow{s}$ are $\overrightarrow{s}$ and $\overleftarrow{s}$.
	A \emph{subsystem} of $\overrightarrow{S}$ is a separation system $(\overrightarrow{S}', \leq ', {\mathord{^*}}')$ where $\overrightarrow{S}'$ is a subset of $\overrightarrow{S}$, $\leq'$ is the restriction of $\leq$ to $\overrightarrow{S}'$ and ${\mathord{^*}}'$ is the restriction of $\mathord{^*}$.
	
	A \emph{universe} $(\overrightarrow{S},\leq,\mathord{^*},\vee,\wedge)$ is a separation system in which all elements $\overrightarrow{s}$ and $\overrightarrow{t}$ of $\overrightarrow{S}$ have a join $\overrightarrow{s}\vee \overrightarrow{t}$ and a meet $\overrightarrow{s}\wedge \overrightarrow{t}$.
	(Recall that in partial orders, a join of two elements $\overrightarrow{r}$ and $\overrightarrow{s}$ is the smallest element of $\{\overrightarrow{p} : \overrightarrow{p} \geq \overrightarrow{r} \text{ and } \overrightarrow{p} \geq \overrightarrow{s} \}$, and similarly the meet of two elements is the biggest element of $\{\overrightarrow{p} : \overrightarrow{p} \leq \overrightarrow{r} \text{ and } \overrightarrow{p} \leq \overrightarrow{s} \}$.)
	A universe is \emph{submodular} if it comes with an order function $\left|\mathord{\cdot} \right|$, i.e.\ a symmetric submodular function with values in the non-negative integers together with $\infty$.
	A \emph{subuniverse} of $(\overrightarrow{S},\leq,\mathord{^*},\vee,\wedge)$ is a universe that is a subsystem of the separation system $(\overrightarrow{S},\leq,\mathord{^*})$ such that for all separations $\overrightarrow{r}$ and $\overrightarrow{s}$ of the subsystem, also $\overrightarrow{r} \wedge \overrightarrow{s}$ and $\overrightarrow{r} \vee \overrightarrow{s}$ are contained in the subsystem.
	For any non-negative integer $k$, the set of all separations in $(\overrightarrow{S},\leq,\mathord{^*},\vee,\wedge)$ of order less than $k$ forms, together with the partial order and involution inherited from $(\overrightarrow{S},\leq,\mathord{^*},\vee,\wedge)$, a separation system $S_k$.
\end{defn}

Note that this paper's definition of a submodular universe differs from the definition in \cite{AbstractSepSys} in that the order function is not only allowed to have integer values but additionally can take the value $\infty$.
It is important that arbitrary positive reals are not allowed: this property ensures that if for some $k\in \mathbb{N}$ two separations have order less than $k$ and their meet has order at least $k-1$, then their join has order less than $k$ as well.
The latter fact is used frequently, e.g.\ in \cref{cornersinSexist}.
As opposed to \cite{AbstractSepSys}, this paper' emphasis lies with infinite separation systems, and some come with a natural order function which does take the value $\infty$, for example the connectivity function of an infinite matroid.
Another example is, for some infinite graph, the connectivity function whose ground set is the set $E$ of edges of the graph and that maps a set $X$ of edges to the number (in $\mathbb{N} \cup \{\infty\}$) of vertices incident with both an edge in $X$ and an edge in $E \setminus X$.

\begin{ex}
	Assume that $\mathcal{U}$ is a subuniverse of a universe $\mathcal{U}'$ and that the order function of $\mathcal{U}$ is denoted by $\sigma$.
	Then $\sigma$ can be extended to an order function $\sigma'$ of $\mathcal{U}'$ by letting $\sigma'(\overrightarrow{s})=\sigma(\overrightarrow{s})$ if $\overrightarrow{s}$ is contained in $\mathcal{U}$ and $\sigma'(\overrightarrow{s})=\infty$ otherwise.
	In particular, $\sigma'$ is submodular and symmetric.
	On the other hand, the set of separations in $\mathcal{U}$ which have finite order is closed under joins and meets and thus is the set of separations of a subuniverse $\mathcal{U}''$ of $\mathcal{U}$.
\end{ex}

A separation system can contain elements that behave counter-intuitively:

\begin{defn}\cite{AbstractSepSys}
	Let $\overrightarrow{S}$ be a separation system.
	An element $\overrightarrow{s}$ of $\overrightarrow{S}$ is \emph{degenerate} if $\overrightarrow{s}=\overleftarrow{s}$.
	A separation $\overrightarrow{s}$ is \emph{trivial} if there is a separation $\overrightarrow{t}$ in $\overrightarrow{S}$ such that $\overrightarrow{s}<\overrightarrow{t}$ and $\overrightarrow{s}<\overleftarrow{t}$.
	A separation $\overrightarrow{s}$ is \emph{small} if $\overrightarrow{s}\leq \overleftarrow{s}$.
	The inverse of a small separation is \emph{co-small} and the inverse of a trivial separation is \emph{co-trivial}.
	A separation system is \emph{essential} if it none of its elements are degenerate or trivial; and it is \emph{regular} if none of its elements are small.
\end{defn}

Note that degenerate and trivial elements are small, implying that regular separation systems are essential.
Note also that a separation system $\overrightarrow{S}$ which is a subsystem of some other separation system $\overrightarrow{S}'$ may be essential while containing elements which are trivial in $\overrightarrow{S}'$: whether a separation is trivial or not depends on the existence of a witness of the triviality, and after the deletion of all such witnesses the separation is not trivial any more.
On the other hand, being small or degenerate does not depend on the existence of a witness, and thus if an element is small (or degenerate) in some separation system, then it also is small (or degenerate) in all subsystems which still contain that element.

In this paper, only separation systems are considered that are a subsystem of some universe of separations.
Suprema and infima are always taken in the surrounding universe: So they are always defined, but not always contained in the separation system.

There are two main examples of submodular universes.
The first example are submodular universes arising from graphs, which are not needed in this paper.
The second main example are submodular universes arising from matroids or more general connectivity systems.

\begin{ex}[\cite{SeparationsOfSets}]\label{ex:universebips}
	Let $E$ be a set.
	Let $\mathcal{UB}(E)$ consist of all pairs $(A,B)$ of subsets of $E$ such that $A \cup B = E$ and $A \cap B = \emptyset$, and let $(A,B)^* = (B,A)$ and $(A,B) \leq (C,D)$ if $A \subseteq C$ (which is equivalent to $D \subseteq B)$.
	Then $\mathcal{UB}$ is a universe of separations with join $(A,B) \vee (C,D) = (A \cup C, B \cap D)$ and meet $(A,B) \wedge (C,D) = (A \cap C, B \cup D)$.
	There is only one small separation of $\mathcal{UB}(E)$, namely $(\emptyset, E)$, which is also the only trivial separation, and there is no degenerate separation.
	Just on its own, $\mathcal{UB}(E)$ does not have a natural order function, but if $E$ is e.g.\ the ground set of a matroid, then the connectivity function of that matroid is an example of an order function on $\mathcal{UB}(E)$.
	The universe $\mathcal{UB}(E)$ is isomorphic to the partially ordered set of subsets of $E$ via $(A,B)\mapsto A$.
	Via this isomorphism, all subsets of $E$ are separations of $\mathcal{UB}(E)$.
\end{ex}

\begin{figure}
	\begin{tikzpicture}
		\draw (0,0) rectangle (4,2);
		\draw (2,0) -- (2,2);
		\draw (1,0) node [anchor=north] {$A$} (2,0) node [anchor=north] {$\overrightarrow{s}$} (3,0) node [anchor=north] {$B$};
		\draw (2,1) pic {separrow};
	\end{tikzpicture}
	\caption{A separation $\protect\overrightarrow{s}=(A,B)$ in $\mathcal{UB}(V)$ is depicted by the little arrow from $A$ to $B$ on the line separating $A$ from $B$.}\label{fig:bipartition}
\end{figure}
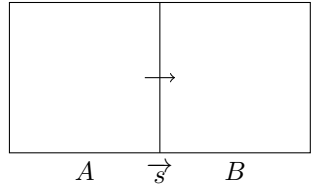

\begin{defn}\label{def:nested}\cite{AbstractSepSys}
	Two elements of a separation system $\overrightarrow{S}$ are \emph{nested} if they have orientations $\overrightarrow{s}$ and $\overrightarrow{t}$ such that $\overrightarrow{s}\leq \overrightarrow{t}$, and they \emph{cross} if they are not nested.
	A separation $\overrightarrow{s}$ \emph{points towards} a separation $\overrightarrow{t}$ if $\overrightarrow{s}\leq \overrightarrow{t}$ or $\overrightarrow{s}\leq \overleftarrow{t}$.
	A separation system is \emph{nested} if its elements are pairwise nested, and a \emph{tree set} if it is additionally essential.
	A set of separations is a \emph{star} if its elements are non-degenerate and all distinct elements $\overrightarrow{s}$ and $\overrightarrow{t}$ satisfy $\overrightarrow{s}\leq \overleftarrow{t}$.
\end{defn}

In \cite{AbstractSepSys}, officially it is only defined when a separations points towards an unoriented separation (not introduced here), but this is the obvious translation to oriented separations.

Throughout this paper, instead of with tangles we will work with the closely related notion of profiles.
The notion of a profile is more suited to the general setting of separation systems and in the context of a connectivity system on ground set $E$ (which is also $\mathcal{UB}$ with an order function) the difference is small and not relevant.

\begin{defn}\cite{ProfilesNew}
	Let $(S,\leq,\mathord{^*})$ be a separation system.
	A subset $O$ of $S$ such that $O\cap {\overrightarrow{s},\overleftarrow{s}}$ has exactly one element for all $\overrightarrow{s}\in S$ is an \emph{orientation} of $S$.
	It is \emph{consistent} if it does not contain two elements $\overleftarrow{s}$ and $\overrightarrow{t}$ which are not orientations of each other such that $\overrightarrow{s}<\overrightarrow{t}$.
	A \emph{profile} is a consistent orientation $P$ with the property that for any two elements $\overrightarrow{s}$ and $\overrightarrow{t}$ the separation $(\overrightarrow{s}\vee \overrightarrow{t})^*$ is not contained in $P$ (possibly because it does not exist in $S$).
	The latter property is also called the \emph{profile property}.
	A profile is \emph{regular} if it does not contain a co-small separation.
	Given a submodular universe $\mathcal{U}$, a $k$-profile $P$ of $\mathcal{U}$ is a profile of the separation system $S_k$.
	Two profiles that contain distinct orientations of a separation $\overrightarrow{s}$ are \emph{distinguished} by that separation, and a set $S'$ of separations distinguishes a set $\mathcal{P}$ of profiles if any two distinct profiles in $\mathcal{P}$ are distinguished by some separation in~$S'$.
\end{defn}

\begin{defn}\cite{GroheSchweitzer}
	Let $\mathcal{U}$ be a submodular universe and $k$ and $l$ elements of $\mathbb{N}$ such that $l\leq k$.
	For a $k$-profile $P$, the \emph{truncation of $P$ to an $l$-profile} is the intersection of $P$ with the separation system $S_l$.
	If $k\neq 0$, then the \emph{truncation of $P$}, without further mention of a second integer $l$, is the truncation of $P$ to a $k-1$-profile.
\end{defn}

If $\mathcal{U}$ is a subuniverse of $\mathcal{UB}(E)$ for some set $E$, then consistency is equivalent to a seemingly stronger property, as can be shown with the next lemma.

\begin{defn}\cite{ProfileDuality}
	A set of separations is called \emph{strongly consistent} if it does not contain elements $\overrightarrow{r}$ and $\overrightarrow{s}$ with $\overleftarrow{r}<\overrightarrow{s}$.
	Thus, an orientation is strongly consistent if and only if for every $\overrightarrow{s}\in O$ and every separation $\overrightarrow{r}$ with $\overrightarrow{r}\leq \overrightarrow{s}$ also $\overrightarrow{r}\in O$.
\end{defn}

\begin{lem}\cite[Lemma 7]{ProfileDuality}\label{stronglyconsistent}
	An orientation of a separation system is strongly consistent if and only if it is consistent and contains all small separations.
\end{lem}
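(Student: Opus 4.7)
The plan is to prove the two implications by directly unpacking the two definitions and using the basic fact that for a non-degenerate separation $s$, an orientation contains exactly one of $\overrightarrow{s}$ and $\overleftarrow{s}$.

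For the forward direction, assume that $O$ is strongly consistent. Consistency is immediate because the condition of consistency is a restriction of the condition of strong consistency to pairs which are not orientations of each other. To see that $O$ contains every small separation, let $\overrightarrow{s}$ be small, so $\overrightarrow{s}\leq \overleftarrow{s}$. If $\overrightarrow{s}$ is degenerate, then $\overrightarrow{s}=\overleftarrow{s}$ is already in $O$. Otherwise $\overrightarrow{s}<\overleftarrow{s}$ strictly; I would suppose for contradiction that $\overleftarrow{s}\in O$ instead, and then apply the defining property of strong consistency with both $\overrightarrow{r}$ and the second separation taken to be $\overleftarrow{s}$: this gives $\overleftarrow{r}=\overrightarrow{s}<\overleftarrow{s}$, contradicting strong consistency.

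For the backward direction, assume that $O$ is consistent and contains every small separation, and suppose for contradiction that there exist $\overrightarrow{r},\overrightarrow{s}\in O$ with $\overleftarrow{r}<\overrightarrow{s}$. If $r$ and $s$ are not orientations of each other, this is immediately forbidden by consistency. Otherwise $\{\overrightarrow{r},\overleftarrow{r}\}=\{\overrightarrow{s},\overleftarrow{s}\}$, and since $O$ is an orientation I would argue that the only possibility is $\overrightarrow{r}=\overrightarrow{s}$ (the case $\overrightarrow{r}=\overleftarrow{s}$ with both in $O$ forces the separation to be degenerate, in which case $\overleftarrow{r}<\overrightarrow{s}$ is impossible). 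Then $\overleftarrow{r}<\overrightarrow{r}$ strictly, so $\overleftarrow{r}$ is strictly small. By hypothesis $\overleftarrow{r}\in O$, but also $\overrightarrow{r}\in O$, contradicting the fact that $O$ is an orientation of a non-degenerate separation.

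There is no real obstacle here: the argument is essentially a careful case analysis on whether $r$ and $s$ are orientations of each other, combined with the observation that strict co-smallness forces the inverse to be small. The only subtlety is being careful about degenerate separations, for which one has to note that $\overrightarrow{s}=\overleftarrow{s}$ makes strict inequalities of the form $\overleftarrow{r}<\overrightarrow{r}$ vacuously impossible. Once this case distinction is set up cleanly, each direction is a one-line application of the respective hypothesis.
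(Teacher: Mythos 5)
Your argument is correct and complete: both directions follow from a direct unpacking of the definitions, and your case analysis on whether $r$ and $s$ are orientations of each other (with the degenerate case handled separately) closes every gap. Note, however, that the paper itself gives no proof of this statement -- it is quoted as \cite[Lemma 7]{ProfileDuality} -- so there is no in-paper argument to compare against; your elementary verification is exactly the kind of proof one would expect the cited source to contain.
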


\subsection{Universes of bipartitions}\label{sec:prelbips}

A \emph{universe of bipartitions} is a universe that is, for some ground set $E$, a subuniverse of $\mathcal{UB}(E)$ (see \cref{ex:universebips}).
If $\mathcal{UB}$ is identified with the set $2^E$ of subsets of $E$ via $(A,B) \mapsto A$, then a universe of bipartitions is a subset of $2^E$ that is closed under taking complements, unions and intersections.
Then the join of two separations $A$ and $B$ is their union $A \cup B$ and their meet is their intersection $A \cap B$.
The inverse of a set $A$ is then $E \setminus A$, which is its complement in $E$.
In this sense, a connectivity system is a special case of a submodular universe of bipartitions, and terminology for separation systems can be borrowed for subsets of $E$.
For example, two subsets $A,B$ of the ground set $E$ are nested if $A \subseteq B$ or $A \cap B = \emptyset$ or $B \subseteq A$ or $A \cup B = E$.
Furthermore, $A$ points towards $B$ if $A \subseteq B$ or $A \subseteq E \setminus B$ (equivalently, $A \cup B = E$).
A corner of $A$ and $B$ is one of the sets $A\cup B$, $A \cup (E \setminus B)$, $B \cup (A \setminus B)$ and $E \setminus (A \cap B)$.
Furthermore, when in the context of subsets of $E$ the terminology of separation systems competes with terminology of subsets, then usually the terminology for subsets will be used.

Recall that if a universe of bipartitions $\mathcal{U}$ has a small element, then this element is $\emptyset$.
So if $E \neq \emptyset$ then by \cref{stronglyconsistent} a consistent orientation of a subsystem $S$ of $\mathcal{U}$ is a subset $O$ of $S$ such that
\begin{itemize}
	\item for every $\overrightarrow{s} \in \overrightarrow{S}$, $O$ contains exactly one of $\overrightarrow{s}$ and $\overleftarrow{s}$ and
	\item if $\overrightarrow{r}$ and $\overrightarrow{s}$ are elements of $S$ with $\overrightarrow{r} < \overrightarrow{s} \in O$ then $\overrightarrow{r} \in O$.
\end{itemize}

\section{\texorpdfstring{Definition of $k$-pseudo\-flowers and $k$-flowers}{Definition of pseudoflowers and flowers}}\label{sec:defsbips}

Given the definitions of $k$-flowers in \cite{AikinOxley08} for finite polymatroids and \cite{ClarkWhittle13} for finite connectivity systems with a $k$-tangle, the following is a good provisional definition of finite $k$-flowers for (possibly infinite) connectivity systems:
A finite $k$-flower is an ordered partition $(P_1,\ldots,P_n)$ of the the ground set such that every partition class and the union of any two adjacent partition classes (where $P_1$ is adjacent to $P_n$) has connectivity exactly $k-1$.
Of course, asking that all these sets have connectivity $k$ instead of $k-1$ would work just as well.
As mentioned in the introduction, we want to be able to find maximal flowers.
In connectivity systems with infinite ground set, such maximal flowers might need to display infinitely many separations, and thus have infinitely many partition classes.
At least in the case that the infinite flower is more daisy-like than anemone-like, these petals have to be arranged in a cyclic order.
As opposed to finite cyclic orders, infinite cyclic orders need not be isomorphic just because they have the same size, and furthermore elements do not necessarily have adjacent elements.
To resolve this, the definition of a $k$-flower for infinite connectivity systems has a partition with a cyclically ordered index set (whereas the finite $k$-flowers are partitions on index set $\{1,\ldots,n\}$) and instead of asking for the union of adjacent petals to have connectivity exactly $k-1$, any union of a non-trivial interval of petals must have connectivity exactly $k-1$.
The definition is a special case of the following definition of a $k$-pseudo\-flower.

\begin{defn}\label{def:pseudoflowerbipswithlimits}
	A \emph{$k$-pseudo\-flower} is a partition $(P_i)_{i\in I}$ with a cyclically ordered index set $I$ such that the union of any interval of partition classes has order at most $k-1$.
	The sets $P_i$ are the \emph{petals} of the $k$-pseudo\-flower $(P_i)_{i\in I}$.
	Given an interval $I'$ of $I$, the set/separation $\bigcup_{i\in I'}P_i$ is denoted by $S(I')$.
	The separations $S(I')$ where $\emptyset \subsetneq I'\subsetneq I$ are the separations \emph{displayed} by the $k$-pseudo\-flower.
	
	A \emph{concatenation} of a $k$-pseudo\-flower $(P_i)_{i\in I}$ is $k$-pseudo\-flower $(Q_i)_{i\in I'}$ such that for every $i\in I$ there is an index $f(i)\in I'$ with $P_i\subseteq Q_{f(i)}$ and such that for all intervals $I''$ of $I'$, the set $f^{-1}(I'')$ is an interval of $I$\myfootnote{Recall that for this property of $f$ it is sufficient if $f$ is monotone and its image does not have exactly two elements.}.
	If a $k$-pseudo\-flower $\Phi$ is a concatenation of a $k$-pseudo\-flower $\Psi$, then this is denoted as $\Phi\leq \Psi$, and $\Psi$ is called an \emph{extension} of $\Phi$.
	
	A $k$-pseudo\-flower is a \emph{$k$-flower} if it has at least four petals\myfootnote{See the last paragraph before the next section for a remark about the lower bound on the number of petals.} and the union of any non-trivial interval of petals has order exactly $k-1$.
	It is a \emph{finite $k$-flower} if it is a $k$-flower with finitely many petals.
\end{defn}

Note that $\leq$ is a pre-order and that if two $k$-pseudo\-flowers $\Phi$ and $\Psi$ satisfy $\Phi \leq \Psi \leq \Phi$ then they are the same up to renaming of the index set.
Recall that in a pre-order, a \emph{maximal} element $\Phi$ is one such that, if $\Phi \leq \Psi$  then also $\Psi \leq \Phi$.
One standard method in infinite combinatorics to obtain a $\leq$-maximal $k$-flower is to prove that every $\leq$-chain of $k$-flowers has an upper bound and then apply Zorn's Lemma.
The most straightforward procedure to construct these upper bounds is to take the common refinement of all the partitions of the $k$-flowers in the chain and combine the cyclic orders into a cyclic order of the common refinement.
The resulting partition with cyclic order need not be a $k$-flower, as there might be separations displayed by the common refinement that are not displayed by any $k$-flower in the chain and thus cannot be guaranteed to have order $k$.
But every separation displayed by the resulting partition is a limit of separations that are displayed by $k$-flowers in the chain and, as the connectivity function is limit-closed, thus has order at most $k-1$.
That is why $k$-pseudo\-flowers are defined, and indeed in \cref{limitofpseudoflowers} it is shown that $\leq$-chains of $k$-pseudo\-flowers have upper bounds.

One important observation of connectivity systems is that every partition $\mathcal{P}$ of $E$ naturally induces another connectivity function $\lambda'$ via $\lambda'(\mathcal{Q})=\lambda(\bigcup \mathcal{Q})$ for $\mathcal{Q}\subseteq \mathcal{P}$.
So a finite $k$-flower of an infinite connectivity system is also a finite $k$-flower of some finite connectivity system in which the connectivity of unions of petals is preserved.
In particular the following lemma also holds for finite $k$-flowers in infinite connectivity systems:

\begin{lem}[{\cite[Theorem 1.1]{AikinOxley08}}]\label{daisyoranemone}
	In a finite $k$-flower, either all non-trivial unions of petals have order $k-1$ or the non-trivial unions of petals of order $k-1$ are exactly those whose index set is an interval of $I$.
\end{lem}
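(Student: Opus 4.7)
My plan is to exploit the reduction pointed out in the paragraph just before the lemma: the partition $(P_1,\ldots,P_n)$ induces a finite connectivity system on the $n$-element set of petals via $\lambda'(\mathcal{Q}) := \lambda(\bigcup \mathcal{Q})$, and under this correspondence the given $k$-flower becomes a $k$-flower of the finite system whose petals are the singletons, with the orders of all unions of petals preserved. The statement then reduces to the corresponding result for finite connectivity systems (equivalently, finite polymatroids of partial order $k-1$), which is \cite[Theorem 1.1]{AikinOxley08}.

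If one wanted instead a self-contained proof, it would proceed via submodularity in three steps. First, one shows the \emph{lower bound} that every non-trivial union of petals of a $k$-flower has order at least $k-1$: write such a union $A$ as $I_1 \cup \ldots \cup I_r$ where the $I_j$ are the maximal cyclic sub-intervals of its index set, and induct on $r$. The base case $r=1$ is immediate; for $r \geq 2$, let $J_j$ be a gap between two consecutive sub-intervals of $A$ and apply submodularity to $A$ and the interval $I_j \cup J_j$: since $A \cap (I_j \cup J_j) = I_j$ and $A \cup (I_j \cup J_j) = A \cup J_j$ has $r-1$ maximal sub-intervals, the inductive hypothesis gives $\lambda(A) \geq \lambda(A \cup J_j) \geq k-1$. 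Second, from this lower bound and submodularity one sees that the family $\mathcal{L}$ of non-trivial petal unions of order exactly $k-1$ is closed under $\cup$ and $\cap$ whenever both results are non-trivial: for $A, B \in \mathcal{L}$ we get $2(k-1) = \lambda(A) + \lambda(B) \geq \lambda(A\cup B) + \lambda(A\cap B) \geq 2(k-1)$, forcing equality. Third, assuming $\mathcal{L}$ contains some non-interval $A_0$, one iteratively combines $A_0$, its complement (which is also non-interval and lies in $\mathcal{L}$ by symmetry of $\lambda$), and the cyclic intervals via this sublattice closure to show that every non-trivial petal union lies in $\mathcal{L}$, giving the anemone case.

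The main obstacle will be this last propagation step. Since the sublattice closure is only guaranteed when both the intersection and the union remain non-trivial, one must argue combinatorially that, despite this constraint, the operations reach every non-trivial subset of cyclic petal indices starting from $A_0$ together with all the intervals. Once that combinatorial claim is in place, the dichotomy follows directly.
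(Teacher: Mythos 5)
Your primary plan — pass to the finite connectivity system induced on the set of petals by $\lambda'(\mathcal{Q})=\lambda(\bigcup\mathcal{Q})$ and then invoke \cite[Theorem 1.1]{AikinOxley08} — is exactly what the paper does; it offers no proof beyond this reduction, which is spelled out in the paragraph immediately preceding the lemma. The additional self-contained sketch you outline is not needed for agreement with the paper (and, as you note yourself, its propagation step would still require a careful combinatorial argument before it could count as a complete proof).
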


In \cite{AikinOxley08}, $k$-flowers of the first type are called $k$-anemones while flowers of the second type are called daisies.
Note that an infinite partition with cyclic order is a $k$-flower if and only if all its finite concatenations are finite $k$-flowers.
The finite concatenations can also be used to determine infinite $k$-daisies and $k$-anemones as follows.
Every finite concatenation of a $k$-flower is either a $k$-daisy, or a $k$-anemone.
For every two finite concatenations $\Psi$ and $\Psi'$ of a $k$-flower $\Phi$, there is a third finite concatenation $\Phi'$ of $\Phi$ that extends both $\Psi$ and $\Psi'$, and so $\Psi$ is a $k$-daisy if and only if $\Psi'$ is a $k$-daisy.
Thus the following definition arises.

\begin{defn}\label{def:anemoneanddaisy}
	A $k$-flower is a \emph{$k$-anemone} if all non-trivial unions of petals of finite concatenations have order $k-1$.
	A $k$-flower is a $k$-daisy if every finite concatenation of it has the property that the non-trivial unions of petals which have order $k-1$ are exactly those where the indices of the petals form an interval of $I$.
\end{defn}

Given that there are two types of finite $k$-flowers, and that the definition of infinite $k$-flowers is closely related to the characterisation of finite $k$-daisies, one might think that maybe there should be a definition of infinite $k$-anemones that is closer to the characterisation of finite $k$-anemones.
The most obvious choice here would be to let a $k$-anemone be a partition such that every non-trivial union of partition classes has order $k-1$.
It turns out (see \cref{infanemonedef}) that that is not a different possible definition of infinite $k$-anemone, but a property of the current definition of $k$-anemones.

Similarly, one might want to give distinct definitions of $k$-pseudo\-anemones and $k$-pseudo\-daisies and give a definition of $k$-pseudo\-anemones that is closer to the characterisation of finite $k$-anemones.
In particular, there are the following possibilities of $k$-pseudo\-flowers that are anemone-like:

\begin{defn}
	A $k$-pseudo\-anemone is a $k$-pseudo\-flower that can be concatenated into a $k$-anemone.
	A strong $k$-pseudo\-anemone is a $k$-pseudo\-flower for which all unions of partition classes have order at most $k-1$.
	For two strong $k$-pseudo\-anemones $\Phi$ and $\Psi$ denote $\Phi\leq_A \Psi$ if the partition of $\Phi$ is coarser than the partition of~$\Psi$.
\end{defn}

For a strong $k$-pseudo\-anemone, replacing the cyclic order of the partition with any other cyclic order yields again a strong $k$-pseudo\-anemone.
Furthermore, given two strong $k$-pseudo\-anemones $\Phi$ and $\Psi$, the relation $\Phi\leq_A\Psi$ holds if and only if there is a $k$-pseudo\-flower $\Psi'$ with the same partition as $\Psi$ such that $\Phi\leq \Psi'$.
In this sense for strong $k$-pseudo\-anemones the cyclic order does not really have a meaning, and therefore strong $k$-pseudo\-anemones should be compared by $\leq_A$ instead of by $\leq$.
Considering two strong $k$-pseudo\-anemones with the same partition to be the same strong $k$-pseudo\-anemone turns $\leq_A$ into a partial order.

One of the main results of the next section is that every $k$-pseudo\-anemone that can be concatenated into a $k$-anemone with $k+1$ many petals is a strong $k$-pseudo\-anemone (see \cref{moststrongkpa}) and thus that, as infinite $k$-flowers are the focus, these two definitions are essentially the same.
As a strong $k$-pseudo\-anemone cannot be concatenated into a $k$-daisy, a $k$-pseudo\-flower that can be concatenated into a $k$-anemone with sufficiently many petals is clearly anemone-like.

In \cite{AikinOxley08} and \cite{ClarkWhittle13} $k$-flowers are allowed to have less than four petals.
That allows for $k$-flowers which are not unambiguously classified as daisies or anemones.
As the focus of this paper is to translate the existing theory of $k$-flowers to infinite $k$-flowers, it seems reasonable to simplify the presentation by restricting the definition of a $k$-flower to partitions with at least four partition classes.

\section{\texorpdfstring{The order of different unions of petals in $k$-pseudo\-anemones}{The order of different unions of petals in pseudoanemones}}\label{sec:ordersinkpa}

As by definition every $k$-pseudo\-anemone can be concatenated into a $k$-anemone, the following lemma implies a statement about the order of unions of petals of the $k$-pseudo\-anemone which nearly form a union of petals of the $k$-anemone.

\begin{lem}\label{subsetofpetalsinfan}
	Let $\Phi$ be a finite $k$-anemone and $Q$ a petal of $\Phi$.
	Let $R$ be a non-empty union of petals of $\Phi$ such that $R\cap Q=\emptyset$ and $R\cup Q\neq E$.
	Then for any subset $S$ of $Q$, the order of $S\cup R$ does not depend on the choice of $R$.
\end{lem}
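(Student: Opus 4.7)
The plan is to reduce the claim to a local invariance statement and then patch this together via a connectivity argument on valid choices of $R$. Call a union of petals $R$ of $\Phi$ \emph{valid} if it is non-empty, disjoint from $Q$, and $R\cup Q\neq E$. The strategy has two steps: (i) show that if $R$ and $R'$ are valid and differ by exactly one petal, then $\lambda(S\cup R)=\lambda(S\cup R')$; and (ii) show that any two valid $R$ can be connected by a sequence of single-petal modifications that stay within the valid region.

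For step (i), by symmetry I may assume $R'=R\cup P_{\ell}$ for a petal $P_{\ell}$ not contained in $R$, $P_{\ell}\neq Q$. The key observation is that by the $k$-anemone property and the validity of $R,R'$, all four quantities $\lambda(R),\lambda(R'),\lambda(Q\cup R),\lambda(Q\cup R')$ equal $k-1$: the first two because $R,R'$ are non-trivial unions of petals; and the last two because validity forces the complements $T=E\setminus Q\setminus R$ and $T\setminus P_{\ell}=E\setminus Q\setminus R'$ to be non-empty, so $Q\cup R$ and $Q\cup R'$ are also non-trivial unions of petals. Applying submodularity to $S\cup R$ and $R'$, whose union is $S\cup R'$ and whose intersection is $R$ (because $S\subseteq Q$ is disjoint from $E\setminus Q\supseteq R'$, and $R\cap P_{\ell}=\emptyset$), gives $\lambda(S\cup R)+(k-1)\geq\lambda(S\cup R')+(k-1)$, i.e.\ $\lambda(S\cup R)\geq\lambda(S\cup R')$. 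Applying submodularity instead to $S\cup R'$ and $Q\cup R$, whose union is $Q\cup R'$ and whose intersection is $S\cup R$ (by similar disjointness computations), gives the reverse inequality $\lambda(S\cup R')\geq\lambda(S\cup R)$.

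For step (ii), I identify a valid $R$ with the corresponding non-empty proper subset of the set of the $n-1$ petals distinct from $Q$. Because $\Phi$ is a $k$-flower we have $n\geq 4$, hence $n-1\geq 3$, so the graph on these subsets whose edges are symmetric differences of size one is connected: to reduce the symmetric difference with a target $R^{\ast}$ one can either remove a petal of $R\setminus R^{\ast}$ (permissible whenever $|R|\geq 2$) or add a petal of $R^{\ast}\setminus R$ (permissible whenever $R$ does not already exhaust all petals outside $Q$), and $n-1\geq 3$ guarantees that in the remaining degenerate cases (e.g.\ $|R|=1$ and the petal is not in $R^{\ast}$) one has room to first enlarge $R$ by a fresh petal. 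Combining steps (i) and (ii) gives the lemma by transitivity.

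The main obstacle is step (i), for which the non-obvious ingredient is choosing the two submodular pairings so that the four $(k-1)$-terms provided by the anemone property cancel cleanly, leaving only $\lambda(S\cup R)$ and $\lambda(S\cup R')$ in the inequalities. Step (ii) is a routine connectivity check that uses only the flower assumption $n\geq 4$.
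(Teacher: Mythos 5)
Your proof is correct and follows essentially the same route as the paper's: after reducing to comparable choices of $R$, the two submodular pairings you use (with $S\cup R$ against $R'$, and with $S\cup R'$ against $Q\cup R$) are, up to complementation, exactly the two inequalities in the paper's chain. The only difference is cosmetic: the paper reduces directly to the case $R_1\subseteq R_2$ and leaves the "any two valid choices are connected" step implicit, whereas you make that reduction explicit via single-petal moves.
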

\begin{proof}
	Let $S$ be a subset of $Q$ and let $R_1$ and $R_2$ be candidates for $R$.
	In order to show $\lambda(R_1\cup S)=\lambda(R_2\cup S)$ it suffices to consider the case that $R_1$ is a subset of $R_2$.
	Every non-trivial union of petals of $\Phi$ has order $k-1$, so
	\begin{align*}
		\lambda(R_2\cup S)&\leq \lambda(R_1\cup S)+\lambda(R_2)-\lambda(R_1)= \lambda(R_1\cup S) =\lambda(E\setminus (R_1\cup S))\\
		&= \lambda((E\setminus (R_1\cup Q))\cup (Q\setminus S)) \\
		&\leq \lambda(E\setminus (R_1\cup Q)) + \lambda((E\setminus (R_2\cup Q))\cup (Q\setminus S)) - \lambda(E\setminus (R_2\cup Q)) \\
		&=\lambda((E\setminus (R_2\cup Q))\cup (Q\setminus S)) = \lambda(R_2\cup S).
	\end{align*}
	Thus $\lambda(R_1\cup S)=\lambda(R_2\cup S)$.
\end{proof}

So within a petal of an anemone, another connectivity function is induced.

\begin{lem}\label{muforpetal}
	Let $\Phi$ be a finite $k$-anemone with distinct petals $Q$ and $R$.
	Then the map $\mu:2^Q\rightarrow \mathbb{N}$ defined by $\mu(S)=\lambda(S\cup R)$ is submodular, symmetric, limit-closed, bounded from below by $k-1$, and does not depend on the choice of $R$.
\end{lem}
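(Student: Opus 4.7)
The plan is to establish the five properties in an order where each one supports the next, starting from the fact that $\mu$ is well-defined (independent of $R$) and building up to the lower bound. Independence of the choice of $R$ is an immediate consequence of \cref{subsetofpetalsinfan}: given two distinct petals $R$ and $R'$ of $\Phi$ other than $Q$, the set $R \cup R'$ is a non-empty union of petals disjoint from $Q$, and since a $k$-flower has at least four petals we have $Q \cup R \cup R' \neq E$; applying \cref{subsetofpetalsinfan} to the inclusions $R \subseteq R \cup R'$ and $R' \subseteq R \cup R'$ then gives $\lambda(S \cup R) = \lambda(S \cup (R \cup R')) = \lambda(S \cup R')$ for every $S \subseteq Q$. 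Submodularity and limit-closedness then drop out of the corresponding properties of $\lambda$: since $R$ is disjoint from $Q$, for $S_1, S_2 \subseteq Q$ we have $(S_1 \cup R) \cup (S_2 \cup R) = (S_1 \cup S_2) \cup R$ and $(S_1 \cup R) \cap (S_2 \cup R) = (S_1 \cap S_2) \cup R$, so submodularity of $\lambda$ transfers directly; and for a chain $(S_i)_{i\in I}$ in $2^Q$, the chain $(S_i \cup R)_{i\in I}$ has union $(\bigcup_{i\in I} S_i) \cup R$, so limit-closedness of $\lambda$ transfers as well.

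For symmetry, I would unpack $\mu(Q \setminus S) = \lambda((Q \setminus S) \cup R)$ and use the symmetry of $\lambda$ to rewrite this as $\lambda(E \setminus ((Q \setminus S) \cup R)) = \lambda(S \cup R')$, where $R' := E \setminus (Q \cup R)$ is the union of all petals of $\Phi$ other than $Q$ and $R$. Since $\Phi$ has at least four petals, $R'$ is a non-empty union of petals disjoint from $Q$ with $Q \cup R' \neq E$, so \cref{subsetofpetalsinfan} (which is stated for arbitrary non-empty unions of petals rather than just single petals) yields $\lambda(S \cup R') = \lambda(S \cup R) = \mu(S)$, hence $\mu(Q \setminus S) = \mu(S)$.

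The lower bound follows from the preceding steps. Since $R$ is a petal of a $k$-anemone, $\mu(\emptyset) = \lambda(R) = k-1$, and symmetry then gives $\mu(Q) = k-1$. Submodularity of $\mu$ applied to $S$ and $Q \setminus S$ yields $\mu(S) + \mu(Q \setminus S) \geq \mu(Q) + \mu(\emptyset) = 2(k-1)$, and combining this with $\mu(S) = \mu(Q \setminus S)$ gives $\mu(S) \geq k-1$. I do not expect any serious obstacle; the only point that requires some care is to verify that the side conditions of \cref{subsetofpetalsinfan} are met each time it is invoked, which is precisely where the requirement that a $k$-flower has at least four petals is used.
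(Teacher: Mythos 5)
Your proof is correct and follows essentially the same route as the paper's: independence of $R$ and symmetry both come from \cref{subsetofpetalsinfan} applied with $E\setminus(Q\cup R)$ as the alternative union of petals, submodularity and limit-closedness are inherited directly from $\lambda$, and the lower bound is the standard consequence of $\mu(\emptyset)=k-1$ together with symmetry and submodularity. The paper merely leaves the last two steps implicit, whereas you spell them out; the verification of the side conditions of \cref{subsetofpetalsinfan} via the four-petal assumption is exactly the right point to be careful about.
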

\begin{proof}
	As $\lambda$ is submodular and limit-closed, $\mu$ is also submodular and limit-closed.
	By \cref{subsetofpetalsinfan} $\mu$ does not depend on the choice of $R$ and, for all $S\subseteq Q$,
	\begin{align*}
		\mu(S)=\lambda(S\cup R)=\lambda(S \cup (E \setminus (R \cup Q)))=\lambda((Q \setminus S) \cup R)=\mu(Q \setminus S)
	\end{align*}
	so $\mu$ is symmetric.
	Also $\mu(\emptyset)=k-1$, so as $\mu$ is symmetric and submodular it follows that $\mu(S)\geq k-1$ for all $S\subseteq Q$.
\end{proof}

In the special case where $S$ a petal of a $k$-pseudo\-anemone extending the anemone, then $\mu(S)=k-1$, as the next lemma shows.

\begin{lem}\label{muforkpf}
	Let $\Phi'$ be a $k$-pseudo\-anemone which has a concatenation into a $k$-anemone $\Phi$.
	Then the union of any petal $S$ of $\Phi'$ and any petal $R$ of $\Phi$ not containing $S$ has order $k-1$.
\end{lem}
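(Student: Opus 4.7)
The plan is to reduce to a finite $k$-anemone (where \cref{subsetofpetalsinfan} and \cref{muforpetal} apply) containing $R$ and the $\Phi$-petal $Q$ that contains $S$ as separate petals, and then combine submodularity of the resulting function $\mu$ with the pseudo\-flower property of~$\Phi'$.

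First I would construct a finite concatenation $\Phi^*$ of $\Phi$ which is a finite $k$-anemone with at least four petals and with both $R$ and $Q$ appearing as singleton petals. This can be done by keeping $R$ and $Q$ and merging the rest of $\Phi$ into the two cyclic arcs strictly between $R$ and $Q$; if one of those arcs is empty (so $R$ and $Q$ are cyclically adjacent in $\Phi$) then the other arc contains at least two $\Phi$-petals, since $\Phi$ has at least four, and can be split into two sub-intervals. Every finite concatenation of a $k$-anemone has all non-trivial unions of its petals of order $k-1$ and hence is itself a $k$-anemone, so \cref{subsetofpetalsinfan,muforpetal} apply to $\Phi^*$ and yield a symmetric submodular limit-closed function $\mu\colon 2^Q\to \mathbb{N}$ with $\mu \geq k-1$, satisfying $\mu(T)=\lambda(T\cup R'')$ for every non-empty union $R''$ of $\Phi^*$-petals disjoint from $Q$ with $R''\cup Q\neq E$. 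In particular $\mu(\emptyset)=\lambda(R)=k-1$, and by symmetry $\mu(Q)=k-1$.

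Let $I_Q$ be the interval of $\Phi'$-petals mapping to $Q$ under the concatenation map $\Phi'\to \Phi^*$. The petal $S$ corresponds to some element of $I_Q$; split $I_Q$ at $S$ into an initial sub-interval $L$ ending at $S$ and a final sub-interval $U$ starting at $S$ (both including $S$), so that $L\cup U = Q$ and $L\cap U = S$ as subsets of $E$. In the cyclic order of $\Phi'$ the interval $I_Q$ is flanked on each side by the $\Phi'$-interval corresponding to a neighbouring $\Phi^*$-petal; call these $Z_L^*$ (on the side of $L$) and $Z_U^*$ (on the side of $U$), both valid choices of $R''$ above because $\Phi^*$ has at least four petals. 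Then $L\cup Z_L^*$ and $U\cup Z_U^*$ are intervals of $\Phi'$, so the pseudo\-flower property of $\Phi'$ gives $\lambda(L\cup Z_L^*),\lambda(U\cup Z_U^*)\leq k-1$, hence $\mu(L),\mu(U)\leq k-1$; the lower bound from \cref{muforpetal} then forces $\mu(L)=\mu(U)=k-1$.

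Applying submodularity of $\mu$ to $L$ and $U$ now gives
\[
    \mu(L)+\mu(U) \geq \mu(L\cup U)+\mu(L\cap U) = \mu(Q)+\mu(S),
\]
so $\mu(S)\leq (k-1)+(k-1)-(k-1) = k-1$, and together with $\mu(S)\geq k-1$ this yields $\mu(S)=\lambda(S\cup R)=k-1$, as desired. I expect the main obstacle to be the bookkeeping needed to verify that $L\cup Z_L^*$ and $U\cup Z_U^*$ really are intervals of $\Phi'$, which amounts to tracing adjacency of the $\Phi^*$-petals' preimages in $\Phi'$'s cyclic order, together with handling the degenerate boundary cases (such as $S=Q$, or $Q$ and $R$ being cyclically adjacent in $\Phi$) when choosing $\Phi^*$.
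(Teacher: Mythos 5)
Your proof is correct and is essentially the paper's argument in different clothing: the paper splits $Q\setminus S$ into the two arcs $A$ and $B$ and applies submodularity of $\lambda$ to $R\cup A\cup S$ and $R\cup S\cup B$ (whose union is $R\cup Q$ and intersection is $R\cup S$), which is exactly your submodularity step for $\mu$ applied to $L$ and $U$ after translating via $\mu(\cdot)=\lambda(\cdot\cup R)$, with the same two intervals of $\Phi'$ supplying the upper bounds. The only cosmetic difference is that you set up the finite concatenation $\Phi^*$ explicitly, where the paper invokes \cref{subsetofpetalsinfan} to reduce to $R$ adjacent to $Q$ and works with $\lambda$ directly.
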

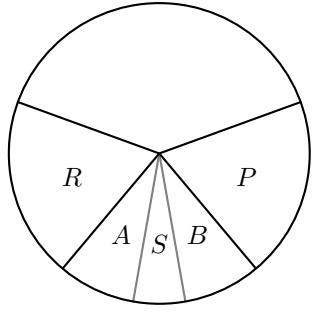
\begin{figure}
	\begin{tikzpicture}[thick]
		\labelofset{230}{160}{$R$}
		\labelofset{-100}{-130}{$A$}
		\labelofset{-80}{-100}{$S$}
		\labelofset{-50}{-80}{$B$}
		\labelofset{20}{-50}{$P$}

		\newedge[gray]{-100}
		\newedge[gray]{-80}		
		\newedge{-50}
		\newedge{230}
		\newedge{160}
		\newedge{20}
		
		\basicflower
	\end{tikzpicture}
	\caption{Notation from the proof of \cref{muforkpf}.}
	\label{fig:muforkpf}
\end{figure}
\begin{proof}
	Denote the petal of $\Phi$ which contains $S$ by $Q$.
	By \cref{subsetofpetalsinfan} it suffices to consider the case that $R$ is adjacent to $Q$.
	Denote the neighbour of $Q$ in $\Phi$ which is not $R$ by $P$.
	Deleting $S$ from $Q$ yields two, possibly empty, unions of intervals of petals of $\Phi'$; denote them by $A$ and $B$ such that $A$ is adjacent to $R$ and $B$ is adjacent to $P$.
	Then
	\begin{align*}
		\lambda(R\cup S)&\leq \lambda(R\cup A\cup S)+\lambda(R\cup S\cup B)-\lambda(R\cup A\cup S\cup B)\\
		&\leq \lambda(R\cup S\cup B)= \lambda(S\cup B\cup P) \leq k-1
	\end{align*}
	where the equality holds by \cref{subsetofpetalsinfan}.
	Thus $\lambda(R\cup S)=k-1$ by \cref{muforpetal}.
\end{proof}

In order to deduce from \cref{muforkpf} that every $k$-pseudo\-anemone extending an anemone with sufficiently many petals is a strong $k$-pseudo\-anemone, the following elementary property of submodular functions is needed.

\begin{lem}\label{witnessforbiggerconnectivity}
	Let $E$ be a finite set and $\lambda:2^E\rightarrow \mathbb{Z}$ submodular. Let $X$ and $Y$ be subsets of $E$ such that $X\subseteq Y$ and $\lambda(X)<\lambda(Y)$. Then there is $e\in Y\setminus X$ such that $\lambda(X)<\lambda(X+e)$.
\end{lem}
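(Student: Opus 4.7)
The plan is a standard submodular-function argument, which I would prove by contradiction via a maximal-set extraction.

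First I would pick a set $Z$ with $X\subseteq Z\subseteq Y$ such that $\lambda(Z)\leq\lambda(X)$ and $|Z|$ is as large as possible; this is well-defined because $Z=X$ satisfies the requirement and $Y$ is finite. Since $\lambda(Y)>\lambda(X)$ we have $Z\neq Y$, so there exists $e\in Y\setminus Z$ (in particular $e\in Y\setminus X$). By the maximality of $|Z|$, the set $Z+e$ fails the condition, i.e.\ $\lambda(Z+e)>\lambda(X)$.

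Next I would apply submodularity to the pair $Z$ and $X+e$. Since $e\notin Z$ (so that $Z\cap(X+e)=X$) and $Z\cup(X+e)=Z+e$, submodularity gives
\begin{displaymath}
\lambda(Z)+\lambda(X+e)\geq\lambda(Z+e)+\lambda(X).
\end{displaymath}
Rearranging and using $\lambda(Z)\leq\lambda(X)$ yields
\begin{displaymath}
\lambda(X+e)\geq\lambda(Z+e)+\lambda(X)-\lambda(Z)\geq\lambda(Z+e)>\lambda(X),
\end{displaymath}
which is exactly what was needed.

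There is no real obstacle here; the only point worth being careful about is the choice of the comparison pair in the submodular inequality, since naively comparing $X$ and $Z+e$ (or $X+e$ and $Y$) does not immediately give the strict increase we want. Picking $Z$ and $X+e$ is what makes the intersection equal to $X$ and thereby transfers the strict inequality $\lambda(Z+e)>\lambda(X)$ into $\lambda(X+e)>\lambda(X)$. An alternative, essentially equivalent, approach would be an induction on $|Y\setminus X|$ assuming $\lambda(X+e)\leq\lambda(X)$ for every $e\in Y\setminus X$ and concluding $\lambda(Y)\leq\lambda(X)$, but the maximal-$Z$ presentation above avoids bookkeeping.
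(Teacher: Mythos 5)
Your proof is correct. It takes a mildly different route from the paper's: you extract a \emph{maximal} set $Z$ with $X\subseteq Z\subseteq Y$ and $\lambda(Z)\leq\lambda(X)$ and then apply submodularity once to the pair $Z$ and $X+e$ (whose intersection is exactly $X$ because $e\notin Z$ and $X \subseteq Z$), whereas the paper works with the shifted function $\mu(A)=\lambda(A\cup X)-\lambda(X)$ on $2^{Y\setminus X}$, takes a \emph{minimal} set $X'$ with $\mu(X')\geq 1$, and uses submodularity in the form $\mu(X')\leq\mu(X'-e)+\mu(e)-\mu(\emptyset)$ to conclude that $X'$ is a singleton. The two arguments are dual in flavour (maximal non-increasing superset versus minimal increasing subset), and each consists of one extremal choice plus one application of submodularity; neither buys anything substantial over the other. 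Your version has the small advantage of not needing to verify that $\mu$ is again submodular, and it is in fact the same template the paper itself uses to prove \cref{fintoinfconn} (maximal $Z$ by Zorn's Lemma, then one submodular inequality), so it fits the paper's toolbox naturally. All the steps check out: the maximality of $|Z|$ does force $\lambda(Z+e)>\lambda(X)$, and the chain $\lambda(X+e)\geq\lambda(Z+e)+\lambda(X)-\lambda(Z)\geq\lambda(Z+e)>\lambda(X)$ is valid since $\lambda(X)-\lambda(Z)\geq 0$.
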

\begin{proof}
	Define a function $\mu:2^{Y\setminus X}\rightarrow \mathbb{Z}$ via $\mu(A)=\lambda(A\cup X)-\lambda(X)$.
	Then $\mu$ is submodular and satisfies $\mu(\emptyset)=0$.
	Furthermore $\mu(Y\setminus X)\geq 1$, so there is a minimal set $X'\subseteq Y\setminus X$ such that $\mu(X')\geq 1$.
	Let $e\in X'$.
	Then by submodularity
	\begin{displaymath}
		\mu(X') \leq \mu(X' - e) + \mu(e) - \mu(\emptyset)
	\end{displaymath}
	so minimality of $X'$ implies $\mu(e)\geq \mu(X')$ and thus $X'=\{e\}$.
	Hence $\lambda(X+e)=\mu(e)+\lambda(X)\geq \lambda(X)+1$.
\end{proof}

\begin{lem}\label{nearlystrongkpa}
	Let $\Phi'$ be a $k$-pseudo\-flower which has a concatenation $\Phi$ into a finite $k$-anemone.
	Every union of petals of $\Phi'$ which either contains a petal of $\Phi$ or is disjoint from a petal of $\Phi$ has order at most $k-1$.
\end{lem}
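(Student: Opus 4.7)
The plan is first to reduce, using the symmetry $\lambda(X)=\lambda(E\setminus X)$, to the case where $X$ \emph{contains} a petal $R$ of $\Phi$: if instead $X$ is disjoint from a petal $R$, then $E\setminus X$ is again a union of petals of $\Phi'$, it contains $R$, and has the same order as $X$.

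So suppose $X=R\cup\bigcup_{i\in J_0}S_i$, where $(S_i)_{i\in J_0}$ enumerates the petals of $\Phi'$ sitting inside $X\setminus R$. Each $S_i$ is nonempty, pairwise disjoint with the other $S_j$, and disjoint from $R$, so in particular $S_i\not\subseteq R$. Applying \cref{muforkpf} to $\Phi'$ (which is a $k$-pseudo-anemone because it concatenates into the $k$-anemone $\Phi$) then yields $\lambda(R\cup S_i)\leq k-1$ for every $i\in J_0$; and $\lambda(R)=k-1$ because $R$ is a single petal of the $k$-flower $\Phi$.

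The key step is to glue these one-petal bounds together by a Zorn / transfinite-induction argument. Well-order $J_0$ as $\{i_\beta:\beta<\kappa\}$ and set $T_\alpha:=R\cup\bigcup_{\beta<\alpha}S_{i_\beta}$, so that $T_0=R$ and $T_\kappa=X$. For the successor step, disjointness of the petals of $\Phi'$ gives $T_\beta\cap(R\cup S_{i_\beta})=R$, and submodularity applied to $T_\beta$ and $R\cup S_{i_\beta}$ yields
\begin{align*}
\lambda(T_{\beta+1})\leq\lambda(T_\beta)+\lambda(R\cup S_{i_\beta})-\lambda(R)\leq k-1.
\end{align*}
For a limit ordinal $\alpha$, the sets $T_\beta$ with $\beta<\alpha$ form a chain with union $T_\alpha$, so limit-closedness of $\lambda$ gives $\lambda(T_\alpha)\leq k-1$. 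Transfinite induction concludes $\lambda(X)=\lambda(T_\kappa)\leq k-1$.

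The main obstacle is that $X\setminus R$ can involve infinitely many petals of $\Phi'$, which is precisely what forces a transfinite step and makes the limit-closedness hypothesis on $\lambda$ indispensable; a plain finite induction does not suffice. Once \cref{muforkpf} supplies the one-petal base and $R$ contributes the ``absorbing'' set of order $k-1$, the remainder of the argument is essentially bookkeeping.
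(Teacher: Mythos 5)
Your proof is correct and follows essentially the same route as the paper: reduce by symmetry to the case of a union containing a petal $R$ of $\Phi$, use \cref{muforkpf} to get the single-petal bounds $\lambda(R\cup S_i)=k-1$, and then combine these via submodularity and limit-closedness. The only (immaterial) difference is organisational: the paper handles the combination by applying \cref{witnessforbiggerconnectivity} to finite subfamilies and then invoking \cref{fintoinfconn}, whereas you run one transfinite induction with submodularity at successor steps and limit-closedness at limit steps.
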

\begin{proof}
	By symmetry of $\lambda$ it suffices to consider unions of petals of $\Phi'$ which contain a petal of $\Phi$.
	
	Let $R$ be a petal of $\Phi$.
	Denote the set of petals of $\Phi'$ that are disjoint from $R$ by $\mathcal{Q}$ and let $\mu'$ be the map defined on $\mathcal{Q}$ via $\mu'(\mathcal{R})=\lambda(\bigcup \mathcal{R} \cup R)$.
	Then $\mu'(\emptyset)=k-1$ and $\mu'(Q)=k-1$ for all $Q\in \mathcal{Q}$ by \cref{muforkpf}, so $\mu'(\mathcal{R})\leq k-1$ for all finite subsets $\mathcal{R}$ of $\mathcal{Q}$ by \cref{witnessforbiggerconnectivity}.
	As $\lambda$ is limit-closed, also $\mu'$ is limit-closed and thus $\mu'(\mathcal{R})\leq k-1$ for all subsets $\mathcal{R}$ of $\mathcal{Q}$ by \cref{fintoinfconn}.
\end{proof}

As a corollary, we obtain the following theorem:

\begin{thm}\label{moststrongkpa}
	Let $\Phi'$ be a $k$-pseudo\-flower which can be concatenated into a finite $k$-anemone $\Phi$ with at least $k+1$ many petals.
	Then $\Phi'$ is a strong $k$-pseudo\-anemone.
\end{thm}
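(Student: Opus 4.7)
The plan is to take an arbitrary union $Y$ of petals of $\Phi'$ and show $\lambda(Y)\leq k-1$; by definition this is exactly what it means for $\Phi'$ to be a strong $k$-pseudo\-anemone. Enumerate the petals of $\Phi$ as $Q_1,\ldots,Q_n$ with $n\geq k+1$, and write $A_j := Y \cap Q_j$, so that $Y = A_1\cup \cdots \cup A_n$ and each $A_j$ is itself a union of petals of $\Phi'$ contained in $Q_j$. If some $A_j$ equals $\emptyset$ or $Q_j$, then $Y$ is either disjoint from or contains the petal $Q_j$ of $\Phi$, and $\lambda(Y)\leq k-1$ follows immediately from \cref{nearlystrongkpa}; so the remaining case is that every $A_j$ is a proper nonempty subset of $Q_j$.

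In this remaining case I would set $W_j := Y \cup Q_j$. Each $W_j$ is a union of petals of $\Phi'$ containing the petal $Q_j$ of $\Phi$, hence $\lambda(W_j)\leq k-1$ again by \cref{nearlystrongkpa}. The key step is to prove by induction on $m$ that
\[
\sum_{j=1}^{m}\lambda(W_j)\;\geq\;(m-1)\lambda(Y) + \lambda(W_1\cup \cdots \cup W_m),
\]
each step being one application of submodularity to $W_1\cup\cdots\cup W_m$ and $W_{m+1}$. The crucial observation is that the intersection collapses, $(W_1\cup\cdots\cup W_m)\cap W_{m+1} = Y$: any element of the intersection outside $Y$ would have to lie simultaneously in $Q_1\cup\cdots\cup Q_m$ and in $Q_{m+1}$, which is impossible since the petals of $\Phi$ are pairwise disjoint.

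Setting $m=n$ gives $W_1\cup\cdots\cup W_n = E$, so $\lambda(W_1\cup\cdots\cup W_n)=\lambda(\emptyset)\geq 0$; combining with $\lambda(W_j)\leq k-1$ yields
\[
\lambda(Y)\;\leq\;\frac{n(k-1)}{n-1}\;=\;(k-1)+\frac{k-1}{n-1}.
\]
The hypothesis $n\geq k+1$ (and $k\geq 1$, forced by the very existence of a $k$-flower) gives $\frac{k-1}{n-1}\leq \frac{k-1}{k}<1$, so $\lambda(Y)<k$, and since $\lambda$ takes values in $\mathbb{N}\cup\{\infty\}$ this closes the gap to $\lambda(Y)\leq k-1$. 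The main obstacle I anticipate is recognising that \cref{nearlystrongkpa} already handles every union of petals of $\Phi'$ except those which cross every petal of $\Phi$, and then finding a submodular telescope whose averaging over the $n$ petals combines with integrality of $\lambda$ to extract exactly the unit of savings that the hypothesis $n\geq k+1$ affords.
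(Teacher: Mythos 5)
Your proof is correct, and it reaches the conclusion by a genuinely different route from the paper's. Both arguments funnel everything through \cref{nearlystrongkpa}, but in opposite directions: the paper shrinks rather than enlarges, observing that any union of at most $k$ petals of $\Phi'$ meets at most $k$ of the at least $k+1$ petals of $\Phi$ and is therefore disjoint from one of them, so \cref{nearlystrongkpa} bounds its order; the general witness lemma for connectivity functions (the unnamed corollary of \cref{minimalsetsmall}) then promotes the bound from unions of at most $k$ petals to all finite unions, and \cref{fintoinfconn} to all unions. Your telescoping submodularity over the supersets $W_j = Y \cup Q_j$, with the collapse $(W_1\cup\cdots\cup W_m)\cap W_{m+1}=Y$ and the averaging-plus-integrality finish $\lambda(Y)\le (k-1)+\tfrac{k-1}{n-1}<k$, replaces both auxiliary lemmas at once: it applies directly to an infinite union $Y$ (so no appeal to limit-closedness is needed), and it makes quantitatively visible where the hypothesis of $k+1$ petals enters, namely in forcing the error term $\tfrac{k-1}{n-1}$ below $1$. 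The paper's version is shorter because the witness machinery is already developed for other purposes; yours is more self-contained given \cref{nearlystrongkpa}. Two minor remarks: your opening case split is not actually needed, since $W_j$ always contains the petal $Q_j$ of $\Phi$ and so \cref{nearlystrongkpa} always applies to it; and it is worth noting explicitly that the inequality $(n-1)\lambda(Y)\le n(k-1)$ already rules out $\lambda(Y)=\infty$, since $\lambda$ takes values in $\mathbb{N}\cup\{\infty\}$.
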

\begin{proof}
	All unions of at most $k$ many petals of $\Phi'$ are disjoint from a petal of $\Phi$ and thus have order at most $k-1$ by \cref{nearlystrongkpa}.
	As $\lambda$ is a connectivity function, this implies that all finite unions of petals of $\Phi'$ have order at most $k-1$.
	So by \cref{fintoinfconn} all unions of petals of $\Phi'$ have order at most $k-1$.
\end{proof}
\begin{rem}
	In particular, every $k$-pseudo\-flower which can be concatenated into a $k$-anemone with at least $k+1$ many petals cannot be concatenated into a $k$-daisy.
\end{rem}

Now we can show that indeed the other definition of infinite $k$-anemone mentioned in \cref{sec:defsbips} is equivalent to the definition currently in use.

\begin{cor}\label{infanemonedef}
	Let $\Phi$ be an infinite $k$-anemone.
	Then every union of petals of $\Phi$ has order $k-1$.
\end{cor}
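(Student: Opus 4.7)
The plan is to complement the upper bound $\lambda(X) \leq k-1$ with a matching lower bound, for every non-trivial union $X$ of petals of $\Phi$. The upper bound is immediate from \cref{moststrongkpa}: since $\Phi$ has infinitely many petals, it can be concatenated into a finite $k$-anemone with at least $k+1$ petals, so $\Phi$ is a strong $k$-pseudo-anemone, and every union of petals has order at most $k-1$.

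For the lower bound I would do a short case analysis. If $X$ consists of a single petal, the $k$-flower property already gives $\lambda(X) = k-1$, and by the symmetry $\lambda(X) = \lambda(E \setminus X)$ the same holds if $E \setminus X$ is a single petal. In the remaining case, $E \setminus X$ contains two distinct petals $R, R'$ of $\Phi$, and $X$ contains a petal $P$.

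The crucial step is to show first that $\lambda(X \cup R) = k-1$. Applying submodularity to the pair $X \cup R$ and $E \setminus (X \cup R')$, whose intersection works out to $R$ and whose union works out to $E \setminus R'$, and then using symmetry to rewrite both complemented sides, produces
\[
\lambda(X \cup R) + \lambda(X \cup R') \;\geq\; \lambda(R) + \lambda(R') \;=\; 2(k-1).
\]
Each summand on the left is at most $k-1$ by the upper bound, so both must equal $k-1$. Next, \cref{muforkpf} applied to $\Phi$ together with a finite $k$-anemone concatenation containing $P$ and $R$ as individual petals gives $\lambda(P \cup R) = k-1$, and one more application of submodularity to $X$ and $P \cup R$, using $P \subseteq X$ and $R \cap X = \emptyset$ (so that the intersection is $P$ and the union is $X \cup R$), yields
\[
\lambda(X) + (k-1) \;\geq\; \lambda(X \cup R) + \lambda(P) \;=\; 2(k-1),
\]
whence $\lambda(X) \geq k-1$ and equality holds.

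I expect the main obstacle to be resisting the temptation to conclude $\lambda(X) \geq \lambda(P) = k-1$ directly from $P \subseteq X$: connectivity is not monotone, so this fails. The way around it is to exploit a \emph{pair} of distinct petals $R, R'$ in the complement together with the symmetry $\lambda(\cdot) = \lambda(E \setminus \cdot)$; the resulting submodular inequality pins down $\lambda(X \cup R)$ exactly, after which a second submodular step transfers the value down to $X$ itself.
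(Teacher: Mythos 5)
Your proof is correct, and its lower-bound argument takes a genuinely different route from the paper's. The paper also gets the upper bound from \cref{moststrongkpa}, but then argues by contradiction: it uses limit-closedness to pick a non-trivial union of petals of order less than $k-1$ that is minimal with that property, and derives a contradiction from a submodularity computation involving an interval of petals and the induced function $\mu$ of \cref{muforpetal}. Your argument is direct. The identity check in your key step is sound: with $R,R'$ distinct petals in $E\setminus X$, the sets $X\cup R$ and $E\setminus(X\cup R')$ do intersect in $R$ and unite to $E\setminus R'$, so submodularity plus symmetry gives $\lambda(X\cup R)+\lambda(X\cup R')\geq\lambda(R)+\lambda(R')=2(k-1)$, and the upper bound forces both summands to equal $k-1$; the second exchange with $P\cup R$ (where $P\subseteq X$ and $R\cap X=\emptyset$, so the intersection is $P$ and the union is $X\cup R$) then yields $\lambda(X)\geq k-1$. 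The appeal to \cref{muforkpf} is legitimate — a finite concatenation of $\Phi$ having $R$ as an individual petal exists because a singleton and its complement are intervals of the cyclic order, and every finite concatenation of a $k$-anemone is a finite $k$-anemone — though it is in fact dispensable: running your first computation with $P$ in place of $X$ already gives $\lambda(P\cup R)=k-1$. What your approach buys is a constructive proof whose only use of limit-closedness is the one already buried in \cref{moststrongkpa}, avoiding the minimal-counterexample selection entirely; the case analysis (single petal, complement a single petal, otherwise) is exhaustive for non-trivial unions, which is the intended reading of the statement in both your proof and the paper's.
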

\begin{proof}
	By the previous theorem, all unions of petals have order at most $k-1$.
	Also every concatenation of $\Phi$ into a finite $k$-flower is a finite $k$-anemone.
	So every non-trivial union of petals that is also a union of petals  of a finite concatenation has order exactly $k-1$.
	Assume for a contradiction that there is a non-trivial union of petals $P$ whose order is less than $k-1$.
	As the connectivity function is assumed to be limit-closed, $P$ can be chosen minimal with that property.
	Then $P$ contains distinct petals $Q$ and $Q'$ such that the interval $[Q,Q']$ has at least $k+1$ elements and such that $]Q',Q'[$ contains a petal that is not contained in $P$.
	Let $R$ be the union of $]Q,Q']$.
	Then there is a finite $k$-anemone $\Phi'$  with at least $k+1$ petals, one of which is $Q$, that is a concatenation of $\Phi$ such that the union of $]Q',Q[$ is contained in a petal of $\Phi'$.
	Now $\mu$ as in \cref{muforpetal} can be defined for $\Phi'$ and its petal containing the union of $]Q',Q[$.
	Then
	\begin{align*}
		\lambda(P)&\geq \lambda(P\cap R) + \lambda(P\cup R) - \lambda(R) = k-1 + \lambda(E \setminus (P \cup R)) - (k-1)\\
		&= \mu(E \setminus (P \cup Q \cup R)) \geq k-1
	\end{align*}
	a contradiction to $\lambda(P)<k-1$.
\end{proof}

\section[Finding maximal pseudoflowers]{\texorpdfstring{Finding maximal $k$-pseudo\-flowers and maximal strong $k$-pseudo\-flowers}{Finding maximal pseudoflowers and maximal strong pseudoanemones}}

In this chapter we show that $k$-pseuodflowers can be extended to $\leq$-maximal $k$-pseudo\-flowers.
For that, we first find an upper bound of a $\leq$-chain of $k$-pseudo\-flowers by taking the common refinement of the partitions and defining a suitable cyclic order.
The statement then follows from an application of Zorn's Lemma.

\begin{lem}\label{limitofpseudoflowers}
	Every $\leq$-chain of $k$-pseudo\-flowers with cyclic orders has an upper bound.
\end{lem}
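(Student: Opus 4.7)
The plan is to take the common refinement of the partitions underlying the chain, equip it with a natural cyclic order built from the cyclic orders of the $I_\alpha$, and verify the pseudo\-flower property by exhibiting each displayed separation as a limit of separations coming from the chain, then applying limit-closedness of $\lambda$.

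Given a $\leq$-chain $(\Phi_\alpha)_{\alpha \in A}$ with $\Phi_\alpha = (P^\alpha_i)_{i\in I_\alpha}$ and witnessing concatenation maps $g_{\alpha\beta}\colon I_\beta \to I_\alpha$ for $\alpha \leq \beta$ in the chain, I would let $\mathcal{P}$ denote the common refinement of the underlying partitions, and for each $Q \in \mathcal{P}$ and each $\alpha$ let $f_\alpha(Q)$ be the unique $i \in I_\alpha$ with $Q \subseteq P^\alpha_i$; these projections satisfy $g_{\alpha\beta} \circ f_\beta = f_\alpha$. The cyclic order on $\mathcal{P}$ can then be defined by: for three pairwise distinct classes $Q_1, Q_2, Q_3 \in \mathcal{P}$, choose $\alpha \in A$ with $f_\alpha(Q_1), f_\alpha(Q_2), f_\alpha(Q_3)$ pairwise distinct (such $\alpha$ exists because $\mathcal{P}$ is the common refinement and $A$ is a chain) and declare $Q_2 \in \lrbracket Q_1, Q_3\rlbracket$ iff $f_\alpha(Q_2) \in \lrbracket f_\alpha(Q_1), f_\alpha(Q_3)\rlbracket$ in $I_\alpha$. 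The concatenation properties of the $g_{\alpha\beta}$'s make this well-defined and give a cyclic order, each $f_\alpha$ becomes monotone, and the footnote in \cref{def:pseudoflowerbipswithlimits} then yields $\Phi_\alpha \leq (\mathcal{P},\text{cyclic order})$ for every $\alpha$.

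It remains to show $\lambda(\bigcup J) \leq k-1$ for every interval $J$ of $\mathcal{P}$, and I would split this into two cases. If $J = \rlbracket Q_1, Q_2\lrbracket$ is a closed interval with $Q_1, Q_2 \in \mathcal{P}$ (possibly equal), pass to the cofinal subchain $A' \subseteq A$ consisting of those $\alpha$ which separate $Q_1$ from $Q_2$ (equivalently all of $A$ when $Q_1 = Q_2$) and set $T_\alpha := S(\rlbracket f_\alpha(Q_1), f_\alpha(Q_2) \lrbracket)$. Each $T_\alpha$ is a displayed separation of $\Phi_\alpha$, or equals $E$ in which case $\lambda(T_\alpha) = \lambda(\emptyset) \leq k-1$ by the pseudo\-flower property of $\Phi_\alpha$; the concatenation structure of the $g_{\alpha\beta}$'s makes $(T_\alpha)_{\alpha \in A'}$ a $\subseteq$-decreasing chain; and the definition of the cyclic order on $\mathcal{P}$ forces $\bigcap_{\alpha \in A'} T_\alpha = \bigcup J$. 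Symmetry of $\lambda$ together with limit-closedness then gives $\lambda(\bigcup J) \leq k-1$. If instead $J$ has at least one endpoint that is a cut in $C(\mathcal{P})$ or is a $\mathcal{P}$-class not included in $J$, then $\bigcup J$ can be written as an increasing union of sets of the form $\bigcup J'$ over closed sub-intervals $J' \subseteq J$ with $\mathcal{P}$-endpoints (the singletons $\{Q\} \subseteq J$ already suffice to cover $\bigcup J$); the previous case applied to each $\bigcup J'$ together with limit-closedness yields $\lambda(\bigcup J) \leq k-1$.

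The main obstacle will be the identity $\bigcap_{\alpha \in A'} T_\alpha = \bigcup J$ in the closed-interval case. The inclusion $\bigcup J \subseteq \bigcap_\alpha T_\alpha$ is delicate because the closed cyclic interval $\rlbracket f_\alpha(Q_1), f_\alpha(Q_2) \lrbracket$ in $I_\alpha$ has a prescribed orientation, and one must check that this orientation agrees with that of $J$ in $\mathcal{P}$; restricting to $A'$ is precisely what rules out the ambiguity that would otherwise arise if $f_\alpha(Q_1) = f_\alpha(Q_2)$ but $J$ happened to be the "long arc" of $\mathcal{P}$ wrapping around the other way. For the reverse inclusion one uses the construction of the cyclic order on $\mathcal{P}$: any class $Q \in \mathcal{P}\setminus J$ is, for some $\alpha \in A'$ separating $Q$ from both $Q_1$ and $Q_2$, projected to an index strictly outside $\rlbracket f_\alpha(Q_1), f_\alpha(Q_2)\lrbracket$ in $I_\alpha$, so $Q \not\subseteq T_\alpha$.
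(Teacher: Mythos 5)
Your proposal is correct and follows essentially the same route as the paper: form the common refinement of the partitions, induce the cyclic order by projecting any three distinct classes to a member of the chain that separates them, and use limit-closedness of $\lambda$ to bound the order of displayed separations. In fact you spell out more carefully than the paper's own proof the verification that the resulting partition is a $k$-pseudo\-flower (the paper only sketches this limit argument in the discussion preceding \cref{def:pseudoflowerbipswithlimits}); just note that in your final case the closed subintervals with endpoints in $J$ form a directed family rather than a chain, so you need two nested applications of limit-closedness (first extending one endpoint, then the other) rather than one.
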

\begin{proof}
	Let $(\Phi_j)_{j\in J}$ be a $\leq$-chain of $k$-pseudo\-flowers.
	For every $e\in E$ let $P_e$ be the intersection of all petals of the $\Phi_j$ which contain $e$.
	The sets $P_e$ are the petals of $\Psi$.
	In order to define a cyclic order on them, let $P_e$, $P_f$ and $P_g$ be distinct petals of $\Psi$ and let $j$ be an index of $J$ such that $e$, $f$ and $g$ are contained in distinct petals $P'_e$, $P'_f$, and $P'_g$ of $\Phi_j$.
	Then the cyclic order of $P'_e$, $P'_f$ and $P'_g$ does not depend on the choice of $j$, so it is well-defined to put $P_e$, $P_f$ and $P_g$ in the same order as $P'_e$, $P'_f$ and $P'_g$ and this induces a cyclic order on the set of petals of $\Psi$.
	Also clearly every $\Phi_j$ is a concatenation of $\Psi$.
\end{proof}
\begin{cor}\label{existenceleqmaxkpf}
	For every $k$-pseudo\-flower $\Phi$ there is a $\leq$-maximal $k$-pseudo\-flower $\Psi$ such that $\Phi\leq \Psi$.\qed
\end{cor}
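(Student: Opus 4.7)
The plan is to apply Zorn's Lemma, using \cref{limitofpseudoflowers} as the upper-bound clause. Let $\mathcal{F}$ be the collection of all $k$-pseudo\-flowers $\Psi$ satisfying $\Phi \leq \Psi$, equipped with the pre-order $\leq$. Since a $k$-pseudo\-flower is determined (up to renaming) by a partition of $E$ together with a ternary cyclic-order relation on the index set, $\mathcal{F}$ is genuinely a set and not a proper class, so the set-theoretic hypotheses of Zorn's Lemma are not an issue.

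To verify the upper-bound hypothesis, let $(\Psi_j)_{j\in J}$ be a $\leq$-chain in $\mathcal{F}$. If $J=\emptyset$, then $\Phi$ itself serves as an upper bound. Otherwise, \cref{limitofpseudoflowers} produces a $k$-pseudo\-flower $\Psi^*$ with $\Psi_j \leq \Psi^*$ for every $j\in J$. Since $\leq$ is transitive and $\Phi \leq \Psi_j \leq \Psi^*$ for any $j\in J$, we have $\Phi \leq \Psi^*$, so $\Psi^* \in \mathcal{F}$ is an upper bound for the chain inside $\mathcal{F}$.

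The only subtlety is that $\leq$ is a pre-order rather than a partial order, as already remarked after \cref{def:pseudoflowerbipswithlimits}. To apply Zorn's Lemma in the standard formulation, one can quotient $\mathcal{F}$ by the equivalence relation $\Psi \sim \Psi'$ iff $\Psi \leq \Psi' \leq \Psi$; this relation identifies exactly the $k$-pseudo\-flowers that agree up to renaming of the index set, and $\leq$ descends to a genuine partial order on $\mathcal{F}/\!\sim$. The chain-completeness carries over, so Zorn's Lemma yields an equivalence class whose representatives are $\leq$-maximal in $\mathcal{F}$ in the sense of the preamble to \cref{existenceleqmaxkpf}: any $k$-pseudo\-flower $\Psi'$ with $\Psi \leq \Psi'$ must satisfy $\Psi' \leq \Psi$.

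There is no real obstacle here, since the only nontrivial ingredient — the existence of upper bounds for $\leq$-chains — has just been supplied by \cref{limitofpseudoflowers}; the corollary is a standard invocation of Zorn's Lemma, with the pre-order vs.\ partial-order point being the one detail worth flagging.
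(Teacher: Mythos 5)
Your proof is correct and is exactly the argument the paper intends: the corollary is stated with \qed because it follows from \cref{limitofpseudoflowers} together with Zorn's Lemma, precisely as the paragraph preceding that lemma announces. Your extra care about the pre-order versus partial-order point (quotienting by mutual comparability, which the paper notes identifies pseudoflowers up to renaming of the index set) is a reasonable way to make the standard invocation fully rigorous, but it does not change the substance of the argument.
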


Just as there are $\leq$-maximal $k$-pseudo\-flowers, there also are $\leq_A$-maximal strong $k$-pseudo\-anemones.
That fact does not follow immediately from \cref{existenceleqmaxkpf}, as $\Phi\leq \Psi$ for a strong $k$-pseudo\-anemone $\Phi$ does not necessarily imply that $\Psi$ is a strong $k$-pseudo\-anemone as well.
Thus the proof will take a detour via certain subsets of the power set of $E$, of which there are maximal ones by Zorn's Lemma, and then show that the resulting set can be transformed back into a strong $k$-pseudo\-anemone.
The transformation back is done separately in \cref{settoskpa}.

\begin{lem}\label{settoskpa}
	Let $\mathcal{A}$ be a subset of $2^E$ containing $\emptyset$ such that for all elements $A$ and $B$ of $\mathcal{A}$ the sets $E\setminus A$ and $A\cap B$ are contained in $\mathcal{A}$ and $\lambda(A)\leq k-1$.
	Then there is a partition of $E$ such that every union of partition classes has order at most $k-1$ and every element of $\mathcal{A}$ is a union of partition classes.
\end{lem}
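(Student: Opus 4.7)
Take the partition $\mathcal{P}$ to be the set of equivalence classes of the relation $\sim$ on $E$, where $e\sim f$ if and only if $e$ and $f$ lie in exactly the same members of $\mathcal{A}$. Every $A\in\mathcal{A}$ is then a union of $\sim$-classes simply by the definition of $\sim$, so the only content of the lemma is to show that every union $P$ of $\sim$-classes satisfies $\lambda(P)\leq k-1$.

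For this I would introduce $\mathcal{A}^*$, the smallest family of subsets of $E$ containing $\mathcal{A}$ and closed under complements and chain (increasing) unions. Both operations preserve the bound $\lambda\leq k-1$ (by the symmetry and limit-closedness of $\lambda$), so every element of $\mathcal{A}^*$ still has order at most $k-1$. The technical heart of the argument is to show, by transfinite induction on the construction of $\mathcal{A}^*$, that this family is moreover closed under finite intersection, and hence (by De Morgan together with closure under complement) also under finite union. The induction splits into cases depending on how the two arguments were adjoined: if $Y=\bigcup_\gamma Y_\gamma$ is a chain union, then $X\cap Y=\bigcup_\gamma(X\cap Y_\gamma)$ is a chain union of members of $\mathcal{A}^*$ by the inductive hypothesis; if $Y=E\setminus Y'$ where $Y'$ is itself built as a chain union $\bigcup_\gamma Y'_\gamma$, then $X\cap Y=\bigcap_\gamma(X\cap(E\setminus Y'_\gamma))$ is a chain intersection of members of $\mathcal{A}^*$. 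A further transfinite accumulation argument, using finite unions at successor steps and chain unions at limit steps, then yields that $\mathcal{A}^*$ is closed under arbitrary directed unions (and, dually, directed intersections).

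With these closure properties established, the conclusion follows quickly. Each $\sim$-class $C$ equals $\bigcap\{A\in\mathcal{A}:C\subseteq A\}$, because for every $g\notin C$ some member of $\mathcal{A}$ contains $C$ and excludes $g$; this filter is closed under finite intersection and therefore a directed family in $\mathcal{A}$, so $C\in\mathcal{A}^*$ by the directed-intersection closure. Consequently every finite union of $\sim$-classes lies in $\mathcal{A}^*$ (closure under finite union), and then the arbitrary union $P$ lies in $\mathcal{A}^*$ as the directed union of its finite sub-unions of classes. In particular $\lambda(P)\leq k-1$, completing the proof.

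The main obstacle will be the transfinite bookkeeping needed to show closure of $\mathcal{A}^*$ under finite intersection when both $X$ and $Y$ have been built up through several iterated stages of complement and chain-union operations; once that is set up correctly, every remaining step is fairly routine applications of limit-closedness and symmetry of $\lambda$.
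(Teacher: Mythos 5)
Your proposal is correct in outline but takes a genuinely different route from the paper's. The partition itself is the same in both arguments: your $\sim$-classes are exactly the complements of the paper's sets $S_e=\bigcup\{A\in\mathcal{A}\colon e\notin A\}$. The difference lies in how the order of a union of classes is bounded. The paper stays inside $\mathcal{A}$: it forms $S_F=\bigcup\{A\in\mathcal{A}\colon A\cap F=\emptyset\}$ only for \emph{finite} $F\subseteq E$, gets $\lambda(S_F)\leq k-1$ from one application of Zorn's Lemma plus limit-closedness, shows $S_F=\bigcap_{e\in F}S_e$ directly from closure of $\mathcal{A}$ under intersections, and then passes from finite to arbitrary unions of classes via \cref{fintoinfconn}. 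You instead prove a transfinite monotone-class theorem: that the closure $\mathcal{A}^*$ of the field of sets $\mathcal{A}$ under complements and chain unions is closed under finite intersections, hence under arbitrary unions and intersections. That claim is true and your overall strategy works, but the induction as sketched does not quite close: in the case $Y=E\setminus Y'$ you need $X\cap(E\setminus Y'_\gamma)\in\mathcal{A}^*$, and the sets $E\setminus Y'_\gamma$ need not appear at an earlier stage of the construction than $Y$ does; you also do not treat the cases where $Y'$ lies in $\mathcal{A}$ or is itself a complement, and the base case $Y\in\mathcal{A}$ requires its own induction on $X$. The standard repair is the ``good sets'' device rather than induction on construction rank: for $X\in\mathcal{A}^*$ let $\mathcal{G}_X$ be the set of all $Y\in\mathcal{A}^*$ with $X\cap Y$, $X\cup Y$, $X\setminus Y$ and $Y\setminus X$ all in $\mathcal{A}^*$; one checks that $\mathcal{G}_X$ is closed under complements and chain unions, that $\mathcal{A}\subseteq\mathcal{G}_X$ whenever $X\in\mathcal{A}$ (so $\mathcal{G}_X=\mathcal{A}^*$ for such $X$), and then, using the symmetry $Y\in\mathcal{G}_X\Leftrightarrow X\in\mathcal{G}_Y$, that $\mathcal{G}_Y=\mathcal{A}^*$ for every $Y\in\mathcal{A}^*$. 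With that in place your remaining steps (each class is a directed intersection of members of $\mathcal{A}$; every union of classes is a directed union of finite unions of classes) are fine, and \cref{fintoinfconn} would even let you skip the final directed-union step. Your approach buys a stronger intermediate statement---a family closed under arbitrary unions, intersections and complements all of whose members have order at most $k-1$---at the cost of heavier machinery; the paper's argument is more elementary and never leaves $\mathcal{A}$ until the last line.
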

\begin{proof}
	Note that $\mathcal{A}$ is closed under finite unions and finite intersections of its elements.
	For a finite subset $F$ of $E$ define $\mathcal{S}_F=\{A\in \mathcal{A}\colon A\cap F=\emptyset\}$.
	Then by Zorn's Lemma the set $\mathcal{S}_F$ has a maximal element $S_F$ in $2^E$ whose order is at most $k-1$.
	As $\mathcal{S}_F$ is closed under taking unions, $S_F=\bigcup \mathcal{S}_F$.
	If $F=\{e\}$, then denote $S_{\{e\}}$ by $S_e$ and let $\mathcal{Q}=\{E\setminus S_e\colon e\in E\}$.
	As $F\subseteq E \setminus S_F$ for all finite subsets $F$ of $E$, in order to show that $\mathcal{Q}$ is a partition of $E$ it suffices to show that for elements $e$ and $f$ of $E$ either $S_e=S_f$ or $E=S_e\cup S_f$.
	If $e\in A \Leftrightarrow f\in A$ holds for all $A$ in $\mathcal{A}$, then $\mathcal{S}_{\{e\}}=\mathcal{S}_{\{f\}}$ and hence $S_e=S_f$.
	Otherwise there is a set $A$ in $\mathcal{A}$ that contains, say, $f$ but not $e$.
	In this case, $A\subseteq S_e$ and $E\setminus A\subseteq S_f$, so $E=A\cup (E\setminus A)\subseteq S_e\cup S_f$.
	
	In order to show that every union of elements of $\mathcal{Q}$ has order at most $k-1$, it suffices by \cref{fintoinfconn} to show that every finite union of elements in $\mathcal{Q}$ has order at most $k-1$.
	By the definition of $\mathcal{Q}$ that is the same as to show for every finite subset $F$ of $E$ that $\bigcup_{e\in F}(E\setminus S_e)$ has order at most $k-1$.
	For this let $X$ and $Y$ be sets whose disjoint union is $F$.
	Then $\mathcal{S}_F\subseteq \mathcal{S}_X\cap \mathcal{S}_Y$, so $S_F\subseteq S_X\cap S_Y$.
	Also for $e\in S_X\cap S_Y$ there are elements $A_1$ and $A_2$ of $\mathcal{A}$ such that $e\in A_1\in \mathcal{S}_X$ and $e\in A_2\in \mathcal{S}_Y$.
	Then $A_1\cap A_2\in \mathcal{A}$ and $e\in A_1\cap A_2\in \mathcal{S}_F$, so $e\in S_F$.
	Thus $S_F=S_X\cap S_Y$.
	By induction this implies $S_F=\bigcap_{e\in F}S_e$.
	So
	\begin{displaymath}
		\lambda(\bigcup_{e\in F}(E\setminus S_e))=\lambda(E\setminus \bigcap_{e\in F}S_e)=\lambda(E\setminus S_F)\leq k-1.\qedhere
	\end{displaymath}
\end{proof}

\begin{lem}\label{limitofstrongpseudoanemones}
	For every strong $k$-pseudo\-anemone $\Phi$ there is a $\leq_A$-maximal strong $k$-pseudo\-anemone $\Psi$ such that $\Phi\leq_A\Psi$.
\end{lem}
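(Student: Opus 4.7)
The plan is to apply Zorn's Lemma not directly to strong $k$-pseudo-anemones but to an auxiliary poset of subfamilies of $2^E$ satisfying the closure hypotheses of \cref{settoskpa}, and then to translate a $\subseteq$-maximal element of that poset back into a strong $k$-pseudo-anemone via \cref{settoskpa}. This detour is needed because the naive candidate for an upper bound of a $\leq_A$-chain of strong $k$-pseudo-anemones---the common refinement of the underlying partitions---is not obviously a strong $k$-pseudo-anemone: unions of parts of the common refinement can be sets that are unions of petals of no single member of the chain, and there is then no direct way to bound their order by $k-1$.

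Concretely, I would start by letting $\mathcal{A}_\Phi$ be the family of all unions of petals of $\Phi$; since $\Phi$ is a strong $k$-pseudo-anemone, $\mathcal{A}_\Phi$ contains $\emptyset$, is closed under complements and finite intersections, and has all elements of order at most $k-1$. Let $\mathfrak{F}$ be the poset, ordered by inclusion, of all $\mathcal{A} \subseteq 2^E$ that contain $\mathcal{A}_\Phi$ and satisfy these same three properties. Every $\subseteq$-chain in $\mathfrak{F}$ has its union as an upper bound, since each of the three properties is clearly inherited by set-theoretic unions of chains, so Zorn's Lemma yields an inclusion-maximal $\mathcal{A}^* \in \mathfrak{F}$. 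Applying \cref{settoskpa} to $\mathcal{A}^*$ produces a partition $\mathcal{Q}$ in which every union of parts has order at most $k-1$ and every element of $\mathcal{A}^*$ is a union of parts; equipping $\mathcal{Q}$ with any cyclic order turns it into a strong $k$-pseudo-anemone $\Psi$. Since the petals of $\Phi$ lie in $\mathcal{A}_\Phi \subseteq \mathcal{A}^*$, each is a union of petals of $\Psi$, so $\Phi \leq_A \Psi$.

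To verify $\leq_A$-maximality, suppose $\Psi \leq_A \Psi'$ for some strong $k$-pseudo-anemone $\Psi'$ and let $\mathcal{A}'$ denote the family of all unions of petals of $\Psi'$. Then $\mathcal{A}' \in \mathfrak{F}$; and because every petal of $\Psi$, and hence every element of $\mathcal{A}^*$, is a union of petals of $\Psi'$, we have $\mathcal{A}^* \subseteq \mathcal{A}'$. Maximality of $\mathcal{A}^*$ then forces $\mathcal{A}' = \mathcal{A}^*$. A petal $Q$ of $\Psi'$ containing some $e$ is then squeezed between two inclusions: writing $S_e = \bigcup \{A \in \mathcal{A}^* : e \notin A\}$ so that $E \setminus S_e$ is the part of $\mathcal{Q}$ containing $e$, the inclusion $Q \subseteq E \setminus S_e$ holds because $\Psi'$ refines $\Psi$, while $E \setminus Q \subseteq S_e$ holds because $E \setminus Q \in \mathcal{A}' = \mathcal{A}^*$ and $e \notin E \setminus Q$. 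Thus $Q$ is a part of $\mathcal{Q}$, and $\Psi$, $\Psi'$ have the same partition.

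The main obstacle is recognising that a direct Zorn argument at the level of partitions-with-cyclic-order fails, and that the closure properties needed to preserve the order bound $k-1$ under chain unions live naturally at the level of families of sets rather than partitions; once this framework is in place, all remaining steps are routine verifications of closure, together with the explicit description of $\mathcal{Q}$ extracted from the proof of \cref{settoskpa}.
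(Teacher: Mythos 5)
Your proposal is correct and follows essentially the same route as the paper: both pass to the poset of subfamilies of $2^E$ that contain the sets displayed by $\Phi$, are closed under complementation and finite intersection, and consist of sets of order at most $k-1$; both obtain a maximal such family by Zorn's Lemma and convert it back into a partition via \cref{settoskpa}. Your write-up merely spells out the final $\leq_A$-maximality verification (which the paper leaves as ``by maximality of $\mathcal{A}$'') in more detail.
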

\begin{proof}
	The set of separations displayed by $\Phi$ only has elements of order at most $k-1$ and is closed under taking finite unions, finite intersections and complements.
	By Zorn's Lemma there is a maximal set $\mathcal{A}$ of subsets of $E$ which has these properties and contains all separations displayed by $\Phi$.
	By its maximality $\mathcal{A}$ contains both $\emptyset$ and $E$.
	Then by \cref{settoskpa} there is a partition of $E$ such that every union of partition classes has order at most $k-1$ and every element of $\mathcal{A}$ is a union of partition classes.
	Choosing an arbitrary cyclic order turns the partition into a strong $k$-pseudo\-anemone, and by maximality of $\mathcal{A}$ that strong $k$-pseudo\-anemone is $\leq_A$-maximal.
\end{proof}

\section{Combining distinct extensions of an anemone}
Extensions of a $k$-anemone can in general be quite different.
But is has already been shown in \cref{sec:ordersinkpa} that $k$-pseudo\-anemones that can be concatenated into a $k$-anemone with at least $k+1$ many petals have additional properties.
This section shows another property of $k$-anemones with at least $k+1$ many petals:
all their extensions can be combined into one strong $k$-pseudo\-anemone.
The next two lemmas show this for extensions that subdivide only one selected petal of the $k$-anemone.

\begin{lem}\label{partitionformu}
	Let $\Phi$ be a $k$-anemone and $Q$ a petal of $\Phi$.
	There is a partition of $Q$ such that the subsets $S$ of $Q$ with $\mu(S)=k-1$ are exactly the unions of partition classes.
\end{lem}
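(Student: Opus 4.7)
The plan is to consider the family $\mathcal{A} := \{S \subseteq Q : \mu(S) = k-1\}$ of subsets of $Q$ achieving the minimum value of $\mu$, and to show that $\mathcal{A}$ is precisely the set of unions of classes of some partition of $Q$. First I would verify that $\mathcal{A}$ has strong closure properties. Since $\mu \geq k-1$ and $\mu$ is submodular (\cref{muforpetal}), whenever $A, B \in \mathcal{A}$ the inequality $\mu(A \cup B) + \mu(A \cap B) \leq 2(k-1)$ forces both terms to equal $k-1$, giving closure under finite unions and finite intersections. Closure under arbitrary unions then follows by transfinite induction: well-order the family, iteratively take successor unions using finite-union closure, and at limit steps invoke the limit-closedness of $\mu$. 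Closure under arbitrary intersections is dual, using the symmetry $\mu(S) = \mu(Q \setminus S)$, which also yields closure of $\mathcal{A}$ under complementation within $Q$. Finally, $\emptyset \in \mathcal{A}$ trivially, and $Q \in \mathcal{A}$ since $Q$ together with any other petal is a non-trivial union of petals of the anemone and hence has order $k-1$.

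Next I would define, for each $e \in Q$, the set $P_e := \bigcap \{A \in \mathcal{A} : e \in A\}$; this is the minimum element of $\mathcal{A}$ containing $e$, and by intersection closure $P_e \in \mathcal{A}$. The main step is to show that the $P_e$ form a partition of $Q$, i.e.\ that any two are either equal or disjoint. The key observation is: if $f \in P_e$ but $e \notin P_f$, then $P_e \setminus P_f = P_e \cap (Q \setminus P_f)$ lies in $\mathcal{A}$ and contains $e$, so by minimality of $P_e$ it must contain all of $P_e$, forcing $P_e \cap P_f = \emptyset$; but this contradicts $f \in P_e \cap P_f$. Hence $f \in P_e$ implies $e \in P_f$, and in that case minimality applied in both directions gives $P_e = P_f$. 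If $P_e \neq P_f$, then $f \notin P_e$ and $e \notin P_f$; any hypothetical common element $g \in P_e \cap P_f$ would by the same equivalence satisfy $P_e = P_g = P_f$, a contradiction. So distinct $P_e$ and $P_f$ are disjoint, and since $e \in P_e$ for every $e$, the family $\{P_e : e \in Q\}$ is indeed a partition of $Q$.

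To conclude, I would identify $\mathcal{A}$ with the unions of partition classes. If $S \in \mathcal{A}$, then for each $e \in S$ the minimality of $P_e$ gives $P_e \subseteq S$, hence $S = \bigcup_{e \in S} P_e$; conversely, any union of $P_e$'s lies in $\mathcal{A}$ by the closure under arbitrary unions established in the first step. The main technical delicacy is the transfinite-induction argument for arbitrary-union closure, since limit-closedness is stated only for chains rather than arbitrary directed systems, and careful case analysis is needed to make the disjointness argument for the $P_e$ airtight.
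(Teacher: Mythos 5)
Your proof is correct and follows essentially the same route as the paper: the paper also works with $\mathcal{A}=\{S\subseteq Q:\mu(S)=k-1\}$, notes its closure under unions, intersections and complements via submodularity, symmetry and the lower bound $\mu\geq k-1$, and then extracts the partition from extremal elements of $\mathcal{A}$ (the paper does this by invoking its Lemma~\ref{settoskpa}, whose proof uses Zorn's Lemma to take maximal elements of $\mathcal{A}$ avoiding a given point --- the complements of your minimal sets $P_e$). You have merely inlined that auxiliary lemma, replacing its Zorn argument by an equivalent transfinite induction.
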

\begin{proof}
	By \cref{muforpetal}, $\mu(S)\geq k-1$ for all subsets $S$ of $Q$.
	Thus if $S_1$ and $S_2$ are subsets of $Q$ with $\mu(S_1)=\mu(S_2)=k-1$, then by submodularity of $\mu$ also $\mu(S_1\cup S_2)=\mu(S_1\cap S_2)=k-1$.
	
	\Cref{settoskpa} can be applied to the limit-closed\ function $\mu$ and the set $\mathcal{A}$ of all $S\subseteq Q$ with $\mu(S)=k-1$.
	As $\mu$ is bounded from below by $k-1$, all unions of partition classes of the obtained partition $\mathcal{Q}$ have order exactly $k-1$, so $\mathcal{A}$ is the set of unions of partition classes.
\end{proof}

\begin{lem}\label{skpatosetswithmusmall}
	Let $\Phi$ be an anemone, $Q$ a petal and $\mathcal{Q}$ a partition of $Q$.
	Denote the common refinement of $\mathcal{Q}\cup \{E\setminus Q\}$ and the set of petals of $\Phi$ by $\mathcal{Q}'$.
	Then $\mathcal{Q}'$ is the set of petals of a $k$-pseudo\-flower if and only if $\mathcal{Q}'$ is the set of petals of a strong $k$-pseudo\-anemone if and only if every element $S$ of $\mathcal{Q}$ satisfies $\mu(S)=k-1$.
\end{lem}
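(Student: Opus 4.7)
I plan to prove $(2)\Rightarrow(1)\Rightarrow(3)\Rightarrow(2)$. The implication $(2)\Rightarrow(1)$ is immediate, since every strong $k$-pseudo\-anemone is by definition a $k$-pseudo\-flower. For $(1)\Rightarrow(3)$, note that $\Phi$ is coarser than $\mathcal{Q}'$ and arises from $\mathcal{Q}'$ by fusing the elements of $\mathcal{Q}$ back into $Q$, so, using on $\mathcal{Q}'$ the cyclic order that extends the one on $\Phi$, the $k$-anemone $\Phi$ is a concatenation of the $k$-pseudo\-flower on $\mathcal{Q}'$; this exhibits $\mathcal{Q}'$ as a $k$-pseudo\-anemone, and \cref{muforkpf} applied to any $S\in \mathcal{Q}$ and any petal $R$ of $\Phi$ other than $Q$ yields $\mu(S)=\lambda(S\cup R)=k-1$.

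For $(3)\Rightarrow(2)$, assume $\mu(S)=k-1$ for every $S\in \mathcal{Q}$. It suffices to show $\lambda(\bigcup\mathcal{R})\leq k-1$ for every $\mathcal{R}\subseteq \mathcal{Q}'$, since this already turns $\mathcal{Q}'$ under any cyclic order into a strong $k$-pseudo\-anemone. Fix any petal $R_0$ of $\Phi$ other than $Q$ and, mimicking the proof of \cref{nearlystrongkpa}, define $\nu:2^{\mathcal{Q}'\setminus\{R_0\}}\to\mathbb{N}$ by $\nu(\mathcal{A}):=\lambda(\bigcup\mathcal{A}\cup R_0)$. Because the petals of $\mathcal{Q}'$ are pairwise disjoint, $\nu$ inherits submodularity and limit-closedness from $\lambda$; moreover $\nu(\emptyset)=\lambda(R_0)=k-1$ and $\nu(\{T\})=k-1$ for every singleton — from the assumption if $T\in \mathcal{Q}$, and from the anemone property of $\Phi$ if $T$ is another petal of $\Phi$. \Cref{witnessforbiggerconnectivity} then forces $\nu\leq k-1$ on every finite $\mathcal{A}$, and \cref{fintoinfconn} applied to $\nu$ extends this bound to all $\mathcal{A}$.

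This already handles every $\mathcal{R}\subseteq \mathcal{Q}'$ that contains some petal of $\Phi$ other than $Q$, by choosing $R_0\in \mathcal{R}$ and reading off $\lambda(\bigcup\mathcal{R})=\nu(\mathcal{R}\setminus\{R_0\})\leq k-1$. For $\mathcal{R}\subseteq \mathcal{Q}$, set $S:=\bigcup\mathcal{R}\subseteq Q$; the bound on $\nu$ gives $\mu(S)=\nu(\mathcal{R})\leq k-1$, and combined with the submodular inequality
\begin{displaymath}
\lambda(S\cup R_0)+\lambda(Q)\geq \lambda(Q\cup R_0)+\lambda(S),
\end{displaymath}
the equalities $\lambda(Q)=k-1$ and the anemone property $\lambda(Q\cup R_0)=k-1$, this yields $\lambda(S)\leq \mu(S)\leq k-1$.

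The main obstacle is the setup of $\nu$: its submodularity rests essentially on the disjointness of the family $\mathcal{Q}'$, which guarantees the identity $\bigcup(\mathcal{A}\cap\mathcal{B})=\bigcup\mathcal{A}\cap\bigcup\mathcal{B}$ needed to pass submodularity from $\lambda$ to $\nu$, while the transport from the equality $\nu=k-1$ at singletons to the global bound $\nu\leq k-1$ combines \cref{witnessforbiggerconnectivity} on finite subsets with the limit step provided by \cref{fintoinfconn}.
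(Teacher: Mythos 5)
Your proposal is correct and takes essentially the same approach as the paper: the implication from the pseudoflower hypothesis to $\mu(S)=k-1$ is \cref{muforkpf} in both cases, and your converse direction simply inlines the argument of \cref{nearlystrongkpa} (the auxiliary function $\nu$ together with \cref{witnessforbiggerconnectivity} and \cref{fintoinfconn}) and replaces the appeal to \cref{partitionformu} by the equivalent observation that $\nu\leq k-1$ bounds $\mu$ on unions, finishing with the same kind of submodularity step $\lambda(S)\leq\mu(S)$. Like the paper's proof, yours reads the pseudoflower hypothesis as supplying a cyclic order compatible with $\Phi$, which is how the lemma is applied later, so this is not a point of divergence.
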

\begin{proof}
	If $\mathcal{Q}'$ is the set of petals of a $k$-pseudo\-flower, then by \cref{nearlystrongkpa} it is the set of petals of a strong $k$-pseudo\-anemone, and by \cref{muforkpf} also $\mu(S)=k-1$ for all elements $S$ of $\mathcal{Q}$.
	Now consider the case that $\mu(S)=k-1$ for all elements $S$ of $\mathcal{Q}$.
	Pick a cyclic order of $\mathcal{Q}'$ which can be concatenated to $\Phi$.
	In order to show that $\mathcal{Q}'$ together with this cyclic order is a $k$-pseudo\-flower, it suffices by \cref{muforpetal} and the symmetry of $\lambda$ to show that $\lambda(S)=k-1$ for all unions $S$ of elements of $Q$.
	By \cref{partitionformu} $\mu(S)=k-1$.
	Let $R_1$ and $R_2$ be distinct petals of $\Phi$ which are distinct from $Q$.
	Then by \cref{muforpetal}
	\begin{displaymath}
		\lambda(S)\leq \lambda(S\cup R_1)+\lambda(S\cup R_2)-\lambda(S\cup R_1\cup R_2)=\mu(S)=k-1. \qedhere
	\end{displaymath}
\end{proof}

\begin{cor}\label{subdivideonepetal}
	Let $\Phi$ be a $k$-anemone and $Q$ a petal of $\Phi$.
	Let $\mathcal{S}$ be the set of partitions of $k$-pseudo\-flowers $\Psi$ such that $\Phi\leq_A \Psi$ and all petals of $\Phi$ except possibly $Q$ are also petals of $\Psi$.
	Then all elements of $\mathcal{S}$ are partitions of strong $k$-pseudo\-anemones and $\mathcal{S}$ has a $\leq_A$-biggest element.
\end{cor}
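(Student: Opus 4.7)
The plan is to exhibit the $\leq_A$-biggest element of $\mathcal{S}$ explicitly using the canonical partition of $Q$ from \cref{partitionformu}, and to derive both assertions from \cref{skpatosetswithmusmall}.

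First I would check that every $\mathcal{Q}' \in \mathcal{S}$ has the structure required by \cref{skpatosetswithmusmall}: since all petals of $\Phi$ other than $Q$ lie in $\mathcal{Q}'$, and the hypothesis $\Phi \leq_A \Psi$ forces each class of $\mathcal{Q}'$ to be contained in some petal of $\Phi$, the classes of $\mathcal{Q}'$ that are subsets of $Q$ form a partition $\mathcal{Q}$ of $Q$, and $\mathcal{Q}'$ is exactly the common refinement of $\mathcal{Q}\cup\{E\setminus Q\}$ with the petals of $\Phi$. Applying \cref{skpatosetswithmusmall} then simultaneously yields (i) that $\mathcal{Q}'$ is the partition of a strong $k$-pseudo\-anemone and (ii) that every $S\in\mathcal{Q}$ satisfies $\mu(S)=k-1$, which handles the first assertion of the corollary.

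For the existence of a $\leq_A$-biggest element, I would let $\mathcal{Q}^*$ be the partition of $Q$ provided by \cref{partitionformu}, so that the subsets of $Q$ with $\mu$-value $k-1$ are exactly the unions of classes of $\mathcal{Q}^*$. In particular every class of $\mathcal{Q}^*$ has $\mu$-value $k-1$, so the backward direction of \cref{skpatosetswithmusmall} implies that the common refinement $\mathcal{Q}^*\cup\{\text{petals of }\Phi\text{ other than }Q\}$ is the partition of a strong $k$-pseudo\-anemone $\Psi^*$; by construction $\Phi\leq_A\Psi^*$ and all petals of $\Phi$ other than $Q$ are petals of $\Psi^*$, so $\Psi^*\in\mathcal{S}$.

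Finally, to see that $\Psi^*$ is $\leq_A$-biggest, take any $\Psi\in\mathcal{S}$ with associated partition $\mathcal{Q}$ of $Q$. By step one, every $S\in\mathcal{Q}$ satisfies $\mu(S)=k-1$, so by \cref{partitionformu} every such $S$ is a union of classes of $\mathcal{Q}^*$. Hence the partition of $\Psi^*$ refines the partition of $\Psi$, i.e.\ $\Psi\leq_A\Psi^*$. The proof has no genuinely hard step; the only thing to be careful about is to keep straight which partitions live on $Q$ and which live on $E$, and to use the convention that $\leq_A$-bigger means finer.
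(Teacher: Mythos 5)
Your proof is correct and follows exactly the route the paper intends: its proof of this corollary is simply ``By \cref{partitionformu} and \cref{skpatosetswithmusmall},'' and your write-up fills in precisely those details (identifying each element of $\mathcal{S}$ with a partition of $Q$ all of whose classes have $\mu$-value $k-1$, and taking the canonical partition from \cref{partitionformu} as the $\leq_A$-biggest element).
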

\begin{proof}
	By \cref{partitionformu} and \cref{skpatosetswithmusmall}.
\end{proof}

These refinements of the individual petals can be combined.

\begin{lem}\label{commonrefinement}
	Let $\Phi$ be a $k$-anemone with at least $k+1$ many petals and denote its partition by $\mathcal{Q}$.
	For each petal $P$ of $\Phi$ let $\mathcal{Q}_P$ be a partition of $P$ such that the common refinement of $\mathcal{Q}_P\cup \{E\setminus P\}$ and $\mathcal{Q}$ is a strong $k$-pseudo\-anemone.
	Then the common refinement of all partitions $\mathcal{Q}_P\cup \{E\setminus P\}$ is a strong $k$-pseudo\-anemone.
\end{lem}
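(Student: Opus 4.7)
The plan is to apply \cref{moststrongkpa}: it suffices to exhibit a cyclic order on the pieces of the common refinement $\mathcal{R}$ of the partitions $\mathcal{Q}_P\cup\{E\setminus P\}$ which makes $\mathcal{R}$ into a $k$-pseudo\-flower that concatenates to a finite $k$-anemone with at least $k+1$ petals. I would endow $\mathcal{R}$ with the cyclic order that places the pieces of each $\mathcal{Q}_P$ contiguously in the slot of $P$ inside $\Phi$'s cyclic order, in any chosen linear order within the slot; this realises $\mathcal{R}$ as a concatenation of $\Phi$.

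It then remains to verify $\lambda(\bigcup I)\leq k-1$ for every non-trivial interval $I$ of $\mathcal{R}$. I would organise this by the \emph{shadow} of $I$ in $\Phi$, i.e.\ the set of petals of $\Phi$ that meet $I$; since $I$ is an interval and the pieces of each $\mathcal{Q}_P$ form a contiguous slot, this shadow is an interval of $\Phi$'s cyclic order. If the shadow consists of a single petal $P$, then $\bigcup I$ is a union of pieces of $\mathcal{Q}_P$ inside the assumed strong $k$-pseudo\-anemone $\mathcal{Q}_P\cup(\mathcal{Q}\setminus\{P\})$, so $\lambda(\bigcup I)\leq k-1$ directly. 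If the shadow is a proper multi-petal interval with boundary petals $P_s$ and $P_t$ (setting the corresponding $Y$ to $\emptyset$ on a side where the shadow has a cut boundary rather than a petal boundary), let $Y_s=P_s\cap\bigcup I$, $Y_t=P_t\cap\bigcup I$, and let $M$ be the union of the petals of $\Phi$ strictly between $P_s$ and $P_t$ in the shadow. Setting $A=Y_s\cup M\cup P_t$ and $B=P_s\cup M\cup Y_t$, a direct check shows $A\cap B=\bigcup I$ and $A\cup B=P_s\cup M\cup P_t$, and the hypothesised strong $k$-pseudo\-anemones for $\mathcal{Q}_{P_s}$ and $\mathcal{Q}_{P_t}$ give $\lambda(A),\lambda(B)\leq k-1$. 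As $A\cup B$ is a non-trivial union of petals of $\Phi$, it has order exactly $k-1$ (by the anemone property for finite $\Phi$, respectively \cref{infanemonedef} for infinite $\Phi$), so submodularity yields $\lambda(\bigcup I)\leq k-1$.

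The main subtlety is the remaining case, where the shadow of $I$ is all of $\Phi$: then the above construction would force $A\cup B=E$, whose order might be strictly below $k-1$. I would get around this by passing to the complementary interval $\mathcal{R}\setminus I$, whose shadow is contained in the at most two endpoint petals of $I$'s shadow. Since $\Phi$ is a $k$-flower and therefore has at least four petals, this is a proper subset of the petals of $\Phi$, so the complementary interval falls under one of the two previous cases and $\lambda(E\setminus\bigcup I)\leq k-1$; symmetry of $\lambda$ then gives $\lambda(\bigcup I)\leq k-1$.

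Having established that $\mathcal{R}$ is a $k$-pseudo\-flower, I would conclude by observing that $\mathcal{R}$ concatenates to $\Phi$, and from there to a finite $k$-anemone with at least $k+1$ petals (either $\Phi$ itself if it is finite with at least $k+1$ petals, or any sufficiently fine finite concatenation of $\Phi$ if $\Phi$ is infinite), so \cref{moststrongkpa} yields that $\mathcal{R}$ is a strong $k$-pseudo\-anemone.
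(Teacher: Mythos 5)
Your proposal is correct and follows essentially the same route as the paper: reduce via \cref{moststrongkpa} to showing the common refinement is a $k$-pseudo\-flower extending $\Phi$, then bound each interval's order by submodularity, using that unions of classes of a single-petal refinement have order at most $k-1$ and that non-trivial unions of petals of the anemone have order exactly $k-1$. The only differences are organizational: the paper checks that each \emph{pairwise} refinement is a strong $k$-pseudo\-anemone (which subsumes your full-shadow case without needing the complementation step) and in the case of a non-empty middle part applies submodularity to the pair $Q\cup S$, $Q\cup S'$ rather than to your supersets $A$, $B$, but these are equivalent instances of the same computation.
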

\begin{proof}
	By \cref{moststrongkpa} it suffices to show that there is a cyclic order which turns the common refinement of all partitions $\mathcal{Q}_P\cup \{E\setminus P\}$ into a $k$-pseudo\-flower which is an extension of $\Phi$.
	For that, it suffices to show that for all distinct petals $P$ and $P'$ of $\Phi$ the common refinement of $\mathcal{Q}_P\cup \{E\setminus P\}$, $\mathcal{Q}_{P'}\cup \{E\setminus P'\}$ and $\mathcal{Q}$ is a strong $k$-pseudo\-anemone.
	In order to show the latter, let $S$ be a union of elements of $\mathcal{Q}_P$, $S'$ a union of elements of $\mathcal{Q}_{P'}$ and $Q$ a non-empty union of petals of $\Phi$ which contains neither $P$ nor $P'$.
	Then
	\begin{align*}
		\lambda(Q\cup S\cup S')&\leq \lambda(Q\cup S)+\lambda(Q\cup S')-\lambda(Q)\leq k-1
	\end{align*}
	where $\lambda(Q)=k-1$ by \cref{infanemonedef}, and 
	\begin{align*}
		\lambda(S\cup S')&\leq \lambda(P\cup S')+\lambda(P'\cup S)-\lambda(P\cup P')\leq k-1.\qedhere
	\end{align*}
\end{proof}

\begin{thm}\label{finestrefinement}
	For every $k$-anemone $\Phi$ with at least $k+1$ many petals there is a strong $k$-pseudo\-anemone $\Psi$ such that $\Phi\leq_A\Psi$ and $\Phi\leq\Psi'\Rightarrow \Psi'\leq_A\Psi$ for all $k$-pseudo\-flowers $\Psi'$.
\end{thm}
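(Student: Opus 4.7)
The plan is to construct $\Psi$ petal by petal: for each petal $P$ of $\Phi$, I would use \cref{subdivideonepetal} to refine $P$ as finely as possible while preserving the strong $k$-pseudo\-anemone structure of the whole partition, and then glue these refinements together via \cref{commonrefinement}. The existence half of the statement is then immediate; the universal property requires showing that the finest per-petal refinement ``absorbs'' every extension.

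More precisely, for each petal $P$ of $\Phi$ let $\mathcal{S}_P$ denote the set from \cref{subdivideonepetal} and let $\mathcal{Q}_P$ be the partition of $P$ such that $\mathcal{Q}_P \cup (\mathcal{Q} \setminus \{P\})$ (where $\mathcal{Q}$ is the partition of $\Phi$) is the $\leq_A$-biggest element of $\mathcal{S}_P$. By \cref{subdivideonepetal} every element of $\mathcal{S}_P$ is the partition of a strong $k$-pseudo\-anemone, so each $\mathcal{Q}_P$ is a valid input for \cref{commonrefinement}. Applying \cref{commonrefinement} to the family $(\mathcal{Q}_P)_P$ yields a strong $k$-pseudo\-anemone $\Psi$ whose partition is the common refinement of all $\mathcal{Q}_P \cup \{E \setminus P\}$; since every petal of $\Phi$ is then a union of petals of $\Psi$, we have $\Phi \leq_A \Psi$.

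For the universal property, let $\Psi'$ be a $k$-pseudo\-flower with $\Phi \leq \Psi'$. By \cref{moststrongkpa}, $\Psi'$ is itself a strong $k$-pseudo\-anemone, so the relation $\Psi' \leq_A \Psi$ is meaningful. For each petal $P$ of $\Phi$ let $\mathcal{Q}'_P$ be the set of petals of $\Psi'$ contained in $P$ (a partition of $P$, since $\Phi \leq \Psi'$). The partition $\mathcal{Q}'_P \cup (\mathcal{Q} \setminus \{P\})$ is a coarsening of the partition of $\Psi'$, so every union of its classes is a union of petals of $\Psi'$ and hence has order at most $k-1$; choosing any compatible cyclic order, it is the partition of a strong $k$-pseudo\-anemone extending $\Phi$, so it lies in $\mathcal{S}_P$. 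By the $\leq_A$-maximality of $\mathcal{Q}_P$ in $\mathcal{S}_P$, $\mathcal{Q}_P$ refines $\mathcal{Q}'_P$. Since the partition of $\Psi'$ is precisely the common refinement of the partitions $\mathcal{Q}'_P \cup \{E \setminus P\}$ as $P$ ranges over the petals of $\Phi$, and $\Psi$ is the common refinement of the corresponding finer partitions $\mathcal{Q}_P \cup \{E \setminus P\}$, it follows that $\Psi$ refines the partition of $\Psi'$, i.e.\ $\Psi' \leq_A \Psi$.

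The main obstacle is the middle step: verifying that the ``slice'' $\mathcal{Q}'_P \cup (\mathcal{Q} \setminus \{P\})$ really is the partition of a strong $k$-pseudo\-anemone, so that the maximality of $\mathcal{Q}_P$ inside $\mathcal{S}_P$ can be invoked. Once this strong $k$-pseudo\-anemone structure on each slice is in hand, the rest is a formal manipulation of common refinements, using that refinement commutes with the ``petal-by-petal'' decomposition dictated by $\Phi \leq \Psi'$.
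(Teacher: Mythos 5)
Your proposal is correct and follows essentially the same route as the paper's own (one-sentence) proof: take the $\leq_A$-biggest per-petal refinement from \cref{subdivideonepetal}, combine via \cref{commonrefinement}, and use \cref{moststrongkpa} to reduce an arbitrary extension $\Psi'$ to its per-petal slices. Your verification that each slice $\mathcal{Q}'_P \cup (\mathcal{Q}\setminus\{P\})$ lies in $\mathcal{S}_P$, and hence is refined by $\mathcal{Q}_P$, is exactly the detail the paper leaves implicit in the phrase ``has the required properties.''
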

\begin{proof}
	For every petal $Q$ there is by \cref{subdivideonepetal} a finest partition into which a $k$-pseudo\-flower can split that petal and by \cref{commonrefinement} all these partitions can be combined into a strong $k$-pseudo\-anemone $\Psi$ which has the required properties.
\end{proof}
\section{Distinguishing profiles}\label{sec:bipslimitsprofiles}

In this section we try to find $k$-pseudoflowers that distinguish as many profiles as possible.
In \cite{ClarkWhittle13}, $k$-flowers are not compared by $\leq$ but by a pre-order that relies on an equivalence relation of separations of order at most $k-1$.
Namely $\Phi$ is less than or equal to $\Psi$ if every equivalence class that is \emph{displayed} by $\Phi$, in the sense that one of its elements is displayed by $\Phi$, is also displayed by $\Psi$.
In this paper, we do not work with the equivalence relation from \cite{ClarkWhittle13} but with a coarser equivalence relation.
For this relation, and throughout this section, let $\mathcal{P}$ be a set of $k$-profiles which have the same truncation $P_0$ to a $k-1$-profile.
Define two separations of order at most $k-1$ to be equivalent if they are contained in the same elements of $\mathcal{P}$\myfootnote{That is, if they have the same image under the map $\phi$ defined in \cref{sec:abstractionfromunderlyingsepsys}.}.
By using this coarser equivalence relation we can avoid several of the technical issues that arise in \cite{ClarkWhittle13} when proving the existence of maximal $k$-flowers, allowing us to concentrate on the problems arising from the infinite setting.
But by using the coarser equivalence relation, also something is lost; in particular, a tree-decomposition with flowers that displays all equivalence classes except two represents the tree-like structure of a connectivity system better if the equivalence relation is finer.

So the goal of this section is to show that there are $k$-pseudo\-flowers distinguishing as many elements of $\mathcal{P}$ as possible, that is to find maximal elements of the following pre-order.

\begin{defn}
	For $k$-pseudo\-flowers $\Phi$ and $\Psi$ let $\Phi\preccurlyeq \Psi$ if every two profiles in $\mathcal{P}$ that are distinguished by the union of an interval of $\Phi$ are also distinguished by the union of an interval of $\Psi$.
\end{defn}

For a $k$-pseudo\-flower $\Phi$ and $n\in \mathbb{N}$ we will use the term \emph{$\Phi$ distinguishes $n$ elements of $\mathcal{P}$} as shorthand for the property that there are at least three elements of $\mathcal{P}$ that are pairwise distinguished by sets displayed by $\Phi$.
We want to extend every $k$-pseudo\-flower to a $\preccurlyeq$-maximal $k$-pseudoflower.
As $\preccurlyeq$ is transitive, it suffices to extend every $k$-pseudo\-flower $\Phi$ with index set $I$ that distinguishes at least three elements of $\mathcal{P}$ to a $\preccurlyeq$-maximal $k$-pseudo\-flower.
Consider the case that $\Phi$ is not $\preccurlyeq$-maximal, then there is a $k$-pseudo\-flower $\Psi$ such that $\Phi\preccurlyeq \Psi$, and such that there are two profiles $P_1$ and $P_2$ in $\mathcal{P}$ that are distinguished by $\Psi$ but not by $\Phi$.
If both $P_1$ and $P_2$ \emph{point to a petal of $\Phi$} in the sense that they contain the inverse of the petal, then they point to the same petal of $\Phi$ and $\Phi$ can be extended to a $k$-pseudo\-flower distinguishing $P_1$ and $P_2$.
This is proven in \cref{divideonepetal}.
This proof is heavily inspired by the corresponding proof for finite connectivity systems (\cite{ClarkWhittle13}) but simpler because the equivalence relation of separations is simpler.
A very similar version of this proof for vertex separations can also be found in \cite{EH:vertexflowers}.

For the proof, we use two additional notions.
First, say two separations $S$ and $T$ \emph{cross properly} if for all orientations $S'$ of $S$ and $T'$ of $T$ there is some element of $\mathcal{P}$ containing both $S'$ and $T'$.
Note that this is equivalent to all four corners of $S$ and $T$ distinguishing elements of $\mathcal{P}$.

Second, if $P \in \mathcal{P}$ points towards a petal $Q$, then $Q$ already determines which unions of intervals of $\Phi$ are contained in $P$.
Even if $P$ does not point towards a petal, it can be pinpointed in the cycle completion of the index set as follows.
For distinct cuts $u$ and $v$ of $I$ denote $S([u,v] \cap I)$ (that is, the union of all $P_i$ with $i \in [u,v] \cap I$) by $S(u,v)$.
A $k$-profile that does not point towards a petal of a $k$-pseudo\-flower $\Phi$ on index set $I$ is \emph{located at} a cut $v\in C(I)\setminus I$ if either $S(x,v)\in P$ for all $x\in C(I)\setminus I - v$ or $S(v,x)\in P$ for all $x\in C(I)\setminus I-v$.
Here, the cut $v$ nearly determines which unions of intervals are contained in $P$.
Indeed, if $I'$ is an interval of $I$ that is of the form $[u,w] \cap I$ with $v\notin [u,w]$ then $S(u,w)$ is a subset of both $S(u,v)$ and $S(v,w)$ and thus is contained in $P$.

\begin{lem}
	Let $P$ be a $k$-profile and $\Phi$ a $k$-pseudo\-flower such that $P$ does not point towards a petal of $\Phi$.
	Then there is a unique $v\in C(I)\setminus I$ such that $P$ is located at $v$.
\end{lem}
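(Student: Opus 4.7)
The plan is to prove uniqueness first by a direct case analysis, and then existence by constructing $v$ as a Dedekind-style cut in a suitable linear order.

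For uniqueness, suppose $P$ is located at two distinct cuts $v$ and $w$. The definition of ``located at'' admits two sub-forms (call them A and B), giving four sub-cases. If the same sub-form holds at both (say both via A), then taking $x=w$ in A at $v$ and $x=v$ in A at $w$ puts both complementary separations $S(w,v)$ and $S(v,w)$ in $P$, a contradiction. The mixed case (WLOG A at $v$, B at $w$) is handled by picking a cut $z \in \rlbracket v, w \lrbracket$: then A at $v$ gives $S(z,v) \in P$ and B at $w$ gives $S(w,z) \in P$, but a direct arc computation shows $S(z,v) \cup S(w,z) = E$ while $S(z,v) \cap S(w,z) = S(w,v) \neq \emptyset$, contradicting consistency. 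If no cut lies in $\rlbracket v,w \lrbracket$, then this arc contains at most a single petal $q$, with $v,w$ its predecessor and successor cuts, so $S(w,v) = E \setminus \{q\}$. By A at $v$ with $x=w$ this is in $P$, but the hypothesis that $P$ does not point to $q$ says $\{q\} \in P$, whence $E \setminus \{q\} \notin P$, again a contradiction.

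For existence, fix any $u_0 \in C(I) \setminus I$ and linearly order $C(I)\setminus I - u_0$ by clockwise position from $u_0$. Set $T := \{x : S(x, u_0) \in P\}$. The first step is to show $T$ is upward-closed. If $x_1 < x_2$ and $x_1 \in T$: either $S(x_1, x_2) = \emptyset$, in which case $S(x_1, u_0) = S(x_2, u_0)$ and $x_2 \in T$ trivially; or $S(x_1, x_2) \neq \emptyset$, in which case assuming $x_2 \notin T$ yields $S(x_1, u_0), S(u_0, x_2) \in P$ with intersection $S(x_1, x_2) \neq \emptyset$ and union $E$, contradicting consistency.

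If $T$ is empty or the whole set, then $u_0$ itself is a location (via B or A, respectively). Otherwise, the Dedekind cut of the linear order separating $T^c$ from $T$ corresponds to a unique point $v^* \in C(I)$, and the main obstacle is to show $v^* \in C(I) \setminus I$. If instead $v^* = q \in I$, then the predecessor cut $p_q$ lies in $T^c$ and the successor cut $s_q$ lies in $T$, so $S(u_0, p_q), S(s_q, u_0) \in P$. Their join as separations is $S(s_q, p_q) = E \setminus \{q\}$, so the profile property forces $\{q\} \notin P$, contradicting that $P$ does not point to $q$. Once $v^*$ is known to be a cut, a direct verification in the spirit of the upward-closedness step (combining the profile property and consistency) shows that $P$ is located at $v^*$: via A when $v^* \in T^c$ and via B when $v^* \in T$. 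The step ruling out $v^* \in I$ is where the hypothesis that $P$ does not point to any petal is used essentially, and this is the crux of the argument.
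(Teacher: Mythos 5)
Your proof is correct and follows essentially the same strategy as the paper's: fix a reference cut $u_0$, consider which arcs $S(x,u_0)$ lie in $P$, use completeness of the cycle completion to locate the boundary (ruling out, via the no-petal hypothesis and the profile property, that it sits at an element of $I$), and derive uniqueness by showing two locations would force complementary separations, or the complement of a petal, into $P$. The only cosmetic difference is that you isolate the ``boundary falls on a petal'' case explicitly, whereas the paper's argument absorbs it; both are fine.
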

\begin{proof}
	Let $w_1\in C(I)\setminus I$.
	Let $I'\subseteq I$ be the set of those $i\in I$ for which there is $w_1\in C(I)$ with $i\in [w_1,w_2]$ and $S(i_1,i_2) \in P$.
	If $I'=\emptyset$ or $I'=I$ then $P$ is located at $w_1$.
	Otherwise $I'$ is a non-trivial interval of $I$ and there is a unique $w_2\in C(I)\setminus I$ such that $I'=[w_1,w_2]\cap I$.
	First consider the case that $S(w_1,w_2)$ is contained in $P$ and let $u\in C(I)\setminus I-w_2$.
	If $u=w_1$ then $S(u,w_2)\in P$.
	If $u\in \lrbracket w_1,w_2\rlbracket \setminus I$ then $S(u,w_2)\subseteq S(w_1,w_2)$ and thus $S(u,w_2)\in P$.
	If $u\in \lrbracket w_2,w_1\rlbracket$, then $S(w_1,u)$ is not contained in $P$, so its complement is.
	Then $S(u,w_2)$ is the union of $S(u,w_1)$ and $S(w_1,w_2)$ and thus is contained in $P$.
	Similarly, if $S(w_2,w_1)$ is contained in $P$, then $S(w_2,u)\in P$ for all $u\in C(I)\setminus I - w_1$.
	So $P$ is located at $w_2$.
	
	Assume for a contradiction that there are two distinct cuts $v$ and $v'$ such that $P$ is located at both $v$ and $v'$.
	Without loss of generality $S(v_1,v_2)\in P$.
	Then for all $u\in C(I)\setminus I - v_1 - v_2$, $S(v_1,u)$ and $S(u,v_2)$ are contained in $P$.
	If there is $u\in \lrbracket v_2, \rlbracket v_1\setminus I$ then both $S(v_1,u)$ and $S(u,v_2)$ are contained in $P$, but the union of these two sets is $E$, a contradiction.
	So $[v_2,v_1]$ contains exactly one $i\in I$ and $P$ points towards $P_i$, a contradiction.
\end{proof}

Now we can show that under some circumstances a $k$-pseudoflower that is not $\preccurlyeq$-maximal is also not $\leq$-maximal.

\begin{lem}\label{divideonepetal}
	Let $\Phi$ be a $k$-pseudo\-flower distinguishing at least three elements of $\mathcal{P}$.
	Also let $S$ be a separation which properly crosses some petal $P_i$ of $\Phi$.
	Then there is an extension $\Psi$ of $\Phi$ which has $P_i\cap S$ and $P_i\cap (E \setminus S)$ as petals and whose other petals are also petals of $\Phi$.
\end{lem}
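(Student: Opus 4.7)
My plan is to construct $\Psi$ from $\Phi$ by splitting $P_i$ into its two parts $P_i\cap S$ and $P_i\setminus S$, inserted adjacently in the cyclic order (at the old position of $P_i$; the local orientation within this pair can be chosen arbitrarily and does not affect the argument). Every other petal of $\Phi$ becomes a petal of $\Psi$. The natural map sending both new petals to $i$ and fixing the rest is monotone and satisfies the concatenation condition, so $\Phi$ is a concatenation of $\Psi$. It remains to check that $\Psi$ is itself a $k$-pseudo\-flower, i.e.\ that every interval union of its petals has order at most $k-1$.

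An interval of $\Psi$'s cyclic order either (a) contains neither of $P_i\cap S$ and $P_i\setminus S$, in which case it coincides with an interval of $\Phi$ not containing $i$; (b) contains both, in which case it corresponds to an interval of $\Phi$ through $i$ and has the same union; or (c) contains exactly one of them. Cases (a) and (b) are handled automatically by the $k$-pseudo\-flower property of $\Phi$. By the symmetry $\lambda(X)=\lambda(E\setminus X)$, the remaining case reduces to bounding $\lambda(T)$ where $T=S_\Phi(I')\cup(P_i\cap S)$ and $I'$ is an interval of $\Phi$ that is adjacent to $i$ but does not contain $i$.

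The input I would use is that proper crossing of $S$ and $P_i$ forces each of the four corners $S\cap P_i$, $S\cup P_i$, $S\setminus P_i$, $P_i\setminus S$ to have order at most $k-1$, and in fact exactly $k-1$: the corners distinguish profiles in $\mathcal{P}$ that share the common $(k-1)$-truncation $P_0$, so separations of order strictly less than $k-1$ cannot distinguish any two such profiles. Applying submodularity to $A=S_\Phi(I'\cup\{i\})$ (an interval of $\Phi$ containing $i$, so $\lambda(A)\le k-1$) and $B=E\setminus(P_i\setminus S)$ (the complement of a corner, so $\lambda(B)=\lambda(P_i\setminus S)\le k-1$), a direct computation yields $A\cap B=T$ and $A\cup B=E$, so submodularity gives
\[
  \lambda(T)+\lambda(E)\le\lambda(A)+\lambda(B)\le 2(k-1).
\]

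The main obstacle is that this bound alone only delivers $\lambda(T)\le 2(k-1)-\lambda(\emptyset)$, which is weaker than the required $\lambda(T)\le k-1$ when $\lambda(\emptyset)<k-1$. To close the gap I would iterate the argument using a second corner of $S$ and $P_i$ (each of order exactly $k-1$) together with the complementary interval $I\setminus(I'\cup\{i\})$, and invoke the hypothesis that $\Phi$ distinguishes at least three profiles of $\mathcal{P}$: this guarantees the existence of a profile located on the opposite side of $i$ from $I'$, which contains both $S_\Phi(I')$ and $P_i\cap S$, so that the profile property forbids $E\setminus T$ from that profile; combining this constraint with the submodularity inequality above should yield the sharpened bound $\lambda(T)\le k-1$, completing the verification that $\Psi$ is a $k$-pseudo\-flower.
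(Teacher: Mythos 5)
Your high-level plan (split $P_i$ in place and reduce to bounding $\lambda\bigl(S_\Phi(I')\cup(P_i\cap S)\bigr)$ for intervals $I'$ adjacent to $i$) matches the paper's, and your claim that all four corners of $S$ and $P_i$ have order exactly $k-1$ is true and recoverable. But the step you flag as unfinished is a genuine gap, and the mechanism you propose cannot close it. The profile property only asserts that $(\overrightarrow{s}\vee\overrightarrow{t})^*\notin P$ \emph{possibly because the join does not exist in $S_k$}; it is vacuously satisfied when $\lambda(\overrightarrow{s}\vee\overrightarrow{t})\geq k$. So even granting a profile that contains both $S_\Phi(I')$ and $P_i\cap S$ (which itself is not guaranteed for an arbitrary $I'$), the fact that this profile ``forbids'' $E\setminus T$ gives no upper bound on $\lambda(T)$ whatsoever, and cannot improve $2(k-1)-\lambda(\emptyset)$ to $k-1$. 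The mechanism that actually works runs in the opposite direction: one writes the target as a join $X\vee Y$ of two sets of order at most $k-1$ whose \emph{meet} is forced to have order at least $k-1$, because if the meet had order at most $k-1$ then consistency and the profile property would orient it oppositely in two members of $\mathcal{P}$, which share a $(k-1)$-truncation; submodularity then bounds the join by $k-1$.

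This is why the paper does not work with the raw $S$ at all. It first chooses profiles $P_2\ni S,P_i$ and $P_3\ni E\setminus S,P_i$ that $\Phi$ distinguishes (this is where ``at least three profiles'' enters), takes a displayed separation $T=S_\Phi(I')$ separating them with $I'$ adjacent to $i$, and uncrosses as above to obtain $S'=T\cup(S\cap P_i)$, respectively $T\cup(P_i\setminus S)$ depending on which of $P_2,P_3$ contains $T$, of order exactly $k-1$. Only for this uncrossed $S'$, with $I'$ pinned down by the profiles, is every new interval union a corner of $S'$ with a displayed separation of $\Phi$, whose order can be bounded by one more round of the same argument. Note also that this case distinction determines which of $P_i\cap S$ and $P_i\setminus S$ sits adjacent to $I'$ in the cyclic order; your assertion that the local orientation of the new pair ``can be chosen arbitrarily and does not affect the argument'' is unjustified, since the two orientations display genuinely different sets of separations.
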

\begin{proof}
	We start by constructing from $S$ a separation that still properly crosses $P$ but is otherwise more nested with the separations displayed by $\Phi$.
	Let $P_1$ be an element of $\mathcal{P}$ that contains $S$ and $E \setminus P$.
	If $\Phi$ distinguishes two elements of $\mathcal{P}$ that contain $S$ and $P$, then let $P_3$ be an element of $\mathcal{P}$ that contains $E \setminus S$ and $P$.
	In this case, $\Phi$ distinguishes some element $P_2$ of $\mathcal {P}$ that contains $S$ and $P$ from $P_3$.
	Otherwise, let $P_2$ be an element of $\mathcal{P}$ that contains $S$ and $P$.
	As $\Phi$ distinguishes three elements of $\mathcal{P}$, it distinguishes some $P_3 \in \mathcal{P}$ from $P_1$ and $P_2$.
	In this case, $P_3$ has to contain $E \setminus P$ and $E \setminus S$.
	So in both cases there are elements $P_2$ and $P_3$ of $\mathcal{P}$ containing $P$ such that $P_2$ contains $S$, $P_3$ contains $E \setminus S$ and $\Phi$ distinguishes $P_2$ and $P_3$.
	As every element of $\mathcal{P}$ points towards a petal or is located at some cut, there is an interval $I'$ of the index set $I$ of $\Phi$ such that $T:=\bigcup_{i' \in I'} P_{i'}$ distinguishes $P_2$ and $P_3$.
	By several applications of the profile property, and taking the complement if necessary, without loss of generality it can be assumed that $i$ is the smallest element of the linear order of $I \setminus I'$.
	Then also $T \cup P_i$ is a separation displayed by $\Phi$ that distinguishes $P_2$ and $P_3$.
	
	Fist consider the case that $T$ is contained in $P_2$.
	In this case, if $S \cap T$ has order at most $k-1$, then by the profile property it distinguishes $P_2$ and $P_3$ and thus has order exactly $k-1$.
	So by submodularity $S \cup T$ has order at most $k-1$ and is thus contained in $P_2$.
	Similarly $S' = (S \cup T) \cap (T \cup P_i)$ is a separation of order at most, and thus exactly, $k-1$ that is contained in $P_2$ but not in $P_3$.
	Furthermore $S \cap P_i = S' \cap P_i$, which implies that $S'$ properly crosses $P_i$.
	By symmetry, if $T$ is contained in $P_3$ then $S' = ((E \setminus S) \cup T) \cap (T \cup P_i)$ properly crosses $P_i$ and $\{P_i \cap S', P_i \setminus S'\} = \{P_i \cap S, P_i \setminus S\}$.
	
	Let $i_1$ and $i_2$ be two sets that are not elements of $I$ and let $I'' = I - i + i_1 + i_2$.
	Define a cyclic order on $I''$ such that, for elements $j_1$ and $j_2$ in $I - i$, $i_1 \in \ ]j_1,j_2[$ if and only if $i_2 \in \ ]j_1,j_2[$ if and only if $i \in \ ]j_1,j_2[$ and $i_1$ is the predecessor of $i_2$.
	Let $P_{i_1} = P_i \cap S'$ and $P_{i_2} = P_i \setminus S'$.
	In order to show that $\Psi := (P_i)_{i \in I''}$ is a $k$-pseudo\-flower, let $J$ be an interval of $I''$.
	If $J$ contains neither $i_1$ nor $i_2$ or both $i_1$ and $i_2$, then the union of the corresponding partition classes is the union of an interval of partition classes of $\Phi$ and thus has order at most $k-1$.
	So it suffices to consider the case that $J$ contains $i_1$ but not $i_2$.
	
	If $I' \subseteq J$, then $\bigcup_{j \in J} P_j= T \cup S'$.
	As $T$ and $S'$ distinguish elements of $\mathcal{P}$ and some element of $\mathcal{P}$ contains neither $T$ nor $S'$, the set $T \cup S'$ has order at most $k-1$.
	Similarly, if $J - I_1 \subseteq I'$ then $\bigcup_{j \in J} P_j = (T \cup P_i) \cap S'$ which has order at most $k-1$.
	So $\Psi$ is a $k$-pseudoflower, and it extends~$\Phi$.
\end{proof}

In theory, if a $k$-pseudo\-flower contains an infinite $(k-1)$-pseudo\-flower as a concatenation, then the common truncation of the elements in $\mathcal{P}$ need not point towards a petal of the $(k-1)$-pseudo\-flower.
Fortunately, this problem does not occur, which we will show with the next few lemmas.
Assume for a contradiction that there is a $k$-profile $P\in \mathcal{P}$ that does not point towards a petal of some $(k-1)$-pseudo\-flower $\Phi$, and let $\mathcal{S}$ be an inclusion-wise maximal chain of sets displayed by $\Phi$ that are contained in $P$.
Assume for a contradiction that $\bigcup\mathcal{S}$ is contained in $P$.
Then there is a petal $Q$ that is disjoint from $\bigcup \mathcal{S}$ towards which $P$ does not point, so by the profile property $\bigcup \mathcal{S} \cup Q$ is also contained in $P$, a contradiction to the maximality of $\mathcal{S}$.
So there is a chain of sets contained in $P$ whose union is not contained in $P$.
And, remarkably, a $k$-profile with this property is induced by a unique profile of all finite-order separation and cannot be the truncation of two distinct $k+1$-profiles, as will be shown now.
In particular, if $\mathcal{P}$ contains at least two $k$-profiles, then their truncation points, for every $(k-1)$-pseudo\-flower $\Phi$, towards a petal of $\Phi$.

\begin{lem}\label{zigzagsofsmallorder}
	Let $P$ be a $k$-profile and $(Q_{\alpha})_{\alpha<\kappa}$ an increasing chain of sets in $P$ such that its union $Q$ is not contained in $P$.
	Then for every set $S$ of finite order there is a cofinal set $F\subseteq \kappa$ such that $\lambda((S\cap Q)\cup Q_{\alpha})\leq k-1$ and $\lambda((E\setminus S)\cap Q)\cup Q_{\alpha})\leq k-1$ for all $\alpha\in F$.
\end{lem}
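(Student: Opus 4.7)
The plan is to use submodularity together with limit-closedness to produce, for each of the two required inequalities, a tail of $\kappa$ on which it holds; since the intersection of two tails is a tail and every tail is cofinal, this gives the desired cofinal $F$.

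The key inequality comes from submodularity applied to $S\cap Q$ and $Q_\alpha$: since $Q_\alpha\subseteq Q$, we have $(S\cap Q)\cap Q_\alpha = S\cap Q_\alpha$ and $(S\cap Q)\cup Q_\alpha$ is the set we want to control, so
\begin{equation*}
\lambda((S\cap Q)\cup Q_\alpha) + \lambda(S\cap Q_\alpha) \le \lambda(S\cap Q) + \lambda(Q_\alpha).
\end{equation*}
Hence whenever $\lambda(S\cap Q_\alpha)\ge \lambda(S\cap Q)$ we obtain $\lambda((S\cap Q)\cup Q_\alpha)\le \lambda(Q_\alpha)\le k-1$. The analogous computation with $E\setminus S$ in place of $S$ handles the second required bound, provided $\lambda((E\setminus S)\cap Q_\alpha)\ge \lambda((E\setminus S)\cap Q)$.

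The main work is therefore to show that $\lambda(S\cap Q_\alpha)\ge \lambda(S\cap Q)$ holds on a tail of $\kappa$. The family $(S\cap Q_\alpha)_\alpha$ is an increasing chain with union $S\cap Q$, and since $\lambda(S)$ is finite, submodularity gives $\lambda(S\cap Q_\alpha)\le \lambda(S)+k-1$, so $\alpha\mapsto \lambda(S\cap Q_\alpha)$ takes only finitely many integer values. If the set $\{\alpha<\kappa : \lambda(S\cap Q_\alpha)<\lambda(S\cap Q)\}$ were cofinal, then by pigeonhole on the finitely many values below $\lambda(S\cap Q)$ some specific integer $m<\lambda(S\cap Q)$ would be attained cofinally; applying limit-closedness to the corresponding cofinal subchain, whose union is still $S\cap Q$, would force $\lambda(S\cap Q)\le m$, a contradiction. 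So that set is bounded, and its complement contains a tail. The same reasoning applied to $E\setminus S$ yields a second tail; their intersection is the cofinal $F$ we need.

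The main obstacle here is the transfinite pigeonhole-plus-limit-closedness step. The delicate point is that limit-closedness provides only an \emph{upper} bound on the connectivity of a limit, so the submodular inequality must be arranged so that precisely this one-sided bound suffices, which is the role of the particular choice $A=S\cap Q$, $B=Q_\alpha$ above; writing the desired conclusion instead in the form $\lambda(S\cup Q_\alpha)\le \lambda(S\cup Q)$ would lead to a strictly stronger statement about limits that limit-closedness cannot supply.
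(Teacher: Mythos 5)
Your proof is correct and rests on the same ingredients as the paper's: pigeonhole on the finitely many values of $\lambda(S\cap Q_\alpha)$ (bounded by $\lambda(S)+k-1$), a submodularity step, and limit-closedness. The paper arranges these slightly differently --- it fixes a cofinal set $G$ on which $\lambda(S\cap Q_\alpha)$ is constant, bounds $\lambda((S\cap Q_\beta)\cup Q_\alpha)$ for pairs $\alpha,\beta\in G$, and then passes to the limit in $\beta$ --- whereas you first extract the eventual lower bound $\lambda(S\cap Q_\alpha)\ge\lambda(S\cap Q)$ on a tail and then apply submodularity once; this is the same argument repackaged.
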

\begin{proof}
	As $\lambda(S\cap Q_{\alpha})$ is bounded by $\lambda(S)+k-1$, there is a cofinal set $G\subseteq \kappa$ such that $\lambda(S\cap Q_{\alpha})$ does not depend on $\alpha\in G$.
	Then for all $\alpha,\beta\in G$
	\begin{align*}
		\lambda((S\cap Q_{\alpha})\cup Q_{\beta})\leq \lambda(S\cap Q_{\alpha})+\lambda(Q_{\beta})-\lambda(S\cap Q_{\alpha} \cap Q_{\beta})=\lambda(Q_{\beta})\leq k-1.
	\end{align*}
	Thus for every $\alpha\in G$ the set $(S\cap Q)\cup Q_{\alpha}$ is the union of the sets $(S\cap Q_{\beta})\cup Q_{\alpha}$ where $\beta\in G$, so the order of $(S\cap Q)\cup Q_{\alpha}$ is at most $k-1$. Similarly there is a cofinal subset $F$ of $G$ such that $\lambda(((E\setminus S)\cap Q)\cup Q_{\alpha})\leq k-1$ for all $\alpha\in F$.
\end{proof}

\begin{lem}\label{notlimitclosedinducesaleph0}
	Let $P$ be a $k$-profile and let $(Q_{\alpha})_{\alpha<\kappa}$ be an increasing chain of separations in $P$ such that its supremum $Q$ is not contained in $P$.
	Then the set
	\begin{equation*}
		\{S\in \mathcal{U} \mathbin{|} \lambda(S)\in \mathbb{N} \cap \exists \alpha<\kappa:(S\cap Q)\cup Q_{\alpha}\in P\}
	\end{equation*}
	is a profile of the sets of finite order and induces every $l$-profile which induces $P$.
\end{lem}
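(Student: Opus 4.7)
The plan is to verify in turn that $P'$ orients every separation of finite order uniquely, is consistent, satisfies the profile property, and restricts on $S_l$ to every $l$-profile $P^l$ with $P^l\cap S_k=P$. The main tool is a mild strengthening of \cref{zigzagsofsmallorder} applied to finitely many separations at once: by pigeonholing on the finite tuple $(\lambda(s_i\cap Q_\alpha))_i$ in the original proof, for $s_1,\ldots,s_n$ of finite order there is a cofinal $F\subseteq \kappa$ on which every $(s_i\cap Q)\cup Q_\alpha$ and its complementary version has order at most $k-1$. For a separation $s$ of finite order I write $A^s_\alpha:=(s\cap Q)\cup Q_\alpha$, and use the identities $A^s_\alpha\cup A^{E\setminus s}_\beta=Q$ and $A^s_\alpha\cap A^{E\setminus s}_\alpha=Q_\alpha$, together with $E\setminus Q\in P$, which follows from $Q\notin P$ and $\lambda(Q)\leq k-1$ via limit-closedness.

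Orientation and uniqueness of $P'$ will then follow directly from these identities: the profile property of $P$ rules out $A^s_\alpha, A^{E\setminus s}_\beta\in P$ occurring together (this would give $E\setminus Q\notin P$), while the symmetric application using $Q_\alpha\in P$ rules out both complements lying in $P$; hence for $\alpha\in F$ exactly one side is in $P$. Consistency of $P'$ is then immediate from the enhanced lemma applied to $\{s,t\}$ together with consistency of $P$ on the inclusion $A^s_\beta\subseteq A^t_\beta$; the profile property of $P'$ follows similarly from the enhanced lemma applied to $\{s,t,s\cup t\}$ and the profile property of $P$ on the join $A^s_\beta\cup A^t_\beta=A^{s\cup t}_\beta$.

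The main obstacle is the $l$-profile statement. Given an $l$-profile $P^l$ with $P^l\cap S_k=P$, note first that $P^l$ inherits the zigzag, since $Q\in S_l$ must be oriented by $P^l$ and $E\setminus Q\in P\subseteq P^l$. For $s\in P^l$ I want to find $\beta\in F$ with $A^s_\beta\in P$, and will split on the value of $\lambda(s\cap Q)$. If $\lambda(s\cap Q)\geq k-1$, the tight submodularity estimate applied to $s$ and $A^s_\beta$, using the identities $s\cap A^s_\beta=s\cap Q$ and $s\cup A^s_\beta=s\cup Q_\beta$, yields
\[
\lambda(s\cup Q_\beta)\leq \lambda(s)+\lambda(A^s_\beta)-\lambda(s\cap Q)\leq (l-1)+(k-1)-(k-1)=l-1,
\]
so the profile property of $P^l$ places $s\cup Q_\beta$ in $P^l$, and consistency of $P^l$ on $A^s_\beta\subseteq s\cup Q_\beta$ then places $A^s_\beta\in P^l\cap S_k=P$. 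If instead $\lambda(s\cap Q)\leq k-2$, then $s\cap Q\in S_k$, so consistency of $P^l$ on $s\cap Q\leq s$ places $s\cap Q\in P^l\cap S_k=P$, after which the profile property of $P$ applied to $s\cap Q$ and $Q_\beta$ places $A^s_\beta\in P$. The crux is that a naive submodularity bound using $Q_\beta$ in place of $A^s_\beta$ only gives $\lambda(s\cup Q_\beta)\leq (l-1)+(k-1)$, which is too weak to invoke the profile property of $P^l$; replacing $Q_\beta$ by $A^s_\beta$ incorporates the lower bound $\lambda(s\cap Q)$ into the estimate and yields $l-1$ precisely when this lower bound is at least $k-1$, which is exactly what forces the case split.
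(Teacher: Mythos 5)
Your argument is correct, and for the main claim --- that $\mathcal{Q}=\{S\colon \lambda(S)\in\mathbb{N},\ \exists\alpha\colon (S\cap Q)\cup Q_\alpha\in P\}$ is a profile of the finite-order separations --- it follows the paper's route: everything is pushed through the map $s\mapsto (s\cap Q)\cup Q_\alpha$ for $\alpha$ in a cofinal set supplied by \cref{zigzagsofsmallorder}, and the profile property of $P$ carries over because this map respects unions. The only real deviation is in the orientation step: the paper forms a corner of $R=(S\cap Q)\cup Q_\alpha$ and $E\setminus R'$ with $R'=((E\setminus S)\cap Q)\cup Q_\alpha$ and argues via submodularity plus consistency, whereas you apply the profile property twice using $A^s_\alpha\cup A^{E\setminus s}_\beta=Q$ against $E\setminus Q\in P$ and $A^s_\alpha\cap A^{E\setminus s}_\alpha=Q_\alpha$ against $Q_\alpha\in P$; both are valid, and yours is arguably a little cleaner. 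Where your write-up genuinely goes beyond the paper is the final clause: the paper's proof stops once $\mathcal{Q}$ is shown to be a profile and never argues that $\mathcal{Q}\cap S_l=P^l$ for an $l$-profile $P^l$ inducing $P$. Your case split on $\lambda(s\cap Q)$ supplies exactly that: for $\lambda(s\cap Q)\geq k-1$ the estimate $\lambda(s\cup Q_\beta)\leq \lambda(s)+\lambda(A^s_\beta)-\lambda(s\cap Q)\leq l-1$ lets the profile property and consistency of $P^l$ pull $A^s_\beta$ down into $P$, and for $\lambda(s\cap Q)\leq k-2$ consistency of $P^l$ hands $s\cap Q$ to $P$ directly, after which the profile property of $P$ finishes. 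I checked both branches (including that one inclusion between the two orientations of $S_l$ suffices); they are sound.
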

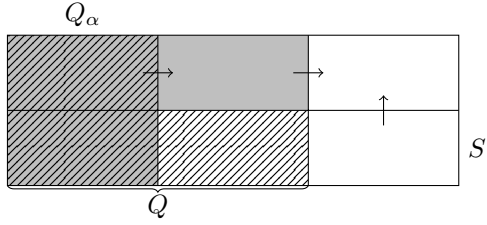
\begin{figure}
	\begin{tikzpicture}[xscale=2,decoration = brace]
		\path [fill=gray!50] (0,0) -- (1,0) -- (1,1) -- (2,1) -- (2,2) -- (0,2) -- (0,0);
		\path [fill,striped] (0,0) -- (2,0) -- (2,1) -- (1,1) -- (1,2) -- (0,2) -- (0,0);
		\draw (0,1) -- (3,1)
		(1,0) -- (1,2)
		(2,0) -- (2,2);
		\draw (1,1.5) pic {separrow} (2,1.5) pic {separrow} (2.5,1) pic[rotate=90] {separrow};
		\draw (3,0.5) node [anchor = west] {$S$}
			(.5, 2) node [anchor = south] {$Q_\alpha$};
		\draw[decorate] (2,0) -- (0,0) node [pos = .5, anchor = north] {$Q$};
		\draw[thin] (0,0) rectangle (3,2);
	\end{tikzpicture}
	\caption{Some notation of the proof of \cref{notlimitclosedinducesaleph0}.
		The rectangle is all of $E$, the set $R$ is depicted with stripes and the set $R'$ is depicted in light grey.}
	\label{fig:notlimitclosedinducesaleph0}
\end{figure}
\begin{proof}
	Let $\mathcal{Q}=\{S\in \mathcal{U} \mathbin{|} \lambda(S)\in \mathbb{N} \cap \exists \alpha<\kappa:(S\cap Q)\cup Q_{\alpha}\in P\}$.
	In order to show that $\mathcal{Q}$ contains every set of finite order or its complement, let $S\in \mathcal{U}$ with $\lambda(S)\in \mathbb{N}$.
	By \cref{zigzagsofsmallorder} there is $\alpha<\kappa$ such that both $R:=(S\cap Q)\cup Q_{\alpha}$ and $R':=((E\setminus S)\cap Q)\cup Q_{\alpha}$ (see \cref{fig:notlimitclosedinducesaleph0}) have order at most $k-1$.
	By submodularity one of $R\cup (E \setminus R')$ and $R\cap (E \setminus R')$ has order at most $k-1$, assume without loss of generality that it is $R\cup (E \setminus R')$.
	If $R\cup (E \setminus R') \in P$, then by consistency also $R \in P$ which implies $S \in Q$.
	So assume otherwise, thus its complement $R' \setminus R$ is contained in $P$.
	Because the union of $R' \setminus R$ and $Q_{\alpha}$ is again $R'$, by the profile property $R'\in P$ and thus $E\setminus S\in \mathcal{Q}$.
	
	The whole ground set is not contained in $\mathcal{Q}$.
	So, in order to show that $\mathcal{Q}$ is a profile, it suffices to show that for every two elements of $\mathcal{Q}$ their union is also contained in $\mathcal{Q}$:
	then $\mathcal{Q}$ cannot contain both $S$ and $E \setminus S$ for any separations $S$ of finite order, and furthermore $\mathcal{Q}$ is consistent and a profile.
	In order to show that the union of any two elements $R$ and $S$ is again contained in $\mathcal{Q}$, apply \cref{zigzagsofsmallorder} several times to obtain a cofinal set $F\subseteq \kappa$ for $S$, a cofinal subset $G\subseteq F$ for $R$ and a cofinal set $H\subseteq G$ for $S\cup R$.
	Let $\alpha < \kappa$ such that $(R\cap Q)\cup Q_{\alpha}\in P$.
	Then for all $\beta \in H$ with $\beta>\alpha$, the separation $(R\cap Q)\cup Q_{\beta}$ has order at most $k-1$ and is the union of $(R\cap Q)\cup Q_{\alpha}$ and $Q_{\beta}$.
	So by the profile property $(R\cap Q)\cup Q_{\beta}\in P$.
	Similarly for all sufficiently large $\beta$ in $H$, $(S\cap Q)\cup Q_{\beta}\in P$.
	As $((R\cup S)\cap Q)\cup Q_{\beta}$ is for all $\beta<\kappa$ the union of $(R\cap Q)\cup Q_{\beta}$ and $(S\cap Q)\cup Q_{\beta}$, it is for all sufficiently large $\beta\in H$ contained in $P$ by the profile property.
	Thus $R\cup S$ is contained in $Q$.
\end{proof}

Cal a $k$-profile \emph{limit-closed} if, for every chain of elements of the $k$-profile, the supremum of the chain is also contained in the $k$-profile.

\begin{cor}\label{truncationlimitclosed}
	The common truncation of any two distinct $k$-profiles to a $k-1$-profile  is limit-closed.\qed
\end{cor}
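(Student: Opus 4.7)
The plan is to argue by contradiction using \cref{notlimitclosedinducesaleph0}. Suppose that $P_1$ and $P_2$ are distinct $k$-profiles with common $(k-1)$-truncation $P_0$, and that $P_0$ is not limit-closed; so there is a chain $(Q_\alpha)_{\alpha<\kappa}$ of elements of $P_0$ whose supremum $Q$ is not in $P_0$. The first small observation is that $Q$ still lies in the separation system $S_{k-1}$: by limit-closedness of $\lambda$ we have $\lambda(Q)\leq k-1$. Since $P_0$ is an orientation of $S_{k-1}$ that does not contain $Q$, it must contain $E\setminus Q$, and hence so must $P_1$ and $P_2$. In particular, $Q\notin P_1$ and $Q\notin P_2$, which is exactly the hypothesis needed to apply \cref{notlimitclosedinducesaleph0} to each of the two $k$-profiles with the same chain $(Q_\alpha)_{\alpha<\kappa}$.

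Doing so produces two profiles of the sets of finite order,
\begin{displaymath}
\mathcal{Q}_i = \{S\in \mathcal{U} \mid \lambda(S)\in \mathbb{N}\ \wedge\ \exists\, \alpha<\kappa \colon (S\cap Q)\cup Q_\alpha \in P_i\}, \quad i=1,2,
\end{displaymath}
each of which, by the lemma, induces the $k$-profile it was built from. The key observation is that $\mathcal{Q}_1 = \mathcal{Q}_2$. Indeed, any witness $(S\cap Q)\cup Q_\alpha$ that lies in $P_i$ automatically has order at most $k-1$ (because every element of a $k$-profile does), so membership in $P_i$ and membership in $P_0$ are equivalent for such a witness. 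Thus both $\mathcal{Q}_1$ and $\mathcal{Q}_2$ are determined entirely by $P_0$, and coincide.

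Writing $\mathcal{Q}$ for this common profile, the induction statement in \cref{notlimitclosedinducesaleph0} now forces $P_1 = \mathcal{Q}\cap S_k = P_2$, contradicting distinctness. The only real step is the first one, namely noticing that $E\setminus Q\in P_0$ so that $Q$ is absent from both $P_1$ and $P_2$; everything else is routine bookkeeping, since \cref{notlimitclosedinducesaleph0} has already done the hard analytic work. I do not anticipate an obstacle here: once the lemma from the previous page is in place, the corollary is essentially a one-line consequence, which is presumably why it is phrased as a corollary.
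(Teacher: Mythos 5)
Your overall strategy --- argue by contradiction and invoke \cref{notlimitclosedinducesaleph0} --- is the right one, but you apply that lemma to the wrong profiles, and the step needed to repair this does not go through. The gap is the claim $\mathcal{Q}_1=\mathcal{Q}_2$. You justify it by saying that a witness $(S\cap Q)\cup Q_\alpha$ has order at most $k-1$ and that therefore ``membership in $P_i$ and membership in $P_0$ are equivalent'' for it. But $P_0$ is a $(k-1)$-profile: it orients only the separations in $S_{k-1}$, i.e.\ those of order at most $k-2$. A witness of order exactly $k-1$ lies in $S_k\setminus S_{k-1}$ and is therefore not oriented by $P_0$ at all --- and since $P_1$ and $P_2$ are \emph{distinct} $k$-profiles with the same $(k-1)$-truncation, they disagree precisely on some separations of that order. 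So nothing forces the two sets of witnesses, and hence $\mathcal{Q}_1$ and $\mathcal{Q}_2$, to coincide; the claim is exactly as hard as what you are trying to prove. (A symptom of the same off-by-one issue appears earlier: you write $\lambda(Q)\leq k-1$ and conclude $Q\in S_{k-1}$, but $S_{k-1}$ requires order at most $k-2$; the correct bound $\lambda(Q)\leq k-2$ does follow from limit-closedness of $\lambda$ applied to the chain of elements of $P_0$.)

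The intended argument avoids this entirely by applying \cref{notlimitclosedinducesaleph0} \emph{once, to $P_0$ itself}: the lemma is stated for an arbitrary profile, so take $P$ there to be the $(k-1)$-profile $P_0$ and take $(Q_\alpha)_{\alpha<\kappa}$ to be a chain in $P_0$ whose supremum is not in $P_0$ (which exists if $P_0$ is not limit-closed). The resulting set is a profile of the separations of finite order that induces \emph{every} $l$-profile which induces $P_0$; both $P_1$ and $P_2$ are $k$-profiles inducing $P_0$, so both equal the intersection of that profile with $S_k$, whence $P_1=P_2$, contradicting distinctness. No comparison of two separately constructed profiles is needed, which is why the paper records the statement with only a \qed.
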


Now the fact that the common truncation to a $k-1$-profile of distinct $k$-profiles is limit-closed can be used to show that if in a $k$-pseudo\-flower two distinct elements of $\mathcal{P}$ are not distinguished and do not point towards a petal, then the $k$-pseudo\-flower has to be a $k$-pseudo\-anemone.

\begin{lem}\label{twoprofilesatnonpetal}
	Let $\Phi$ be a $k$-pseudo\-flower and let $P_1$ and $P_2$ be two $k$-profiles with truncation $P_0$ which are located at the same cut. Then $\Phi$ can be concatenated into an infinite $k$-anemone and some union of partition classes of the $\leq_A$-maximal partition\myfootnote{Recall that by \cref{finestrefinement} this is a strong $k$-pseudo\-anemone that is unique up to the choice of the cyclic order} extending $\Phi$ distinguishes $P_1$ and $P_2$.
\end{lem}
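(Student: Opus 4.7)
Plan: The plan is to split the conclusion into two parts: first, produce a concatenation $\Phi'$ of $\Phi$ that is an infinite $k$-anemone; then apply \cref{finestrefinement} to $\Phi'$ to obtain the $\leq_A$-maximal strong $k$-pseudo-anemone $\Psi$ extending $\Phi$, and exhibit a union of partition classes of $\Psi$ that distinguishes $P_1$ from $P_2$.

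Since $P_1 \neq P_2$ share the $(k-1)$-truncation $P_0$, they disagree on a separation $T$ of order exactly $k-1$; say $T \in P_1$ and $E \setminus T \in P_2$. To construct $\Phi'$, I would first use the limit-closedness of $P_0$ (\cref{truncationlimitclosed}) together with the fact that $E \notin P_0$ to show that there are cuts on either side of $v$ for which the associated half-cuts $S(x,v)$ have order exactly $k-1$; in the subcase where $P_1$ and $P_2$ have opposite orientation at $v$ (one containing all $S(x,v)$ and the other all $S(v,x)$), every half-cut automatically has order $k-1$, since otherwise it would be determined by $P_0$ and both profiles would agree on it. Then I would concatenate $\Phi$ by choosing a cofinal family of cuts $(y_\alpha)$ going around the cycle away from $v$ and forming petals as arcs between consecutive $y_\alpha$'s together with the two half-cuts adjacent to $v$, obtaining a $k$-pseudoflower concatenation with $v$ as a cut. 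Using submodularity arguments in the spirit of \cref{muforkpf} and \cref{nearlystrongkpa}, I would verify that finite subconcatenations yield $k$-anemones with at least $k+1$ petals, so by \cref{moststrongkpa} the whole concatenation is a strong $k$-pseudo-anemone; passing to an appropriate infinite limit via Zorn's Lemma and \cref{limitofpseudoflowers}, combined with limit-closedness of $\lambda$, yields the infinite $k$-anemone $\Phi'$.

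For the distinguishing step, apply \cref{finestrefinement} to $\Phi'$ to obtain $\Psi$. In the opposite-orientation subcase, a half-cut petal of $\Phi'$ adjacent to $v$ is already a single partition class of $\Phi'$ and hence a union of partition classes of $\Psi$; $P_1$ and $P_2$ orient this half-cut differently, so it distinguishes them. In the same-orientation subcase, I would instead show $T$ itself is a union of partition classes of $\Psi$: for each petal $Q$ of $\Phi'$, submodularity and the profile property give $\mu(T \cap Q) = k-1$ (analogous to the computations in \cref{muforkpf}), and by \cref{partitionformu} and \cref{skpatosetswithmusmall} it follows that $T \cap Q$ is a union of the partition classes of $\Psi$ lying within $Q$; gluing over all petals shows that $T$ itself is a union of $\Psi$'s partition classes globally, and $T$ distinguishes $P_1,P_2$ by construction.

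The main obstacle is constructing $\Phi'$ as a genuine $k$-anemone rather than only a $k$-pseudoflower. The subtle point is that arbitrary displayed separations of $\Phi$ may have order strictly less than $k-1$, so not every concatenation with $v$ as a cut is automatically a $k$-flower; the cuts $(y_\alpha)$ must be chosen with some care so that all intermediate double-cut petals also have order exactly $k-1$. I expect that resolving this requires combining the location of $P_1$ and $P_2$ at $v$, the existence of the distinguishing separation $T$, and the limit-closedness of $P_0$ in careful submodularity inequalities, perhaps by iteratively refining cuts and reaching the infinite anemone in the limit.
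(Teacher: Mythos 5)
Your overall architecture matches the paper's: locate both profiles at the cut $v$, build an infinite anemone concatenation of $\Phi$ from a transfinite sequence of cuts converging to $v$, and then use \cref{skpatosetswithmusmall} together with \cref{finestrefinement} to exhibit a distinguishing union of partition classes. However, there is a genuine gap at exactly the point you flag as ``the main obstacle'': you never prove that the constructed concatenation (or any of its finite subconcatenations) is a $k$-\emph{anemone} rather than a $k$-daisy. The lemmas you invoke for this step --- \cref{muforkpf}, \cref{nearlystrongkpa} and \cref{moststrongkpa} --- all have as a \emph{hypothesis} that the pseudoflower in question already concatenates into a finite $k$-anemone, so appealing to them here is circular. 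The paper closes this gap with a specific computation: it fixes a separation $S$ distinguishing $P_1$ and $P_2$, applies \cref{zigzagsofsmallorder} to the chain $(S(v,x_\alpha))$ to extract a cofinal set on which $\lambda(S(v,x_\alpha)\cup S)\le k-1$ and $\lambda(S(v,x_\alpha)\cup(E\setminus S))\le k-1$, sets $Q=S(y_l,v)\cap S$ and $R=S(y_l,v)\cap(E\setminus S)$, and then runs a chain of submodularity inequalities giving $\lambda(S(y_{l-1},y_l)\cup Q)\le k-1$, then $\lambda(S(y_{l-2},y_{l-1})\cup Q)\le k-1$ (and the same for $R$), and finally $\lambda(S(y_{l-2},y_{l-1})\cup S(y_l,v))\le k-1$ --- a union of non-adjacent petals of order $k-1$, which is what certifies the anemone. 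Without an argument of this kind your $\Phi'$ could a priori be a daisy, and then neither \cref{moststrongkpa} nor \cref{finestrefinement} applies.

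A second, smaller problem is the distinguishing step in your ``same-orientation'' subcase. You propose to show that the distinguishing separation $T$ is itself a union of partition classes of $\Psi$ by proving $\mu(T\cap Q)=k-1$ for \emph{every} petal $Q$ of $\Phi'$; this is stronger than what is needed and is not justified by anything you cite. The paper only needs (and only proves) that the single set $Q=S(y_l,v)\cap S$, for one suitable tail $S(y_l,v)$ of the anemone, satisfies $\mu$-value $k-1$ there; by \cref{skpatosetswithmusmall} this $Q$ is a petal of an extension, its complement $S(v,y_l)\cup(E\setminus S)$ distinguishes $P_1$ from $P_2$ by the choice of the cofinal set, and that already finishes the proof. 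Note also that the paper needs no case split on the orientations of $P_1$ and $P_2$ at $v$: the entire construction is run with respect to $P_1$'s orientation alone, and the difference between the two profiles enters only through the auxiliary separation $S$.
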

\begin{proof}
	Let $I$ be the index set of $\Phi$ and $v\in C(I)\setminus I$ be the cut at which the two profiles are located.
	Assume that $S(w,v)\in P_1$ for all $w\in C(I)\setminus (I+v)$, the other case is symmetric.
	Denote the common truncation of $P_1$ and $P_2$ by $P_0$.
	By \cref{truncationlimitclosed} $P_0$ is limit-closed.
	Let $S'$ be the union of all sets $S(u,v)$ with $u\in C(I)\setminus I$ and $\lambda(S(u,v))<k-1$.
	As $P_0$ is limit-closed, $S'\neq E$.
	If $S'$ is not the empty set, then it is of the form $S(v,x_0)$ for some $x_0\in C(I)\setminus I - v$.
	Otherwise let $x_0\in C(I)\setminus I - v$ be arbitrary.
	Then $\lambda(S(v,u))=k-1$ for all $u\in [x_0,v\rlbracket$.	
	
	As an intermediate step of finding the infinite $k$-anemone, let $x\in C(I)\setminus (I+v)$.
	Show that there is $y\in \lrbracket x,v\rlbracket\setminus I$ such that $\lambda(S(x,w))=k-1$ for all $w\in [y,v\rlbracket\setminus I$ as follows:
	Let $C_x=\{w\in \lrbracket x,v\rlbracket\setminus I\colon \lambda(S(x,w))<k-1\}$. If $C_x$ is empty, then $y$ can be chosen arbitrarily from $\lrbracket x,v\rlbracket\setminus I$.
	So assume that $C_x$ is non-empty and denote its supremum in $[x,v]$ by $v'$.
	Then every set of the form $S(x,w)$ with $w\in C_x$ is contained in $P_0$ and the union of all those sets is $S(x,v')$.
	As $P_0$ is limit-closed, $S(x,v')$ is contained in $P_0$ and thus in $P_1$.
	Because $S(x,v)\notin P_1$, this implies that $v\neq v'$.
	Thus $y$ can be chosen arbitrarily from $\lrbracket v',v\rlbracket\setminus I$.
	
	Recall that $x_0$ is already defined.
	Define recursively a (possibly transfinite) sequence $(x_{\alpha})_{\alpha<\nu}$ as follows:
	For a limit $\alpha$, if the supremum $v'$ of $(x_{\beta})_{\beta<\alpha}$ in $[x,v]$ is $v$, then terminate the construction.
	Otherwise let $x_{\alpha}=v'$.
	For a successor ordinal $\alpha+1$ there is by the previous paragraph some $x_{\alpha+1}\in \lrbracket x_{\alpha},v\rlbracket$ such that $\lambda(S(x_{\alpha},w))=k-1$ for all $w\in [x_{\alpha+1},v\rlbracket$.
	As a result, $x_{\beta}\in \lrbracket x_{\alpha},v\rlbracket$ and $\lambda(S(x_{\alpha},x_{\beta}))=k-1$ for all $\alpha<\beta<\nu$ and the supremum in $[x_0,v]$ of all $x_{\alpha}$ is $v$.
	
	Let $S$ be a set of order at most $k$ which distinguishes $P_1$ and $P_2$.
	Applying \cref{zigzagsofsmallorder} to the chain $(S(v,x_{\alpha}))_{0<\alpha<\nu}$ yields a cofinal $F\subseteq \nu$ such that $\lambda(S(v,x_{\alpha})\cup S)\leq k-1$ and $\lambda(S(v,x_{\alpha})\cup (E\setminus S))\leq k-1$ for all $\alpha\in F$.
	As the sets $S(v,x_{\alpha})\cup S$ and $S(v,x_{\alpha})\cup (E\setminus S)$ distinguish $P_1$ and $P_2$, they have order exactly $k-1$.
	Let $\mu<\nu$ be a limit-ordinal such that the supremum of $F\cap \mu$ is $\mu$.
	Then $S\cup S(v,x_\mu)$ is the union of the sets $(S \cup S(v,x_{\alpha}))_{\alpha \in \mu\cap F}$ and thus has order at most $k-1$.
	Similarly the order of $(E\setminus S)\cup S(v,x_{\mu})$ is at most $k-1$.
	
	Let $G$ be the union of $F-0$ and all limit ordinals $\mu<\nu$ for which the supremum of $F\cap \mu$ is $\mu$.
	Let $\alpha_1$ be the smallest element of $G$, $\alpha_2$ the smallest element but one of $G$ and so on.
	Denote $x_{\alpha_j}$ by $y_j$ for all $j\geq 1$, and let $z$ be the supremum of $(y_j)_{j\in \mathbb{N}-0}$.
	The partition of $E$ whose partition classes are $S(z,y_1)$ and the sets $S(y_i,y_{i+1})$ for $i\in \mathbb{N}-0$ inherits a natural linear order from $\mathbb{N}$, with $S(z,y_1)$ as biggest element, and thus a cyclic order.
	Denote this partition with cyclic order by $\Psi$.
	We have shown so far that $\Psi$ is an infinite $k$-flower, we now want to show that it is a $k$-anemone.
	
	Let $l$ be the maximum of $k$ and $3$.
	For every $\alpha\in G$ with $\alpha_l<\alpha$
	\begin{align*}
		\lambda(S(y_{l-1},y_l)&\cup (S(y_l,x_{\alpha})\cap S))\\
		&\leq \lambda(S(v,y_l)\cup (S(y_l,x_{\alpha})\cap S))+\lambda(S(y_{l-1},x_{\alpha}))-\lambda(S(v,x_{\alpha}))\\
		&=\lambda(S(v,y_l)\cup (S(y_l,x_{\alpha})\cap S)) + k-1 - (k-1) \\
		&\leq \lambda(S(v,y_l)\cup (S(y_{l-1},v)\cap S)) + \lambda(S(v,y_l)\cup S(y_{l-1},x_{\alpha}))\\
		&\quad- \lambda(S(v,y_l)\cup S(y_{l-1},x_{\alpha})\cup S)\\
		&= \lambda(S(v,y_l)\cup S) + \lambda(S(v,x_{\alpha})) - \lambda(S(v,x_{\alpha})\cup S) =k-1.
	\end{align*}
	
	Denote $S(y_l,v)\cap S$ by $Q$ and $S(y_l,v)\cap (E\setminus S)$ by $R$.
	Then $S(y_{l-1},y_l)\cup Q$, which is the union of all the sets $S(y_{l-1},y_l)\cup (S(y_l,x_{\alpha})\cap S)$ with $\alpha\in G$ and $\alpha_l<\alpha$, has order at most $k-1$.
	Symmetrically $S(y_{l-1},y_l)\cup R$ has order at most $k-1$.
	So
	\begin{align*}
		\lambda(S(y_{l-2},y_{l-1})\cup Q)&\leq \lambda(S(y_{l-2},y_l)\cup Q) + \lambda(S(v,y_{l-1})\cup Q) - \lambda(S(v,y_l) \cup Q)\\
		&=\lambda(S(y_{l-2},y_l)\cup Q) + \lambda(S(y_{l-1},y_l)\cup R) -\lambda(S(v,y_l)\cup S)\\
		&\leq \lambda(S(y_{l-2},y_l)) + \lambda(S(y_{l-1},y_l)\cup Q)- \lambda(S(y_{l-1},y_l))\\
		&\quad  + (k-1) -\lambda(S(v,y_l)\cup S)\\
		&=\lambda(S(y_{l-1},y_l)\cup Q)\leq k-1
	\end{align*}
	and symmetrically $\lambda(S(y_{l-2},y_{l-1})\cup R)\leq k-1$.
	Thus by submodularity also $\lambda(S(y_{l-2},y_{l-1})\cup S(y_l,v))\leq k-1$ and $\lambda(S(y_{l-2},y_{l-1})\cup S(y_l,y_{l+1}))\leq k$ and hence $\Psi$ is a $k$-anemone.
	
	Let $\Psi_f$ be the concatenation of $\Psi$ whose petals are $S(y_i,y_{i+1})$ with $1\leq i \leq l-1$ and $S(y_l,y_1)$.
	By \cref{muforpetal}
	\begin{displaymath}
		\lambda(S(y_{l-1},y_l)\cup Q)=\lambda(S(y_{l-1},y_l)\cup R)=k-1
	\end{displaymath}
	and thus by \cref{skpatosetswithmusmall} there is an extension of $\Psi_f$ which is a strong $k$-pseudo\-anemone, contains $Q$ as a petal and thus distinguishes $P_1$ and $P_2$.
	So the maximal extension of $\Phi$, which exists by \cref{finestrefinement} and is also the maximal extension of $\Psi$, contains a union of petals that equals $Q$ and thus distinguishes $P_1$ from $P_2$.
\end{proof}

So most $\leq$-maximal $k$-pseudo\-flowers that extend a daisy are also $\preccurlyeq$-maximal.

\begin{lem}\label{maxpreccurlyeqkpf}
	Let $\Phi$ be a $\leq$-maximal $k$-pseudo\-flower which has a concatenation into a $k$-daisy and distinguishes at least three profiles.
	Then $\Phi$ is $\preccurlyeq$-maximal.
\end{lem}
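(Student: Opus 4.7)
I would argue by contradiction: assume $\Phi$ is not $\preccurlyeq$-maximal, so some $k$-pseudo\-flower distinguishes a pair $P_1, P_2 \in \mathcal{P}$ that $\Phi$ fails to distinguish. Each $k$-profile either points to a unique petal of $\Phi$ (by the profile property; the complements of two distinct petals would have join $E$, whose inverse $\emptyset$ is forbidden in a regular profile) or, by the lemma preceding \cref{twoprofilesatnonpetal}, is located at a unique cut. If one of $P_1, P_2$ pointed to a petal $P_i$ while the other did not, the two would disagree on the set $P_i$ itself, which $\Phi$ displays as a single petal; so $\Phi$ would distinguish them. Hence either $P_1, P_2$ both point to a common petal $P_i$, or both are located at the same cut $v\in C(I)\setminus I$.

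To rule out the same-cut possibility I would invoke \cref{twoprofilesatnonpetal}, which in that situation provides a concatenation of $\Phi$ into an infinite $k$-anemone. Passing to a finite concatenation of this anemone with at least $k+1$ petals and applying \cref{moststrongkpa}, one concludes that $\Phi$ itself is a strong $k$-pseudo\-anemone. But then the remark after \cref{moststrongkpa} forbids $\Phi$ from being concatenated into a $k$-daisy, contradicting the hypothesis on $\Phi$.

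So $P_1$ and $P_2$ both point to the same petal $P_i$. Since some $k$-pseudo\-flower distinguishes them, there is a separation $T$ of order at most $k-1$ with $T\in P_1$ and $E\setminus T\in P_2$. The plan is to produce from $T$ a separation $S$ of order at most $k-1$ that properly crosses $P_i$; then \cref{divideonepetal} gives an extension $\Psi'$ of $\Phi$ whose partition strictly refines the one of $\Phi$ (splitting $P_i$ into the two non-empty pieces $P_i\cap S$ and $P_i\cap (E\setminus S)$), contradicting $\leq$-maximality of $\Phi$. For the profiles on the $P_i$-side that proper crossing requires, I would use that $\Phi$ distinguishes at least three profiles: at most one of them can point to $P_i$ (otherwise two of them would agree on every set displayed by $\Phi$), and by unique orientation every other profile contains $P_i$, so at least two of the three furnish $P_i$-side witnesses.

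The main obstacle is that $T$ itself may fail to properly cross $P_i$: it can happen that every profile containing $P_i$ lies on the same side of $T$, leaving one of the four orientation combinations of $S$ and $P_i$ unwitnessed. To repair this I would replace $T$ by successive corners of $T$ with carefully chosen unions of intervals of petals of $\Phi$, using submodularity to keep the order at most $k-1$ and the profile property to place each new corner inside the required profile, in the style of the opening manipulations of the proof of \cref{divideonepetal}. Iterating these corner substitutions until all four corners of the resulting $S$ and $P_i$ distinguish profiles produces the separation needed for the final application of \cref{divideonepetal}.
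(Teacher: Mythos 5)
Your overall architecture matches the paper's: argue by contradiction, split into the same-cut and same-petal cases, kill the same-cut case via \cref{twoprofilesatnonpetal} combined with \cref{moststrongkpa} and its remark, and aim to contradict $\leq$-maximality through \cref{divideonepetal}. The first two paragraphs are essentially correct. The gap is in the same-petal case: \cref{divideonepetal} requires as input a separation that \emph{properly crosses} the petal $P_i$, i.e.\ one for which all four orientation pairs with $P_i$ are realised by profiles in $\mathcal{P}$, and you never actually produce one. Your $T$ only witnesses the two pairs on the $E\setminus P_i$ side (via $P_1$ and $P_2$); for the other two you need $T$, or whatever replaces it, to distinguish two profiles that \emph{contain} $P_i$, and the closing ``iterate corner substitutions'' paragraph does not show how to achieve this. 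Concretely, if $U$ is a union of an interval of $\Phi$ disjoint from $P_i$ that distinguishes two such profiles $P_3$ and $P_4$ (say $U \in P_3$), then of the candidate corners $T\cup U$ and $(E\setminus T)\cup U$ exactly one distinguishes $P_3$ from $P_4$ (according to how $P_3$ orients $T$), submodularity does not let you choose which of them has order at most $k-1$, and the fallback corner $T\cap U$ lies in both $P_1$ and $P_2$ and so destroys the property you started with. There is no argument that your iteration makes progress, terminates, or preserves the $P_1$/$P_2$ distinction.

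The deeper issue is that after extracting $T$ you discard $\Phi'$ and with it the hypothesis $\Phi\preccurlyeq\Phi'$; if your argument worked it would show that a $\leq$-maximal $\Phi$ of this kind distinguishes \emph{every} pair of distinct profiles pointing to a common petal, which is stronger than the lemma and not what the machinery supports. The paper uses $\Phi\preccurlyeq\Phi'$ essentially: since $\Phi'$ distinguishes $P_1$ from $P_2$ \emph{and}, by $\preccurlyeq$, every pair that $\Phi$ distinguishes, one can choose a single separation $S(v,w)$ displayed by $\Phi'$ that simultaneously distinguishes $P_1$ from $P_2$ and two profiles $P_3,P_4$ which $\Phi$ distinguishes from $P_1$ and from each other. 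Such $P_3,P_4$ automatically contain $P_i$ (a profile pointing to $P_i$ orients every union of an interval of $\Phi$ exactly as $P_1$ does, so it cannot be $\Phi$-distinguished from $P_1$), so this one separation properly crosses $P_i$ by definition and \cref{divideonepetal} applies directly. To repair your proof, reinstate $\Phi'$ and make a choice of this kind instead of trying to fix up an arbitrary $T$.
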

\begin{proof}
	Assume for a contradiction that there is a $k$-pseudo\-flower $\Phi'$ such that $\Phi\preccurlyeq \Phi'$ and such that $\Phi'$ distinguishes two profiles $P_1$ and $P_2$ from $\mathcal{P}$ which are not distinguished by $\Phi$.
	Then by \cref{twoprofilesatnonpetal} the two profiles cannot be located at the same cut, so they have to point to the same petal $i$.
	As $\Phi$ distinguishes sufficiently many profiles, there is a separation $S(v,w)$ of $\Phi'$ which not only distinguishes $P_1$ from $P_2$, but also distinguishes two profiles $P_3$ and $P_4$ which are distinguished in $\Phi$ from $P_1$ as well as from each other.
	Then $S(v,w)$ properly crosses $S(i)$, which is by \cref{divideonepetal} a contradiction to the fact that $\Phi$ is $\leq$-maximal.
\end{proof}

Unfortunately, it is not true that a $\leq$-maximal $k$-pseudo\-anemone (with sufficiently many petals and distinguishing sufficiently many $k$-profiles) is also necessarily $\preccurlyeq$-maximal.
This is illustrated by \cref{ex:anemoneultrafilter}.
In this example there is a $k$-anemone whose partition is finest among all partitions of $k$-pseudo\-flowers.
But there are many possible choices for the cyclic order on the set of petals, and which profiles are distinguished depends on the cyclic order.

The example makes use of the notion of ultrafilters, an important notion in topology, to be found for example in~\cite{Manetti}.
\begin{defn}\label{defn:ultrafilter}
	An \emph{ultrafilter} of a set $X$ is a non-empty set $\mathcal{F}$ of subsets of $X$ with the following properties:
	\begin{itemize}
		\item The empty set is not contained in $\mathcal{F}$.
		\item The intersection of any two elements of $\mathcal{F}$ is again contained in $\mathcal{F}$.
		\item Given subsets $Y$ and $Z$ of $X$ such that $Y\subseteq Z\subseteq X$ and $Y\in \mathcal{F}$, then also $Z\in \mathcal{F}$.
		\item If a subset $Y$ of $X$ has a non-empty intersection with all elements of $\mathcal{F}$, then it is contained in $\mathcal{F}$.
	\end{itemize}
	An ultrafilter is \emph{free} if it does not contain finite sets.
\end{defn}

One of the most important properties of free ultrafilters is that they exist for all infinite sets $X$.

\begin{lem}\label{existencefreeultrafilter}
	Let $X$ be a set and $Y$ an infinite subset of $X$.
	Then there is a free ultrafilter of $X$ which contains $Y$.
\end{lem}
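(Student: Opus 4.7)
I would prove the existence of the desired free ultrafilter by a standard Zorn's Lemma argument, but starting from a carefully chosen filter base on $X$ rather than the cofinite filter, so that we can guarantee both freeness and membership of $Y$.

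First I would introduce the collection $\mathcal{F}_0 := \{Z \subseteq X : Y \setminus Z \text{ is finite}\}$. Since $Y$ is infinite, $\emptyset \notin \mathcal{F}_0$; since $Y \setminus (Z_1 \cap Z_2) = (Y \setminus Z_1) \cup (Y \setminus Z_2)$, the set $\mathcal{F}_0$ is closed under binary intersections; it is upward closed by construction; and $Y$ itself is in $\mathcal{F}_0$. Moreover $\mathcal{F}_0$ contains no finite subset of $X$: if $F \subseteq X$ were finite and in $\mathcal{F}_0$, then $Y \setminus F$ would be finite, forcing $Y$ to be finite.

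Next I would apply Zorn's Lemma to the poset (under inclusion) of all collections $\mathcal{G} \subseteq 2^X$ that satisfy the first three bullets of \Cref{defn:ultrafilter} and contain $\mathcal{F}_0$. The union of a chain of such collections clearly still has these properties (any two elements of the union lie in a common member of the chain, giving closure under intersection; the empty set cannot sneak in since no member of the chain contains it), so by Zorn's Lemma there is an inclusion-maximal such collection $\mathcal{F}$.

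It then remains to verify the fourth bullet of \Cref{defn:ultrafilter} for $\mathcal{F}$, which is the only nontrivial step. Given $Z \subseteq X$ meeting every element of $\mathcal{F}$ non-trivially, I would form $\mathcal{F}' := \{W \subseteq X : \exists A \in \mathcal{F},\ A \cap Z \subseteq W\}$ and check directly that $\mathcal{F}'$ again satisfies the first three bullets: $\emptyset \notin \mathcal{F}'$ exactly because $Z$ meets every $A \in \mathcal{F}$; closure under binary intersection follows from closure of $\mathcal{F}$ under intersection; upward closure is immediate. Since $\mathcal{F} \subseteq \mathcal{F}'$ and $Z \in \mathcal{F}'$, maximality of $\mathcal{F}$ forces $Z \in \mathcal{F}$. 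Finally, freeness of $\mathcal{F}$ follows automatically from the choice of $\mathcal{F}_0$: for any finite $F \subseteq X$, the set $Y \setminus F \in \mathcal{F}_0 \subseteq \mathcal{F}$ is disjoint from $F$, so $F \in \mathcal{F}$ would give $\emptyset = F \cap (Y \setminus F) \in \mathcal{F}$, a contradiction. The main (and really only) subtlety is the verification that extending by $Z$ yields a strictly larger filter-like collection; everything else is bookkeeping.
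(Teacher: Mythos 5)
Your proof is correct and follows essentially the same route as the paper: the paper's proof consists of applying a cited ultrafilter-extension theorem to exactly the filter $\mathcal{F}_0=\{Z\subseteq X: Y\setminus Z \text{ finite}\}$ that you start from, and your Zorn's Lemma argument is just the standard proof of that cited theorem written out in full. The freeness and $Y\in\mathcal{F}$ verifications you add are the same ones implicit in the paper's choice of starting filter.
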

\begin{proof}
	This well-known result from topology can for example be shown by applying \cite[Theorem 8.17]{Manetti} to the set of subsets $Z$ of $X$ for which $Y\setminus Z$ is finite.
\end{proof}

\begin{ex}\label{ex:anemoneultrafilter}
	Let $E$ be an infinite set and $k$ an integer bigger than $1$.
	Define an order function $\lambda$ on the set of subsets of $E$ via
	\begin{equation*}
		\lambda(X)=
		\begin{cases}
			0 & X=\emptyset \text{ or } X=E\\
			k-1 & \text{otherwise.}
		\end{cases}
	\end{equation*}
	Let $\mathcal{P}$ be the set of $k$-profiles of $E$ and $\lambda$.
	Then $\mathcal{P}$ is the set of ultrafilters of $E$ and every cyclic order turns $E$ into a $\leq$-maximal $k$-anemone in which every petal has exactly one element.
	
	\begin{clm}
		For $E$, $\lambda$ and $\mathcal{P}$ there is no $\preccurlyeq$-maximal $k$-anemone.
	\end{clm}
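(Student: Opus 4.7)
The plan is to show that every $k$-anemone $\Phi$ has a strictly $\preccurlyeq$-greater one. Because every non-trivial subset of $E$ has order $k-1$, any $k$-anemone can be refined to the singleton partition, and such $\leq$-refinement is $\preccurlyeq$-increasing; so I may assume $\Phi$ is the singleton anemone equipped with some cyclic order $<_\Phi$ on $E$, and its displayed separations are precisely the $<_\Phi$-intervals.

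I would first locate two ultrafilters $\mathcal{F}_1\neq\mathcal{F}_2$ that $\Phi$ fails to distinguish. For this, pick a countably infinite increasing chain $y_0<y_1<\cdots$ inside some linearisation of $<_\Phi$: each $<_\Phi$-interval meets $Y=\{y_i\}$ in a cyclic interval of the induced order $<_Y$, and any finite Boolean combination of such $<_Y$-intervals is eventually constant on $Y$. The alternating set $\{y_{2i}\}$ fails this, so some $X\subseteq E$ lies outside the Boolean algebra $\mathcal{A}$ generated by the $<_\Phi$-intervals. A standard Stone-duality extension argument (the morphism of Stone spaces dual to $\mathcal{A}\hookrightarrow 2^E$ is not injective, so some $\mathcal{A}$-trace has two distinct extensions to $E$) then yields $\mathcal{F}_1,\mathcal{F}_2$ agreeing on every element of $\mathcal{A}$ with $X\in\mathcal{F}_1\setminus\mathcal{F}_2$.

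Next I would construct $\Psi$ by concatenating cyclic orders: linearise $<_\Phi|_X$ and $<_\Phi|_{E\setminus X}$ at arbitrary cuts, place the $X$-block first, and let $<_\Psi$ be the cyclic order on $E$ induced by the resulting linear order. Then $\Psi$, equipped with singleton petals, is a $k$-anemone; the sets $X$ and $E\setminus X$, as well as every forward sub-interval of each of the two blocks, are $<_\Psi$-intervals; and $\Psi$ distinguishes $\mathcal{F}_1$ from $\mathcal{F}_2$ via $X$.

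The main obstacle is verifying $\Phi\preccurlyeq\Psi$. Given a $<_\Phi$-interval $I\in\mathcal{G}_1\setminus\mathcal{G}_2$, write $J=I\cap X$ and $K=I\cap(E\setminus X)$; these are disjoint, their union lies in $\mathcal{G}_1$ but not in $\mathcal{G}_2$, so exactly one of them (say $J$, by symmetry) lies in $\mathcal{G}_1\setminus\mathcal{G}_2$. The set $J$ is a cyclic interval of $<_\Phi|_X$. If $J$ is a forward interval of the chosen linearisation of $<_\Phi|_X$, then it is itself a $<_\Psi$-interval and we are done. Otherwise $X\setminus J$ is a forward interval of that linearisation, hence a $<_\Psi$-interval; since $J\in\mathcal{G}_1$ forces $X\in\mathcal{G}_1$ (else $\mathcal{G}_1$ would contain the disjoint sets $J$ and $E\setminus X$) and therefore $X\setminus J\notin\mathcal{G}_1$, a short case split on whether $X\in\mathcal{G}_2$ shows that either $X\setminus J$ (when $X\in\mathcal{G}_2$) or $X$ itself (when $X\notin\mathcal{G}_2$) distinguishes $\mathcal{G}_1$ and $\mathcal{G}_2$, and both are $<_\Psi$-intervals. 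Thus $\Phi\preccurlyeq\Psi$, while $\Psi\not\preccurlyeq\Phi$ by the choice of $X$, so $\Phi$ is not $\preccurlyeq$-maximal.
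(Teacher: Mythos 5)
Your proof is correct and takes essentially the same route as the paper: reduce to a singleton anemone, extract a monotone $\omega$-sequence from a linearisation, observe that its alternating subset is not a finite Boolean combination of intervals, and re-order the cycle so that such a set becomes an interval. You additionally spell out, via the Stone-duality argument and the case analysis on $J=I\cap X$, the two steps the paper only asserts, namely that the two ultrafilters can be chosen to agree on every interval of the original order and that $\Phi\preccurlyeq\Psi$.
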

	\begin{proof}
		It suffices to show that if $C$ is a cyclic order of $E$, then the $k$-anemone $\Phi$ whose petals contain only one element and whose petals are cyclically ordered according to $C$ is not $\preccurlyeq$-maximal.
		
		Let $L$ be a linear order such that closing it to a cyclic order yields $C$.
		As $E$ is infinite, there is a sequence $e_1,e_2,\ldots$ of elements of $E$ such that in the linear order $L$ either $e_i<e_{i+1}$ for all indices $i$ or $e_i>e_{i+1}$ for all indices $i$.
		Assume that there is such a sequence such that $e_i<e_{i+1}$ for all indices $i$, the other case is symmetric.
		Denote the set of elements of $E$ which are of the form $e_i$ with an odd index $i$ by $R$.
		Define a new linear order $L'$ on $E$ where $e<f$ if one of the following happens:
		\begin{itemize}
			\item both $e$ and $f$ are contained in $R$ and $e<f$ in $L$;
			\item only $e$ is contained in $R$; or
			\item neither $e$ nor $f$ is contained in $R$ and $e<f$ in $L$.
		\end{itemize}
		Let $C'$ be the cyclic order obtained from closing $L'$ to a cyclic order, and denote the $\leq$-maximal $k$-anemone arising from $C'$ by $\Psi$.
		Then $\Psi$ distinguishes all elements of $\mathcal{P}$ which are distinguished by $\Phi$, so $\Phi\preccurlyeq\Psi$.
		But there are free ultrafilters $P_1$ and $P_2$ such that $P_1$ contains $R$ and $P_2$ contains the set of all $e_i$ with even index.
		Then $\Psi$ distinguishes $P_1$ from $P_2$, but $\Phi$ does not.
		Hence $\Phi$ is not $\preccurlyeq$-maximal.
	\end{proof}
	So in this setting there are many $\leq$-maximal $k$-anemones, each of them distinguishing infinitely many profiles, but no $\preccurlyeq$-maximal $k$-pseudo\-flowers.
\end{ex}

Because of the previous example, strong $k$-pseudo\-anemones are compared by the following pre-order, instead of by $\preccurlyeq$.

\begin{defn}
	Define a relation $\preccurlyeq_A$ on the set of strong $k$-pseudo\-anemones where $\Phi\preccurlyeq_A \Psi$ if all profiles in $\mathcal{P}$ which can be distinguished by a union of petals of $\Phi$ can be distinguished by a union of petals of $\Psi$.
\end{defn}

So essentially, $\preccurlyeq$ and $\preccurlyeq_A$ mean a $k$-pseudo\-flower is less than another if all profiles distinguished by a separation displayed by the first $k$-pseudo\-flower are also distinguished by a separation displayed by the second $k$-pseudo\-flower.
The two pre-orders just disagree on which separations count as displayed by a $k$-pseudo\-flower.
The separate definition of $\preccurlyeq_A$ is also in line with the separate definition of $\leq_A$ for strong $k$-pseudo\-anemones and the observation that for strong $k$-pseudo\-anemones the cyclic order of the partition is unimportant.
Now $\leq_A$-maximal strong $k$-pseudo\-anemones can be shown to be $\preccurlyeq_A$-maximal.

\begin{lem}\label{exmaxpreccurlyeqA}
	Let $\Phi$ be a $\leq_A$-maximal strong $k$-pseudo\-anemone which has a concatenation into a $k$-anemone and distinguishes at least three profiles.
	Then $\Phi$ is $\preccurlyeq_A$-maximal.
\end{lem}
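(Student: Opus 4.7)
The plan is to mirror the proof of \cref{maxpreccurlyeqkpf}, replacing the $\leq$/daisy input by the $\leq_A$/anemone input of the current statement. Suppose for contradiction there is a strong $k$-pseudo\-anemone $\Psi$ with $\Phi \preccurlyeq_A \Psi$, and profiles $P_1,P_2 \in \mathcal{P}$ distinguished by $\Psi$ but not by $\Phi$; pick a union of petals $S$ of $\Psi$ with $S \in P_1$ and $E\setminus S \in P_2$. Let $\Phi_0$ be a $k$-anemone of which $\Phi$ is a concatenation; by \cref{finestrefinement} together with the $\leq_A$-maximality of $\Phi$, the $\leq_A$-maximal extension of $\Phi_0$ is $\Phi$ itself.

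The first step is to localise $P_1,P_2$ relative to $\Phi$. They agree on every separation displayed by $\Phi$, so they occupy the same position in the cycle completion of the index set of $\Phi$. I would then apply \cref{twoprofilesatnonpetal} to $\Phi$: if $P_1,P_2$ were located at a common cut, some union of petals of the $\leq_A$-maximal extension of $\Phi$ would distinguish them, but that extension is $\Phi$ itself, contradicting our assumption. Hence $P_1$ and $P_2$ both point to a common petal $P_{i^*}$ of $\Phi$, so $E\setminus P_{i^*}\in P_1\cap P_2$ and $P_{i^*}\notin P_1\cup P_2$.

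Next I would show that $S$ splits $P_{i^*}$ properly as a set: were $P_{i^*}\subseteq S$, strong consistency applied to $P_{i^*}\leq S\in P_1$ would give $P_{i^*}\in P_1$, contradicting $E\setminus P_{i^*}\in P_1$; and symmetrically $S\cap P_{i^*}=\emptyset$ would force $P_{i^*}\in P_2$. The remaining task before invoking \cref{divideonepetal} is to check that $S$ properly crosses $P_{i^*}$ in the profile sense. The two corners containing $E\setminus P_{i^*}$ are realised by $P_1$ and $P_2$ themselves. For the other two corners, observe that every separation displayed by $\Phi$ either contains $P_{i^*}$ or is disjoint from it, and that its orientation in any profile pointing to $P_{i^*}$ is then forced by $E\setminus P_{i^*}$ via strong consistency. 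Thus no separation displayed by $\Phi$ distinguishes two profiles both pointing to $P_{i^*}$, and of the three $\Phi$-pairwise-distinguished profiles in $\mathcal{P}$ at most one can point to $P_{i^*}$; I would use the remaining two profiles containing $P_{i^*}$, together with the freedom to replace $S$ by another union of petals of $\Psi$ still distinguishing $P_1$ and $P_2$, to realise the remaining two orientation pairs.

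Once proper crossing is in hand, \cref{divideonepetal} produces an extension $\Phi'$ of $\Phi$ whose petals are those of $\Phi$ with $P_{i^*}$ replaced by $P_{i^*}\cap S$ and $P_{i^*}\setminus S$. Since $\Phi'$ also concatenates into $\Phi_0$, which has at least $k+1$ petals (automatic in the infinite case, checkable directly otherwise), \cref{moststrongkpa} shows that $\Phi'$ is a strong $k$-pseudo\-anemone; hence $\Phi<_A\Phi'$ contradicts the $\leq_A$-maximality of $\Phi$. The main obstacle will be the last step of the proper-crossing verification: when the two profiles in $\mathcal{P}$ containing $P_{i^*}$ happen to orient $S$ in the same way, a finer case analysis, possibly combined with a modification of $S$ inside the family of unions of petals of $\Psi$ distinguishing $P_1$ from $P_2$, is needed to produce the missing orientation, and this is where the technical heart of the argument lies.
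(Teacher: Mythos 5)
Your overall architecture is the paper's: assume $\Phi\preccurlyeq_A\Psi$ with $P_1,P_2$ distinguished by $\Psi$ but not by $\Phi$, split into the case where $P_1,P_2$ are located at a common cut (dispatched via \cref{twoprofilesatnonpetal} against $\leq_A$-maximality, exactly as you do) and the case where they point to a common petal (dispatched via \cref{divideonepetal}). But in the petal case your argument has a genuine gap, and you flag it yourself: you fix a union of petals $S$ of $\Psi$ distinguishing $P_1$ from $P_2$ \emph{first} and only afterwards try to verify that $S$ properly crosses the petal, which leaves you stuck precisely when the two auxiliary profiles containing the petal orient $S$ the same way. ``A finer case analysis, possibly combined with a modification of $S$'' does not produce the two missing corners, and nothing in your text does. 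The paper closes this by choosing $S$ differently from the start: since $\Phi$ distinguishes at least three profiles, there are $P_3,P_4\in\mathcal{P}$ distinguished by $\Phi$ from each other and from $P_1$; because $\Phi\preccurlyeq_A\Psi$, the pseudoanemone $\Psi$ distinguishes $P_3$ from $P_4$ as well as $P_1$ from $P_2$, and one takes $S$ to be a single separation displayed by $\Psi$ that distinguishes \emph{both} pairs. As $P_3$ and $P_4$ do not point to the petal $S(i)$, they both contain $S(i)$, and being distinguished by $S$ they realise the two corners on the $S(i)$-side, while $P_1,P_2$ realise the two corners on the other side; proper crossing is then immediate. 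That choice of $S$ is the one idea your write-up is missing.

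A secondary problem is your final step: you invoke \cref{moststrongkpa}, whose hypothesis is that the $k$-anemone concatenation has at least $k+1$ petals. The lemma's hypothesis only guarantees \emph{some} concatenation into a $k$-anemone, i.e.\ at least four petals, and your parenthetical ``checkable directly otherwise'' is not a justification when $k\geq 4$ and the anemone is finite with few petals. The paper does not route the contradiction through \cref{moststrongkpa}; it concludes directly that the extension produced by \cref{divideonepetal}, which splits a single petal of $\Phi$, yields a strictly finer partition contradicting $\leq_A$-maximality. If you want to keep your route, you need a separate argument (in the spirit of \cref{skpatosetswithmusmall}) that the refined partition is again a strong $k$-pseudoanemone without assuming $k+1$ petals.
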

\begin{proof}
	Assume for a contradiction that $\Phi$ is not $\preccurlyeq_A$-maximal.
	So there is a strong $k$-pseudo\-anemone $\Psi$ such that $\Phi\preccurlyeq_A\Psi$ but not $\Psi\preccurlyeq_A\Phi$.
	Let $P_1$ and $P_2$ be two profiles which are distinguished by a union of petals of $\Psi$ but not by a union of petals of $\Phi$.
	As no union of petals of $\Phi$ distinguishes $P_1$ from $P_2$, they are in particular located at the same cut or petal $\Phi$.
	If they are located at the same cut, then by \cref{twoprofilesatnonpetal} there is a strong $k$-pseudo\-anemone $\Phi'$ which can be concatenated into an infinite $k$-anemone such that $\Phi\leq_A\Phi'$ and such that some union of petals of $\Phi$ distinguishes $P_1$ and $P_2$.
	Thus $\Phi'$ is a strong $k$-pseudo\-anemone and some cyclic order on it gives an extension of $\Phi$.
	So there is an extension of $\Phi$ of which some union of petals distinguishes $P_1$ and $P_2$, contradicting the fact that $\Phi$ is $\leq_A$-maximal and that none of its unions of petals distinguishes $P_1$ from $P_2$.
	
	If $P_1$ and $P_2$ are located at the same petal $i$ of $\Phi$, let $P_3$ and $P_4$ be profiles which are distinguished from each other and from $P_1$ in $\Phi$.
	Let $S$ be a separation displayed by $\Psi$ which distinguishes $P_1$ from $P_2$ and $P_3$ from $P_4$.
	Then $S$ properly crosses $S(i)$, so by \cref{divideonepetal} there is an extension of $\Phi$ which has $S(i)\cap S$ and $S(i)\cap (E \setminus S)$ as petals and thus distinguishes $P_1$ from $P_2$.
	This is a contradiction to the fact that $\Phi$ is $\leq_A$-maximal.
\end{proof}

\section{The abstract structure of the equivalence classes of separations}\label{sec:abstractionfromunderlyingsepsys}

Let $\mathcal{U}$ be a submodular universe, $k\in \mathbb{N}$ and $S_k$ the set of separations of $\mathcal{U}$ of order less than $k$.
Let $\mathcal{P}$ be a non-empty set of regular $k$-profiles which all have the same truncation $Q$.
Declaring two separations in $S$ to be equivalent if and only if they are contained in the same elements of $\mathcal{P}$ induces a natural equivalence relation on $S_k$.
This section shows that the set of equivalence classes has the structure of a separation system of bipartitions that is closed under finite unions.
The map $\phi$ is taken from \cite{EH:treesets}, even though that paper will be published later than this one.
The properties of $\phi$ describe phenomena known and used frequently in proof of tangle-tree theorems.
In particular the fact that certain suprema of elements of $S$ are also contained in $S$ has led to the study of structurally submodular separation systems, a generalisation of subsystems of submodular universes.
Also, the nested set obtained in \cite[Theorem 3.6]{ProfilesNew} applied to $S$ and $\mathcal{P}$ is very close to the set of equivalence classes that do not cross other equivalence classes, as will be explained in \cref{ex:treeoftangles}.

The first lemma of this section shows that the involution, partial order and join of $S$ induce natural maps on the set of equivalence classes which turn the latter into a separation system naturally isomorphic to a separation system of bipartitions of $\mathcal{P}$.
Intuitively, that separation system of bipartitions condenses from $S$ the information of how it distinguishes the elements of $\mathcal{P}$.
As the image of $\phi$ consists of subsets of $\mathcal{P}$, notation and terminology for subsets instead of for separations will be used for its elements.

\begin{defn}\label{def:abstractionofseps}
	Let $\phi:S\rightarrow \mathcal{UB}(\mathcal{P})$ map every separation $\overrightarrow{p}$ to the set of elements of $\mathcal{P}$ which contain $\overleftarrow{p}$.
\end{defn}

\begin{lem}\label{abstractionishom}
	The map $\phi$ respects $*$, $\leq$ and $\vee$.
\end{lem}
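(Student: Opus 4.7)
The plan is to verify the three claimed compatibilities separately, in order of increasing subtlety. For the involution, I would observe that each $P \in \mathcal{P}$ is an orientation of $S$ and so contains exactly one of $\overrightarrow{p}, \overleftarrow{p}$; consequently $\phi(\overrightarrow{p}^*) = \{P \in \mathcal{P} : \overrightarrow{p} \in P\}$ is precisely $\mathcal{P} \setminus \phi(\overrightarrow{p})$, which by the description of $\mathcal{UB}(\mathcal{P})$ in \cref{ex:universebips} is exactly $\phi(\overrightarrow{p})^*$. For $\leq$, I would start with $\overrightarrow{p} \leq \overrightarrow{q}$, translate this via order-reversal of the involution to $\overleftarrow{q} \leq \overleftarrow{p}$, and then invoke the fact that every regular profile is strongly consistent by \cref{stronglyconsistent}: if $\overleftarrow{p} \in P$ then every separation $\leq \overleftarrow{p}$, in particular $\overleftarrow{q}$, also lies in $P$, giving $\phi(\overrightarrow{p}) \subseteq \phi(\overrightarrow{q})$, which is the order on $\mathcal{UB}(\mathcal{P})$.

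The main content is the preservation of $\vee$, which I would handle only for pairs $\overrightarrow{p}, \overrightarrow{q}$ whose join lies in $S$. One inclusion $\phi(\overrightarrow{p}) \cup \phi(\overrightarrow{q}) \subseteq \phi(\overrightarrow{p} \vee \overrightarrow{q})$ is immediate from the preservation of $\leq$ just established, since both $\overrightarrow{p}$ and $\overrightarrow{q}$ lie below $\overrightarrow{p} \vee \overrightarrow{q}$. The reverse inclusion is the only place where the profile property itself (as opposed to mere consistency) is essential: if $P$ contains $(\overrightarrow{p} \vee \overrightarrow{q})^*$, then by the profile property $P$ cannot also contain both $\overrightarrow{p}$ and $\overrightarrow{q}$, and since $P$ is an orientation this forces $\overleftarrow{p} \in P$ or $\overleftarrow{q} \in P$, placing $P$ in $\phi(\overrightarrow{p}) \cup \phi(\overrightarrow{q})$. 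Since joins in $\mathcal{UB}(\mathcal{P})$ are unions, combining the two inclusions yields the required equality.

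No real obstacle is anticipated; the proof is essentially a matter of unpacking the definitions and calling on the three structural properties of elements of $\mathcal{P}$ (being an orientation, being strongly consistent via regularity, and the profile property), one per claim. The only step with any content beyond bookkeeping is the reverse inclusion for $\vee$, which is precisely the reason the profile property is built into the definition of a profile.
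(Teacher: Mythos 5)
Your proposal is correct and follows essentially the same route as the paper: the involution claim by complementation in $\mathcal{P}$, the order claim via regularity and consistency (i.e.\ strong consistency by \cref{stronglyconsistent}), and for $\vee$ the easy inclusion from order-preservation together with the reverse inclusion from the profile property, restricted as in the paper to pairs whose join lies in $S$. No gaps.
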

\begin{proof}
	For all $\overrightarrow{s}\in S$ the complement of $\phi(\overrightarrow{p})$ in $E$ equals $\phi(\overleftarrow{p})$.
	If $\overrightarrow{p}$ and $\overrightarrow{q}$ are elements of $S$ such that $\overrightarrow{p}\leq \overrightarrow{q}$ then, as the elements of $\mathcal{P}$ are regular consistent orientations, all elements of $\mathcal{P}$ which contain $\overleftarrow{p}$ also contain $\overleftarrow{q}$, so $\phi(\overrightarrow{p}) \subseteq \phi(\overrightarrow{q})$.
	If $\overrightarrow{p}$ and $\overrightarrow{q}$ are elements of $S$ such that $\overrightarrow{p}\vee \overrightarrow{q}$ is also contained in $S$, then $\phi(\overrightarrow{p}) \cup \phi(\overrightarrow{q}) \subseteq \phi(\overrightarrow{p}\vee \overrightarrow{q})$ as $\phi$ respects the partial order.
	Let $P$ be a profile in $\mathcal{P}$ which is contained in $\phi(\overrightarrow{p}\vee \overrightarrow{q})$.
	Then $P$ contains $(\overrightarrow{p}\vee \overrightarrow{q})^*$ and thus by the profile property $P$ also contains $\overleftarrow{p}$ or $\overleftarrow{q}$.
	Thus $P$ is also contained in $\phi(\overrightarrow{p}) \cup \phi(\overrightarrow{q})$.
	So $\phi(\overrightarrow{p}\vee \overrightarrow{q})$ is contained in $\phi(\overrightarrow{p}) \cup \phi(\overrightarrow{q})$ and thus the two sets are equal.
\end{proof}

Part of \cref{abstractionishom} is a statement about elements of $S$ whose join is contained in $S$ as well.
\Cref{cornersinSexist} shows that in many cases that join does exist, a fact that will later on also reveal more structure of the image of $\phi$.
Both the following and the previous lemma of course imply similar statements for the meet operation.

\begin{lem}\label{cornersinSexist}
	Let $\overrightarrow{p}$ and $\overrightarrow{q}$ be elements of $S$ such that $\phi(\overrightarrow{p}) \cap \phi(\overrightarrow{q})\notin \{\emptyset,\mathcal{P}\}$.
	Then $\overrightarrow{p}\vee \overrightarrow{q}\in S$.
\end{lem}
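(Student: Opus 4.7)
The plan is to reduce the problem to showing $|\overrightarrow{p} \wedge \overrightarrow{q}| \geq k-1$: once that is done, submodularity together with $|\overrightarrow{p}|, |\overrightarrow{q}| \leq k-1$ yields $|\overrightarrow{p} \vee \overrightarrow{q}| \leq 2(k-1) - (k-1) = k-1$, so $\overrightarrow{p} \vee \overrightarrow{q} \in S$. This is precisely the principle flagged earlier in the paper, that integer-valued submodularity forces the join of two separations of order less than $k$ to have order less than $k$ whenever their meet has order at least $k-1$.

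I will argue by contradiction, assuming $|\overrightarrow{p} \wedge \overrightarrow{q}| \leq k-2$, so that $\overrightarrow{p} \wedge \overrightarrow{q}$ lies in $S_{k-1}$ and the common truncation $Q$ must orient it. If $\overrightarrow{p} \wedge \overrightarrow{q} \in Q$, then it lies in every $P \in \mathcal{P}$; picking $P$ in the nonempty set $\phi(\overrightarrow{p}) \cap \phi(\overrightarrow{q})$ gives $\overleftarrow{p}, \overleftarrow{q} \in P$, and the profile property applied to these two elements forces $(\overleftarrow{p} \vee \overleftarrow{q})^* = \overrightarrow{p} \wedge \overrightarrow{q} \notin P$, a contradiction. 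The remaining case $\overleftarrow{p} \vee \overleftarrow{q} \in Q$ is where the real work lies. Using $\phi(\overrightarrow{p}) \cap \phi(\overrightarrow{q}) \neq \mathcal{P}$ I pick $P' \in \mathcal{P}$ containing, without loss of generality, $\overrightarrow{p}$; by the case assumption $P'$ also contains $\overleftarrow{p} \vee \overleftarrow{q}$. If $\overrightarrow{p} \leq \overrightarrow{q}$ then $\overrightarrow{p} \vee \overrightarrow{q} = \overrightarrow{q} \in S$ directly, so I may assume $\overleftarrow{p} < \overleftarrow{p} \vee \overleftarrow{q}$, and then apply the consistency of $P'$ to the pair $\overrightarrow{p}, \overleftarrow{p} \vee \overleftarrow{q}$.

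The main obstacle will be the subtle bookkeeping in this last step: consistency yields a contradiction provided $\overrightarrow{p}$ and $\overleftarrow{p} \vee \overleftarrow{q}$ are not orientations of each other, and the only way that could fail under $\overrightarrow{p} \not\leq \overrightarrow{q}$ is $\overleftarrow{p} \vee \overleftarrow{q} = \overrightarrow{p}$, which would give $\overleftarrow{p} \leq \overrightarrow{p}$, forcing $\overrightarrow{p}$ to be co-small and contradicting the regularity of $P'$ (the standing hypothesis on $\mathcal{P}$ in this section). This is the one place where regularity is essential; everything else follows from submodularity, the profile property, and the basic fact that a $(k-1)$-profile must orient every separation in $S_{k-1}$.
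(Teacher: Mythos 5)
Your proof is correct and follows essentially the same route as the paper's: both reduce the claim to showing that $\overrightarrow{p}\wedge\overrightarrow{q}$ has order at least $k-1$ (so that submodularity bounds the join), and both get this from the profile property applied to a profile in $\phi(\overrightarrow{p})\cap\phi(\overrightarrow{q})$ together with consistency applied to a profile outside that intersection. The paper phrases this directly --- if the meet lies in $S$ it distinguishes two profiles with the same truncation and hence has order exactly $k-1$ --- whereas you run the contradiction assuming order at most $k-2$ and let the truncation $Q$ orient the meet, but the ingredients are identical.
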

\begin{proof}
	As $\phi(\overrightarrow{p}) \cap \phi(\overrightarrow{q})\neq \emptyset$, there is a profile in $\mathcal{P}$ which contains both $\overleftarrow{p}$ and $\overleftarrow{q}$.
	If $\overrightarrow{p}\wedge \overrightarrow{q}$ is contained in $S$, then the profile containing both $\overleftarrow{p}$ and $\overleftarrow{q}$ also contains $\overleftarrow{p}\vee \overleftarrow{q}$.
	In this case $\overrightarrow{p}\wedge \overrightarrow{q}$ distinguishes two profiles in $\mathcal{P}$ and thus has order $k-1$.
	So in each case $\overrightarrow{p}\wedge \overrightarrow{q}$ has order at least $k-1$ and by submodularity $\overrightarrow{p}\vee \overrightarrow{q}$ has order at most $k-1$ and is thus contained in $S$.
\end{proof}

If two elements of $S$ satisfy $\phi(\overrightarrow{p}) \subseteq \phi(\overrightarrow{q})$ then $\overrightarrow{p}\leq \overrightarrow{q}$ does not necessarily hold.
But \cref{nestedpreimagesofphi} shows for nested elements of the image of $\phi$ that there do exist pre-images that are nested.

\begin{lem}\label{nestedpreimagesofphi}
	Let $R$ and $T$ be elements of the image of $\phi$ such that $R \subseteq T$.
	Then there are elements $\overrightarrow{p}$ and $\overrightarrow{q}$ of $S$ such that $\phi(\overrightarrow{p})=R$, $\phi(\overrightarrow{q})=T$ and $\overrightarrow{p}\leq \overrightarrow{q}$.
\end{lem}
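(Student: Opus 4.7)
The plan is to start with arbitrary preimages $\overrightarrow{p}_0,\overrightarrow{q}_0\in S$ with $\phi(\overrightarrow{p}_0)=R$ and $\phi(\overrightarrow{q}_0)=T$ and to replace one of them by a suitable corner in order to enforce nestedness. Since $\phi(\overrightarrow{p}_0)\subseteq \phi(\overrightarrow{q}_0)$, the plan is to show that either $\overrightarrow{p}_0\wedge\overrightarrow{q}_0$ or $\overrightarrow{p}_0\vee\overrightarrow{q}_0$ lies in $S$, and that the corresponding corner has the right image under $\phi$.

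First I would observe that, because $|\overrightarrow{p}_0|,|\overrightarrow{q}_0|\leq k-1$, submodularity yields
\begin{displaymath}
|\overrightarrow{p}_0\wedge\overrightarrow{q}_0|+|\overrightarrow{p}_0\vee\overrightarrow{q}_0|\leq |\overrightarrow{p}_0|+|\overrightarrow{q}_0|\leq 2(k-1),
\end{displaymath}
so at least one of the meet and join has order at most $k-1$ and therefore lies in $S$. This is the submodularity argument behind \cref{cornersinSexist}, used here without needing the crossing hypothesis since we do not need \emph{both} corners in $S$.

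Next I would compute the image of the good corner using \cref{abstractionishom}. That lemma directly gives $\phi(\overrightarrow{p}_0\vee\overrightarrow{q}_0)=R\cup T=T$ whenever the join is in $S$; applying the $*$-part of the same lemma to the dual identity $\overrightarrow{p}_0\wedge\overrightarrow{q}_0=(\overleftarrow{p}_0\vee\overleftarrow{q}_0)^*$ gives $\phi(\overrightarrow{p}_0\wedge\overrightarrow{q}_0)=R\cap T=R$ whenever the meet is in $S$. In the first case I would set $\overrightarrow{p}:=\overrightarrow{p}_0$ and $\overrightarrow{q}:=\overrightarrow{p}_0\vee\overrightarrow{q}_0$, so that $\overrightarrow{p}\leq\overrightarrow{q}$, $\phi(\overrightarrow{p})=R$ and $\phi(\overrightarrow{q})=T$. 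In the second case I would set $\overrightarrow{p}:=\overrightarrow{p}_0\wedge\overrightarrow{q}_0$ and $\overrightarrow{q}:=\overrightarrow{q}_0$, giving the same conclusion.

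There is no real obstacle: the only point to be careful about is to use the involution to transfer the join statement of \cref{abstractionishom} to a corresponding meet statement, and to observe that the submodularity argument always places \emph{at least} one of the two relevant corners inside $S$, which is all that is needed.
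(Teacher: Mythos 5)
Your proof is correct and follows essentially the same strategy as the paper: take arbitrary preimages, use submodularity of the order function to force one of the two corners into $S$, replace the corresponding preimage by that corner, and identify its image via \cref{abstractionishom}. The only difference is cosmetic: the paper routes the main case through \cref{cornersinSexist} and reserves the bare inequality $|\overrightarrow{p}\wedge\overrightarrow{q}|+|\overrightarrow{p}\vee\overrightarrow{q}|\leq 2(k-1)$ for the degenerate case $R=\emptyset$, $T=\mathcal{P}$, whereas you apply that inequality uniformly and thereby avoid the case distinction altogether.
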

\begin{proof}
	If $R = T$, then it suffices to pick $\overrightarrow{p}=\overrightarrow{q}$, so assume otherwise.
	Let $\overrightarrow{p}$ and $\overrightarrow{q}$ be elements of $S$ such that $\phi(\overrightarrow{p}) = R$ and $\phi(\overrightarrow{q}) = T$.
	Also if $R=\emptyset$ and $T=\mathcal{P}$, then at least one of $\overrightarrow{p} \wedge \overrightarrow{q}$ and $\overrightarrow{p}\vee \overrightarrow{q}$ is contained in $S$.
	In the first case, replace $\overrightarrow{p}$ by $\overrightarrow{p} \wedge \overrightarrow{q}$ and in the second case replace $\overrightarrow{q}$ with $\overrightarrow{p} \vee \overrightarrow{q}$ and the lemma holds.
	So assume also that $R \neq \emptyset$ or $T \neq \mathcal{P}$.
	
	So one of $R \cap T$ and $R \cup T$ is not contained in $\{\emptyset, \mathcal{P}\}$.
	Thus by \cref{cornersinSexist}, one of $\overrightarrow{p}\wedge \overrightarrow{q}$ and $\overrightarrow{p} \vee \overrightarrow{q}$ is contained in $S$, and replacing one of $\overrightarrow{p}$ and $\overrightarrow{q}$ with the existing corner as above shows that the lemma holds.
\end{proof}

Denote the image of $\phi$ after deleting $\emptyset$ and $\mathcal{P}$ by $\mathcal{B}$.
Note that the empty set is contained in the image of $\phi$ if and only if $S$ contains a separation which is contained in all profiles in $\mathcal{P}$, and similarly for $\mathcal{P}$.
Also $\mathcal{B}$ is a regular separation system which is a sub-system of the universe of bipartitions of $\mathcal{P}$.
Furthermore, by \cref{cornersinSexist} in $\mathcal{B}$ many unions of elements of $\mathcal{B}$ are again contained in $\mathcal{B}$:

\begin{cor}[of \cref{cornersinSexist}]\label{cornersinBexist}
	Let $R$ and $T$ be elements of $\mathcal{B}$ with $R \cap T \neq \emptyset$ and $R \cup T \neq \mathcal{P}$.
	Then both $R \cap T$ and $R \cup T$ are elements of $\mathcal{B}$.
\end{cor}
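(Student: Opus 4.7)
The plan is to unwind the definitions and apply \cref{cornersinSexist} twice, once to obtain the join and once to obtain the meet, then use that $\phi$ respects $\vee$, $\wedge$ and $*$.

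First I would pick preimages $\overrightarrow{p}, \overrightarrow{q} \in S$ with $\phi(\overrightarrow{p})=R$ and $\phi(\overrightarrow{q})=T$. Since $R\in\mathcal{B}$ we have $R\neq\mathcal{P}$, and in particular $R\cap T\subseteq R\neq \mathcal{P}$; combined with the hypothesis $R\cap T\neq\emptyset$ this gives $\phi(\overrightarrow{p})\cap\phi(\overrightarrow{q})\notin\{\emptyset,\mathcal{P}\}$. Hence \cref{cornersinSexist} applied to $\overrightarrow{p}$ and $\overrightarrow{q}$ yields $\overrightarrow{p}\vee\overrightarrow{q}\in S$, and then \cref{abstractionishom} gives $\phi(\overrightarrow{p}\vee\overrightarrow{q})=R\cup T$. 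Since by hypothesis $R\cup T\neq\mathcal{P}$ and $R\cup T\supseteq R\neq\emptyset$, this witnesses $R\cup T\in\mathcal{B}$.

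To handle $R\cap T$ I would apply the same idea to the inverses. Using that $\phi$ respects~$*$, we have $\phi(\overleftarrow{p})=\mathcal{P}\setminus R$ and $\phi(\overleftarrow{q})=\mathcal{P}\setminus T$, so
\begin{equation*}
\phi(\overleftarrow{p})\cap\phi(\overleftarrow{q}) = \mathcal{P}\setminus (R\cup T),
\end{equation*}
which is non-empty by the hypothesis $R\cup T\neq\mathcal{P}$ and not all of $\mathcal{P}$ because $R\neq\emptyset$. Thus \cref{cornersinSexist} applied to $\overleftarrow{p}$ and $\overleftarrow{q}$ gives $\overleftarrow{p}\vee\overleftarrow{q}\in S$, and taking inverses yields $\overrightarrow{p}\wedge\overrightarrow{q}\in S$. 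Applying \cref{abstractionishom} to $*$ and $\vee$ (or directly, to the meet, using that meet is the dual of join via $*$) gives
\begin{equation*}
\phi(\overrightarrow{p}\wedge\overrightarrow{q}) = \mathcal{P}\setminus\bigl(\phi(\overleftarrow{p})\cup\phi(\overleftarrow{q})\bigr) = R\cap T.
\end{equation*}
Since $R\cap T\neq\emptyset$ by hypothesis and $R\cap T\subseteq R\neq\mathcal{P}$, this shows $R\cap T\in\mathcal{B}$.

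There is no real obstacle here: the work was already done in \cref{cornersinSexist} and \cref{abstractionishom}, and the only care needed is to verify in each of the two applications that the relevant intersection avoids both $\emptyset$ and $\mathcal{P}$, which follows immediately from the fact that $R,T\in\mathcal{B}$ (hence are neither empty nor all of $\mathcal{P}$) together with the two hypotheses $R\cap T\neq\emptyset$ and $R\cup T\neq\mathcal{P}$.
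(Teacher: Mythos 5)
Your proof is correct and takes essentially the same route as the paper's: pick preimages under $\phi$, apply \cref{cornersinSexist} (and its dual, obtained via the involution, for the meet) to get $\overrightarrow{p}\vee\overrightarrow{q}$ and $\overrightarrow{p}\wedge\overrightarrow{q}$ in $S$, then push forward with \cref{abstractionishom}. The paper's version is merely terser, leaving implicit both the verification that the relevant intersections avoid $\emptyset$ and $\mathcal{P}$ and the dualization step that you spell out.
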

\begin{proof}
	Let $R = \phi(\overrightarrow{p})$ and $T = \phi(\overrightarrow{q})$.
	By \cref{cornersinSexist} both $\overrightarrow{p}\wedge \overrightarrow{q}$ and $\overrightarrow{p}\vee \overrightarrow{q}$ are contained in $S$, and so by \cref{abstractionishom} both $R \cap T$ and $R \cup T$ are contained in $\mathcal{B}$.
\end{proof}

Hence $\mathcal{B}$ is closed under unions of crossing elements.
Such separation systems will be further investigated in \cref{sec:abstract}.
Also, if $S$ is finite, then most equivalence classes have a biggest and a smallest element.

\begin{cor}[of \cref{cornersinSexist}]\label{biggestpreimage}
	If $S$ is finite, then for every $R\in \mathcal{B}$ the set $\phi^{-1}(R)$ has a biggest and smallest element.
\end{cor}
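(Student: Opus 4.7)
The plan is to show that $\phi^{-1}(R)$ is closed under binary joins and binary meets, and then use finiteness of $S$ to take the join and meet of all elements of $\phi^{-1}(R)$.

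For the join: let $\overrightarrow{p},\overrightarrow{q}\in \phi^{-1}(R)$. Since $R\in \mathcal{B}$, we have $R\notin\{\emptyset,\mathcal{P}\}$, so $\phi(\overrightarrow{p})\cap \phi(\overrightarrow{q})=R\cap R=R\notin\{\emptyset,\mathcal{P}\}$. By \cref{cornersinSexist} the join $\overrightarrow{p}\vee \overrightarrow{q}$ lies in $S$, and by \cref{abstractionishom} we compute
\begin{displaymath}
\phi(\overrightarrow{p}\vee \overrightarrow{q})=\phi(\overrightarrow{p})\cup \phi(\overrightarrow{q})=R,
\end{displaymath}
so $\overrightarrow{p}\vee \overrightarrow{q}\in\phi^{-1}(R)$. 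For the meet, I would argue symmetrically via the involution: $R^*:=\mathcal{P}\setminus R$ is also in $\mathcal{B}$, and if $\overrightarrow{p},\overrightarrow{q}\in \phi^{-1}(R)$ then $\overleftarrow{p},\overleftarrow{q}\in \phi^{-1}(R^*)$; applying the join-closure just established and then taking inverses, $\overrightarrow{p}\wedge\overrightarrow{q}=(\overleftarrow{p}\vee \overleftarrow{q})^*$ lies in $S$ with $\phi(\overrightarrow{p}\wedge\overrightarrow{q})=R$.

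Since $S$ is finite, so is $\phi^{-1}(R)$. Enumerate it as $\{\overrightarrow{p}_1,\dots,\overrightarrow{p}_n\}$. By induction on $n$, using join-closure, the iterated join $\overrightarrow{p}_1\vee \cdots \vee \overrightarrow{p}_n$ lies in $\phi^{-1}(R)$; it is an upper bound of every $\overrightarrow{p}_i$ and is itself in $\phi^{-1}(R)$, hence the biggest element. The analogous iterated meet gives the smallest element.

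There is no real obstacle here: the content of the corollary is entirely packaged in \cref{cornersinSexist} and \cref{abstractionishom}, and finiteness of $S$ is used only to promote binary closure to closure under taking a maximum/minimum of the whole equivalence class. The one thing to be careful about is to verify at the outset that $R\notin\{\emptyset,\mathcal{P}\}$ (which is exactly why such $R$ were removed from the image of $\phi$ when defining $\mathcal{B}$), so that the hypothesis of \cref{cornersinSexist} is met when both arguments have image $R$.
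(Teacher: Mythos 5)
Your proof is correct and follows exactly the route the paper intends: the corollary is stated without a written proof, as an immediate consequence of \cref{cornersinSexist} together with \cref{abstractionishom}, and your argument (binary join/meet closure of $\phi^{-1}(R)$, then iterating using finiteness of $S$) is precisely that intended deduction. Your care in checking $R\notin\{\emptyset,\mathcal{P}\}$ and in obtaining the meet case via the involution is exactly right.
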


The $k$-profiles of $\mathcal{U}$ whose truncation is $Q$ are closely related to the profiles of the separation system $\mathcal{B}$, as will be explained now.
First, let $P$ be an element of $\mathcal{P}$.
Note that the set of all $k$-profiles of $\mathcal{U}$ whose truncation is $Q$ is a candidate for $\mathcal{P}$, so if $\mathcal{P}$ is chosen accordingly, $P$ can be any regular $k$-profile of $\mathcal{U}$ whose truncation is $Q$.
Let $P'$ be the set of all elements of $\mathcal{B}$ that do not contain $P$.
It is easily checked that $P'$ is a profile of $\mathcal{B}$, and $\overrightarrow{p}\in S$ is contained in $P'$ if and only if $\phi(\overrightarrow{p})$ is contained in $P$.
In the other direction, every profile of $\mathcal{B}$ also induces a regular $k$-profile of $\mathcal{U}$ whose truncation is $Q$.

\begin{lem}
	Let $P$ be a profile of the separation system $\mathcal{B}$ and let $P'$ be the set of all those elements of $S$ whose image under $\phi$ is contained in $P \cup \{\emptyset\}$.
	Then $P'$ is a regular $k$-profile of $\mathcal{U}$ whose truncation is $Q$.
\end{lem}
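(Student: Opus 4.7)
The plan is to verify in turn the five defining properties of a regular $k$-profile of $\mathcal{U}$ with truncation $Q$: that $P'$ is an orientation of $S$, is consistent, satisfies the profile property, contains no co-small separation, and agrees with $Q$ on separations of order less than $k-1$. Throughout I will use that $\phi(\overleftarrow{r}) = \mathcal{P} \setminus \phi(\overrightarrow{r})$ and that, by \cref{abstractionishom}, $\phi$ is monotone and respects joins whenever the join exists in $S$, together with the strong consistency of the profiles in $\mathcal{P}$ available from \cref{stronglyconsistent}.

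The orientation property splits into three cases on whether $\phi(\overrightarrow{r})$ is $\emptyset$, $\mathcal{P}$, or in $\mathcal{B}$: in the first two cases the membership of $\overrightarrow{r}$ versus $\overleftarrow{r}$ in $P'$ is forced because $\mathcal{P} \notin P \cup \{\emptyset\}$, and in the third it follows because $P$ is an orientation of $\mathcal{B}$. For consistency, suppose $\overleftarrow{s}, \overrightarrow{t} \in P'$ with $\overrightarrow{s} < \overrightarrow{t}$ and $s \neq t$; monotonicity gives $\phi(\overrightarrow{s}) \subseteq \phi(\overrightarrow{t})$. After ruling out $\phi(\overleftarrow{s}) = \emptyset$ and $\phi(\overrightarrow{t}) = \emptyset$ (each of which forces the other $\phi$-image to be $\mathcal{P}$, contradicting $P'$-membership) I may assume $\phi(\overleftarrow{s}), \phi(\overrightarrow{t}) \in P$. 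The inclusion $\phi(\overrightarrow{s}) \subseteq \phi(\overrightarrow{t})$ then contradicts the orientation of $P$ if it is an equality and its consistency if it is strict, since the alternative $\phi(\overrightarrow{s}) = \phi(\overrightarrow{t})^*$ would force $\phi(\overrightarrow{t}) = \mathcal{P} \notin \mathcal{B}$. For regularity, every small separation $\overrightarrow{r}$ is contained in each $P_i \in \mathcal{P}$ by strong consistency, so $\phi(\overleftarrow{r}) = \mathcal{P}$ and hence the co-small $\overleftarrow{r}$ is not in $P'$. For the truncation, any $\overrightarrow{r}$ of order less than $k-1$ is oriented by $Q$ and thus identically by each $P_i$, so $\phi(\overrightarrow{r}) \in \{\emptyset, \mathcal{P}\}$; consequently $\overrightarrow{r} \in P'$ iff $\phi(\overrightarrow{r}) = \emptyset$ iff $\overrightarrow{r} \in Q$.

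The main obstacle is the profile property. Given $\overrightarrow{s}, \overrightarrow{t} \in P'$, if $\overrightarrow{s} \vee \overrightarrow{t} \notin S$ then its inverse is also outside $S \supseteq P'$ and we are done. Otherwise let $U := \phi(\overrightarrow{s}) \cup \phi(\overrightarrow{t}) = \phi(\overrightarrow{s} \vee \overrightarrow{t})$. The delicate case is $U = \mathcal{P}$, because then $\phi((\overrightarrow{s} \vee \overrightarrow{t})^*) = \emptyset \in P \cup \{\emptyset\}$ would naively force $(\overrightarrow{s} \vee \overrightarrow{t})^*$ into $P'$, while the profile property of $P$ cannot be invoked to prevent this since $\mathcal{P} \notin \mathcal{B}$. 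I rule this case out separately: if one of $\phi(\overrightarrow{s}), \phi(\overrightarrow{t})$ equals $\emptyset$, then $U = \mathcal{P}$ forces the other $\phi$-image to equal $\mathcal{P}$, contradicting $P'$-membership; and if both lie in $P$, the relation $\phi(\overrightarrow{s})^* \subseteq \phi(\overrightarrow{t})$ (equivalent to $U = \mathcal{P}$) leads, by the same case analysis as in the consistency argument above, to a contradiction with either the orientation or the consistency of $P$. If $U = \emptyset$ then $\phi((\overrightarrow{s} \vee \overrightarrow{t})^*) = \mathcal{P} \notin P \cup \{\emptyset\}$; and if $U \in \mathcal{B}$, either the profile property of $P$ (when both $\phi$-images lie in $P$) or its orientation (when one of them is $\emptyset$, forcing $U$ to equal the other) gives $U^* \notin P$, while $U^* \neq \emptyset$ because $U \neq \mathcal{P}$, so $(\overrightarrow{s} \vee \overrightarrow{t})^* \notin P'$ in either subcase.
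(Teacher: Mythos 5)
Your proof is correct and follows essentially the same route as the paper's: a direct verification of each defining property using the fact that $\phi$ respects $*$, $\leq$ and $\vee$. You are in fact more careful than the paper on the profile property, where the paper's one-line appeal to ``$\phi$ respects $\vee$'' silently skips the case $\phi(\overrightarrow{s})\cup\phi(\overrightarrow{t})=\mathcal{P}$ (in which $\phi((\overrightarrow{s}\vee\overrightarrow{t})^*)=\emptyset\in P\cup\{\emptyset\}$ would naively put the inverse into $P'$); your separate argument excluding this case via the strong consistency of $P$ supplies exactly the detail the paper glosses over.
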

\begin{proof}
	By the definition of $\phi$, $P'$ is an orientation of $S$.
	If some element $\overrightarrow{p}$ of $\mathcal{B}\cup \{\emptyset, \mathcal{P}\}$ is a subset of an element of $P \cup \{\emptyset\}$, then also $\overrightarrow{p} \in P \cup \{\emptyset\}$.
	As $\phi$ respects the partial order $\leq$, this property translates to the fact that if $\overrightarrow{p}$ and $\overrightarrow{q}$ are elements of $S$ with $\overrightarrow{p} \leq \overrightarrow{q} \in P'$ then also $\overrightarrow{p} \in P'$.
	So $P'$ is a consistent orientation.
	Also, the fact that $\phi$ respects $\vee$ implies that $P'$ has the profile property because $P$ has it.
	
	In order to show that $Q$ is the truncation of $P'$, let $\overrightarrow{r}$ be an element of $Q$.
	Then $\overrightarrow{r}$ is contained in $S$ and in all profiles in $\mathcal{P}$.
	So $\phi(\overrightarrow{r}) = \emptyset$ and hence $\overrightarrow{r} \in P'$.
	Thus $Q$ is a subset of $P'$ and thus the truncation of $P'$.
	
	Let $\overrightarrow{s}$ be a small element of $S$.
	As the elements of $\mathcal{P}$ are all regular, they contain $\overrightarrow{s}$ and thus $\phi(\overrightarrow{s})= \emptyset$.
	So $\overrightarrow{s} \in P'$, hence $P'$ contains all small elements of $S$ and is thus regular.
\end{proof}

The more elements $\mathcal{P}$ has, the better does the image of $\phi$ represent the structure of $S$.
As will be seen in \cref{sec:abstractprofiles}, it is quite easy to construct profiles of the image of $\phi$, which then by the previous lemma correspond to profiles of $S$.

See \cref{ex:treeoftangles} for a remark on how the abstract separation system $\mathcal{B}$ fits into the context of already existing strategies for the construction of trees of tangles.

\section{Abstract separation systems}\label{sec:abstract}
For this section fix a ground set $E$ and a set $\mathcal{B}$ of non-trivial subsets of $E$ such that $\mathcal{B}$ is a separation system with the following property:
\begin{align*}
	\text{If $S$ and $T$ are crossing elements of $\mathcal{B}$ , then $S\cup T$ is also contained in $\mathcal{B}$.}
\end{align*}
Phrased differently, $\mathcal{B}$ should be a separation system of bipartitions that is closed under unions of crossing elements.
We denote the set of all elements of $\mathcal{B}$ that do not cross any element of $\mathcal{B}$ by $\mathcal{E}$.
Note that $\mathcal{E}$ is closed under taking inverses and any two elements of $\mathcal{E}$ are nested.
Also, let $\mathcal{V}$ be the finest partition of $\mathcal{B} \setminus \mathcal{E}$ such that elements of distinct partition classes are nested.

\subsection{The finite tree structure}\label{sec:abstractfinite}

If $\mathcal{B}$ is finite, then it has a natural tree structure, as is described by \cite{CunninghamEdmonds80}.
Their terminology may be very different, but it can be applied to finite systems of bipartitions and then yields a tree decomposition.
In particular Theorems 3 and 4 of \cite{CunninghamEdmonds80} can be translated to this paper's terminology as follows:

\begin{thm}[reformulation of results of {\cite{CunninghamEdmonds80}}]\label{thm:CEtdc}
Let $E$ be a set and $\mathcal{B}'$ a finite set of non-oriented bipartitions such that every element of $\mathcal{B}'$ only contains partition classes of size at least two.
Assume that whenever $|A \cap B| \geq 2$ and $|E \setminus (A \cup B)|\geq 1$ for some $\{A, E \setminus A\}$ and $\{B, E \setminus B\}$ in $\mathcal{B}'$, then also $\{A \cap B, E \setminus (A\cap B)\}$ is contained in $\mathcal{B}'$.
Then there is a tree decomposition of $E$ with the following properties:
\begin{itemize}
	\item The unoriented separations induced by the edges of the tree decomposition are exactly the elements of $\mathcal{B}'$ that do not cross any element of $\mathcal{B}'$.
	\item Let $v$ be a node of the decomposition tree with part $B_v$, then deleting $v$ yields a partition of $E\setminus B_v$ that can be extended to a partition $P_v$ of $E$ by adding the elements of $B_v$ as singletons.
	If $S_v$ is the set of all biparititions $\{A,B\}$ of $E$ where both $A$ and $B$ are unions of at least two partition classes of $P_v$ then:
	\begin{enumerate}
		\item\label{CEtdc:simple} Either $S_v$ and $\mathcal{B}'$ are disjoint; or
		\item\label{CEtdc:brittle} $S_v$ is a subset of $\mathcal{B}'$; or
		\item \label{CEtdc:semibrittle} There is a cyclic order on $P_v$ such that an element $\{A,B\}$ of $S_v$ is contained in $\mathcal{B}'$ if and only if $A$ is the union of an interval of elements of $P_v$.
	\end{enumerate}
	\item Every element of $\mathcal{B}'$ is either induced by an edge of the decomposition tree or is contained in some $S_v$ for a node $v$ of the decomposition tree.
\end{itemize}
\end{thm}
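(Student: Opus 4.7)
The plan is to obtain the theorem as a direct translation of the split decomposition theorem of Cunningham and Edmonds. The dictionary is the following: each element of $\mathcal{B}'$ corresponds to one of their \emph{splits}, our closure condition matches their requirement that the family be closed under corners of crossing pairs, and their node types \emph{prime}, \emph{complete} and \emph{circular} correspond respectively to cases \ref{CEtdc:simple}, \ref{CEtdc:brittle} and \ref{CEtdc:semibrittle}.

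First I would build the decomposition tree from the nested subfamily $\mathcal{E}' \subseteq \mathcal{B}'$ consisting of all elements that do not cross any other element of $\mathcal{B}'$. A finite nested family of bipartitions of $E$ induces a tree in the standard way: declare $e,f \in E$ to be equivalent if no element of $\mathcal{E}'$ separates them, take the equivalence classes as the nodes of $T$, and take the elements of $\mathcal{E}'$ as its edges, with the two endpoints of an edge lying on opposite sides of the corresponding split. Letting $B_v$ be the equivalence class at $v$ makes the partition $P_v$ obtained from deleting $v$ and adjoining the elements of $B_v$ as singletons precisely the one described in the statement. The first bullet point, and the structural part of the second, then follow by construction; the third bullet will follow once the trichotomy at each node is proven.

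The real work is concentrated at each individual node $v$. Define $\mathcal{B}'_v$ as the set of bipartitions in $\mathcal{B}' \cap S_v$ that are not edge-induced. By construction every element of $\mathcal{B}'_v$ crosses some other element of $\mathcal{B}'$, and the closure hypothesis on $\mathcal{B}'$ restricts to closure of $\mathcal{B}'_v$ under those corners that remain non-trivial relative to $P_v$. If $\mathcal{B}'_v$ is empty, case \ref{CEtdc:simple} holds. The main obstacle is the dichotomy in the remaining case: either $\mathcal{B}'_v$ already contains \emph{every} bipartition in $S_v$ (brittle case), or $\mathcal{B}'_v$ consists precisely of the interval bipartitions under some cyclic order on $P_v$ (semi-brittle case).

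To establish this dichotomy I would pick any $\{A,B\} \in \mathcal{B}'_v$ and classify the other elements of $\mathcal{B}'_v$ by how their sides distribute across $A$ and $B$. Closure under corners implies that if some pair of crossing members of $\mathcal{B}'_v$ yields all four corners as non-trivial new bipartitions, then one can propagate this through further corners to produce every bipartition of $P_v$ whose two sides are unions of at least two classes, giving case \ref{CEtdc:brittle}. In the remaining case the crossing pattern is restricted enough that corners never produce a new refinement of $P_v$, and a short combinatorial argument then yields a consistent ``successor'' relation on the classes of $P_v$ that assembles into the cyclic order required for case \ref{CEtdc:semibrittle}. This dichotomy is the finite predecessor of \cref{daisyoranemone} and is exactly the combinatorial heart of Theorems 3 and 4 of \cite{CunninghamEdmonds80}; once it is in place, all three bullets of the theorem are immediate.
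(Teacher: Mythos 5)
Your overall plan coincides with what the paper actually does: \cref{thm:CEtdc} is not proved from scratch there but obtained by citing \cite{CunninghamEdmonds80} and supplying a dictionary --- $E$ together with $\mathcal{B}'$ forms one of their split systems, the closure hypothesis is their intersection property, and their simple, brittle and semi-brittle parts are the nodes falling under cases (\ref{CEtdc:simple}), (\ref{CEtdc:brittle}) and (\ref{CEtdc:semibrittle}) respectively. Your first paragraph is essentially that dictionary (your ``prime/complete/circular'' is terminology from the later split-decomposition literature; the 1980 paper says simple/brittle/semi-brittle). You then go further and sketch the internal proof, but you leave the node dichotomy --- which is the entire content of Theorems 3 and 4 of \cite{CunninghamEdmonds80} --- as ``a short combinatorial argument'', so the sketch does not actually replace the citation; it only risks introducing errors, and it does introduce one.

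The concrete problem is your construction of the decomposition tree. Taking as nodes only the equivalence classes of ``not separated by any element of $\mathcal{E}'$'' omits every node with empty part, and those are precisely the nodes at which cases (\ref{CEtdc:brittle}) and (\ref{CEtdc:semibrittle}) occur. For instance, if $\mathcal{B}'$ consists of all interval unions of a cyclic partition $P_1,\dots,P_n$ with $n\ge 4$, then $\mathcal{E}'$ consists of the bipartitions $\{P_i,E\setminus P_i\}$, the equivalence classes are just $P_1,\dots,P_n$, and your rule ``the two endpoints of an edge lie on opposite sides of the split'' does not determine a second endpoint for the edge $\{P_i,E\setminus P_i\}$: there are $n-1$ classes on the other side. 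The intended tree is the star whose hub is the empty-part node carrying the semi-brittle (daisy-like) structure. The standard fix is to take the nodes to be the consistent orientations of $\mathcal{E}'$, with part $E\setminus\bigcup O$ at the orientation $O$ --- exactly the construction the paper uses in \cref{treefortreeset} for the infinite case. With that correction, and with the dichotomy at each node either delegated honestly to \cite{CunninghamEdmonds80} or proved in full as the finite precursor of \cref{daisyoranemone} (which is genuinely more work than a short argument), your proof goes through.
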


\begin{rem}
	This version of \cite[Theorem 4]{CunninghamEdmonds80} relates to its original version as follows:
	As the elements of $\mathcal{B}'$ only contain partition classes with at least two elements each, $E$ and $\mathcal{B}'$ together form a split system as defined in \cite{CunninghamEdmonds80} after the proof of Theorem 3.
	There it is also explained how the split system induces a decomposition frame.
	The additional property of $\mathcal{B}'$ in \cref{thm:CEtdc} translates to $\mathcal{B}'$ having the intersection property of \cite{CunninghamEdmonds80}.
	So \cite[Theorem 4]{CunninghamEdmonds80} can be applied to the decomposition frame induced by $\mathcal{B}'$.
	On the first page of \cite{CunninghamEdmonds80} it is described how the resulting minimal decomposition can be turned into a (decomposition) tree, and the parts of the decomposition correspond to the vertices of the tree.
	This decomposition tree together with its parts can be turned into a tree decomposition by deleting, from all parts, all edges that are not contained in $E$.
	Parts of the decomposition that are called simple in \cite{CunninghamEdmonds80} are those that correspond to vertices $v$ of the decomposition tree for which (\ref{CEtdc:simple}) from \cref{thm:CEtdc} holds, namely that $S_v$ and $\mathcal{B}'$ are disjoint.
	Similarly, the brittle parts of the decomposition correspond to vertices for which (\ref{CEtdc:brittle}) holds, and the semi-brittle parts correspond to vertices for which (\ref{CEtdc:semibrittle}) holds.
\end{rem}

In order to apply this theorem to $\mathcal{B}$, it is necessary to first turn $\mathcal{B}$ into a suitable set $\mathcal{B}'$ of unoriented separations.
This is achieved by taking the unoriented versions of the elements of $\mathcal{B}$ and then deleting all elements that contain a partition class with only one element.
The tree decomposition that results from applying \cref{thm:CEtdc} can be turned into a tree decomposition that describes the tree structure of $\mathcal{B}$ by adding, for all $e\in E$ for which the orientations of $\{e,E\setminus e\}$ are contained in $\mathcal{B}$, a new vertex $v$ to $T$ that is adjacent to the unique vertex $w$ of $T$ whose part already contains $e$, deleting $e$ from the part of $w$ and making $\{e\}$ the part of $v$.
So the following version of \cref{thm:CEtdc} holds for $\mathcal{B}$:

\begin{thm}\label{thm:structurefiniteBsimple}
	Let $E$ be a set and $\mathcal{B}$ a finite subsystem of the separation system of bipartitions on ground set $E$ such that neither orientation of $\{\emptyset, E\}$ is contained in $\mathcal{B}$ and such that the union of any two non-nested elements of $\mathcal{B}$ is also contained in $\mathcal{B}$.
	Then there is a tree decomposition of $E$ with the following properties:
\begin{itemize}
	\item Every separation induced by the tree decomposition is contained in $\mathcal{B}$.
	\item Let $v$ be a node of the decomposition tree, then:
	\begin{enumerate}
		\item\label{structurefiniteBsimple:nonempty} Either the part of $v$ is non-empty; or
		\item\label{structurefiniteBsimple:simple} The part of $v$ is empty, and the partition $P_v$ that arises from the deletion of $v$ from $T$ has the property that every element of $\mathcal{B}$ contains at most one partition class and is disjoint from at most one partition class; or
		\item\label{structurefiniteBsimple:anemone} The part of $v$ is empty and all non-trivial unions of partition classes of $P_v$ are contained in $\mathcal{B}$; or
		\item\label{structurefiniteBsimple:daisy} The part of $v$ is empty, and there is a cyclic order on $P_v$ (unique up to mirroring) such that a non-trivial union of partition classes is contained in $\mathcal{B}$ if and only if it is a union of a non-trivial interval.
	\end{enumerate}
	\item Every element of $\mathcal{B}$ is either induced by an edge of the decomposition tree or is a union of partition classes of $P_v$ for a vertex $v$ of the decomposition tree.
\end{itemize}
\end{thm}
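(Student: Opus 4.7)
The plan is to deduce the theorem from \cref{thm:CEtdc} by first converting $\mathcal{B}$ into a set $\mathcal{B}'$ of unoriented bipartitions that satisfies the hypotheses of that theorem, and then modifying the resulting tree decomposition to restore the singleton bipartitions that are dropped during the conversion. This follows the strategy sketched in the paragraph immediately preceding the theorem.

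First I would form $\mathcal{B}'$ by taking, for each $A \in \mathcal{B}$, the unordered pair $\{A, E \setminus A\}$, and then discarding those pairs in which one side has fewer than two elements; this is well defined because $\mathcal{B}$ is closed under inverses. Every bipartition in $\mathcal{B}'$ then has both sides of size at least two. To verify the intersection property required by \cref{thm:CEtdc}, suppose $\{A, E \setminus A\}$ and $\{B, E \setminus B\}$ lie in $\mathcal{B}'$ with $|A \cap B| \geq 2$ and $|E \setminus (A \cup B)| \geq 1$. If $A$ and $B$ are nested then $\{A \cap B, E \setminus (A \cap B)\}$ is already one of the two input pairs and thus in $\mathcal{B}'$. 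Otherwise $A$ and $B$ cross, so by the hypothesis on $\mathcal{B}$ we have $A \cup B \in \mathcal{B}$, and taking complements using that $\mathcal{B}$ is a separation system gives $A \cap B \in \mathcal{B}$. The three corners $A \setminus B$, $B \setminus A$ and $E \setminus (A \cup B)$ are all nonempty in the crossing case, so $|E \setminus (A \cap B)| \geq 3$, and hence $\{A \cap B, E \setminus (A \cap B)\} \in \mathcal{B}'$.

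Applying \cref{thm:CEtdc} to $\mathcal{B}'$ yields a tree decomposition $(T', (B_v)_{v \in V(T')})$. To account for the singleton bipartitions dropped when forming $\mathcal{B}'$, I would modify $T'$ as follows: for every $e \in E$ with $\{e\} \in \mathcal{B}$, locate the unique vertex $w \in V(T')$ with $e \in B_w$, add a new vertex $v_e$ adjacent to $w$ with part $\{e\}$, and remove $e$ from $B_w$. The new edge $w v_e$ of the resulting decomposition $T$ induces the separation $\{e, E \setminus \{e\}\} \in \mathcal{B}$, while every other induced separation is unchanged from $T'$ and so lies in $\mathcal{B}' \subseteq \mathcal{B}$.

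Finally I would classify each vertex of $T$. A newly created leaf $v_e$ has part $\{e\}$, so case (\ref{structurefiniteBsimple:nonempty}) applies; an original vertex whose part remains nonempty after splitting also falls into case (\ref{structurefiniteBsimple:nonempty}). Otherwise $v$ is an original vertex whose part is now empty, and the partition $P_v$ induced by deleting $v$ from $T$ coincides with the partition used in \cref{thm:CEtdc} (the split-off singletons reappear as separate classes). The three subcases of \cref{thm:CEtdc} then correspond directly to cases (\ref{structurefiniteBsimple:simple}), (\ref{structurefiniteBsimple:anemone}), and (\ref{structurefiniteBsimple:daisy}): the simple subcase gives case (\ref{structurefiniteBsimple:simple}), the brittle subcase translates to every non-trivial union of classes lying in $\mathcal{B}$ as in case (\ref{structurefiniteBsimple:anemone}), and the semi-brittle subcase supplies the cyclic order of case (\ref{structurefiniteBsimple:daisy}). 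The remaining property, that every element of $\mathcal{B}$ is either induced by an edge or a union of classes of some $P_v$, follows from the corresponding property for $\mathcal{B}'$ in \cref{thm:CEtdc} together with the fact that the singleton bipartitions are now induced by the new edges $w v_e$. The main obstacle is the bookkeeping in translating the three subcases, in particular verifying that the cyclic order in the semi-brittle subcase still characterises the displayed separations of $\mathcal{B}$ after the singleton bipartitions have been introduced as their own partition classes.
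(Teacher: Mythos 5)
Your proposal is correct and follows essentially the same route as the paper: the paper's argument is precisely the reduction to \cref{thm:CEtdc} via unorienting, discarding bipartitions with a singleton side, and then reattaching those singletons as new leaf vertices. Your explicit verification of the intersection property (via closure under unions of crossing elements and under complementation) fills in a step the paper leaves implicit, but the overall strategy is identical.
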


\begin{rem}
	\Cref{thm:structurefiniteBsimple} is phrased deliberately to closely resemble maximal partial $(k, \mathcal{S})$-trees from \cite{ClarkWhittle13}.
	Essentially, \cref{thm:structurefiniteBsimple} solves a problem for $\mathcal{B}$ that is solved in \cite{ClarkWhittle13} for equivalence classes of separations of the same order: trying to display oriented bipartitions of a ground set in a tree decomposition even though the displayed (equivalence classes of) bipartitions need not be nested.
	The partitions in \cref{structurefiniteBsimple:anemone} and \cref{structurefiniteBsimple:daisy} play the same role as anemones and daisies: displaying many bipartitions at once that mostly cross and thus cannot be displayed together by the tree decomposition alone.
	Indeed, in the special case where there is a connectivity function and $k\in \mathbb{N}$ such that the elements of $\mathcal{B}$ are exactly the subsets of $E$ of connectivity $k-1$, the partitions in \cref{structurefiniteBsimple:anemone} and \cref{structurefiniteBsimple:daisy} are $k$-anemones and $k$-daisies.
\end{rem}

The main goal of this section is to derive a similar decomposition theorem for the case that $\mathcal{B}$ is infinite.
For that, denote by $\mathcal{E}$ the set of separations in $\mathcal{B}$ that do not cross any other separations in $\mathcal{B}$, and let $\mathcal{V}$ be the finest partition of $\mathcal{B}\setminus \mathcal{E}$ in which crossing separations are contained in the same partition class.
By definition, $\mathcal{E}$ does not contain any crossing elements nor the empty set, and it is closed under taking inverses.

With this terminology, the tree decomposition obtained in \cref{thm:structurefiniteBsimple} has the following properties, which we try to also obtain for a tree decomposition if $\mathcal{B}$ is infinite:
\begin{itemize}
	\item The separations induced by the edges of the decomposition tree are the elements of $\mathcal{E}$.
	\item Every $V\in \mathcal{V}$ is associated with a vertex $v_V$ of the decomposition tree whose part is empty, and if $V\neq W \in \mathcal{V}$ then $v_V$ and $v_W$ are distinct.
	\item For $V\in \mathcal{V}$, $\partial(V)$ consists of those (oriented) separations in $\mathcal{B}$ that are induced by the edges of the decomposition tree that are incident with $v_V$ and oriented towards $v_V$.
	In particular $\partial(V)\subseteq \mathcal{E}$.
\end{itemize}
The proof of \cref{thm:CEtdc} in \cite{CunninghamEdmonds80} finds a decomposition tree whose edges induce the separations in $\mathcal{E}'$, and then find that the sets $S_v$ as in \cref{thm:CEtdc} are as described in that reformulated version of the theorem.
Furthermore, if $\mathcal{E}$ is finite then there are tools (described later) to obtain a tree decomposition whose edges induce the elements of $\mathcal{E}$.
These tools also work if $\mathcal{E}$ is infinite, though in that case the result might be a tree-like space, an infinite generalisation of a tree.
So it is a valid proof strategy to construct a tree-decomposition from $\mathcal{E}$ and then hope that the elements of $\mathcal{V}$ are associated with vertices of the decomposition tree(-like space) just as nicely as in the finite case.
There are two main reasons why this proof strategy does not work in the infinite case: First, the proof in \cite{CunninghamEdmonds80} that $S_v\cap \mathcal{B}'$ has a certain structure involves induction, but in the infinite case $S_v\cap \mathcal{B}'$ can also be infinite.
Second, in the decomposition tree(-like space) which is obtained from $\mathcal{E}$, several elements of $\mathcal{V}$ may be associated with the same vertex of the decomposition tree (-like space); essentially there may be elements ``missing'' from $\mathcal{E}$.
A solution to both problems is to organise the elements of $\mathcal{V}$ into flower-like structures first by applying \cref{thm:CEtdc} to suitable finite subsets of $V$ and combining the results, and then use the flower-like structures of the elements of $V$ to find a more appropriate tree-like structure.

\subsection{The flower-like structure of infinite sets of mostly crossing separations}\label{sec:abstractflower}

What is a suitable finite subset is captured in the notion of pre-flowers:

\begin{defn}\label{def:preflower}
	A \emph{pre-flower} is a finite subset $\mathcal{F}$ of $\mathcal{B}$ with at least two elements such that for all elements $S$ and $T$ of $\mathcal{F}$ there are $R_0,\ldots,R_n \in \mathcal{F}$ with $S = R_0$, $T = R_n$ such that, for $0\leq i\leq n-1$, $R_i$ and $R_{i+1}$ cross.
\end{defn}

Note that every pre-flower is a subset of some element of $\mathcal{V}$, and that if $\mathcal{F}$ and $\mathcal{G}$ are two subsets of $V \in \mathcal{V}$ that are pre-flowers then there is a pre-flower $\mathcal{H} \subseteq V$ that contains $\mathcal{F}$ and $\mathcal{G}$ as subsets.
The following notation will be used for both elements of $\mathcal{V}$ and pre-flowers:

\begin{defn}
	Given a set $S$ of bipartitions of $E$ let $\sim_{S}$ be the equivalence relation on $E$ where $e\sim_{S} f$ if and only if no element of $S$ distinguishes $e$ and $f$.
	Denote the set of equivalence classes of $\sim_{S}$ by $\partial(S)$, the set of elements of $\mathcal{B}$ which are unions of elements $\partial(S)$ by $\sepclos{S}$, and the set of elements of $\sepclos{S}$ which are not orientations of elements in $\partial(S)$ by $\sepinn{S}$.
\end{defn}

Every pre-flower is a subset of some element of $\mathcal{V}$, and if some $V\in \mathcal{V}$ is finite, then it is itself a pre-flower.
Also every pre-flower $\mathcal{F}$ contains two crossing separations, so $\partial(\mathcal{F})$ has at least four elements.
Furthermore, the bipartitions to which the elements of $\partial{\mathcal{F}}$ correspond form a star; and the separations of $\mathcal{B}$ towards which all elements of the star point are exactly the separations contained in $\sepclosf$.
Also $\mathcal{F}\subseteq \sepinnf\subseteq \sepclosf$, and every element of $\sepinnf$ contains at least two elements of $\partial(\mathcal{F})$ and is disjoint from at least two elements of $\partial(\mathcal{F})$.
As every corner of elements of $\sepclosf$ corresponds again to a union of elements of $\partial(\mathcal{F})$, also $\sepclosf$ is closed under unions of crossing elements.

\begin{lem}\label{preflowerisdaisyoranemone}
	Let $\mathcal{F}$ be a pre-flower.
	Then one of the following happens:
	\begin{itemize}
		\item The elements of $\sepclosf$ are exactly the unions of elements of $\partial(\mathcal{F})$ that are neither $\emptyset$ nor $E$.
		\item There is a cyclic order on $\partial(\mathcal{F})$, unique up to mirroring, such that the elements of $\sepclosf$ are exactly the unions of non-trivial intervals of $\partial(\mathcal{F})$.
	\end{itemize}
\end{lem}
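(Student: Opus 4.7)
The plan is to derive this from Theorem \ref{thm:structurefiniteBsimple} by applying it on the finite ground set $\partial(\mathcal{F})$ with the separation system $\sepclosf$ viewed as bipartitions of $\partial(\mathcal{F})$. First I would verify the hypotheses: $\sepclosf$ is finite (since $\partial(\mathcal{F})$ is), contains neither orientation of $\{\emptyset, \partial(\mathcal{F})\}$ (elements of $\sepclosf$ lie in $\mathcal{B}$, which avoids trivial separations), and is closed under unions of crossing elements, inheriting this from $\mathcal{B}$ because the union of two crossing elements of $\sepclosf$ is again a union of equivalence classes in $\partial(\mathcal{F})$.

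Theorem \ref{thm:structurefiniteBsimple} then yields a tree decomposition of $\partial(\mathcal{F})$, each of whose vertices falls into one of the four types. The key step is to identify a single ``flower vertex'' $v_0$ at which all of $\mathcal{F}$ is displayed. For this, I would argue that if two elements $S,T$ of $\sepclosf$ cross, they cannot both be induced by tree-edges (separations induced by tree-edges are nested with everything in $\sepclosf$), and if they were unions of partition classes of $P_v$ and $P_w$ for distinct vertices $v\neq w$, then the tree-edge on the $v$--$w$ path would make $S$ and $T$ nested. So crossing elements of $\sepclosf$ are displayed at a common vertex. Using the chain-of-crossings condition in the definition of a pre-flower, all elements of $\mathcal{F}$ are displayed at one common vertex $v_0$.

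Next I would show that $P_{v_0}$ coincides with $\partial(\mathcal{F})$ (viewed as the discrete partition into singletons). If two distinct classes $a,b \in \partial(\mathcal{F})$ lay in the same part of $P_{v_0}$, then no union of parts of $P_{v_0}$ would separate them; but by definition of $\partial(\mathcal{F})$, some element of $\mathcal{F}$ separates $a$ from $b$, and that element is a union of parts of $P_{v_0}$ by the previous step — contradiction. Hence $P_{v_0}$ is the finest possible partition and $\sepclosf$ is exactly the set of those non-trivial unions of parts of $P_{v_0}$ which are in $\mathcal{B}$. Finally, since $\mathcal{F}$ contains crossing elements, $v_0$ cannot be of the simple type (where $S_{v_0}$ is disjoint from $\mathcal{B}$) and its part must be empty (non-empty parts only contribute singleton-like separations that cannot be crossed). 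So $v_0$ falls into type \ref{structurefiniteBsimple:anemone} or \ref{structurefiniteBsimple:daisy} of Theorem \ref{thm:structurefiniteBsimple}, which translate directly into the two alternatives claimed by the lemma, with uniqueness of the cyclic order up to mirroring carried over from the uniqueness statement of the theorem.

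The main obstacle I anticipate is the bookkeeping in the ``crossings force a common vertex'' step: one must carefully translate between ``union of parts of $P_v$'' for two distinct vertices and nestedness via the separating tree-edge, and also be careful that, when all of $\mathcal{F}$ funnels through $v_0$, the remaining vertices of the decomposition are truly superfluous (so that $\sepclosf$ is not secretly enlarged by separations displayed elsewhere). Once this is cleanly handled, the rest is a direct translation of cases \ref{structurefiniteBsimple:anemone} and \ref{structurefiniteBsimple:daisy} of Theorem \ref{thm:structurefiniteBsimple}.
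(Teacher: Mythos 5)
Your proposal is correct and takes essentially the same route as the paper: the paper also reduces the claim to the finite Cunningham--Edmonds structure theorem, applying \cref{thm:CEtdc} to $\sepinnf$ after observing that no element of $\sepinnf$ is nested with all of $\sepinnf$ (so the decomposition has a single, non-simple part). Your version just makes the ``everything funnels through one flower vertex whose partition is $\partial(\mathcal{F})$'' step explicit via the chain-of-crossings condition, which the paper obtains more directly.
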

\begin{proof}
	Assume for a contradiction that there is $S\in \sepinnf$ that is nested with all elements of $\sepinnf$.
	If $Q$ and $R$ are two elements of $\mathcal{F}$ that cross, then some orientation of $S$ points towards both $Q$ and $R$.
	So by the definition of $\mathcal{F}$, some orientation $S'$ of $S$ points towards all elements of $\mathcal{F}$.
	But then all elements of $S'$ are contained in the same element of $\partial(\mathcal{F})$, a contradiction.
	
	So applying \cref{thm:CEtdc} to $\sepinnf$ yields a decomposition with only one part.
\end{proof}

So for a pre-flower $\mathcal{F}$, \cref{thm:CEtdc} can be applied to $\sepinnf$ and yields a tree-decomposition with only one part that is anemone-like, in which case $\mathcal{F}$ is a \emph{pre-anemone}, or daisy-like, in which case $\mathcal{F}$ is a \emph{pre-daisy}.
Furthermore, given an element $V$ of $\mathcal{V}$, it is not possible that $V$ contains both pre-daisies and pre-anemones.

\begin{lem}\label{infdaisyoranemone}
	Let $V\in \mathcal{V}$ and let $\mathcal{F}$ and $\mathcal{G}$ be pre-flowers contained in $V$ such that $\mathcal{F}\subseteq \mathcal{G}$.
	Then $\mathcal{F}$ is a pre-anemone if and only if $\mathcal{G}$ is a pre-anemone.
\end{lem}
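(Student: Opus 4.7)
My plan is to invoke the dichotomy of \cref{preflowerisdaisyoranemone}: each of $\mathcal{F}$ and $\mathcal{G}$ is either a pre-anemone or a pre-daisy, so I only need to rule out the two mixed cases. The key observation is that since $\mathcal{F}\subseteq\mathcal{G}$, every element of $\mathcal{F}$ is already a union of classes of $\partial(\mathcal{G})$, and hence $\partial(\mathcal{G})$ refines $\partial(\mathcal{F})$ as a partition of $E$; equivalently, each class $C\in\partial(\mathcal{F})$ is a union of classes of $\partial(\mathcal{G})$. With this in hand, the direction ``$\mathcal{G}$ pre-anemone implies $\mathcal{F}$ pre-anemone'' is almost immediate: any non-trivial union of classes of $\partial(\mathcal{F})$ is also a non-trivial union of classes of $\partial(\mathcal{G})$, hence lies in $\sepclos{\mathcal{G}}\subseteq\mathcal{B}$, and therefore in $\sepclos{\mathcal{F}}$.

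For the converse I argue by contraposition. Suppose $\mathcal{F}$ is a pre-anemone but $\mathcal{G}$ is a pre-daisy, and fix the cyclic order on $\partial(\mathcal{G})$ given by \cref{preflowerisdaisyoranemone}. First, each individual class $C\in\partial(\mathcal{F})$ is a non-trivial union of classes of $\partial(\mathcal{F})$ (namely itself), so the pre-anemone property of $\mathcal{F}$ gives $C\in\sepclos{\mathcal{F}}\subseteq\mathcal{B}$; since $C$ is also a union of classes of $\partial(\mathcal{G})$, it lies in $\sepclos{\mathcal{G}}$, and the pre-daisy property of $\mathcal{G}$ then forces $C$ to be a non-trivial interval of the cyclic order of $\partial(\mathcal{G})$. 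Thus the classes of $\partial(\mathcal{F})$ partition $\partial(\mathcal{G})$ into consecutive intervals $C_1,\ldots,C_n$, and because $\mathcal{F}$ contains two crossing separations (being a pre-flower) we have $n=|\partial(\mathcal{F})|\geq 4$. I now exploit this by setting $X:=C_1\cup C_3$: on the one hand, $X$ is a non-trivial union of classes of $\partial(\mathcal{F})$, so the pre-anemone property of $\mathcal{F}$ yields $X\in\mathcal{B}$, hence $X\in\sepclos{\mathcal{G}}$; on the other hand, $C_2$ separates $C_1$ from $C_3$ on one side of the cyclic order and $C_4,\ldots,C_n$ separate them on the other side, so $X$ is not a non-trivial interval of $\partial(\mathcal{G})$, contradicting the pre-daisy property of $\mathcal{G}$.

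I do not foresee a serious obstacle here. The two points that require a little care are: first, one must verify that each class of $\partial(\mathcal{F})$ genuinely lands in $\mathcal{B}$, which is why I feed the pre-anemone hypothesis on $\mathcal{F}$ into the pre-daisy structure on $\mathcal{G}$ rather than the other way round; and second, one must guarantee that $\partial(\mathcal{F})$ has at least four classes so that two non-adjacent ones $C_1$ and $C_3$ exist, which is immediate from the pre-flower definition since two crossing separations already cut $E$ into four non-empty corners.
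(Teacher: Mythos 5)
Your proof is correct and takes essentially the same route as the paper: both arguments rest on the dichotomy of \cref{preflowerisdaisyoranemone} together with the observation that $\partial(\mathcal{G})$ refines $\partial(\mathcal{F})$, and both refute the daisy alternative by producing a union of non-adjacent classes that lies in $\sepclos{\mathcal{G}}$ (respectively $\sepclosf$) yet cannot be an interval in any cyclic order. The only cosmetic difference is that the paper works with the three sets $Q\cup R$, $Q\cup R'$, $Q\cup S$ in both directions, whereas you argue one direction directly via the refinement and the other via your $C_1\cup C_3$.
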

\begin{proof}
	As $\mathcal{F}\subseteq \mathcal{G}$, also $\sepinnf \subseteq \sepinn{\mathcal{G}}$.
	
	First consider the case that $\mathcal{F}$ is a pre-anemone, and let $Q$, $R$, $R'$ and $S$ be elements of $\partial(\mathcal{F})$ such that the union of $Q$ with any of the other three elements of $\partial(\mathcal{F})$ is an element of $\sepinnf$.
	As $\partial(\mathcal{G})$ is a refinement of $\partial(\mathcal{F})$, the sets $Q$, $R$, $R'$ and $S$ are unions of elements of $\partial(\mathcal{G})$.
	Then there is no cyclic order on $\partial(\mathcal{G})$ which turns all of $Q\cup R$, $Q\cup R'$ and $Q\cup S$ into intervals of $\partial(\mathcal{G})$.
	So $\mathcal{G}$ is not a pre-daisy, thus it is a pre-anemone.
	
	Now consider the case that $\mathcal{G}$ is a pre-anemone, and let $Q$, $R$, $R'$ and $S$ be distinct elements of $\partial(\mathcal{F})$.
	Then $Q\cup R$, $Q\cup R'$ and $Q\cup S$ are all elements of $\sepinn{\mathcal{G}}$ and unions of elements of $\partial(\mathcal{F})$, so they are elements of $\sepclosf$.
	As there is no cyclic order on $\partial(\mathcal{F})$ which turns these three sets into unions of intervals of $\partial(\mathcal{F})$, $\mathcal{F}$ is not a pre-daisy and thus is a pre-anemone.
\end{proof}

\begin{cor}
	Let $V\in \mathcal{V}$.
	Then either all pre-flowers contained in $V$ are pre-anemones or all pre-flowers contained in $V$ are pre-daisies.
\end{cor}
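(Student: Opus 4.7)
The plan is to deduce this corollary from \cref{infdaisyoranemone} by showing that any two pre-flowers in $V$ have a common pre-flower extension, and then chaining the biconditional of that lemma twice. More precisely, given pre-flowers $\mathcal{F}$ and $\mathcal{G}$ contained in $V\in\mathcal{V}$, I would produce a pre-flower $\mathcal{H}\subseteq V$ with $\mathcal{F}\cup\mathcal{G}\subseteq\mathcal{H}$; applying \cref{infdaisyoranemone} to the pairs $(\mathcal{F},\mathcal{H})$ and $(\mathcal{G},\mathcal{H})$ would then give that $\mathcal{F}$ is a pre-anemone iff $\mathcal{H}$ is iff $\mathcal{G}$ is, which is exactly the statement.

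To construct $\mathcal{H}$, I would proceed as follows. Recall from the definition of $\mathcal{V}$ that any two elements of $V$ are connected by a finite chain of pairwise crossing elements of $\mathcal{B}$, and since $V$ is a partition class of $\mathcal{B}\setminus\mathcal{E}$ under the equivalence relation generated by crossing, every witness in such a chain already lies in $V$. Thus for each pair consisting of one element of $\mathcal{F}$ and one element of $\mathcal{G}$, pick a finite ``crossing chain'' inside $V$ connecting them. Let $\mathcal{H}$ be the union of $\mathcal{F}$, $\mathcal{G}$, and all these finitely many chains; this is a finite subset of $V$ containing at least two elements. By construction any two elements of $\mathcal{H}$ can be joined by a sequence of pairwise crossing elements of $\mathcal{H}$, so $\mathcal{H}$ is a pre-flower in the sense of \cref{def:preflower}, and trivially $\mathcal{F}\cup\mathcal{G}\subseteq\mathcal{H}\subseteq V$.

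With $\mathcal{H}$ in hand, the corollary is immediate: assume some pre-flower in $V$ is a pre-anemone and some other pre-flower in $V$ is a pre-daisy; name them $\mathcal{F}$ and $\mathcal{G}$, take the common extension $\mathcal{H}\subseteq V$ just described, and observe that $\mathcal{H}$ would by \cref{infdaisyoranemone} simultaneously have to be a pre-anemone (because $\mathcal{F}$ is) and a pre-daisy (because $\mathcal{G}$ is), which is impossible by \cref{preflowerisdaisyoranemone}.

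I do not expect any real obstacle here; the only slightly subtle point is verifying that the required crossing chains stay inside the fixed partition class $V$, which follows directly from the definition of $\mathcal{V}$ as the finest partition of $\mathcal{B}\setminus\mathcal{E}$ such that crossing separations lie in a common class. Everything else is a two-line application of the preceding lemma.
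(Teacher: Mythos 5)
Your proposal is correct and matches the paper's own proof: both take two pre-flowers $\mathcal{F},\mathcal{G}\subseteq V$, produce a common pre-flower extension $\mathcal{H}\subseteq V$ containing both, and apply \cref{infdaisyoranemone} twice. The paper simply asserts the existence of $\mathcal{H}$ (having noted it earlier as a consequence of the definition of $\mathcal{V}$), whereas you spell out the crossing-chain construction, but the argument is the same.
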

\begin{proof}
	Let $\mathcal{F}$ and $\mathcal{G}$ be pre-flowers contained in $V$ and let $Q\in \mathcal{F}$ and $R\in \mathcal{G}$.
	Then there is a pre-flower $\mathcal{H} \subseteq V$ with $\mathcal{F} \cup \mathcal{G} \subseteq \mathcal{H}$, and $\mathcal{F}$ is a pre-anemone if and only if $\mathcal{H}$ is a pre-anemone, if and only if $\mathcal{G}$ is a pre-anemone.
\end{proof}

So now the structure of the pre-flowers contained in some $V\in \mathcal{V}$ can be combined to a structure on $V$.
If the pre-flowers are pre-anemones, then not much can be gained in addition:

\begin{lem}
	If the pre-flowers contained in $\Phi(V)$ are pre-anemones, then for any two elements $R$ and $S$ of $V$, the intersection is contained in $\sepclos{V}\cup \{\emptyset\}$ and the union is contained in $\sepclos{V}\cup \{E\}$. 
\end{lem}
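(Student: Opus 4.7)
The plan is to reduce the statement to the structure of a single finite pre-flower inside $V$ and then invoke \cref{preflowerisdaisyoranemone}. If $R = S$ there is nothing to do, because every element of $V$ is a union of elements of $\partial(V)$ and $V \subseteq \mathcal{B}$, so $V \subseteq \sepclos{V}$. Assume therefore that $R \neq S$.

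Since $V$ is, by definition, a class of the finest partition of $\mathcal{B} \setminus \mathcal{E}$ in which separations from different classes are nested, $V$ is a single component of the ``crossing'' graph on its elements, and hence there is a finite sequence $R = R_0, R_1, \ldots, R_n = S$ in $V$ with $R_i$ and $R_{i+1}$ crossing for each $i$. The set $\mathcal{F} := \{R_0, \ldots, R_n\}$ is then a pre-flower contained in $V$, and by hypothesis it is a pre-anemone. From \cref{preflowerisdaisyoranemone}, $\sepclosf$ coincides with the set of those unions of elements of $\partial(\mathcal{F})$ that are neither $\emptyset$ nor $E$.

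Both $R$ and $S$ lie in $\mathcal{F}$ and are therefore unions of elements of $\partial(\mathcal{F})$, and so are $R \cap S$ and $R \cup S$. Because $R$ is a non-trivial subset of $E$, we have $R \cap S \neq E$ and $R \cup S \neq \emptyset$, so the only possibly degenerate outcomes are $R \cap S = \emptyset$ and $R \cup S = E$---exactly the ones permitted by the statement. In every other case, $R \cap S$ and $R \cup S$ are non-trivial unions of elements of $\partial(\mathcal{F})$ and thus lie in $\sepclosf \subseteq \mathcal{B}$. Finally, since $\mathcal{F} \subseteq V$, the equivalence relation $\sim_V$ refines $\sim_{\mathcal{F}}$, so $\partial(V)$ refines $\partial(\mathcal{F})$; hence $R \cap S$ and $R \cup S$ are also unions of elements of $\partial(V)$, and membership in $\mathcal{B}$ places them in $\sepclos{V}$. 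I do not anticipate a genuine obstacle: the hypothesis on $V$ is used only to invoke \cref{preflowerisdaisyoranemone} for the pre-flower $\mathcal{F}$, and the rest is bookkeeping.
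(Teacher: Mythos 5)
Your proof is correct and follows essentially the same route as the paper's: find a pre-flower $\mathcal{F}\subseteq V$ containing both $R$ and $S$, apply the pre-anemone alternative of \cref{preflowerisdaisyoranemone}, and conclude via $\sepclosf\subseteq\sepclos{V}$. You merely make explicit two steps the paper leaves implicit, namely the construction of $\mathcal{F}$ as a crossing-chain and the refinement argument showing $\partial(V)$ refines $\partial(\mathcal{F})$.
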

\begin{proof}
	By symmetry it suffices to show that the union of $R$ and $S$ is contained in $\sepclos{V}\cup\{E\}$.
	By the definition of $\mathcal{V}$, there is a pre-flower $\mathcal{F}\subseteq V$ that contains both $R$ and $S$.
	Hence $R$ and $S$ are unions of elements of $\partial(\mathcal{F})$, and so is their union.
	As $\mathcal{F}$ is a pre-anemone, either $R\cup S = E$ or $R \cup S \in \sepclosf$, and as $\sepclosf\subseteq \sepclos{V}$ the lemma follows.	
\end{proof}

If the pre-flowers contained in $V$ are pre-daisies, the cyclic orders are compatible with each other in the following sense:
For a pre-daisy $\mathcal{F}\subseteq V$ let a \emph{cyclic order of $\mathcal{F}$} be a cyclic order of $\partial(\mathcal{F})$ such that the unions of non-trivial intervals are exactly the elements of $\sepinnf$.
Also, for pre-daisies $\mathcal{F} \subseteq \mathcal{G} \subseteq V$ let $\pi_{\mathcal{G}\mathcal{F}}$ be the inclusion $\partial(\mathcal{F}) \rightarrow \partial(\mathcal{F})$.
Then every cyclic order of a pre-daisy induces unique cyclic orders of all comparable pre-daisies such that the inclusions are monotone maps of cyclically ordered sets.

\begin{lem}\label{daisiesconsistentorders}
	Let $\mathcal{F} \subseteq \mathcal{G} \subseteq \mathcal{H} \subseteq V$ be pre-daisies.
	For every cyclic order of $\mathcal{F}$ there is a unique cyclic order of $\mathcal{G}$ such that $\pi_{\mathcal{G}\mathcal{F}}$ is monotone, and for every cyclic order of $\mathcal{G}$ there is a unique cyclic order of $\mathcal{F}$ such that $\pi_{\mathcal{G}\mathcal{F}}$ is monotone.
	Also, if $T_{\mathcal{F}}$, $T_{\mathcal{G}}$ and $T_{\mathcal{H}}$ are cyclic orders of their respective pre-daisies, and two of the maps $\pi_{\mathcal{H}\mathcal{F}}$, $\pi_{\mathcal{H}\mathcal{G}}$ and $\pi_{\mathcal{G}\mathcal{F}}$ are monotone, then all three are monotone.
\end{lem}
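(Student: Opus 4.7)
My plan is to analyse the fibres of the natural projection $\pi_{\mathcal{G}\mathcal{F}}\colon \partial(\mathcal{G})\to \partial(\mathcal{F})$ that sends each $\sim_{\mathcal{G}}$-class to the (coarser) $\sim_{\mathcal{F}}$-class containing it, and then use this fibre structure to transport cyclic orders.

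The key technical step is to show that for every $P\in \partial(\mathcal{F})$ the fibre $\pi_{\mathcal{G}\mathcal{F}}^{-1}(P)$ is an interval of any chosen cyclic order on $\partial(\mathcal{G})$. Since $\mathcal{F}$ is a pre-daisy, \cref{preflowerisdaisyoranemone} tells me that $\{P\}$ is a non-trivial interval of $\partial(\mathcal{F})$, so $P\in \sepclosf\subseteq \sepclos{\mathcal{G}}$. Writing $P$ as a union of $\sim_{\mathcal{G}}$-classes, I would argue that either $P\in \sepinn{\mathcal{G}}$ (and then $\pi_{\mathcal{G}\mathcal{F}}^{-1}(P)$ is exactly the non-trivial interval of $\partial(\mathcal{G})$ witnessing $P\in \sepinn{\mathcal{G}}$), or $P$ is itself a single $\sim_{\mathcal{G}}$-class, or $E\setminus P$ is a single $\sim_{\mathcal{G}}$-class; in each of the latter two cases the fibre is a singleton or the complement of a singleton, hence still an interval.

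With the fibre structure established, existence and uniqueness in (1) and (2) follow by contracting fibres. Given any valid cyclic order $T_{\mathcal{G}}$ on $\mathcal{G}$, contracting each (interval) fibre of $\pi_{\mathcal{G}\mathcal{F}}$ to a point yields a cyclic order $T^{*}$ on $\partial(\mathcal{F})$ that makes $\pi_{\mathcal{G}\mathcal{F}}$ monotone by construction; I would verify that $T^{*}$ is a valid cyclic order of $\mathcal{F}$ by checking that the unions of its non-trivial intervals are precisely the sets in $\sepclosf$. Monotonicity together with $|\partial(\mathcal{F})|\geq 4$ implies that preimages of intervals are intervals, so the union of any non-trivial interval of $T^{*}$ pulls back to a non-trivial interval of $\partial(\mathcal{G})$ and thus lies in $\sepclos{\mathcal{G}}$ and, being also a union of $\partial(\mathcal{F})$-classes, in $\sepclosf$; conversely any $S\in \sepclosf\subseteq \sepclos{\mathcal{G}}$ is a union of a non-trivial interval of $\partial(\mathcal{G})$ whose image under contraction is a non-trivial interval of $T^{*}$. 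Since the cyclic orders of $\mathcal{F}$ and $\mathcal{G}$ are each unique up to mirroring by \cref{preflowerisdaisyoranemone}, and since mirroring $T_{\mathcal{G}}$ mirrors $T^{*}$, the contraction map is a bijection between the two cyclic orders of $\mathcal{G}$ and the two of $\mathcal{F}$, giving both existence and uniqueness in (1) and (2).

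For part (3) I would use a parity argument. Assigning to each of the three projections the sign $+$ if it is monotone with respect to the given pair of cyclic orders and $-$ if it is only monotone with respect to one of them reversed, monotonicity of the composition of cyclically ordered maps (combined with the fact that simultaneous reversal of both orders preserves monotonicity) shows that the sign of $\pi_{\mathcal{H}\mathcal{F}}$ equals the product of the signs of $\pi_{\mathcal{H}\mathcal{G}}$ and $\pi_{\mathcal{G}\mathcal{F}}$. Hence if two of the three projections have sign $+$, so does the third, proving (3). The main obstacle is the first step: while generic fibres come straight from $\sepinn{\mathcal{G}}$, the degenerate cases where a fibre or its complement is a single $\sim_{\mathcal{G}}$-class must be handled separately, because they lie in $\sepclos{\mathcal{G}}\setminus \sepinn{\mathcal{G}}$ and so are not directly governed by the daisy structure of $\mathcal{G}$.
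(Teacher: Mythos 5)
Your proposal is correct and follows essentially the same route as the paper: induce the quotient cyclic order on $\partial(\mathcal{F})$ from one on $\partial(\mathcal{G})$, verify it is a cyclic order of $\mathcal{F}$ via $\sepclosf\subseteq\sepclos{\mathcal{G}}$, deduce existence and uniqueness in both directions from the fact that each pre-daisy has exactly two cyclic orders which are mirror images of each other, and obtain the third claim from composition of monotone maps together with uniqueness. The only differences are that you explicitly justify that the fibres of $\pi_{\mathcal{G}\mathcal{F}}$ are intervals (a step the paper merely asserts when it says every cyclic order of $\mathcal{G}$ induces a unique one on $\partial(\mathcal{F})$ making $\pi_{\mathcal{G}\mathcal{F}}$ monotone), and that your sign/parity formulation of the third claim is a compact repackaging of the paper's explicit three-case argument.
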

\begin{proof}
	Every cyclic order of $\mathcal{G}$ induces a unique cyclic order of $\partial(\mathcal{F})$ such that $\pi_{\mathcal{G}\mathcal{F}}$ is monotone.
	In this cyclic order of $\partial(\mathcal{F})$, every union of a non-trivial interval is also a union of a non-trivial interval of $\partial(\mathcal{G})$ and thus is contained in $\sepclos{\mathcal{G}}\supseteq \sepclosf$.
	So the cyclic order of $\partial(\mathcal{F})$ is also a cyclic order of $\mathcal{F}$.
	As there are two cyclic orders of $\mathcal{G}$, which induce different cyclic orders of $\mathcal{F}$, the two cyclic orders of $\mathcal{F}$ are necessarily induced by the two cyclic orders of $\mathcal{G}$.
	
	For the second statement, if both $\pi_{\mathcal{H}\mathcal{G}}$ and $\pi_{\mathcal{G}\mathcal{F}}$ are monotone then so is their concatenation $\pi_{\mathcal{H}\mathcal{F}}$.
	If both $\pi_{\mathcal{H}\mathcal{G}}$ and $\pi_{\mathcal{H}\mathcal{F}}$ are monotone, then there is a unique cyclic order $T'_{\mathcal{F}}$ of $\mathcal{F}$ such that $\pi_{\mathcal{G}\mathcal{F}}$ is monotone with respect to $T_{\mathcal{G}}$ and $T'_{\mathcal{F}}$.
	Then $\pi_{\mathcal{H}\mathcal{F}}$ is the concatenation of monotone maps and thus monotone with respect to $T_{\mathcal{H}}$ and $T'_{\mathcal{F}}$.
	Thus $T_{\mathcal{F}} = T'_{\mathcal{F}}$.
	Similarly, if $\pi_{\mathcal{H}\mathcal{F}}$ and $\pi_{\mathcal{G}\mathcal{F}}$ are monotone, then there is a unique cyclic order $T'_{\mathcal{H}}$ such that $\pi_{\mathcal{H}\mathcal{G}}$ is monotone with respect to $T'_{\mathcal{H}}$ and $T_{\mathcal{G}}$, and $T'_{\mathcal{H}} = T_{\mathcal{H}}$.
\end{proof}

In particular, by \cref{daisiesconsistentorders}, if $\mathcal{F} \subseteq \mathcal{G} \subseteq \mathcal{H}$ are pre-daisies contained in $V$, then a cyclic order of one of them induces cyclic orders of the other two that in turn induce each other.
With the help of this fact, the cyclic orders of the pre-daisies contained in $V$ can be combined into one cyclic order of $\partial(V)$.

\begin{lem}\label{infpredaisy}
	If the pre-flowers contained in $V$ are pre-daisies, then there is a cyclic order on $\partial(V)$ such that all elements of $\sepclos{V}$ are unions of intervals of $\partial(V)$, and that cyclic order is unique up to mirroring.
\end{lem}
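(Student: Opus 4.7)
The plan is to construct the cyclic order on $\partial(V)$ as a coherent limit of the (already given) cyclic orders of all pre-daisies $\mathcal{F} \subseteq V$. First I would fix some pre-daisy $\mathcal{F}_0 \subseteq V$ and pick one of its two cyclic orders $T_{\mathcal{F}_0}$. Then, for an arbitrary pre-daisy $\mathcal{F} \subseteq V$, I would choose a pre-daisy $\mathcal{H} \subseteq V$ with $\mathcal{F}_0 \cup \mathcal{F} \subseteq \mathcal{H}$ (possible because any two pre-flowers in $V$ are contained in a common one), let $T_\mathcal{H}$ be the unique cyclic order on $\mathcal{H}$ making $\pi_{\mathcal{H}\mathcal{F}_0}$ monotone, and finally let $T_\mathcal{F}$ be the unique cyclic order on $\mathcal{F}$ making $\pi_{\mathcal{H}\mathcal{F}}$ monotone; \cref{daisiesconsistentorders} guarantees both steps. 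Independence from $\mathcal{H}$ follows by comparing two candidates $\mathcal{H}, \mathcal{H}'$ inside a common enlargement $\mathcal{H}'' \supseteq \mathcal{H}\cup\mathcal{H}'$ and applying the ``two-out-of-three'' clause of \cref{daisiesconsistentorders} twice. The same argument shows that the family $(T_\mathcal{F})$ is coherent: whenever $\mathcal{F} \subseteq \mathcal{G}$ are pre-daisies in $V$, the inclusion $\pi_{\mathcal{G}\mathcal{F}}$ is monotone for $T_\mathcal{G}$ and $T_\mathcal{F}$.

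Next I would define the cyclic order on $\partial(V)$ pointwise. For distinct $x,y,z \in \partial(V)$, pick separations in $V$ pairwise distinguishing them; any finite set of separations in $V$ lies in some pre-flower $\mathcal{F} \subseteq V$, which is a pre-daisy by assumption, and one can take $\mathcal{F}$ large enough that $x,y,z$ lie in three distinct classes of $\partial(\mathcal{F})$. Declare the cyclic order of $x,y,z$ to be the one they inherit via $T_\mathcal{F}$. Well-definedness is immediate: any two such pre-daisies $\mathcal{F}, \mathcal{F}'$ embed into a common $\mathcal{H}$ and the projections to $\partial(\mathcal{F})$, $\partial(\mathcal{F}')$ are both monotone with respect to $T_\mathcal{H}$. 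Verifying the cyclic-order axioms on any four distinct elements reduces to a single pre-daisy separating them, where they hold by construction.

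To check that every $S \in \sepclos{V}$ is a union of intervals of $\partial(V)$, I would take a pre-daisy $\mathcal{F} \subseteq V$ for which $S$ is already a union of elements of $\partial(\mathcal{F})$; such an $\mathcal{F}$ exists because any pre-flower can be enlarged inside $V$, and $S$ only involves finitely many $\partial(\mathcal{F})$-classes once the finite pre-flower is chosen appropriately (using that $S$ is itself a single separation, together with any two crossing separations from $V$). Then $S \in \sepclos{\mathcal{F}}$, so by \cref{preflowerisdaisyoranemone} the set $S$ is the union of a non-trivial interval in $\partial(\mathcal{F})$. Monotonicity of the canonical projection $\partial(V) \to \partial(\mathcal{F})$ means that the preimage of this interval is an interval in $\partial(V)$, and that preimage is exactly $S$.

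Uniqueness up to mirroring is then straightforward: any cyclic order on $\partial(V)$ satisfying the conclusion must, via the monotone projections, restrict on each $\partial(\mathcal{F})$ to one of the two cyclic orders of the pre-daisy; once the restriction to $\partial(\mathcal{F}_0)$ is fixed (two options, differing by mirroring), \cref{daisiesconsistentorders} forces the restriction to every other pre-daisy, and these restrictions collectively determine the cyclic order on $\partial(V)$. I expect the main obstacle to be the global coherence argument at the outset: it is not enough to have pairwise compatibility of cyclic orders on pre-daisies, and avoiding a transfinite bookkeeping of choices relies precisely on the triple-compatibility clause of \cref{daisiesconsistentorders}.
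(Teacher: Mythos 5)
Your proof is correct and follows essentially the same route as the paper: both glue the two-valued cyclic orders of the pre-daisies into a single consistent family via \cref{daisiesconsistentorders} (seeded by a choice on one pre-daisy, with independence of the chosen enlargement checked inside a common super-pre-daisy), define the order on $\partial(V)$ triple-wise using that any finitely many elements of $V$ lie in a common pre-daisy, and obtain uniqueness from the fact that the family is determined by its restriction to any single pre-daisy. Your explicit check that every element of $\sepclos{V}$ (beyond the orientations of elements of $\partial(V)$, which are intervals trivially) becomes a union of intervals via monotonicity of the projection is a point the paper leaves implicit.
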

\begin{proof}
	Let $T_V$ be a cyclic order on $\partial(V)$ such that all elements of $\sepclos{V}$ are unions of intervals.
	Then for every pre-flower $\mathcal{F}\in \Phi(V)$ there is a unique cyclic order $T_{\mathcal{F}}$ on $\partial(\mathcal{F})$ such that the inclusion $\pi_{\mathcal{F}}:\partial(V)\rightarrow \partial(\mathcal{F})$ is monotone.
	Every union of elements of $\partial(\mathcal{F})$ that do not form an interval of $T_{\mathcal{F}}$ is also a union of elements of $\partial(V)$ that do not form an interval of $T_V$ and is thus not contained in $\sepclos{V}\supseteq \sepclosf$.
	Hence only unions of intervals of $T_{\mathcal{F}}$ can be contained in $\sepclosf$, so all unions of non-trivial intervals of $T_{\mathcal{F}}$ are indeed contained in $\sepclosf$.
	Thus, if the set of all pre-daisies contained in $V$ is denoted by $\Phi(V)$, then $T_V$ induces a family $(T_{\mathcal{F}})_{\mathcal{F}\in \Phi(V)}$ where every $T_{\mathcal{F}}$ is a cyclic order of $\mathcal{F}$ and such that the inclusions $\pi_{\mathcal{G}\mathcal{F}}: \partial(\mathcal{G})\rightarrow\partial(\mathcal{F})$ for pre-daisies $\mathcal{F}\subseteq \mathcal{G}$ are monotone.
	For this proof, such a family is called a consistent family of cyclic orders.
	
	In the other direction, every consistent family of cyclic orders $(T_{\mathcal{F}})_{\mathcal{F}\in \Phi(V)}$ induces a set of triples $T_V$ of $\partial(V)$ containing all those triples $(Q, R, R')$ for which there is $\mathcal{F}\in \Phi(V)$ such that $(\pi_{\mathcal{F}}(Q), \pi_{\mathcal{F}}(R), \pi_{\mathcal{F}}(R')) \in T_{\mathcal{F}}$.
	Then $T_V$ is cyclic as every $T_{\mathcal{F}}$ is cyclic, and it is linear because every element of $V$ is contained in some pre-flower.
	In order to show that $T_V$ is antisymmetric and transitive it suffices to show that for any two triples $(Q, R, R')$ and $(Q', R', R'')$ in $T_V$ there is some pre-flower $\mathcal{G}\subseteq V$ such that both $(\pi_{\mathcal{F}}(Q), \pi_{\mathcal{F}}(R), \pi_{\mathcal{F}}(R'))$ and $(\pi_{\mathcal{F}}(Q'), \pi_{\mathcal{F}}(R'), \pi_{\mathcal{F}}(R''))$ are contained in $T_{\mathcal{F}}$.
	But that is true as the union of any two pre-flowers contained in $V$ is a subset of another pre-flower contained in $V$ and the maps $\pi_{\mathcal{F}\mathcal{G}}$ are monotone.
	
	So the cyclic orders of $\partial(V)$ such that all elements of $\sepclos{V}$ are unions of intervals correspond to the consistent families of cyclic orders.
	Thus, in order to show this lemma, it suffices to show that every consistent family of cyclic orders $(T_{\mathcal{F}})_{\mathcal{F}\in \Phi(V)}$ can be constructed from, and is thus determined by, any of its cyclic orders $T_{\mathcal{F}}$.
	Indeed, for every pre-daisy $\mathcal{G} \subseteq V$ there is a pre-daisy $\mathcal{H} \subseteq V$ that contains both $\mathcal{F}$ and $\mathcal{G}$, and $T_{\mathcal{F}}$ induces a cyclic order $T_{\mathcal{H}}$ of $\mathcal{H}$ which in turn induces a cyclic order $T_{\mathcal{G}}$ of $\mathcal{G}$.
	If $\mathcal{H}' \subseteq V$ is another pre-daisy that contains both $\mathcal{F}$ and $\mathcal{G}$, and $T_{\mathcal{F}}$ induces $T_{\mathcal{H}'}$, then there is a pre-daisy $\mathcal{H}'' \subseteq V$ that contains $\mathcal{H}$ and $\mathcal{H}''$.
	In this case, by \cref{daisiesconsistentorders}, both $T_{\mathcal{H}}$ and $T_{\mathcal{H}'}$ induce the same cyclic order $T_{\mathcal{H}''}$ of $\mathcal{H}''$, namely the one that is also induced by $T_{\mathcal{F}}$.
	By the same lemma, $T_{\mathcal{H}}$, $T_{\mathcal{H}'}$ and $T_{\mathcal{H}''}$ all induce the same cyclic order of $\mathcal{G}$.
	Thus $T_{\mathcal{G}}$ does not depend on the choice of $\mathcal{H}$ and the family $(T_{\mathcal{F}})_{\mathcal{F}\in \Phi(V)}$ is well-defined.
	That it is a consistent family also follows from \cref{daisiesconsistentorders}.
	
	So in summary, the cyclic orders of $\partial(V)$ such that every element of $\sepclos{V}$ is the union of an interval correspond to the consistent families of cyclic orders, which in turn correspond to the cyclic orders of $\mathcal{F}$ for any pre-daisy $\mathcal{F} \subseteq V$.
	As there are exactly two of the latter, and the mirror of a suitable cyclic order of $\partial(V)$ is again suitable, there is a suitable cyclic order of $\partial(V)$ and that is unique up to mirroring.
\end{proof}

Note that the cyclic order obtained in \cref{infpredaisy} for $\partial(V)$ has the property that all elements of $\sepclos{V}$ are unions of intervals, but not all non-trivial unions of intervals need be contained in $\sepclos{V}$.
This is in contrast to the fact that \cref{thm:structurefiniteBsimple}, applied to a finite $V\in \mathcal{V}$, yields that the elements of $\partial(V)$ are contained in $\mathcal{B}$ and thus also in $\sepclos{V}$ and in $\mathcal{E}$.
Similarly, if $V\in \mathcal{V}$ is infinite and contains pre-anemones, then it is not necessarily the case that all unions of elements of $\partial(V)$ are contained in $\sepclos{V}$.
This problem could be avoided by asking that $\mathcal{B}\cup \{E\}$ should be closed under taking suprema of chains of separations, which for example could be achieved by asking that the elements of $\mathcal{P}$ should be closed under taking suprema of chains.
This approach is taken in \cite{EH:treesets}.
Even without extra assumptions, at least in the case where the pre-flowers contained in $V$ are pre-daisies, it is possible to describe quite precisely which unions of intervals of $\partial(V)$ are contained in $\mathcal{B}$.
In the following lemma, intervals are denoted with the help of cuts, as is described in the preliminaries.
For that, assume that some cyclic order of $\partial(V)$ has been fixed such that all elements of $V$ are unions of intervals of $\partial(V)$.
For cuts $v$ and $w$ of $\partial(V)$, $[v,w] \cap \partial(V)$ is a subset of $\partial(V)$ and its union is a subset of $E$ that is either contained in $V$ or is not contained in $V$.
Let $Z$ be the set of cuts $v$ such that there is a cut $w$ such that the union of $[v,w] \cap \partial(V)$ is contained in $V$ (equivalently, such that the union of $[w,v] \cap \partial(V)$ is contained in $V$).

\begin{lem}
	Let $V\in \mathcal{V}$ such that the pre-flowers contained in $V$ are pre-daisies.
	Then for all distinct elements $v$ and $w$ of $Z$, the union of $[v,w] \cap \partial(V)$ is contained in $\sepclos{V}$.
\end{lem}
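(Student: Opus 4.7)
The plan is to reduce the statement to the known structure of pre-daisies (\cref{preflowerisdaisyoranemone}), by finding a finite pre-daisy $\mathcal{F}\subseteq V$ whose cyclic structure already ``sees'' both $v$ and $w$. Given distinct $v,w\in Z$, I first pick elements $P_v,P_w\in V$ and cuts $v',w'$ with $P_v=S(v,v')$ and $P_w=S(w,w')$. Since $V$ is a single equivalence class of the crossing relation on $\mathcal{B}\setminus\mathcal{E}$, there exists a finite sequence of pairwise crossing elements of $V$ connecting $P_v$ with $P_w$, and together with $P_v$ and $P_w$ this forms a pre-flower $\mathcal{F}\subseteq V$ which, by hypothesis, is a pre-daisy.

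Next I would transfer the data from $\partial(V)$ to $\partial(\mathcal{F})$. By \cref{infpredaisy} the fixed cyclic order on $\partial(V)$ induces, through the monotone projection $\pi_{\mathcal{F}}:\partial(V)\to\partial(\mathcal{F})$, a cyclic order on $\partial(\mathcal{F})$ which agrees with $\mathcal{F}$'s own pre-daisy cyclic order. Because $P_v\in\mathcal{F}$ has $v$ as an endpoint, $v$ sits precisely between the elements of $\partial(\mathcal{F})$ contained in $P_v$ and those contained in $E\setminus P_v$, so it extends to a cut of $\partial(\mathcal{F})$; likewise for $w$. Moreover, each element of $\partial(\mathcal{F})$ strictly between $v$ and $w$ in $\partial(\mathcal{F})$'s cyclic order is the union of those elements of $\partial(V)$ which lie strictly between $v$ and $w$ in $\partial(V)$'s cyclic order and refine it. Hence $S(v,w)=\bigcup\bigl([v,w]\cap\partial(V)\bigr)$ coincides, as a subset of $E$, with $\bigcup\bigl([v,w]\cap\partial(\mathcal{F})\bigr)$.

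Finally, $\partial(\mathcal{F})$ is finite with at least four elements, so any two distinct cuts of it bound two non-empty arcs; in particular $[v,w]\cap\partial(\mathcal{F})$ is a non-trivial interval of $\partial(\mathcal{F})$. By \cref{preflowerisdaisyoranemone} its union lies in $\sepclosf$, and since $\sepclosf\subseteq\sepclos{V}$ the previous identification of unions gives $S(v,w)\in\sepclos{V}$ as required. The main subtlety I expect is the middle step: carefully tracking, under the projection $\pi_{\mathcal{F}}$ and its extension to cut-completions, that the cut $v$ of $\partial(V)$ really does correspond to a cut of $\partial(\mathcal{F})$ and that the induced intervals produce the same union in $E$; once that compatibility is pinned down, the pre-daisy structure of $\mathcal{F}$ immediately yields the conclusion.
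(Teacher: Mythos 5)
Your proof is correct, but it takes a genuinely different route from the paper's. The paper argues directly with corners: it takes the two witnesses $S(v,v')$ and $S(w,w')$ in $V$, disposes of the case where they cross by one application of the closure of $\mathcal{B}$ under corners of crossing separations, and in the nested case uses the pre-daisy structure only to produce a single element of $V$ crossing both witnesses, after which two further corner computations show that $S(v,w)$ itself lies in $V$. You instead package everything into the finite structure theory: you pass to a finite pre-daisy $\mathcal{F}\subseteq V$ containing both witnesses, transport the cuts $v$ and $w$ along the monotone projection $\partial(V)\to\partial(\mathcal{F})$ (legitimate precisely because $P_v,P_w\in\mathcal{F}$ guarantees that no class of $\partial(\mathcal{F})$ straddles $v$ or $w$), and then invoke \cref{preflowerisdaisyoranemone} to place the resulting union of a non-trivial interval of $\partial(\mathcal{F})$ in $\sepclosf\subseteq\sepclos{V}$. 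Your route is conceptually transparent --- it exhibits the statement as a direct shadow of the finite pre-daisy classification and makes clear why membership of both endpoints in $Z$ is exactly the right hypothesis --- but it carries the cost you yourself flag: the compatibility of cuts and intervals under the projection must be verified (it does check out, via the monotonicity established in \cref{infpredaisy}), and in the degenerate case where $[v,w]\cap\partial(\mathcal{F})$ is a single class you are leaning on the strongest reading of \cref{preflowerisdaisyoranemone}, namely that single classes of $\partial(\mathcal{F})$ also lie in $\sepclosf$; if one doubts that reading, the case can be recovered by intersecting the two crossing two-element intervals adjacent to that class. The paper's corner argument avoids all of this order-theoretic bookkeeping and uses only closure under unions of crossing elements, which is why it is shorter.
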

\begin{proof}
	For this proof, for distinct cuts $x$ and $y$ of $\partial(V)$ denote the union of $[x,y] \cap \partial(V)$ by $S(x,y)$.
	Let $v'$ and $w'$ be elements of $Z$ such that $S(v,v')$ and $S(w,w')$ are contained in $V$.
	If $S(v,v')$ and $S(w,w')$ cross, then $S(v,w)$ is of the form $S(v,v') \setminus S(w,w')$ or $S(v,v') \cup (E \setminus S(w,w'))$ and thus contained in $\sepclos{V}$.
	As $v\neq w$, $S(v,v')$ and $S(w,w')$ cannot be equal, and if they are complements of each other then $S(v,w) = S(v,v') \in V$.
	So assume that $S(v,v')$ and $S(w,w')$ cross and are neither equal nor complements of each other.
	Let $\mathcal{F} \subseteq V$ be a pre-flower that contains $S(v,v')$ and $S(w,w')$.
	Then $\sepclosf$ contains a separation that crosses both $S(v,v')$ and $S(w,w')$.
	This separation is contained in $V$ and thus is of the form $S(u,u')$ for some cuts $u$ and $u'$ of $\partial(V)$.
	So some corner of $S(v,v')$ and $S(u,u')$ is of the form $S(v,u'')$ or $S(u'',v)$ for some $u'' \in \{u,u'\}$ and crosses $S(w,w')$.
	This corner $S$ is contained in $V$ and some corner of $S$ and $S(w,w')$ is of the form $S(v,w)$ or $S(w,v)$ and is also contained in $V$.
	Thus $S(v,w)$ is contained in $V$.	
\end{proof}

\subsection{The infinite tree structure}\label{sec:abstractinfinite}

Back to the construction of a decomposition tree(-like space):
The set $\mathcal{E}$ consists only of nested separations, it contains for every separation also its complement, and it contains neither $\emptyset$ nor $E$.
In the context of separations, this means that $\mathcal{E}$ is regular tree set.
The following remark is about the fact that a tree set can be turned into a tree-like space (that happens to be a finite tree if the tree set is finite) from which a tree-decomposition of $E$ can be constructed such that the separations induced by the edges are the elements of the tree set.
In this construction, the vertices $v$ of the tree-like space correspond to the consistent orientations $O_v$ of $E$, and the part of a vertex $v$ is $E \setminus \bigcup O_v$.
In order to avoid the technicalities that come with working with tree-like spaces, we will not work with them directly but work with tree sets and their consistent orientations instead.

\begin{rem}\label{treefortreeset}
	For regular tree sets $\tau$, there is in \cite{TreelikeSpaces} a construction of a tree-like space $T(\tau)$\footnote{This is a generalisation of the construction in \cite{TreeSets} for finite tree sets.}, together with a bijection from the tree set to the oriented edges of the tree-like space, such that for oriented edges $xy$ and $vw$ the following property holds:
	\begin{itemize}
		\item[]If $x$ and $w$ are contained in distinct components of $T - v$ and of $T - y$, then $S < S'$ for the separations $S$ and $S'$ mapped to the oriented edges $xy$ and $vw$ respectively.
	\end{itemize}
	Here, the vertices of $T(\tau)$ are the consistent orientations of the regular tree set.
	Then a decomposition of $E$ with decomposition ``tree'' $T(\tau)$ can be defined by letting the part of a vertex $O$ be $E \setminus \bigcup O$.
	It is then routine to show that the above extra property of $T(\tau)$ implies that this yields indeed a decomposition of $E$ such that the separations induced by the oriented edges of $T(\tau)$ are the elements of $\tau$.
\end{rem}

One important property of the tree-decomposition in \cref{thm:structurefiniteBsimple} was that every element of $\mathcal{V}$ belongs to a vertex of the decomposition tree, and that these vertices are distinct for distinct elements of $\mathcal{V}$.
Translated to tree sets, this means that for every $V\in \mathcal{V}$ there is a consistent orientation $O_V$ of $\mathcal{E}$ such that all elements of $O_V$ point towards $V$, and that distinct elements of $\mathcal{V}$ induce distinct consistent orientations.
In the infinite case, it is possible that this property does not hold.
Indeed, there is a candidate for $\mathcal{B}$ for which $\mathcal{E}$ is empty (implying that there is exactly one consistent orientation) while $\mathcal{V}$ has two elements.

\begin{ex}\label{EdoesnotdistinguishV}
	Let $E=\{0,1\} \times \mathbb{N}$ and let
	\begin{align*}
		\mathcal{B}=& \{ X \subseteq E \colon \emptyset \subsetneq X \subsetneq \{0\} \times \mathbb{N}, \text{ $X$ is finite}\}
		\cup \{ X \subseteq E \colon \{1\} \times \mathbb{N} \subsetneq X \subsetneq E, \text{ $E \setminus X$ is finite}\} \\
		&\cup \{ X \subseteq E \colon \emptyset \subsetneq X \subsetneq \{1\} \times \mathbb{N}, \text{ $X$ is finite}\}
		\cup \{ X \subseteq E \colon \{0\}\times\mathbb{N} \subsetneq X \subsetneq E, \text{ $E \setminus X$ is finite}\}.
	\end{align*}
	By case-distinction it is easy to see that the set $\mathcal{B}$ is closed under taking complements and under unions of two crossing elements.
	But $\mathcal{E}$ is empty, and $\mathcal{V}$ has two elements.
	In particular, taking the graph with exactly one vertex as decomposition tree is the only possibility to obtain a tree decomposition of $E$ such that the set of separations induced by the edges is $\mathcal{E}$.
	But there is no injective map from $\mathcal{V}$ into a set of vertices containing only one element.
\end{ex}

So in order to ensure that the elements of $\mathcal{V}$ belong to distinct vertices, we define

\begin{align*}
	\mathcal{E}':=\mathcal{E}\cup \bigcup_{V\in \mathcal{V}}\{A, E \setminus A \colon A \in \partial(V)\} && \text{and} && \mathcal{B}':=\mathcal{B}\cup \mathcal{E}'.
\end{align*}

Note that if every element of $\mathcal{V}$ is finite, in particular if $\mathcal{B}$ is finite, then $\mathcal{E}=\mathcal{E}'$ and $\mathcal{B}=\mathcal{B}'$.
The following lemmas establish basic facts about the interaction of elements of some $V\in \mathcal{V}$ and elements of $\mathcal{B}'\setminus V$.
Together they imply, among other things, that $\mathcal{E}'$ is the set of elements of $\mathcal{B}'$ that do not cross any other element of $\mathcal{B}'$ and that every $V\in \mathcal{V}$ induces a consistent orientation of $\mathcal{E}'$.

\begin{lem}\label{sepsinVnestedwithsepsnotinV}
	Let $V$ be an element of $\mathcal{V}$ and let $S$ be an element of $\mathcal{B}\setminus V$.
	Then some orientation of $S$ is contained in some element of $\partial(V)$.
\end{lem}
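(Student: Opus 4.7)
The plan is to first argue that $S$ is nested with every element of $V$, then show by a short case analysis that a crossing pair in $V$ cannot be responsible for a change in which orientation of $S$ ``points inward'', and finally invoke the fact that $V$ is a class of the partition obtained from the transitive closure of crossing.

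First I would observe that since $S \in \mathcal{B} \setminus V$, the separation $S$ is nested with every $T \in V$: if $S \in \mathcal{E}$ this is by definition of $\mathcal{E}$, and otherwise $S$ lies in some $V' \in \mathcal{V}$ different from $V$, so $S$ and $T$ are nested by the definition of $\mathcal{V}$. Since which separations cross which is unchanged by taking inverses, $E \setminus S$ lies in the same class of $\mathcal{V}$ as $S$ (or, respectively, in $\mathcal{E}$), so neither $S$ nor $E \setminus S$ equals any element of $V$. Consequently, for every $T \in V$, exactly one of the four inclusions $S \subseteq T$, $S \subseteq E \setminus T$, $E \setminus S \subseteq T$, $E \setminus S \subseteq E \setminus T$ holds, which distinguishes a \emph{good orientation} of $S$ for $T$: the orientation of $S$ contained in an orientation of $T$.

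The key step is to show that if $T_1, T_2 \in V$ cross, then the good orientation of $S$ for $T_1$ equals the good orientation of $S$ for $T_2$. Assuming otherwise, say $S$ is the good orientation for $T_1$ and $E \setminus S$ is the good orientation for $T_2$, a short case analysis on the four possible sub-configurations forces one of $T_2 \subseteq T_1$, $T_1 \subseteq T_2$, $T_1 \cap T_2 = \emptyset$, $T_1 \cup T_2 = E$ to hold, contradicting that $T_1$ and $T_2$ cross. For instance, if $S \subseteq T_1$ and $E \setminus S \subseteq T_2$, then $E \setminus T_2 \subseteq S \subseteq T_1$ gives $T_1 \cup T_2 = E$; the other three sub-cases are analogous.

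Finally, by the definition of $\mathcal{V}$ as the finest partition of $\mathcal{B} \setminus \mathcal{E}$ whose distinct classes are pairwise nested, any two elements of $V$ are joined by a chain in $V$ in which consecutive elements cross. By the key step, the good orientation of $S$ is constant along this chain, hence is the same for every $T \in V$. Calling this common orientation $S^*$, we get $S^* \subseteq T$ or $S^* \subseteq E \setminus T$ for every $T \in V$, which means that no element of $V$ distinguishes two points of $S^*$. Therefore $S^*$ is contained in a single $\sim_V$-equivalence class, i.e., in some element of $\partial(V)$, as required. The only nontrivial work is the crossing case analysis, which is short and essentially forced.
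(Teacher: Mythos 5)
Your proof is correct and follows essentially the same route as the paper's: the paper's sets $P$ and $Q$ of elements of $V$ towards which $S$ resp.\ $E\setminus S$ points are exactly the fibres of your ``good orientation'' map, the disjointness $P\cap Q=\emptyset$ is your well-definedness claim, and the paper's appeal to the definition of $\mathcal{V}$ is your connectedness-of-the-crossing-graph argument. You merely spell out the crossing case analysis and the final passage from ``points towards all of $V$'' to ``contained in an element of $\partial(V)$'' more explicitly than the paper does.
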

\begin{proof}
	First we show that some orientation of $S$ points towards all elements of $V$.
	Let $P$ be the set of elements of $V$ towards which $S$ points and let $Q$ be the set of elements of $V$ towards which $E\setminus S$ points.
	As $S$ is nested with every element of $V$, $P\cup Q=V$.
	Assume for a contradiction that there is an element $T\in P\cap Q$.
	Then both orientations of $S$ point towards $T$, so either $S$ is an orientation of $T$ or $T\in \{\emptyset, E\}$.
	But $T\notin \{\emptyset, E\}$ because it is an element of $\mathcal{B}$.
	Furthermore every element of $V$ crosses some element of $V$, so $V$ is closed under taking inverses and in particular $S$ is not an orientation of $T$.
	So $P\cap Q=\emptyset$.
	By the definition of $\mathcal{V}$, this implies that one of $P$ and $Q$ is empty.
\end{proof}

Note that this lemma has two important consequences.
First, as every element of $\partial(V)$ points towards all elements of $V \cup \partial(V)$, also $S$ has some orientation that points towards all elements of $V \cup \partial(V)$.
And second, if $A$ is an element of $\partial(V)$ that contains some orientation of $S$, then $E \setminus A$ points towards $S$ and thus every element of $V$ that is disjoint from $A$ also points towards $S$.
With these implications in mind we can extend \cref{sepsinVnestedwithsepsnotinV} to the case where $S\in \mathcal{B}' \setminus V$:

\begin{lem}\label{sepsinVnestedwithsepsnotinV2}
	Let $V$ be an element of $\mathcal{V}$ and let $S$ be an element of $\mathcal{B}' \setminus V$.
	Then some orientation of $S$ is contained in some element of $\partial(V)$.
\end{lem}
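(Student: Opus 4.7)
The plan is to split on whether $S \in \mathcal{B}$. If so, then $S \in \mathcal{B} \setminus V$ and \cref{sepsinVnestedwithsepsnotinV} applies verbatim. Otherwise $S \in \mathcal{E}' \setminus \mathcal{B}$, so by the definition of $\mathcal{E}'$ there is some $W \in \mathcal{V}$ with $S \in \partial(W)$ (replacing $S$ by $E \setminus S$ if necessary, which is harmless since the conclusion only mentions orientations of $S$). The case $W = V$ is immediate: $S$ is itself an element of $\partial(V)$ and is contained in itself.

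The interesting case is $W \neq V$. My plan here is to verify three properties of $S$ and then rerun the argument from \cref{sepsinVnestedwithsepsnotinV}. The three properties are: (i) $S \neq \emptyset, E$; (ii) $S$ is not an orientation of any $T \in V$; and (iii) $S$ is nested with every $T \in V$. Property (i) is clear because $\partial(W)$ consists of non-empty proper subsets of $E$, and (ii) follows because $\mathcal{B}$ is closed under complementation and contains both orientations of every $T \in V$ but does not contain $S$. The substantive step, and where I expect the only real difficulty, is (iii): given $T \in V$, I note that $T \in \mathcal{B} \setminus W$ since distinct classes of $\mathcal{V}$ are disjoint, so \cref{sepsinVnestedwithsepsnotinV} applied to $W$ and $T$ gives an orientation $T'$ of $T$ contained in some $A_T \in \partial(W)$. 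Because $\partial(W)$ is a partition of $E$ and $S \in \partial(W)$, either $A_T = S$ (forcing $T' \subseteq S$) or $A_T \cap S = \emptyset$ (forcing $T' \cap S = \emptyset$), and in both situations $T$ is nested with $S$.

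Having secured (i)--(iii), the rest copies the proof of \cref{sepsinVnestedwithsepsnotinV}: setting $P$ and $Q$ to be the sets of $T \in V$ towards which $S$ and $E \setminus S$, respectively, point, nestedness yields $P \cup Q = V$; a hypothetical element of $P \cap Q$ would force $S$ to be an orientation of $T$ (excluded by (ii)) or $T \in \{\emptyset, E\}$ (excluded because $T \in \mathcal{B}$); and a short case analysis shows that each element of $P$ is nested with each element of $Q$, so the connectedness of the crossing relation on the partition class $V$ forces one of $P$, $Q$ to be empty. The resulting orientation of $S$ then points towards every element of $V$, and since $\partial(V)$ partitions $E$ and $S$ is non-empty, that orientation must be contained in the unique class of $\partial(V)$ it meets.
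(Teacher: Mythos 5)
Your proof is correct, and in the main case ($S$ an orientation of a petal of some $W\in\mathcal{V}$ with $W\neq V$) it takes a genuinely different route from the paper's. The paper picks a class $A\in\partial(V)$ containing an orientation of some element of $W$, argues that $E\setminus A$ lies inside a single class $B$ of $\partial(W)$, and then plays $A$ and $B$ off against each other to place an orientation of $S$ inside $A$. You instead extract the exact hypotheses under which the argument of \cref{sepsinVnestedwithsepsnotinV} runs -- $S$ non-degenerate, not an orientation of any element of $V$, and nested with every element of $V$ -- verify them (the nestedness coming from applying \cref{sepsinVnestedwithsepsnotinV} to each $T\in V$ relative to $W$ and using that $\partial(W)$ is a partition containing $S$ as a class), and then rerun that argument verbatim. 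In effect you prove a slightly more general statement: any non-trivial bipartition that is nested with all of $V$ and is not an orientation of an element of $V$ has an orientation contained in a class of $\partial(V)$. What this buys is modularity and transparency -- in particular you avoid the paper's somewhat delicate intermediate claim that $E\setminus A$ is the union of the elements of $V$ pointing towards $S'$ -- at the cost of repeating the $P$/$Q$ case analysis from the earlier proof rather than reusing its statement as a black box. Both arguments ultimately rest on \cref{sepsinVnestedwithsepsnotinV} and on the connectedness of $V$ under the crossing relation, and both deliver the corollaries \cref{lem:consistenorientationfromflower} and \cref{lem:fromseptopetal} equally well.
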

\begin{proof}
	We are left with the case $S\notin \mathcal{B}$, that is, $T\in \partial(W)$ for some orientation $T$ of $S$ and some $W\in \mathcal{V}$.
	If $W=V$ then the lemma holds, so assume otherwise.
	Let $A \in \partial(V)$ such that $E\setminus A$ points towards some element $S'$ of $W$.
	Then $E \setminus A$ is the union of all elements $Q$ of $V$ that point towards $S'$, and these $Q$ point by \cref{sepsinVnestedwithsepsnotinV} towards all elements of $W$.
	So all elements of $E\setminus A$ have to be contained in the same element $B$ of $\partial(W)$, implying that $E \setminus A$ is contained in $B$ and hence that $E \setminus B$ is contained in $A$.
	As both $T$ and $B$ are contained in $\partial(W)$, some orientation of $S$ is contained in $E \setminus B$ and thus contained in $A$.
\end{proof}

From this more general lemma we have the same (more general) consequences:
\begin{cor}\label{lem:consistenorientationfromflower}
	For all $V \in \mathcal{V}$ and all $S \in \mathcal{B}' \setminus V$ there is an orientation of $S$ that points towards all elements of $V \cup \partial(V)$.\qed
\end{cor}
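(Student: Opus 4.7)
The plan is to derive this corollary as an essentially immediate consequence of \cref{sepsinVnestedwithsepsnotinV2}, by observing that every element of $\partial(V)$, viewed as a bipartition, already points towards all elements of $V \cup \partial(V)$.

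First I would invoke \cref{sepsinVnestedwithsepsnotinV2} to obtain an orientation $T$ of $S$ and an element $A$ of $\partial(V)$ with $T \subseteq A$. Next I would show that $A$ (as a subset of $E$, i.e., as the orientation $(A, E\setminus A)$ of the bipartition $\{A, E\setminus A\}$) points towards every element of $V \cup \partial(V)$. For another element $A' \in \partial(V)$ this is because $\partial(V)$ is a partition of $E$: either $A' = A$, giving $A \subseteq A'$, or $A' \cap A = \emptyset$, giving $A \subseteq E \setminus A'$. For $W \in V$, recall $W \in \sepclos{V}$, so $W$ is a union of elements of $\partial(V)$; hence either $A \subseteq W$ or $A \cap W = \emptyset$, and in both cases $A$ points towards $W$.

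Finally, pointing towards is preserved under passing to smaller separations: since $T \subseteq A$, whenever $A \subseteq W$ or $A \subseteq E \setminus W$ the corresponding inclusion $T \subseteq W$ or $T \subseteq E \setminus W$ holds as well. Therefore $T$ points towards every element of $V \cup \partial(V)$, completing the proof.

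There is no real obstacle here; the content of the corollary is already packaged inside \cref{sepsinVnestedwithsepsnotinV2} together with the star-like behaviour of $\partial(V)$ that was noted immediately after \cref{sepsinVnestedwithsepsnotinV}. The only minor point to keep clear is the distinction between an element $A$ of $\partial(V)$ as a subset of $E$ and as a bipartition $\{A, E\setminus A\}$, and the fact that $V$ is genuinely a union of elements of $\partial(V)$ by the definition of $\partial$.
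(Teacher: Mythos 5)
Your proposal is correct and is exactly the paper's intended argument: the corollary is stated with an immediate \qed because it follows from \cref{sepsinVnestedwithsepsnotinV2} combined with the observation (made in the prose after \cref{sepsinVnestedwithsepsnotinV}) that every element of $\partial(V)$ points towards all elements of $V \cup \partial(V)$ and that pointing towards is inherited by subsets. Your handling of the two cases ($A' \in \partial(V)$ versus $W \in V$ being a union of equivalence classes) is precisely the justification the paper leaves implicit.
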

\begin{cor}\label{lem:fromseptopetal}
	For all $V \in \mathcal{V}$ and all $S \in \mathcal{B}' \setminus V$ there is $A \in \partial(V)$ such that $E\setminus A$ points towards $S$.\qed
\end{cor}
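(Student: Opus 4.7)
The plan is to derive Corollary~\ref{lem:fromseptopetal} as a one-step reformulation of Lemma~\ref{sepsinVnestedwithsepsnotinV2}, which already gives exactly the orientation-free content of the claim.

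Concretely, given $V\in\mathcal{V}$ and $S\in\mathcal{B}'\setminus V$, I would apply Lemma~\ref{sepsinVnestedwithsepsnotinV2} to obtain some orientation $T$ of $S$ and some $A\in\partial(V)$ with $T\subseteq A$. Taking complements in $E$ yields $E\setminus A\subseteq E\setminus T$. Since $E\setminus T=\overleftarrow{T}$, this reads $E\setminus A\leq\overleftarrow{T}$ in the order of $\mathcal{UB}(E)$. Now $\overleftarrow{T}$ is one of the two orientations $S$, $\overleftarrow{S}$ of the separation $S$, so we have either $E\setminus A\leq S$ or $E\setminus A\leq\overleftarrow{S}$. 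By Definition~\ref{def:nested}, this is exactly the statement that $E\setminus A$ points towards $S$, so $A$ witnesses the corollary.

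I do not expect any real obstacle here: the only content is translating between the two formulations ``some orientation of $S$ lies inside an element $A$ of $\partial(V)$'' and ``$E\setminus A$ points towards $S$'', which is a routine complement. In particular, no further case analysis is needed, and the argument works uniformly for $S\in\mathcal{B}$ and for the new separations in $\mathcal{E}'\setminus\mathcal{E}$ that were added into $\mathcal{B}'$, since Lemma~\ref{sepsinVnestedwithsepsnotinV2} already covers both.
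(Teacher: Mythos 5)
Your proposal is correct and matches the paper's intended argument: the corollary is stated with \qed precisely because, as the paper remarks after \cref{sepsinVnestedwithsepsnotinV}, once some orientation $T$ of $S$ lies inside an element $A\in\partial(V)$, taking complements gives $E\setminus A\subseteq E\setminus T$, i.e.\ $E\setminus A$ points towards $S$. No further commentary is needed.
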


So now we can show that $\mathcal{E}'$ is a good tree set from which to build a tree-like space for a tree-decomposition:

\begin{lem}
	The set $\mathcal{E}'$ is nested and consists of those elements of $\mathcal{B}'$ that are nested with all elements of $\mathcal{B}'$.
\end{lem}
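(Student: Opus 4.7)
The plan is to prove the stronger claim that an element of $\mathcal{B}'$ lies in $\mathcal{E}'$ if and only if it is nested with every element of $\mathcal{B}'$; nestedness of $\mathcal{E}'$ then follows automatically from $\mathcal{E}' \subseteq \mathcal{B}'$.

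For the forward direction, I would fix $S \in \mathcal{E}'$ and $T \in \mathcal{B}'$ and split into cases. If $S \in \mathcal{E}$ and $T \in \mathcal{B}$, nestedness is immediate from the definition of $\mathcal{E}$. If $S \in \mathcal{E}$ but $T$ equals $A$ or $E \setminus A$ for some $A \in \partial(V)$ with $V \in \mathcal{V}$, I first note that $S \notin V$, since $V \subseteq \mathcal{B} \setminus \mathcal{E}$; hence \cref{sepsinVnestedwithsepsnotinV2} places some orientation of $S$ inside an element $A' \in \partial(V)$. Because $A$ and $A'$ are classes of the same partition $\partial(V)$, they are equal or disjoint, and in either case $S$ is nested with $A$ (and hence with $T$). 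The symmetric case, where $S$ is $A$ or $E \setminus A$ with $A \in \partial(V)$, splits further: if $T \in V$, then $T$ is a union of $\sim_V$-equivalence classes (elements of $V$ do not distinguish equivalent elements), so either $A \subseteq T$ or $A \cap T = \emptyset$; if instead $T \in \mathcal{B}' \setminus V$, then \cref{sepsinVnestedwithsepsnotinV2} applied to $T$ supplies an $A'' \in \partial(V)$ containing an orientation of $T$, and the same comparison between $A$ and $A''$ gives nestedness.

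For the converse, let $S \in \mathcal{B}'$ be nested with every element of $\mathcal{B}'$. If $S \notin \mathcal{B}$, then $S \in \mathcal{B}' \setminus \mathcal{B} \subseteq \mathcal{E}'$ by the construction of $\mathcal{B}'$, so assume $S \in \mathcal{B}$. If $S \in \mathcal{E}$ we are done, so assume $S \in \mathcal{B} \setminus \mathcal{E}$. Then $S$ crosses some $T \in \mathcal{B}$, and any such $T$ must itself lie in $\mathcal{B} \setminus \mathcal{E}$ (otherwise $T$ would cross nothing in $\mathcal{B}$). Consequently $T$ lies in the same partition class $V \in \mathcal{V}$ as $S$, so $T \in \mathcal{B}'$ and $S$ crosses an element of $\mathcal{B}'$, contradicting the hypothesis.

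The main obstacle will be the case analysis in the forward direction, and in particular ensuring that the preconditions for \cref{sepsinVnestedwithsepsnotinV2} are met (which comes down to the easy observation $V \cap \mathcal{E}' = \emptyset$ whenever we need it); once this is established, everything reduces to the single fact that distinct elements of the partition $\partial(V)$ are disjoint.
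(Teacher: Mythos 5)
Your proof is correct and follows essentially the same route as the paper: the reverse inclusion is immediate because every element of $\mathcal{B}'\setminus\mathcal{E}'$ crosses something in $\mathcal{B}$, and the forward direction rests on \cref{sepsinVnestedwithsepsnotinV2} (the paper invokes its packaged consequence \cref{lem:consistenorientationfromflower}, which is the observation that containment in an element of $\partial(V)$ yields an orientation pointing towards all of $V\cup\partial(V)$). Your case split is organised slightly differently but the substance is identical.
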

\begin{proof}
	As every element of $\mathcal{B}'\setminus \mathcal{E}'$ crosses an element of $\mathcal{B}$, it suffices to show that every element $A$ of $\mathcal{E}'$ is nested with every element $S$ of $\mathcal{B}'$.
	Note that $A$ is not contained in any element of $\mathcal{V}$.
	So by \cref{lem:consistenorientationfromflower}, if some orientation of $S$ is contained in $V \cup \partial(V)$ for some $V\in \mathcal{V}$ then there is an orientation of $A$ that points towards all elements of $V \cup \partial(V)$ and thus $A$ and $S$ are nested.
	Hence we consider the case that $S$ is not contained in $V \cup \partial(V)$ for any $V \in \mathcal{V}$.
	But then $S \in \mathcal{E}$.
	Now, if $A\in \mathcal{E}$, then $A$ and $S$ are both contained in $\mathcal{E}$ and thus nested.
	Otherwise some orientation of $A$ is contained in $\partial(W)$ for some $W \in \mathcal{V}$, and so again by \cref{lem:consistenorientationfromflower}, $A$ and $S$ are nested.
\end{proof}

For every $V\in \mathcal{V}$ and every $S\in \mathcal{E}'$ there is by \cref{lem:consistenorientationfromflower} an orientation of $S$ that points towards all elements of $V \cup \partial(V)$.
As these orientations are unique, the set of elements of $\mathcal{E}'$ which point towards the elements of $V$ is an orientation $O_V$ of $\mathcal{E}'$.
Furthermore, it is a consistent orientation:
\begin{lem}
	For $V\in \mathcal{V}$, the set $O_V$ is a consistent orientation of $\mathcal{E}'$.
\end{lem}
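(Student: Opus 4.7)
The plan is to unpack consistency in the bipartition setting and then reduce everything to a one-line set-theoretic argument. Recall from the discussion at the end of \cref{sec:prelbips} that for a subsystem of a universe of bipartitions with nonempty ground set, \cref{stronglyconsistent} yields that an orientation $O$ is consistent precisely if, for all $A, B$ in the subsystem with $A \subsetneq B$ and $B \in O$, also $A \in O$. The text immediately preceding the lemma already establishes that $O_V$ is an orientation of $\mathcal{E}'$ (using the uniqueness of the orientation of each $S \in \mathcal{E}'$ supplied by \cref{lem:consistenorientationfromflower}); so only this ``downward-closure'' property needs to be checked.

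So I would fix $A, B \in \mathcal{E}'$ with $A \subsetneq B$ and $B \in O_V$, and show that $A$ points towards every $C \in V \cup \partial(V)$, since by the defining property of $O_V$ this forces $A \in O_V$. Take any such $C$. Because $B \in O_V$, the set $B$ points towards $C$, meaning $B \subseteq C$ or $B \cap C = \emptyset$. In the first case $A \subseteq B \subseteq C$; in the second $A \cap C \subseteq B \cap C = \emptyset$. Either way $A$ points towards $C$, as required.

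There is no real obstacle here: all the structural work has already been carried out in \cref{sepsinVnestedwithsepsnotinV2}, \cref{lem:consistenorientationfromflower} and \cref{lem:fromseptopetal}, which together guarantee that $O_V$ is well-defined as an orientation. The only point to be careful about is to invoke the strongly-consistent reformulation (via \cref{stronglyconsistent}) of consistency for bipartitions, rather than arguing directly with the definition through inverses; once this is done the verification reduces to the single observation that passing from $B$ to a smaller $A \subsetneq B$ preserves ``points towards $C$''.
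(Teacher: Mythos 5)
Your proof is correct, but it takes a different route from the paper's. The paper establishes consistency by showing that no two elements $S,T$ of $O_V$ satisfy $S\cup T=E$: it invokes \cref{sepsinVnestedwithsepsnotinV2} to place $S$ and $T$ inside elements $A$ and $B$ of $\partial(V)$, and then uses the fact that $\partial(V)$ has at least four classes to conclude $A\cup B\neq E$. You instead verify strong consistency (downward closure) directly: if $A\subsetneq B\in O_V$, then every witness $B\subseteq C$ or $B\cap C=\emptyset$ for ``$B$ points towards $C$'' is inherited by $A$, so $A\in O_V$. Your reduction to downward closure via \cref{stronglyconsistent} is legitimate here because $\mathcal{E}'$ contains no small separations ($\emptyset\notin\mathcal{E}'$), which is exactly the situation described at the end of \cref{sec:prelbips}; and, as you note, only membership in $V$ (rather than $V\cup\partial(V)$) is needed to conclude $A\in O_V$ from the definition. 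Your argument is the more elementary one: it needs nothing beyond the definition of $O_V$, the already-established orientation property, and the monotonicity of ``points towards'' under taking subsets, whereas the paper's argument re-uses the structural lemma \cref{sepsinVnestedwithsepsnotinV2} and the lower bound on $|\partial(V)|$. What the paper's approach buys in exchange is the slightly stronger intermediate fact that $S\cup T\neq E$ for all $S,T\in O_V$, i.e.\ that the inverses of the maximal elements of $O_V$ behave like a star around $V$, which is in the spirit of how $O_V$ is used afterwards.
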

\begin{proof}
	As we already know that $O_V$ is an orientation of $\mathcal{E}'$, it suffices to show for any elements $S$ and $T$ of $O_V$ that $S\cup T \neq E$.
	By \cref{sepsinVnestedwithsepsnotinV2} there is an element $A$ of $\partial(V)$ such that some orientation $S'$ of $S$ is contained in $A$.
	Then $S'$ points towards all elements of $V \cup \partial(V)$, hence $S = S'$.
	Similarly there is an element $B$ of $\partial(V)$ that contains $T$.
	So $S\cup T$ is a subset of $A \cup B$, and as $\partial(V)$ contains at least four elements, $A\cup B \neq E$.
\end{proof}

Thus every element of $\mathcal{V}$ naturally corresponds to a vertex of a tree-like space constructed from $\mathcal{E}'$.
Similarly for every element $e$ of the ground set $E$, the set of elements in $\mathcal{E}'$ which do not contain $e$ is a consistent orientation $O_e$ of $\mathcal{E}$.
These orientations are essentially all distinct:

\begin{lem}\label{treedistinguishesall}
	Let $X$ and $Y$ be distinct elements of $\mathcal{V}\cup E$.
	Then either the consistent orientations $O_X$ and $O_Y$ of $\mathcal{E}'$ are distinct or $X$ and $Y$ are both elements of $E$ and there is no element of $\mathcal{B}'$ which contains only one of $X$ and $Y$.
\end{lem}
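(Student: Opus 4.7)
The plan is to split into three cases depending on whether $X$ and $Y$ lie in $\mathcal{V}$ or in $E$. As a preliminary step, I would establish the following explicit description of $O_V$ for $V \in \mathcal{V}$: for every $S \in \mathcal{E}'$, the orientation picked by $O_V$ is the orientation of $\{S, E\setminus S\}$ that is contained in a single element of $\partial(V)$. If $S \in \partial(V)$ this orientation is $S$ itself; otherwise, noting that $\mathcal{E}' \cap V = \emptyset$ (elements of $\mathcal{E}'$ are nested with all of $\mathcal{B}$ while elements of $V$ cross something in $\mathcal{B}$), \cref{sepsinVnestedwithsepsnotinV2} applies and supplies such an orientation. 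To confirm this orientation is the one characterised by \cref{lem:consistenorientationfromflower}, I would check that the opposite orientation fails to point towards some petal of $V$, which uses $\lvert\partial(V)\rvert \geq 4$.

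For Case A, where $X$ and $Y$ are distinct elements of $\mathcal{V}$, I will invoke \cref{sepsinVnestedwithsepsnotinV2} with $V = X$ and $S$ any element of $Y$ (so $S \in \mathcal{B}' \setminus X$ because distinct classes of $\mathcal{V}$ are disjoint); reading the proof of that lemma, this yields a petal $A^* \in \partial(X)$ and a petal $B^* \in \partial(Y)$ with $E \setminus A^* \subseteq B^*$. By the preliminary description, $O_X$ orients $\{A^*, E \setminus A^*\}$ as $A^*$, while $O_Y$ orients it as $E \setminus A^*$ (since $E \setminus A^* \subseteq B^* \in \partial(Y)$), so $O_X \neq O_Y$.

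For Case B, where $X \in \mathcal{V}$ and $Y \in E$, let $A^* \in \partial(X)$ be the unique petal containing $Y$. Then $O_X$ orients $\{A^*, E \setminus A^*\}$ as $A^*$, whereas $O_Y$ — which picks, for each element of $\mathcal{E}'$, the orientation not containing $Y$ — picks $E \setminus A^*$, so $O_X \neq O_Y$. For Case C, where $X$ and $Y$ are distinct elements of $E$, I distinguish two subcases: if no element of $\mathcal{B}'$ contains exactly one of $X$ and $Y$, then for every $\{A, E \setminus A\} \subseteq \mathcal{E}'$ both $X$ and $Y$ lie on the same side, so $O_X$ and $O_Y$ both pick the opposite side and thus agree, putting us in the exception clause. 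Otherwise, some $S \in \mathcal{B}'$ contains exactly one of $X$ and $Y$; if $S \in \mathcal{E}'$ then $O_X$ and $O_Y$ disagree on $\{S, E \setminus S\}$ directly, and if $S \in \mathcal{B} \setminus \mathcal{E}$ then $S$ lies in some $V \in \mathcal{V}$ and is a union of some but not all petals of $V$, forcing $X$ and $Y$ into distinct petals $C_1, C_2 \in \partial(V) \subseteq \mathcal{E}'$, so $O_X$ and $O_Y$ disagree on $\{C_1, E \setminus C_1\}$.

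The main obstacle is the preliminary characterisation of $O_V$, since the ``uniqueness'' of the orientation towards $V \cup \partial(V)$ invoked in the definition of $O_V$ has to be verified separately for $S \in \partial(V)$ and for other $S \in \mathcal{E}'$; once that is in hand, all three cases reduce to routine bookkeeping about which side of a separation contains which object.
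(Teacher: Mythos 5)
Your proof is correct and takes essentially the same route as the paper: the same three-way case split, the same reduction of each case to exhibiting a separation in $\mathcal{E}'$ on which $O_X$ and $O_Y$ disagree, via \cref{sepsinVnestedwithsepsnotinV,sepsinVnestedwithsepsnotinV2}. The only cosmetic difference is that in the $\mathcal{V}$--$\mathcal{V}$ case you extract the containment $E\setminus A^*\subseteq B^*$ from inside the proof of \cref{sepsinVnestedwithsepsnotinV2} (it also follows from its statement applied with $V=Y$, $S=A^*$, since $A^*$ itself contains an element of $Y$ and so cannot lie in a single petal of $\partial(Y)$), whereas the paper invokes the packaged \cref{lem:fromseptopetal} and argues that $A$ cannot point towards all of $Y\cup\partial(Y)$.
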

\begin{proof}
	First consider the case that one of $X$ and $Y$, $X$ say, is contained in $E$ and the other is not.
	Then $X\in S$ for some $S\in \partial(Y)$, and $S\in \mathcal{E}'$ by definition of $\mathcal{E}'$.
	Then $S$ points towards all elements of $Y$, implying $S\in O_Y$ while $E \setminus S \in O_X$.
	
	Next consider the case that both $X$ and $Y$ are contained in $E$.
	If there is no element of $\mathcal{B}'$ which contains only one of $X$ and $Y$, then there is nothing to show.
	So assume that there is an element $S$ of $\mathcal{B}$ which contains $X$ but not $Y$.
	If $S\in \mathcal{E}$, then $S\in O_Y$ and $E\setminus S\in O_X$.
	If $S\in V \cup \partial(V)$ for some $V\in \mathcal{V}$, then the element $S'$ of $\partial(V)$ that contains $X$ does not contain $Y$, and so $S'\in O_Y$ while $E \setminus S' \in O_X$.
	
	Last consider the case that both $X$ and $Y$ are contained in $\mathcal{V}$.
	As $X$ and $Y$ are distinct, there is by \cref{lem:fromseptopetal} some $A\in \partial(X)$ such that $E \setminus A$ points towards some element of $Y$.
	Then $A$ cannot point towards all elements of $Y \cup \partial(Y)$, so $E \setminus A$ is contained in $O_Y$ while $A$ is contained in $O_X$.
\end{proof}

Recall the three properties of the tree decomposition for finite $\mathcal{B}$ that we wanted to also obtain for infinite $\mathcal{B}$.
Every element of $\mathcal{V}$ induces a consistent orientation of $\mathcal{E}'$ that is distinct from the consistent orientations of $\mathcal{E}'$ that are induced by other elements of $\mathcal{V}$ and by elements of $E$.
As $\mathcal{E}'=\mathcal{E}$ for finite $\mathcal{B}$, this implies that the second property essentially holds in the infinite case.
In order to obtain this result, the first property had to be modified, $\mathcal{E}$ had to be extended to $\mathcal{E}'$.
By doing so the third property is also essentially obtained.
Indeed, by construction $\partial(V)$ is now contained in $\mathcal{E}'$ for every $V \in \mathcal{V}$; and $O_V$ consists of exactly those elements of $\mathcal{E}'$ that are contained in some element of $\partial(V)$.

\begin{rem}[Tree of tangles]\label{ex:treeoftangles}
	Let $\mathcal{U}$ be a finite submodular universe, $k\in \mathbb{N}$ and $\mathcal{P}$ a set of regular $l$-profiles with $l\leq k$ which is closed under taking truncations.
	A common strategy for proving tree-of-tangles theorems is to find, recursively from $l=1$ to $l=k$, for every $l-1$-profile $Q$ in $\mathcal{P}$ a tree decomposition which distinguishes all the profiles in $\mathcal{P}$ whose truncation is $Q$, and to combine all the tree sets into a large one.
	To a certain extent, this strategy can be mimicked with the tools from this and the last section:
	Given an $l$-profile $Q\in \mathcal{P}$ with $l\leq k-1$, let $\mathcal{P}_Q$ be the set of profiles in $\mathcal{P}$ whose truncation to an $l$-profile is $Q$.
	Then $\phi_Q$, $\mathcal{B}_Q$ and $\mathcal{E}_Q$ can be defined as in this section, where $\mathcal{P}_Q$ takes the role of the set of $l+1$-profiles whose truncation is $Q$.
	Call the decomposition tree of the tree decomposition that displays the elements of $\mathcal{E}_Q$ (see also \cref{treefortreeset}) the \emph{abstract tree} of $Q$.
	Note that this tree is unique up to isomorphism of graphs.
	If $Q$ is not the empty set, then $k-1\neq 0$ and thus the truncation $Q'$ of $Q$ to a $k-2$-profile exists.
	In that case, the orientation $O_Q$ of $\mathcal{E}_{Q'}$ which $Q$ induces is a vertex of the abstract tree of $Q'$.
	Thus the set of abstract trees has itself a tree-structure.
	For an example see \cref{fig:treeoftangles}.
	Also for every tree set $\mathcal{E}_Q$ there is by \cref{cornersinSexist} a tree set $T_Q$ contained in $S$ which is mapped isomorphically to $\mathcal{E}_Q$ by $\phi_Q$.
	
	It is not always possible to combine the tree sets $T_Q$ into one tree set, as elements of distinct tree sets $T_Q$ need not be nested.
	Less abstract, the following can happen:
	There are integers $k\leq l$ and two $k+1$-profiles $P_1$ and $P_2$ which are distinguished by exactly two separations $\overrightarrow{r}$ and $\overleftarrow{r}$ of $S$ that have order $k$.
	Also, there are two $l+1$-profiles $Q_1$ and $Q_2$ which are distinguished by exactly two separations $\overrightarrow{s}$ and $\overleftarrow{s}$ of $S$ that have order $l$, and such that $\overrightarrow{s}$ crosses $\overrightarrow{r}$.
	As $k\leq l$, one of the orientations of $\overrightarrow{r}$ is contained in both $Q_1$ and $Q_2$, say $\overrightarrow{r}\in Q_1\cap Q_2$.
	Under these assumptions, $\overrightarrow{r}\vee \overrightarrow{s}$ has order at least $l+1$, as otherwise it would distinguish $Q_1$ from $Q_2$.
	Thus $\overrightarrow{r}\wedge \overrightarrow{s}$ has order less than $k$ and it does not distinguish $P_1$ from $P_2$, thus it must be contained in $P_1\cap P_2$.
	Likewise, $\overrightarrow{r}\wedge \overleftarrow{s}$ has order less than $k$ and is contained in $P_1\cap P_2$.
	Thus one of $P_1$ and $P_2$ contains not only $\overrightarrow{r}\wedge \overrightarrow{s}$ and $\overrightarrow{r}\wedge \overleftarrow{s}$ but also $\overleftarrow{r}$.
	\cite{confing} contains in Section 6 a graph with $k$-blocks (special cases of $k+1$-profiles) and $k+1$-blocks behaving exactly as described here, and introduces the notion of robustness.
	
	In \cite{ProfilesNew} that same notion of robustness is formulated in terms better suited to the context of this paper:
	A $k$-profile $P$ of a universe is robust if for all $\overrightarrow{r}\in P$ and all separations $\overrightarrow{s}$, if both $\overleftarrow{r}\wedge \overrightarrow{s}$ and $\overleftarrow{r}\wedge \overleftarrow{s}$ have order less than the order of $\overrightarrow{r}$, then they are not both contained in $P$.
	Profiles being robust has the following implication:
	A separation distinguishes two profiles $P_1$ and $P_2$ \emph{efficiently} if it distinguishes them but no separation of lesser order than $\overrightarrow{s}$ distinguishes $P_1$ and $P_2$.
	Separations contained in any $T_Q$ efficiently distinguish some profiles in $\mathcal{P}$, and typically separations in trees of tangles also efficiently distinguish two profiles of the set of profiles under consideration.
	Robustness of the profiles in $\mathcal{P}$ now ensures that, given two crossing separations of different orders which each distinguish two profiles efficiently, there is a corner of the two profiles that distinguishes all those profiles efficiently that are distinguished efficiently by the one separation of the original ones that has the bigger order.
	This property is used in the proof of \cite[Theorem 3.6]{ProfilesNew} to show that, under a slightly weaker notion of robustness, every robust set of profiles in a submodular separation system has a tree of tangles.
	In the same way, if all elements of $\mathcal{P}$ are robust then it can be shown that the tree sets $T_Q$ can be chosen in such a way that the union of all $T_Q$ is a nested set.
	
	Theorem 3.3 of \cite{ProfilesNew} makes a statement about the existence of canonical trees of tangles.
	The proof constructs a tree set $\tau$ such that the restriction of $\phi$ to $\tau$ has $\mathcal{E}$ as its image and is nearly injective or injective.
	The proof can be translated into a construction of $\tau$ from $\mathcal{E}$ as follows:
	Let $T$ be a tree that is the decomposition tree of a tree decomposition whose set of displayed separations is $\mathcal{E}$.
	Let $v$ be a vertex of $T$ such that the maximal distance from $v$ to a leaf is minimized.
	If $v$ is unique, then let $O\subseteq \mathcal{E}$ be the set of separations whose inverse is contained in the consistent orientation $v$.
	If $v$ is not unique, then there are only two possible choices $v$ and $v'$ which are joined by an edge of $T$, let $O$ be the set of separations whose inverse is contained in $v\cup v'$.
	By \cref{biggestpreimage} there is for every $\overrightarrow{r}\in O$ a biggest element of $\phi^{-1}(\overrightarrow{r})$.
	The tree set consisting of all these biggest elements equals $\tau$.
	Thus if $v$ is unique, then there is a canonical tree set contained in $S$ which is mapped to $\mathcal{E}$ isomorphically by $\phi$.
\end{rem}

\begin{figure}
	\newcommand{\dotradius}{1.3pt}
	\begin{tikzpicture}[dot/.style={fill, circle, inner sep=1.3pt}]
		\begin{scope}
			\foreach \xco/\yco/\name in {0/-1/A,-1/0/B,0/0/C,0/1/D,1/0/E,1.7/0.7/F,1.7/-0.7/G}
			{\coordinate (A\name) at (\xco cm,\yco cm);
				\draw (A\name) node [dot] {};}
			\draw (AA)--(AC)--(AB) (AD) -- (AC) -- (AE) -- (AF) (AE) -- (AG);
		\end{scope}
		\begin{scope}[xshift=4.5cm]
			\foreach \xco/\yco/\name in {-0.7/0.7/A,0/1.4/B,0.7/0.7/C,1.4/0/D,0.7/-0.7/E,0/-1.4/F,-0.7/-0.7/G,-1.4/0/H}
			{\coordinate (B\name) at (\xco cm,\yco cm);
				\draw (B\name) node [dot] {};}
			\draw (BA)--(BB)--(BC)--(BD)--(BE)--(BF)--(BG)--(BH)--(BA)--(BC)--(BE)--(BG)--(BA);
		\end{scope}
		\begin{scope}[xshift=7.5cm, yshift=-0.5cm]
			\foreach \xco/\yco/\name in {0/1/A,1/1/B,1/0/C,0/0/D}
			{\coordinate (C\name) at (\xco cm, \yco cm);
				\draw (C\name) node [dot] {};}
			\draw (CA) -- (CB) -- (CC) -- (CD) -- (CA) -- (CC) (CB) -- (CD);
		\end{scope}
	\end{tikzpicture}
	\begin{tikzpicture}[dot/.style={fill, circle, inner sep=\dotradius}]
		\draw (-1cm-\dotradius,0cm) -- (-4.5cm,1.5cm) (-1cm+\dotradius,0cm) -- (-0.5cm,1.5cm);
		\draw (-\dotradius,0cm) -- (0.5cm,1.5cm) (\dotradius,0cm) -- (1.5cm,1.5cm);
		\draw (1cm-\dotradius,0cm) -- (2.5cm,1.5cm) (1cm+\dotradius,0cm) -- (3.5cm,1.5cm);
		\draw (1cm-\dotradius,1.5cm) -- (-1.5cm,3.5cm) (1cm+\dotradius,1.5cm) -- (1.5cm,3.5cm);
		\draw (3cm-\dotradius,1.5cm) -- (2.5cm,3.5cm) (3cm+\dotradius,1.5cm) -- (3.5cm,3.5cm);
		\begin{scope}[canvas is xz plane at y=0cm]%scopes correspond to levels of the picture
			\draw (-1,0) node [dot] {} -- (0,0) node [dot] {} -- (1,0) node [dot] {};
			\draw (0cm,0cm) ellipse [x radius=1.5cm, y radius=0.5cm];
		\end{scope}
		\begin{scope}[canvas is xz plane at y=1.5]
			\draw [xshift=-3cm] (-1,0) node [dot] {} -- (0,0) node [blue,dot] {} -- (0,1) node [dot] {} (0,-1) node [dot] {} -- (0,0) -- (1,0) node [red,dot] {} -- (1.7,0.7) node [dot] {} (1,0) -- (1.7,-0.7) node [dot] {};
			\draw [xshift=-3cm] (0.5,0) ellipse [x radius=2cm,y radius=1.5cm];
			
			\draw [xshift=1cm] (0,0) node [dot] {};
			\draw [xshift=1cm] (0,0) ellipse [radius=0.5cm];
			
			\draw [xshift=3cm] (0,0) node [dot] {};
			\draw [xshift=3cm] (0,0) ellipse [radius=0.5cm];
		\end{scope}
		\begin{scope}[canvas is xz plane at y=3.5]
			\draw (-1,0) node [dot] {} -- (0,0) node [green!70!black,dot] {} -- (0,1) node [dot] {} (0,-1) node [dot] {} -- (0,0) -- (1,0,0) node [dot] {};
			\draw (0,0) ellipse [radius=1.5cm];
			\draw [xshift=3cm] (0,0) node [dot] {} ellipse [radius=0.5cm];
		\end{scope}
		\path (0cm,5cm);
	\end{tikzpicture}
	\caption{A graph with three components and its collection of abstract trees for the set of all $k$-profiles with $k\leq 3$. The black vertices correspond to profiles, the blue vertex corresponds to an anemone, the green vertex corresponds to a daisy and the red vertex corresponds to nothing.}\label{fig:treeoftangles}
\end{figure}
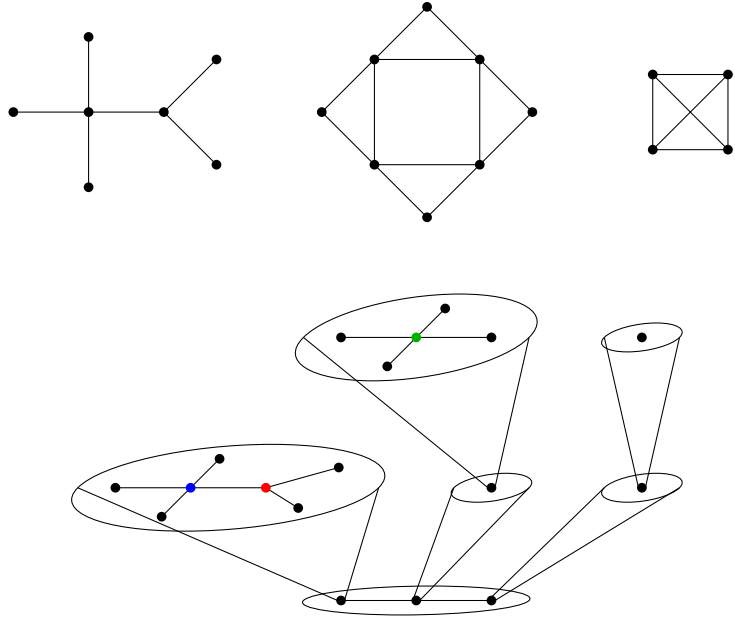

\subsection{Profiles in the abstract setting}\label{sec:abstractprofiles}
Recall from \cref{sec:abstractionfromunderlyingsepsys} that if $\mathcal{B}$ is obtained from a submodular universe $\mathcal{U}$ and a set $\mathcal{P}$ of $k$-profiles of $\mathcal{U}$ then the elements of $\mathcal{P}$ induce profiles of $\mathcal{B}$ and profiles of $\mathcal{B}$ induce $k$-profiles of $\mathcal{U}$.
Furthermore every

Also the profiles of $\mathcal{B}$ can be related to the tree.
For every profile $P$ of $\mathcal{P}$, the intersection of $P$ with $\mathcal{E}$ is a consistent orientation $O_P$ of $\mathcal{E}$, and thus corresponds to a vertex of the tree.
That vertex cannot simultaneously belong to an element of~$\mathcal{V}$.

The following lemma shows that if $O$ is a vertex of the tree-like space obtained from $\mathcal{E}'$ that has degree at least $4$ and is not of the form $O_V$ for any $V\in \mathcal{V}$ then there is a unique profile of $\mathcal{B}$ that contains $O$.

\begin{lem}\label{profilefordegfourvertices}
	Let $O$ be a consistent orientation of $\mathcal{E}$ such that $O\neq O_V\cap \mathcal{E}$ for all $V\in \mathcal{V}$.
	Then there is a unique consistent orientation of $\mathcal{B}$ which contains $O$, and if $O=O_e$ for some $e\in E$ or the restriction of $\subseteq$ to $O$ has at least four maximal elements\myfootnote{There is a notion of the degree of a vertex in a tree-like space. The assumption that $O$ has at least four maximal elements is a stronger assumption than that the vertex $O$ has degree at least $4$.}, then the consistent orientation is in fact a profile.
\end{lem}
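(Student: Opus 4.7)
The plan is to explicitly construct the unique extension of $O$ and then check the profile property. For each $V\in\mathcal{V}$, since $O\neq O_V\cap\mathcal{E}$, there is some $T\in O$ with $E\setminus T\in O_V$: that is, $E\setminus T$ is the orientation of $T$ which points towards every element of $V\cup\partial(V)$. By \cref{sepsinVnestedwithsepsnotinV2} applied to $T$, there is then a unique $A_V\in\partial(V)$ such that $E\setminus T\subseteq A_V$ (the opposite inclusion $T\subseteq A_V$ would put $T$ into $O_V$). I would first check that $A_V$ does not depend on the choice of $T$: if $T_1,T_2\in O$ both pointed away from $V$ and produced different $A_1\neq A_2$ in $\partial(V)$, then $T_i\supseteq E\setminus A_i$ and hence $T_1\cup T_2\supseteq (E\setminus A_1)\cup(E\setminus A_2)=E\setminus(A_1\cap A_2)=E$, contradicting the consistency of $O$. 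Then I would define $O'$ as $O$ together with, for each $V\in\mathcal{V}$ and each $S\in V$, the orientation of $S$ whose chosen side does not contain $A_V$.

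For uniqueness, any consistent extension $O''$ of $O$ must agree with $O'$: if $S^\ast$ denotes the opposite orientation of $S\in V$, then $S^\ast\supseteq A_V\supseteq E\setminus T$, so $S^\ast\cup T\supseteq A_V\cup (E\setminus A_V)=E$, contradicting that $O''$ is consistent and contains both $T$ and $S^\ast$. Existence, i.e.\ that $O'$ is itself consistent, is a case analysis on pairs $R,S\in O'$: the case $R,S\in\mathcal{E}$ follows from consistency of $O$; the remaining cases use the nestedness provided by \cref{sepsinVnestedwithsepsnotinV} and \cref{sepsinVnestedwithsepsnotinV2} (elements of $\mathcal{B}\setminus V$ are nested with elements of $V$) together with the extension rule to show that the union of the chosen orientations is never all of $E$.

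For the profile property: when $O=O_e$, the extension is identified as $\{A\in\mathcal{B}:e\notin A\}$, which satisfies the profile property trivially, since $e\in E\setminus(R\cup S)$ whenever $e\notin R$ and $e\notin S$. When instead $O$ has at least four maximal elements under $\subseteq$, I would verify the profile property by checking, for $R,S\in O'$ with $R\cup S\in\mathcal{B}$, that $R\cup S\in O'$ (equivalently, $E\setminus(R\cup S)\notin O'$). The easy sub-case is that of $R$ and $S$ crossing, which by the definition of $\mathcal{V}$ forces $R,S\in V$ for a common $V$; the extension rule then gives $A_V\not\subseteq R$ and $A_V\not\subseteq S$, hence $A_V\not\subseteq R\cup S$, so $R\cup S\in O'$. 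The main obstacle will be the case of nested $R,S$ whose union happens to lie in some flower $V$: here the four-maximal-elements hypothesis is used to rule out the configuration in which $O$ would effectively coincide with the flower vertex $O_V\cap\mathcal{E}$, ensuring that $A_V$ ends up on the $E\setminus(R\cup S)$ side and hence $R\cup S\in O'$.
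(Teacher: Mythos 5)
Your construction of $O'$, the well-definedness of $A_V$, and the uniqueness argument are essentially the paper's: the paper likewise extends $O$ by choosing, for each $V\in\mathcal{V}$ and some $S_V\in O\setminus O_V$, the orientations of the elements of $\sepinn{V}$ that point towards $S_V$, which is exactly the side disjoint from your $A_V$ (note $E\setminus S_V\subseteq A_V$). The case $O=O_e$ of the profile property also matches the paper.

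There is, however, a genuine gap in the profile property for the case of at least four maximal elements. You check pairs $R,S\in O'$ with $R\cup S\in\mathcal{B}$, but the only case you argue is the crossing one, and even there you tacitly assume that $R\cup S$ lies in the same class $V$ as $R$ and $S$, so that the rule ``choose the side disjoint from $A_V$'' applies to it; $R\cup S$ could instead lie in $\mathcal{E}$ or in a different class of $\mathcal{V}$, and then one must separately verify that the corresponding orientation rule selects $R\cup S$. More seriously, the critical case is $R,S$ nested and disjoint with $R\cup S\in\mathcal{B}$: the danger is that $E\setminus(R\cup S)$ also lies in $O'$, i.e.\ that three pairwise disjoint elements of $O'$ cover $E$. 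Your proposed mechanism --- that the four-maximal-elements hypothesis ``rules out the configuration in which $O$ would effectively coincide with $O_V\cap\mathcal{E}$'' --- does not address this: that configuration is already excluded by the lemma's standing hypothesis $O\neq O_V\cap\mathcal{E}$, and consistency alone does not prevent three elements of a consistent orientation from covering $E$ (orient a star of three separations with empty central part inwards). What the hypothesis actually provides, and how the paper uses it, is the following: it suffices to show that no three elements of $O'$ have union $E$; every element of $O'$ is contained in a maximal element of $O$ (each added element of $\sepinn{V}$ is contained in $S_V\in O$); the maximal elements of $O$ are pairwise disjoint, so the union of any three of them is contained in the complement of a fourth maximal element, which is a proper subset of $E$ because $\mathcal{B}$ contains no trivial separations. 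You should replace the vague final step of your case analysis by this argument (or, equivalently, reduce the whole profile-property check to the statement that no three elements of $O'$ cover $E$ and then apply it uniformly).
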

\begin{proof}
	For $V\in \mathcal{V}$ there is $S_V\in O\setminus O_V$, and every element of $\sepinn{V}$ has a unique orientation which points towards $S_V$.
	This orientation does not depend on the choice of $S_V$.
	The union $O'$ of $O$ and, for all $V$ in $\mathcal{V}$, the set of elements of $\sepinn{V}$ which point towards $S_V$, is the unique consistent orientation of $\mathcal{B}$ which contains $O$.
	
	In order to show that $O'$ is a profile it suffices to show that for any three elements $Q$, $R$ and $R'$ of $O'$ the union is not the whole ground set.
	Let $S_1,\ldots, S_n$ be the maximal elements of $O'$.
	These are also the maximal elements of $O$.
	Then $Q\cup R\cup R'\leq S_i\cup S_j\cup S_l$ for some, not necessarily distinct, indices $i$, $j$ and $l$.
	If $O=O_e$ for some $e\in E$, then no $S_m$ contains $e$ and hence $S_i\cup S_j\cup S_l\neq E$.
	If the degree of $O$ in $T$ is at least $4$, then $S_i\cup S_j\cup S_l$ is less than or equal to the inverse of another maximal element of $O'$.
	As $O'$ is a subset of $\mathcal{B}$ and hence only contains non-trivial separations, $S_i\cup S_j\cup S_l\neq E$.
	In both cases $Q\cup R\cup R'\neq E$ and thus $O'$ is a profile.
\end{proof}

If $O = O_V \cap \mathcal{E}$ for some $V \in \mathcal{V}$, then the previous lemma cannot be applied to $O$.
But $O$ can still be extended to a consistent orientation of $\mathcal{B} \setminus V$ such that, in order to obtain a profile of $\mathcal{B}$, it suffices to add a profile of $\sepclos{V}$ that contains all elements of $\partial(V) \cap \mathcal{E}$.
Recall that no such profile exists if $V$ is finite, but that is not necessarily so if $V$ is infinite.

\begin{lem}
	Let $O$ be a consistent orientation of $\mathcal{E}$ such that $O = O_V \cap \mathcal{E}$ for some $V \in \mathcal{V}$.
	Then the set $O'$ of all elements of $\mathcal{B} \setminus V$ that point towards all elements of $V$ is a consistent orientation of $\mathcal{B} \setminus V$.
	If furthermore $P$ is a profile of $\sepclos{V}$ that contains $\partial(V) \cap \mathcal{E}$ then $O' \cup P$ is a profile of $\mathcal{B}$.
\end{lem}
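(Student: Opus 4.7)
The plan is to prove the two claims in turn.

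For the first claim, that $O'$ is a consistent orientation of $\mathcal{B}\setminus V$: \cref{lem:consistenorientationfromflower} supplies for each $S \in \mathcal{B}\setminus V$ at least one orientation pointing towards every element of $V \cup \partial(V)$, and in particular towards every element of $V$. For uniqueness, if both $S$ and $E\setminus S$ pointed towards some $T \in V$, a case analysis of the four nested configurations of $\{S, T\}$ forces $T \in \{S, E\setminus S\}$ or $T$ trivial; but $V$ contains crossing separations, hence some $T$ distinct from $S$ and $E\setminus S$, yielding a contradiction. Consistency is then immediate, because pointing at a given $T$ is preserved under taking subsets. A key reformulation for use below: an element $X \in \mathcal{B}\setminus V$ belongs to $O'$ if and only if $X$ is contained in some single element of $\partial(V)$ — since no $T \in V$ may separate elements of $X$, all elements of $X$ lie in one $\sim_V$-class.

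For the second claim I first show $O' \cup P$ is an orientation of $\mathcal{B}$. On $V \subseteq \sepclos{V}$ only $P$ acts; on $\mathcal{B}\setminus \sepclos{V} \subseteq \mathcal{B}\setminus V$ only $O'$ acts; so the only agreement to check is on the overlap $\sepclos{V} \cap (\mathcal{B}\setminus V)$. For $S$ in this overlap I would first argue $S \in \mathcal{E}$: if $S$ lay in some $W \in \mathcal{V}$ with $W \neq V$, then picking $T' \in W$ crossing $S$ and applying \cref{sepsinVnestedwithsepsnotinV} to $T'$ would place an orientation of $T'$ inside a class $A \in \partial(V)$, whence $T'$ would be nested with the union-of-classes $S$, contradicting that $T'$ crosses $S$. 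Then \cref{sepsinVnestedwithsepsnotinV} applied to $S$ itself places an orientation of $S$ inside a single class of $\partial(V)$; since $S$ is already a union of classes, that containment is an equality, so $S$ or $E\setminus S$ lies in $\partial(V) \cap \mathcal{E}$. The hypothesis $\partial(V) \cap \mathcal{E} \subseteq P$ together with the reformulation of $O'$ then forces both $P$ and $O'$ to select the same orientation.

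Next I verify strong consistency of $O' \cup P$. The internal consistencies of $P$ and $O'$ handle the cases in which both separations lie in $\sepclos{V}$ or both lie in $\mathcal{B} \setminus V$. The nontrivial cross-case is $R \subseteq S$ with $S \in P$ and $R \in \mathcal{B}\setminus \sepclos{V}$; I want $R \in O'$. Supposing instead $E\setminus R \in O'$, the reformulation yields $E\setminus R \subseteq A$ for some $A \in \partial(V)$, hence $R \supseteq E\setminus A$. Then $S \supseteq R \supseteq E\setminus A$, and since $S \in \sepclos{V}$ is a union of classes with $S \neq E$, necessarily $S = E\setminus A$; combining $R \subseteq S = E\setminus A$ with $R \supseteq E\setminus A$ forces $R = E\setminus A$, which is a union of classes and hence lies in $\sepclos{V}$, contradicting $R \notin \sepclos{V}$. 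For the profile property, one verifies by analogous case analysis that whenever $R, S \in O' \cup P$ and $R \cup S \in \mathcal{B}$, also $R \cup S \in O' \cup P$: if both lie in $P$ then $R \cup S \in \sepclos{V}$ and this is the profile property of $P$; if both lie in $O'$ the reformulation places $R \cup S$ either in a single class (so again in $O'$) or inside $V$ via a crossing witness, reducing to the previous case through the inclusion $R \cup S \subseteq A_R \cup A_S \in P$ combined with strong consistency; and mixed pairs are handled by the same inclusion chain.

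The main obstacle is the orientation-agreement step on the overlap: once one establishes that every element of $\sepclos{V} \cap (\mathcal{B}\setminus V)$ reduces, up to complementation, to a single class in $\partial(V) \cap \mathcal{E}$, the strong-consistency analysis becomes a short computation exploiting that elements of $P$ are literal unions of $\partial(V)$-classes, so that any supposed partial class inside $R$ collides with the inclusion $R \subseteq S \in \sepclos{V}$.
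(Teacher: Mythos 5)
Your overall strategy is the same as the paper's: characterise $O'$ as the set of elements of $\mathcal{B}\setminus V$ contained in a single $\partial(V)$-class, check that $O'$ and $P$ agree on the overlap $\sepclos{V}\cap(\mathcal{B}\setminus V)$ by showing every such separation has an orientation in $\partial(V)\cap\mathcal{E}$, and then run a case analysis for consistency and the profile property. The first claim, the overlap argument and the consistency check are sound (the one cross-case you do not treat, namely $R\in V$ and $R\subseteq S\in O'$ with $S\notin\sepclos{V}$, is vacuous, since it would force the non-empty union of classes $R$ into a single class of $\partial(V)$, contradicting that elements of $V$ cross something).

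The genuine problem is in the profile property. Your inclusion chain $R\cup S\subseteq A_R\cup A_S\in P$ is unjustified: the classes $A_R,A_S\in\partial(V)$ need not be elements of $\mathcal{B}$ at all (this is precisely why the paper enlarges $\mathcal{E}$ to $\mathcal{E}'$), and even when they are, their union need not lie in $\mathcal{B}$ --- the paper explicitly warns after \cref{infpredaisy} that in the infinite setting not every union of $\partial(V)$-classes belongs to $\sepclos{V}$. So neither $A_R\cup A_S$ nor $A_R\cup S$ (in the mixed case) can be fed to $P$ or to its strong consistency. The repair is the paper's route: since $R\cup S\in\mathcal{B}$ and $A_R$ belongs to $\mathcal{E}'$, which is nested with every element of $\mathcal{B}$, the set $R\cup S$ must be nested with $A_R$; a short case check then shows that either $R\cup S$ lies inside a single class (hence in $O'$ --- here one uses that $\partial(V)$ has at least four classes to rule out $E\setminus(R\cup S)\in O'$), or $R=A_R$, so that $R$ itself lies in $\partial(V)\cap\mathcal{E}\subseteq P$ and the profile property of $P$ applies to genuine elements of $P$ whose union $R\cup S$ really is a member of $\sepclos{V}$.
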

\begin{proof}
	By \cref{lem:consistenorientationfromflower}, every element of $\mathcal{E}$ has an orientation that points towards all elements of $V$.
	Thus $O'$ is an orientation of $\mathcal{E}$.
	Also, if $S$ is a subset of an element of $\mathcal{E}$ that points towards all elements of $V$, then also $A$ points towards all elements of $V$.
	Thus $O'$ is down-closed and thus a consistent orientation.
	
	In order to show that $O' \cup P$ is an orientation of $\mathcal{B}$ let $S$ be an element of $O'$ that is an orientation of an element of $\sepclos{V}$.
	Then $S$ is not contained in $V$, so in order to be contained in $\sepclos{V}$ it has to be an orientation of an element of $\partial(V)$.
	As all elements of $\partial(V)$ are contained in $\mathcal{E}'$ and thus nested with all elements of $\mathcal{B}$, this implies that some orientation of $S$ is contained in $\partial(V) \cup \mathcal{E}$.
	Because all elements of $\partial(V)$ point towards all elements of $V$, and $S$ does the same, $S$ itself has to be contained in $\partial(V) \cap \mathcal{E}$.
	So $S$ is contained in $P$, and hence $O' \cup P$ contains exactly one orientation of every element of $\mathcal{B}$.
	Thus $O' \cup P$ is an orientation of $\mathcal{B}$.
	
	In order to show that $O' \cup P$ is a consistent orientation, let $S$ and $T$ be elements of $\mathcal{B}$ with $S \subseteq T \in O \cup P'$.
	It suffices to show that if $S\notin V$ and $T \in V$ then $S \in O'$.
	By \cref{sepsinVnestedwithsepsnotinV} there is $A \in \partial(V)$ such that some orientation of $S$ is contained in $A$.
	If $A \subseteq T$, then the fact that $S\neq \emptyset$ implies that $E \setminus S$ is not contained in $A$ and thus $S$ is contained in $A$.
	Hence $S$ points towards all elements of $V$ and thus is contained in $O'$.
	So assume that $A$ is not a subset of $T$, implying $A \cap T = \emptyset$.
	Then also $A \cap S = \emptyset$, so the orientation of $S$ that is contained in $A$ has to be $E \setminus S$.
	So $S = E \setminus A$.
	As $S$ is contained in $T$ and the latter is disjoint from $A$, this implies that $S = T$ and hence $S\in O'$.
	
	In order to show that $O' \cup P$ is a profile, let $S$ and $T$ be elements of $O' \cup P$ such that $S \cup T$ is contained in $\mathcal{B}$.
	Again it suffices to consider the case that $S \in \mathcal{B} \setminus V$ and $T \in V$, and again there is $A \in \partial(V)$ such that some orientation of $S$ is contained in $A$.
	As $S$ is non-empty and points towards all elements of $V$, this orientation has to be $S$, so $S \subseteq A$.
	Because $S \cup T$ is contained in $\mathcal{B}$, it has to be nested with $A$, so $S=A$ and thus $S\in P$.
	By the profile property of $P$, this implies $S \cup T \in P \subseteq O' \cup P$.
\end{proof}

Now to the question of when profiles of $\sepclos{V}$ exist.
One easy sufficient condition for the existence of a profile of $\sepclos{V}$ is the existence of some $A \in \partial(V)$ that is not contained in $\mathcal{B}$.
Then the set $P$ of all elements of $\sepclos{V}$ that are disjoint from $A$ is a profile of $\sepclos{V}$ that contains $\partial(V) \cap \mathcal{E}$.
For a second method of constructing profiles of $\sepclos{V}$, the notion of an ultrafilter from topology is useful, see also \cref{defn:ultrafilter,existencefreeultrafilter}.

Note that in the terminology of separation systems, if $| E | \geq 2$ then a set $U$ is an ultrafilter of $E$ if and only if the set $\{A \subseteq E \colon A \notin U\}$ is a profile of the universe of bipartitions of $E$.
A free ultrafilter of $\partial(V)$ induces a profile of $\sepclos{V}$ that contains every element of $\partial(V)$.

\begin{lem}
	If there is a free ultrafilter of $\partial(V)$ then there also is a profile of $\sepclos{V}$ that contains all elements of $\partial(V)$.
\end{lem}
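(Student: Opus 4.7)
The plan is to transfer the ultrafilter on $\partial(V)$ to an orientation of $\sepclos{V}$ via the natural correspondence between elements of $\sepclos{V}$ and certain subsets of $\partial(V)$. Concretely, since every $S\in \sepclos{V}$ is by definition a union of elements of $\partial(V)$, we can associate to $S$ the set $\iota(S):=\{A\in \partial(V)\colon A\subseteq S\}$. Because distinct unions of partition classes are distinct as subsets of $E$, the map $\iota$ is injective; moreover $\iota(E\setminus S)=\partial(V)\setminus\iota(S)$ and $\iota(S\cup T)=\iota(S)\cup\iota(T)$ whenever $S\cup T\in \sepclos{V}$, and $\iota$ preserves inclusion.

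Given a free ultrafilter $\mathcal{F}$ of $\partial(V)$, I would define
\begin{equation*}
	P := \{S\in \sepclos{V}\colon \iota(S)\notin \mathcal{F}\}.
\end{equation*}
First, $P$ is an orientation of $\sepclos{V}$: since $\iota(E\setminus S)=\partial(V)\setminus \iota(S)$, the ultrafilter property (exactly one of a set and its complement belongs to $\mathcal{F}$) forces $P$ to contain exactly one of $S$ and $E\setminus S$ for every $S\in \sepclos{V}$. Second, $P$ contains every element $A\in \partial(V)$: here $\iota(A)=\{A\}$ is a singleton, hence finite, and therefore not in the free ultrafilter $\mathcal{F}$.

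Third, $P$ is consistent: if $S\subseteq T\in P$ with $S,T\in \sepclos{V}$, then $\iota(S)\subseteq \iota(T)\notin \mathcal{F}$, and the upward closure of $\mathcal{F}$ forces $\iota(S)\notin \mathcal{F}$, so $S\in P$. Finally, $P$ satisfies the profile property: if $S,T\in P$ and $S\cup T\in \sepclos{V}$, then $\iota(S\cup T)=\iota(S)\cup \iota(T)$; since for any ultrafilter $\mathcal{F}$ a union of two sets belongs to $\mathcal{F}$ only if one of the summands does (otherwise both complements lie in $\mathcal{F}$, so their intersection, the complement of the union, lies in $\mathcal{F}$, contradicting the ultrafilter being an orientation), the fact that neither $\iota(S)$ nor $\iota(T)$ is in $\mathcal{F}$ gives $\iota(S\cup T)\notin \mathcal{F}$, whence $S\cup T\in P$.

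There is no single hard step here; the work is really in setting up the dictionary $\iota$ correctly and verifying that each of the three defining properties of a profile (orientation, consistency, profile property) is the direct translation via $\iota$ of the corresponding ultrafilter property (orientation under complementation, upward closure, and the ``prime'' property $A\cup B\in \mathcal{F}\Rightarrow A\in\mathcal{F}$ or $B\in \mathcal{F}$). The only place freeness of $\mathcal{F}$ is actually needed is to guarantee that all singletons $\{A\}$, and therefore all elements of $\partial(V)$, land in $P$.
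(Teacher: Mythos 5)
Your proposal is correct and follows essentially the same route as the paper: the paper defines the profile as the set of elements of $\sepclos{V}$ that are not unions of members of the ultrafilter, which is exactly your $P=\{S\colon \iota(S)\notin\mathcal{F}\}$, and likewise uses freeness only to exclude the singletons $\{A\}$. You merely spell out the verification of the orientation, consistency and profile properties, which the paper leaves implicit.
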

\begin{proof}
	Let $U$ be a free ultrafilter of $\partial(V)$.
	Define $U' = \{S \subseteq E \colon S = \bigcup U' \text{ for some $U' \in U$}\}$ and let $P$ consist of all those elements of $\sepclos{V}$ that are not contained in $U'$.
	Then $P$ is a profile of $\sepclos{V}$.
	Let $A$ be an element of $\partial(V)\cap \mathcal{E}$.
	Then $\{A\}$ is not contained in $U$, so $A$ is not contained in $U'$.
	But $A$ is contained in $\sepclos{V}$ and thus in $P$.
\end{proof}

It is a well-known result from topology that every infinite set has a free ultrafilter, so if $V$ is infinite then $\partial(V)$ has a free ultrafilter that induces a profile of $\sepclos{V}$, but distinct ultrafilters can induce the same profile of $\sepclos{V}$.

\section{\texorpdfstring{Distinct maximal $k$-pseudo\-flowers are essentially nested}{Distinct maximal pseudoflowers are essentially nested}}

This section works again in the framework of a connectivity system.
Let $k\geq 1$ be an integer and let $\mathcal{P}$ be a set of regular $k$-profiles that have the same truncation.
Then by \cref{sec:abstractionfromunderlyingsepsys}, $\mathcal{P}$ induces a separation system $\mathcal{B}$ to which the results from \cref{sec:abstract} can be applied.
In particular, $k$-pseudo\-flowers that distinguish elements of $\mathcal{P}$ can be shown to relate to elements of $\mathcal{V}$ (defined for $\mathcal{B}$) and to thus be essentially nested.
Call a separation of order $k-1$ \emph{relevant} if it distinguishes two elements of the set $\mathcal{P}$, that is, if its image under $\phi$ is contained in $\mathcal{B}$.
We first associate $k$-pseudoflowers with elements of $\mathcal{V}$.

\begin{lem}\label{flowertoabstractflower}
	Let $\Phi$ be a $k$-pseudo\-flower which distinguishes at least four profiles from $\mathcal{P}$.
	Then there is a unique $V(\Phi)\in \mathcal{V}$ such that for every separation $S$ displayed by $\Phi$ the set $\phi(S)$ is contained in $\sepclos{V(\Phi)}\cup \{\emptyset, \mathcal{P}\}$.
\end{lem}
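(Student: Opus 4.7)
The plan is to produce $V(\Phi)$ from a pair of displayed separations whose $\phi$-images cross in $\mathcal{UB}(\mathcal{P})$, and then verify containment and uniqueness from there. First I would use the hypothesis that $\Phi$ distinguishes at least four profiles to obtain four pairwise-distinguished profiles $P_1,P_2,P_3,P_4\in\mathcal{P}$. Each $P_i$ either points to a petal of $\Phi$ or (by the earlier observation on profiles not pointing to a petal) sits at a unique cut of the cycle completion of the index set; the positions so obtained are pairwise distinct, and I relabel them so that $p_1,p_2,p_3,p_4$ is their cyclic order. Taking $S_1$ and $S_2$ as the displayed separations corresponding to the short arcs from $p_1$ to $p_2$ and from $p_2$ to $p_3$, the restrictions of $\phi(S_1)$ and $\phi(S_2)$ to $\{P_1,P_2,P_3,P_4\}$ are $\{P_1,P_2\}$ and $\{P_2,P_3\}$; the four corners each contain one of the $P_i$, so $\phi(S_1)$ and $\phi(S_2)$ cross in $\mathcal{UB}(\mathcal{P})$.

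By the definition of $\mathcal{V}$ as the finest partition of $\mathcal{B}\setminus\mathcal{E}$ in which different classes are pairwise nested, crossing elements of $\mathcal{B}$ lie in a common class; so $\phi(S_1),\phi(S_2)$ lie in a unique $V(\Phi)\in\mathcal{V}$, which is the candidate.

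To verify containment, let $S$ be any displayed separation with $\phi(S)\in\mathcal{B}$; the aim is $\phi(S)\in\sepclos{V(\Phi)}$, equivalently that $\phi(S)$ is a union of $\sim_{V(\Phi)}$-equivalence classes of $\mathcal{P}$, which in turn is equivalent to the statement that any pair of profiles distinguished by a displayed separation is also distinguished by some element of $V(\Phi)$. Given such a pair $P,P'$, their positions in $\Phi$ differ, and I would choose a displayed $T$ whose defining interval separates the position of $P$ from that of $P'$ while splitting $\{p_1,\ldots,p_4\}$ into two blocks of size at least two; the four-corners argument then shows that $\phi(T)$ crosses $\phi(S_1)$ or $\phi(S_2)$, so $\phi(T)\in V(\Phi)$ and $T$ distinguishes $P$ from $P'$. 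The main obstacle is handling configurations in which the initial quadruple does not admit such a $T$ --- for example when the positions of $P,P'$ fall into an arc flanked by three of the $p_i$ on one side; this requires case analysis exploiting the freedom, guaranteed by the hypothesis, to substitute another pairwise-distinguished quadruple of profiles or, in the infinitary case, to take corners of candidate witnesses via \cref{cornersinSexist}.

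For uniqueness, suppose $V'\in\mathcal{V}$ also satisfies the conclusion. Then $\phi(S_1),\phi(S_2)\in\sepclos{V'}$, so each of their four (non-empty) corners is a union of $\sim_{V'}$-equivalence classes; iterating this for all displayed bipartitions forces $\partial(V')$ to refine the common refinement of $\phi(S)$ over all displayed $S$, which is the same as $\partial(V(\Phi))$ by the previous step. Combined with the fact that $V(\Phi)$ is the crossing-component in $\mathcal{V}$ containing $\phi(S_1)$, this pins down $V'=V(\Phi)$.
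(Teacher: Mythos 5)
Your argument is essentially correct but proceeds differently from the paper's proof in the containment step. The paper takes $\mathcal{S}$ to be the set of \emph{all} displayed separations having two $\Phi$-distinguished profiles on each side, notes that the $\phi$-images of these are pairwise linked by chains of crossings and hence lie in a single class $V(\Phi)\in\mathcal{V}$, and then handles every remaining relevant displayed separation by writing it as the union of two crossing members of $\mathcal{S}$, so that its image is a union of two elements of $V(\Phi)$ and therefore lies in $\sepclos{V(\Phi)}$. You instead generate $V(\Phi)$ from a single crossing pair $\phi(S_1),\phi(S_2)$ built from four pairwise-distinguished profiles, and verify containment through the characterisation of $\sepclos{V(\Phi)}$ as the relevant unions of $\partial(V(\Phi))$-classes, i.e.\ by showing that every pair of profiles split by a displayed separation is split by some member of $V(\Phi)$. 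That reduction is valid, and the ``obstacle'' you flag is not actually one: for any two distinct positions $q,q'$ and any four positions $p_1,\dots,p_4$ on the cycle completion there is always an interval containing $q$ but not $q'$ that meets exactly two of the $p_i$ (slide one endpoint counterclockwise from $q$ through the $p_i$ lying in the arc from $q'$ to $q$, and the other clockwise from $q$ through those in the arc from $q$ to $q'$; these two arcs together contain all four $p_i$, so one can stop after picking up exactly two), and every $2$--$2$ split crosses $\phi(S_1)$ or $\phi(S_2)$ by the four-corners count. So no change of quadruple is needed; the only point requiring care is that the needed cuts exist in $C(I)\setminus I$, which follows because consecutive distinct positions are separated by some displayed separation. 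The one step I would not accept as written is uniqueness: it is not justified that $\partial(V')$ must refine ``the common refinement of the displayed $\phi(S)$'' nor that this common refinement equals $\partial(V(\Phi))$, since $\partial(V(\Phi))$ is induced by all of $V(\Phi)$ and not only by displayed images. A cleaner route: if $\phi(S_1)\in\sepclos{V'}$ for some $V'\neq V(\Phi)$, then by \cref{sepsinVnestedwithsepsnotinV} some orientation of $\phi(S_1)$ lies inside a single class of $\partial(V')$; being also a union of $\partial(V')$-classes, $\phi(S_1)$ would then be an orientation of a single class of $\partial(V')$ and hence nested with every element of $\mathcal{B}$, contradicting that it crosses $\phi(S_2)$.
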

\begin{proof}
	Let $\mathcal{S}$ be the set of separations displayed by $\Phi$ which are contained in two profiles which are distinguished by $\Phi$ and whose inverses are also contained in two profiles which are distinguished by $\Phi$.
	As $\Phi$ distinguishes at least four profiles, the set $\mathcal{S}$ contains at least two elements and $\phi(\mathcal{S})$ is a pre-flower.
	Thus there is $V(\Phi)$ in $\mathcal{V}$ such that $\phi(S)\in V(\Phi)$ for all separations $S$ in $\mathcal{S}$.
	
	Let $S$ be a relevant separation displayed by $\Phi$ that is not contained in $\mathcal{S}$.
	First consider the case that $\Phi$ does not distinguish elements of $\mathcal{P}$ that contain $S$.
	As $\Phi$ distinguishes at least four profiles, there are crossing elements $S_1$ and $S_2$ in $\Phi$ such that $S=S_1 \cup S_2$ and thus $\phi(S)$, which equals $\phi(S_1)\cup \phi(S_2)$, is contained in $\sepclos{V(\Phi)}$.
	Similarly, if $\Phi$ does not distinguish elements of $\mathcal{P}$ that contain $E \setminus S$ then $\phi(S)$ is contained in $\sepclos{V}$.
	Thus $\phi(S)\in \sepclos{V(\Psi)}\cup \{\emptyset, \mathcal{P}\}$ for all separations $S$ displayed by $\Phi$.
\end{proof}

\begin{lem}
	Every $k$-pseudo\-flower $\Phi$ which distinguishes at least four profiles of $\mathcal{P}$ has a concatenation into an anemone if and only if $V(\Phi)$ is a $k$-anemone. Similarly, $\Phi$ has a concatenation into a $k$-daisy if and only if $V(\Phi)$ is a daisy.
\end{lem}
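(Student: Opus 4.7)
The plan is to reduce to the finite case by extracting a four-petal concatenation of $\Phi$ and transferring the dichotomy via $\phi$. Since $\Phi$ distinguishes at least four profiles $P_1, P_2, P_3, P_4 \in \mathcal{P}$, I would first concatenate $\Phi$ into a finite partition $\Psi$ with four petals $Q_1, Q_2, Q_3, Q_4$ (in cyclic order) such that $P_i$ points to $Q_i$. Every non-trivial union of an interval of petals of $\Psi$ distinguishes two of the $P_i$'s, hence has order at least $k-1$; combined with the pseudoflower upper bound, it has order exactly $k-1$, so $\Psi$ is a finite $k$-flower, and by \cref{daisyoranemone} is either a $k$-anemone or a $k$-daisy.

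Next, let $A_i := \phi(Q_i) \subseteq \mathcal{P}$ denote the set of profiles pointing to $Q_i$. The four consecutive pair unions $Q_i \cup Q_{i+1}$ (indices mod $4$) lie in the set $\mathcal{S}_\Psi$ from the proof of \cref{flowertoabstractflower}, since each satisfies the double distinguishing property via the $P_i$'s; thus their $\phi$-images $A_i \cup A_{i+1}$ all belong to the pre-flower $\mathcal{F} := \phi(\mathcal{S}_\Psi) \subseteq V(\Phi)$. A short computation using consistency and the profile property shows that $\partial(\mathcal{F}) = \{A_1, A_2, A_3, A_4\}$, at least modulo profiles located at cuts of $\Psi$, which could be handled by further refining $\Psi$ if necessary.

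Now the key equivalence: $\Psi$ is a $k$-anemone if and only if the non-interval unions $A_1 \cup A_3$ and $A_2 \cup A_4$ lie in $\sepclos{\mathcal{F}}$. Indeed, $A_1 \cup A_3 = \phi(Q_1 \cup Q_3)$, and this $\phi$-image lies in $\mathcal{B}$ precisely when $\lambda(Q_1 \cup Q_3) \leq k-1$; since $Q_1 \cup Q_3$ always distinguishes some pair of the $P_i$'s, its order is at most $k-1$ iff it is exactly $k-1$. By \cref{daisyoranemone} this happens iff $\Psi$ is a $k$-anemone, and by \cref{preflowerisdaisyoranemone} this is equivalent to $\mathcal{F}$ being a pre-anemone; the daisy case is symmetric.

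Finally, \cref{infdaisyoranemone} promotes the local statement to $V(\Phi)$ as a whole, so that $\mathcal{F}$ is a pre-anemone iff $V(\Phi)$ is. To close the equivalence with concatenations of $\Phi$, one direction is immediate since $\Psi$ itself witnesses that $\Phi$ has a concatenation into an anemone; for the converse, given any anemone concatenation $\Psi_0$ of $\Phi$, I would refine it via \cref{finestrefinement} if needed to ensure at least $k+1$ petals, apply \cref{moststrongkpa} to conclude that $\Phi$ is a strong $k$-pseudo\-anemone, and thereby bound the order of every union of petals of $\Psi$ by $k-1$, forcing $\Psi$ to be a $k$-anemone. The main obstacle I anticipate is the bookkeeping around profiles located at cuts of $\Psi$ rather than pointing to petals, and ensuring that the pre-flower $\mathcal{F}$ is rich enough to cleanly identify $\partial(\mathcal{F})$ with the $A_i$'s.
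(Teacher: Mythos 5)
Your overall strategy --- pass to a four-petal $k$-flower concatenation of $\Phi$ and compare it with the pre-flower formed by the $\phi$-images of the consecutive pair unions inside $V(\Phi)$ --- is the same as the paper's. But there is a genuine gap at the step you yourself label the key equivalence. You assert that $A_1\cup A_3$ lies in $\mathcal{B}$ (equivalently in $\sepclos{\mathcal{F}}$) ``precisely when'' $\lambda(Q_1\cup Q_3)\leq k-1$. Only one direction of this is a definition chase. For the direction you actually need --- deducing that $\Psi$ is an anemone from $\mathcal{F}$ being a pre-anemone --- the hypothesis $A_1\cup A_3\in\mathcal{B}$ only gives you \emph{some} separation $T$ of order at most $k-1$ with $\phi(T)=A_1\cup A_3$; since $\phi$ is far from injective, this says nothing a priori about the order of the particular set $Q_1\cup Q_3$. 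This is exactly where the paper does the real work: it repeatedly uncrosses such a $T$ with the separations displayed by the four-petal flower, invoking \cref{cornersinSexist} at each step to stay inside $S$, until it obtains an equivalent separation that contains $Q_1$ and $Q_3$ and is disjoint from $Q_2$ and $Q_4$, which forces it to equal $Q_1\cup Q_3$ and hence witnesses $\lambda(Q_1\cup Q_3)=k-1$. Without this uncrossing argument the implication from ``$V(\Phi)$ is a pre-anemone'' to ``$\Phi$ has an anemone concatenation'' is unproven.

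A secondary problem is your closing argument for the converse: \cref{finestrefinement} and \cref{moststrongkpa} both \emph{require} as a hypothesis an anemone concatenation with at least $k+1$ petals, so they cannot be used to upgrade an anemone concatenation that has only four petals when $k+1>4$; as written, that route is circular. The paper instead reduces everything to deciding the type of the single four-petal flower concatenation $\Phi'$ (all finite flower concatenations of $\Phi$ must agree on anemone versus daisy, via common refinements and \cref{daisyoranemone}), so no appeal to the $k+1$-petal machinery is needed. Your hedged identification of $\partial(\mathcal{F})$ with $\{A_1,A_2,A_3,A_4\}$ and the bookkeeping about profiles located at cuts are comparatively minor and could be repaired, but the missing corner argument is the substance of the lemma.
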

\begin{proof}
	Let $\Phi'$ be a $k$-flower with exactly four petals $P_1$, $P_2$, $P_3$, and $P_4$ (such that the cyclic order is induced by the linear order on $\{1,2,3,4\}$) which is a concatenation of $\Phi$ and distinguishes four profiles.
	Then $V'$ defined as $\{\phi(P_1\cup P_2),\phi(P_2\cup P_3),\phi(P_3\cup P_4), \phi(P_4\cup P_1)\}$ is a pre-flower which is contained in $V(\Phi)$.
	It suffices to show that $\Phi'$ is a $k$-anemone if and only if $V'$ is a pre-anemone.
	If $\Phi'$ is a $k$-anemone, then the set $P_1\cup P_3$ has order $k-1$ and thus $\phi(P_1\cup P_3)$ witnesses that $V'$ is a pre-anemone.
	So assume that $V'$ is a pre-anemone and let $S$ be an element of $S$ such that $\phi(S)=\phi(P_1)\cup \phi(P_3)$.
	By repeated application of \cref{cornersinSexist} there is a separation $T$ in $S$ which is equivalent to $S$ and satisfies $P_1\subseteq T$, $P_3\subseteq T$, $P_2\subseteq E \setminus T$ and $P_4\subseteq E \setminus T$.
	But then $T=P_1\cup P_3$ and thus $\Phi'$ is a $k$-anemone.
\end{proof}

Now we can translate the fact that distinct elements of $\mathcal{V}$ are nested back to $k$-pseudoflowers.

\begin{lem}\label{petalforsep}
	Let $\Phi$ be a $k$-pseudo\-flower which distinguishes at least four profiles and let $S$ be a relevant separation of order $k-1$ such that $\phi(S)$ points towards all elements of $V(\Phi)$ and such that $\phi(S)$ has at least two elements\myfootnote{Recall that $\phi(S)$ is a subset of $\mathcal{P}$ and that, if it has at least two elements, then there are at least two profiles in $\mathcal{P}$ which contain $E\setminus S$.}.
	Then there is a petal $Q$ of $\Phi$ such that $\phi(S) \subseteq \phi(Q)$.
\end{lem}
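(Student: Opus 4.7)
The plan is to show that every profile in $\phi(S)$ points to a common petal of $\Phi$, and that this petal serves as the desired $Q$. The argument proceeds by first localising $\phi(S)$ to an equivalence class of $\partial(V(\Phi))$, and then analysing where the profiles in that class sit inside $\Phi$.

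First I would invoke \cref{lem:fromseptopetal}. Since $\phi(S)$ points towards every element of $V(\Phi)$, it is nested with every element of $V(\Phi)$; as $V(\Phi)$ is a single partition class of $\mathcal{V}$ containing at least one crossing pair, this forces $\phi(S)\notin V(\Phi)$, so $\phi(S)\in\mathcal{B}'\setminus V(\Phi)$. The lemma then yields $A\in\partial(V(\Phi))$ with $\mathcal{P}\setminus A$ pointing towards $\phi(S)$, i.e.\ either $\phi(S)\subseteq A$ or $\mathcal{P}\setminus A\subseteq\phi(S)$. Two crossing elements of $V(\Phi)$ already split $\mathcal{P}$ into four non-empty pieces, so $|\partial(V(\Phi))|\geq 4$; combining this with the fact that $\phi(S)$ sits on one side of every $B\in V(\Phi)$ rules out $\mathcal{P}\setminus A\subseteq\phi(S)$ (as the latter, together with the one-sidedness, would force $A$ to coincide with an entire side of every $B$, impossible when $A$ is only one of four or more atoms). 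Hence $\phi(S)\subseteq A$.

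Next I would argue that all profiles in $A$ share a common location in $\Phi$. By \cref{flowertoabstractflower} every separation $T$ displayed by $\Phi$ satisfies $\phi(T)\in\sepclos{V(\Phi)}\cup\{\emptyset,\mathcal{P}\}$, and every element of $\sepclos{V(\Phi)}$ is a union of equivalence classes of $\sim_{V(\Phi)}$; consequently no separation displayed by $\Phi$ distinguishes two profiles within the single class $A$. So either all profiles in $A$ point to the same petal $Q$ of $\Phi$, or all are located at the same cut $v\in C(I)\setminus I$. In the former case every $P\in A$ contains $E\setminus Q$, so $A\subseteq\phi(Q)$ and therefore $\phi(S)\subseteq A\subseteq\phi(Q)$, which is exactly the conclusion.

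The main obstacle is to rule out the cut case. Assuming it, the hypothesis $|\phi(S)|\geq 2$ yields two distinct profiles $P_1,P_2\in\phi(S)\subseteq A$ both located at $v$, so \cref{twoprofilesatnonpetal} applies and $\Phi$ concatenates into an infinite $k$-anemone; combined with \cref{moststrongkpa}, every union of petals of $\Phi$ then has order at most $k-1$. The indistinguishability of profiles in $A$, together with the assumption that no petal $Q$ satisfies $A\subseteq\phi(Q)$, forces every $P\in A$ to contain every petal of $\Phi$; thus the chain of finite unions of petals lies in each such $P$ but its supremum $E$ does not (by regularity). The plan for the contradiction is to apply \cref{notlimitclosedinducesaleph0} to this chain inside $P_1$ and $P_2$, using $S$ as a test separation: the resulting profile of finite-order separations then forces $P_1$ and $P_2$ to agree on $S$ in a way incompatible with $P_1\neq P_2$ having the common $(k-1)$-truncation shared by all members of $\mathcal{P}$, completing the proof.
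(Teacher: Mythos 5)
Your reduction to a single class $A\in\partial(V(\Phi))$ and your handling of the petal case match the paper's argument in substance (the paper gets there more directly: $\phi(S)$ pointing towards every element of $V(\Phi)$ immediately places $\phi(S)$ inside one class, so no separation displayed by $\Phi$ distinguishes two profiles of $\phi(S)$, and once $P_1,P_2$ point to a common petal $Q$ the containment $\phi(S)\subseteq\phi(Q)$ follows from $P_1\in\phi(S)\cap\phi(Q)$). The problem is your exclusion of the cut case, which is where the real work lies, and there your argument has a genuine gap.

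You invoke only the first conclusion of \cref{twoprofilesatnonpetal} (that $\Phi$ concatenates into an infinite $k$-anemone) and then try to reach a contradiction through \cref{notlimitclosedinducesaleph0}. But that lemma, applied to the chain of partial unions of petals inside $P_1$ and again inside $P_2$, produces two profiles $\mathcal{Q}_1$ and $\mathcal{Q}_2$ of the finite-order separations with $\mathcal{Q}_i\cap S_k=P_i$; nothing in the lemma identifies $\mathcal{Q}_1$ with $\mathcal{Q}_2$, since it only pins down the profiles that \emph{truncate to} a given profile, not two distinct profiles at the same level $k$ that happen to share a chain. The uniqueness mechanism behind \cref{truncationlimitclosed} works precisely because there the chain lies in the \emph{common truncation} of the two profiles; your chain of petal-unions has order exactly $k-1$ and therefore does not lie in the common $(k-1)$-truncation $P_0$, so that corollary cannot be brought to bear. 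Your closing sentence ("forces $P_1$ and $P_2$ to agree on $S$") also does not point at a contradiction: $P_1,P_2\in\phi(S)$ already agree on $S$ by hypothesis; what you would need is agreement on \emph{every} separation of order less than $k$, and the chain argument does not deliver that. The way to close the case is the second conclusion of \cref{twoprofilesatnonpetal}: after reducing without loss of generality to the situation where $\Phi$, if it is a strong $k$-pseudoanemone, is $\leq_A$-maximal (legitimate, since a petal of the maximal refinement is contained in a petal of $\Phi$ and $\phi$ is monotone), that lemma yields a union of petals of $\Phi$ itself distinguishing $P_1$ from $P_2$ — contradicting the fact that $P_1$ and $P_2$ lie in the same class $A$ of $\partial(V(\Phi))$ and hence are not separated by any union of petals of $\Phi$.
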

\begin{proof}
	Assume without loss of generality that if $\Phi$ is a strong $k$-pseudo\-anemone, then it is a $\leq_A$-maximal strong $k$-pseudo\-anemone and in particular $\leq$-maximal.
	Let $P_1$ and $P_2$ be two elements of $\mathcal{P}$ which are contained in $S$.
	As $\phi(S)$ points towards all elements of $V(\Phi)$, it also points towards all $\phi(T)$ where $T$ is a union of petals of $\Phi$ which is also contained in $S$.
	In particular no union of petals of $\Phi$ distinguishes $P_1$ and $P_2$.
	Thus by \cref{twoprofilesatnonpetal} the profiles $P_1$ and $P_2$ are located at the same petal $Q$ of $\Phi$.
	As $P_1\in \phi(S)\cap \phi(Q)$, the fact that $\phi(S)$ points towards $\phi(Q)$ implies that $\phi(S) \subseteq \phi(Q)$.
\end{proof}

\begin{cor}
	Let $\Phi$ and $\Phi'$ be $k$-pseudo\-flowers which distinguish at least four profiles each and such that $V(\Phi)\neq V(\Phi')$.
	Then $\Phi$ and $\Phi'$ have petals $P_i$ and $P_j'$ respectively such that $\phi(P_i)^* \subseteq \phi(P_j')$.
\end{cor}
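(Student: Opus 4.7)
The plan is to use the abstract tree structure from \cref{sec:abstract} to identify a single ``facing'' partition class in each of $\partial(V(\Phi))$ and $\partial(V(\Phi'))$, and then show that these classes are the $\phi$-images (up to $^*$) of the petals we seek.

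First, I would construct $A \in \partial(V(\Phi))$ and $B \in \partial(V(\Phi'))$ with $B \subseteq A^*$. Since $V(\Phi) \neq V(\Phi')$, every $T \in V(\Phi')$ lies in $\mathcal{B} \setminus V(\Phi)$, so by \cref{lem:consistenorientationfromflower} a unique orientation $T'$ of $T$ points towards every element of $V(\Phi) \cup \partial(V(\Phi))$, and this $T'$ sits inside a unique class $A_T \in \partial(V(\Phi))$. For two crossing $T_1, T_2 \in V(\Phi')$, supplied by the pre-flower built in the proof of \cref{flowertoabstractflower}, one has $A_{T_1} = A_{T_2}$: otherwise the two orientations would lie in disjoint classes, forcing one of the four corners of the crossing to be empty. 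Because $V(\Phi')$ is crossing-connected (as a class of $\mathcal{V}$), $A_T$ is the same $A \in \partial(V(\Phi))$ for every $T \in V(\Phi')$, and symmetrically I obtain $B \in \partial(V(\Phi'))$. Both $A, B$ lie in $\mathcal{E}'$ and are therefore nested; tracking the orientations, $A^*$ and $B$ both belong to the consistent orientation $O_{V(\Phi')}$ of $\mathcal{E}'$, and they sit on opposite ends of the path from $v_{V(\Phi)}$ to $v_{V(\Phi')}$ in the tree-like space, forcing $B \subseteq A^*$.

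Second, I would show $A = \phi(P_i)^*$ and $B = \phi(P_j')^*$ for suitable petals $P_i$ of $\Phi$ and $P_j'$ of $\Phi'$, essentially mirroring the proof of \cref{petalforsep}. Since the four corners of the crossing $T_1, T_2$ above are non-empty, $|T_1|, |T_1^*| \geq 2$, so the orientation of $T_1$ contained in $A$ gives $|A| \geq 2$. Two profiles in $A$ agree on every separation in $V(\Phi)$, hence on every separation displayed by $\Phi$ or by any $\leq$-extension $\Psi$ of $\Phi$, because such separations have $\phi$-images in $\sepclos{V(\Psi)} = \sepclos{V(\Phi)}$, whose elements are unions of classes of $\partial(V(\Phi))$ and therefore cannot split $A$. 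If the two profiles were located at the same cut of $\Phi$, \cref{twoprofilesatnonpetal} would produce, in a $\leq_A$-maximal strong $k$-pseudo\-anemone extending $\Phi$, a union of petals distinguishing them --- a separation in $\sepclos{V(\Phi)}$ splitting $A$, a contradiction. Hence they are at the same petal $P_i$ of $\Phi$, so $A \subseteq \phi(P_i)^*$. Conversely, all profiles at $P_i$ agree on the separations displayed by $\Phi$ and therefore lie in a single class of $\partial(V(\Phi))$, which shares profiles with $A$ and must equal $A$; hence $A = \phi(P_i)^*$. The symmetric argument applied to $\Phi'$ and $B$ produces a petal $P_j'$ with $B = \phi(P_j')^*$.

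Combining, $\phi(P_j')^* = B \subseteq A^* = \phi(P_i)$, equivalently $\phi(P_i)^* \subseteq \phi(P_j')$, as required. The hardest step is the tree-side claim $B \subseteq A^*$: one must carefully track how the consistent orientations $O_{V(\Phi)}$ and $O_{V(\Phi')}$ interact with their incident edges in $\partial(V(\Phi))$ and $\partial(V(\Phi'))$ inside the tree-like space built from $\mathcal{E}'$. The remainder is essentially a careful mirror of the proofs of \cref{petalforsep} and \cref{twoprofilesatnonpetal}, combined with the pre-flower argument from the proof of \cref{flowertoabstractflower}.
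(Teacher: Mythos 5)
Your overall strategy---locating the class $A \in \partial(V(\Phi))$ that absorbs all of $V(\Phi')$ and the class $B \in \partial(V(\Phi'))$ that absorbs all of $V(\Phi)$, and then relating these to petals---is viable, but both of your key intermediate claims have their inclusions inverted, and as stated they are false. First, the true relation between $A$ and $B$ is $A^* \subseteq B$ (equivalently $B^* \subseteq A$): this is exactly what the proof of \cref{sepsinVnestedwithsepsnotinV2} establishes, and it is also what the tree-like space gives you, since the oriented edge at $v_{V(\Phi)}$ pointing towards $v_{V(\Phi')}$ is $A^*$ and the one at $v_{V(\Phi')}$ pointing towards $v_{V(\Phi)}$ is $B^*$, whence $A^* \subseteq B$. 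Your claim $B \subseteq A^*$, i.e.\ $A \cap B = \emptyset$, fails whenever some profile sits ``between'' the two hubs: schematically, with $\mathcal{P}=\{1,\dots,8\}$, $\partial(V(\Phi)) = \{\{1\},\{2\},\{3\},\{4,\dots,8\}\}$ and $\partial(V(\Phi')) = \{\{1,2,3,4\},\{5\},\{6\},\{7\},\{8\}\}$ one gets $A=\{4,\dots,8\}$ and $B=\{1,2,3,4\}$, which intersect in $\{4\}$. Second, being ``located at the petal $P_i$'' means containing the inverse of $P_i$, i.e.\ lying in $\phi(P_i)$, not in $\phi(P_i)^*$; so your petal argument yields $A \subseteq \phi(P_i)$. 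The reverse inclusion also need not hold, because $V(\Phi)$ may contain separations that are not displayed by $\Phi$ and that split $\phi(P_i)$; so you get an inclusion, not the claimed equality $A = \phi(P_i)^*$ (which is wrong both in the complementation and in being an equality).

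The two sign errors cancel formally in your final line, which is why the conclusion still reads correctly, but the argument passes through false statements and so is not a proof as written. The repaired chain $\phi(P_i)^* \subseteq A^* \subseteq B \subseteq \phi(P_j')$ does yield the corollary, so your route can be salvaged. Note, however, that the paper's own argument is much shorter and avoids the tree-like space of \cref{sec:abstractinfinite} entirely: it picks a single displayed separation $S(I')$ of $\Phi'$ with $\phi(S(I')) \in V(\Phi')$, orients it so that it lies in a class of $\partial(V(\Phi))$ using \cref{sepsinVnestedwithsepsnotinV}, applies \cref{petalforsep} to obtain $P_i$ with $\phi(S(I')) \subseteq \phi(P_i)$, observes that $\phi(P_i)^*$ then points towards all elements of $V(\Phi')$, and applies \cref{petalforsep} a second time to obtain $P_j'$.
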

\begin{proof}
	Let $S(I')$ be a separation displayed by $\Psi$ such that $\phi(S(I'))\in V(\Psi)$.
	As $V(\Phi)$ and $V(\Psi)$ are distinct and thus disjoint, $\phi(S(I'))\notin V(\Phi)$.
	So there is by \cref{sepsinVnestedwithsepsnotinV} an orientation of $\phi(S(I'))$ which points towards all elements of $V(\Phi)$.
	Assume, by replacing $S(I')$ with its inverse if necessary, that $\phi(S(I'))$ points towards the elements of $V(\Phi)$.
	By \cref{petalforsep} there is a petal $P_i$ of $\Phi$ such that $\phi(S(I')) \subseteq \phi(P_i)$.
	As $\phi(P_i)^*$ points towards $\phi(S(I'))$ and thus towards all elements of $V(\Psi)$, there is again by \cref{petalforsep} a petal $P_j'$ of $\Psi$ such that $\phi(P_i)^* \subseteq \phi(P_j')$.
\end{proof}

\section{The existence of infinite daisies}

Most statements about flowers in matroids are already true in separation systems of bipartitions whose order function is limit-closed.
One fact which does need extra properties of matroids is the fact that there are no infinite daisies, which will be proven now.
From here until \cref{noinfchainofdaisies} let $M$ be a (possibly infinite) matroid on ground set $E$.

Basics about infinite matroids can be found in \cite{InfMatroidAxioms} and about connectivity in infinite matroids in \cite{InfMatFinConn}.
The contraction and deletion of $X$ will be denoted $M\contract X$ and $M\delete X$ respectively, and the contraction onto $X$ and restriction to $X$ will be denoted $M\contractonto X$ and $M\restrict X$.
We will need the following basic lemma about bases in an infinite matroid.
In it, $|B\setminus B'|-|B'\setminus B|$ is essentially the definition of the connectivity of $X$, except that $B'$ need not be a subset of $B$ and thus the deviation of $B'$ from being a subset of $B$ has to be substracted as an error term.

\begin{lem}\label{connectivityfornonnestedbases}
	Let $X\subseteq E$, $B$ a base of $M\restrict X$ and $B'$ a base of $M\contractonto X$ such that $B\setminus B'$ is finite.
	Then $\lambda(X)=|B\setminus B'|-|B'\setminus B|$.
\end{lem}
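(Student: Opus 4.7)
The plan is to reduce the statement to the standard fact that two bases of an infinite matroid whose symmetric difference is finite have the same-cardinality sides, applied to carefully chosen full bases of $M$ extending $B$ and $B'$.

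First I would extend $B$ to a base $\hat B$ of $M$ by adding a subset $C \subseteq E \setminus X$; by the defining property of contraction, $C$ is then a base of $M\contract X$. Next I would extend $C$ further (staying inside $E\setminus X$) to a base $D$ of $M\restrict(E\setminus X)$. The set $B' \cup D$ is then a base of $M$, because $D$ is a base of $M\restrict(E\setminus X)$ and $B'$ is a base of the contraction $M\contract(E\setminus X) = M\contractonto X$.

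Now I would compute the symmetric difference of the two bases $\hat B = B \cup C$ and $B' \cup D$ of $M$. Since $B, B' \subseteq X$ and $C, D \subseteq E\setminus X$, this splits as
\begin{equation*}
\hat B \triangle (B' \cup D) = (B \setminus B') \cup (B' \setminus B) \cup (D \setminus C),
\end{equation*}
with $\hat B \setminus (B' \cup D) = B \setminus B'$ (using $C \subseteq D$). By hypothesis, $B \setminus B'$ is finite, so the standard fact that in an infinite matroid any two bases with one-sided finite difference have matching sides of equal (finite) size yields
\begin{equation*}
|B \setminus B'| = |B' \setminus B| + |D \setminus C|.
\end{equation*}
In particular $B' \setminus B$ and $D \setminus C$ are both finite, and $|B \setminus B'| - |B' \setminus B| = |D \setminus C|$.

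It remains to recognise $|D \setminus C|$ as $\lambda(X)$. Here $C$ is a base of $M\contract X$ and $D$ is a base of $M\restrict(E\setminus X)$ with $C \subseteq D$; the finite quantity $|D \setminus C|$ measures the rank gap between these two matroids on $E\setminus X$, which is precisely how the connectivity of $X$ is defined for (possibly infinite) matroids in~\cite{InfMatFinConn}. Invoking that characterisation concludes the proof. The main obstacle is the bookkeeping for the infinite setting in the second step: verifying that the base exchange lemma with finite symmetric difference applies once we know only the one-sided finiteness of $B \setminus B'$, and producing $D$ so that $\hat B$ and $B' \cup D$ have disjoint differences away from $X$ collapse nicely. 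Both are handled by the choice $C \subseteq D$, which keeps $C \setminus D$ empty and turns the symmetric difference computation into a direct accounting.
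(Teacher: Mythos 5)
Your proof is correct, but it follows a different route from the paper's. The paper never leaves the minor on $X$: it picks a base $A$ of $M\contractonto X$ with $A\subseteq B$ (so that $\lambda(X)=|B\setminus A|$ by definition), applies the finite-difference equicardinality of bases to the two bases $A$ and $B'$ of $M\contractonto X$, and finishes with a short counting identity relating $|B\setminus B'|$, $|B'\setminus B|$ and $|B\setminus A|$. You instead go up to $M$: you build two global bases $B\cup C$ and $B'\cup D$ adapted to the two sides of the bipartition, apply the equicardinality fact to these bases of $M$ itself, and then read off $\lambda(X)$ as $\lambda(E\setminus X)=|D\setminus C|$ from the nested pair $C\subseteq D$ on the complement side. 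Both arguments rest on the same two external ingredients (every base of $M\restrict X$ contains a base of $M\contractonto X$, respectively bases of $M\restrict Y$ and $M\contract Y$ combine to a base of $M$; and the fact that two bases with one side of the difference finite have difference sets of equal finite size), so neither is more demanding than the other. The paper's version is slightly more self-contained, being pure set-difference arithmetic inside $X$; yours makes the geometric content more visible and exploits the symmetry $\lambda(X)=\lambda(E\setminus X)$, at the cost of the extra base extensions and the (correctly handled) observation that choosing $C\subseteq D$ kills the contribution $C\setminus D$ to the symmetric difference. Your bookkeeping is right, including the implicit conclusion that $B'\setminus B$ and $D\setminus C$ are finite, so the proof goes through.
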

\begin{proof}
	Let $A$ be a base of $M\contractonto X$ that is contained in $B$.
	Then $A\setminus B'$ is a subset of $B\setminus B'$ and thus finite, so $|B'\setminus A| - | A \setminus B'| =0$.
	So
	\begin{align*}
		|B\setminus B'| &= |B\setminus B'| + |B'\setminus A| - |A\setminus B'|\\
		&= |B\setminus A| + |B'\setminus B| + |A\setminus B'| - |A\setminus B'|\\
		&= |B\setminus A| + |B'\setminus B|.
	\end{align*}
	As $B\setminus B'$ is finite, also $B'\setminus B$ has to be finite and thus $\lambda(X)=|B\setminus A| = |B\setminus B'| - |B'\setminus B|$.
\end{proof}

Recall that the local connectivity $\localconn_M(X,Y)$ in a finite matroid $M$ of two disjoint sets $X$ and $Y$ is defined as the connectivity of $X$ in the restriction of $M$ to $X\cup Y$.
Define local connectivity in infinite matroids the same.

We want to apply the following characterisation of local connectivities in $k$-flowers:

\begin{lem}[\cite{AikinOxley08}, part of Theorem 1.3 and Lemma 3.6]\label{localconninfinflowers}
	Let $k$ be an integer, $k\geq 1$, and let $(P_1,\ldots, P_n)$ be a $k$-flower with at least five petals.
	Denote $\localconn\,(P_1,P_2)$ by $c$ and $\localconn\,(P_1,P_3)$ by $d$.
	Then the local connectivity of any two non-adjacent petals is $d$.
	$(P_1,\ldots,P_n)$ is a $k$-anemone if and only if $c=d$.
	Furthermore, the local connectivity of a non-trivial interval $I'$ of $I$ and a non-trivial subset $I''$ of $I\setminus I'$ is
	\begin{displaymath}
		\localconn_M(S(I'),\bigcup_{i \in I''} P_i)=
		\begin{cases}
			 d & \text{if no element of $I''$ is adjacent to an element of $I'$}\\
			 2c-d & \text{if two elements of $I''$ are adjacent to elements of $I'$}\\
			 c & \text{otherwise}.
		\end{cases}
	\end{displaymath}
\end{lem}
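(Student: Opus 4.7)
The plan is to adapt the proof of \cite[Theorem 1.3 and Lemma 3.6]{AikinOxley08}, stated there for finite polymatroids, to possibly infinite matroids. The $k$-flower has finitely many petals, so the only adaptation needed is to handle the possibility that $M$ and its petals $P_i$ are infinite, meaning that $r_M(P_i)$ may be an infinite cardinal and one cannot directly write $\localconn(X, Y) = r(X) + r(Y) - r(X \cup Y)$.

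The tool for bypassing this is \cref{connectivityfornonnestedbases}: for every union $X$ of an interval of petals, fix a base $B_X$ of $M\restrict X$ and a base $B_X^*$ of $M\contractonto X$ with $B_X \setminus B_X^*$ finite, so that $|B_X \setminus B_X^*| - |B_X^* \setminus B_X| = \lambda(X) = k-1$. By monotonicity of local connectivity, $\localconn(X, Y) \leq \localconn(X, E \setminus X) = \lambda(X) = k-1$ for any disjoint union of petals $Y$, so all relevant local connectivities are finite and can be read off from finite cardinality differences of carefully aligned bases. This reduces the problem to finite rank data, where the Aikin--Oxley calculation applies directly.

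With this reduction in place, the three conclusions follow as in Aikin and Oxley. First, $\localconn(P_i, P_j) = d$ for any two non-adjacent petals is forced by submodularity of the rank function combined with the $k$-flower identities $\lambda(\bigcup_{l \in J} P_l) = k-1$ for every interval $J$, which force this value to depend only on the non-adjacency of the pair. Second, the flower is a $k$-anemone if and only if $\lambda(P_1 \cup P_3) = k-1$, equivalently $\localconn(P_1, P_3) = \localconn(P_1, P_2)$, equivalently $c = d$. Third, for the interval formula, note that at most two petals of $I \setminus I'$ are cyclically adjacent to a petal of $I'$ (the immediate predecessor and successor of $I'$ in the cyclic order); a direct rank calculation using the first two parts shows that each such boundary petal included in $I''$ contributes an additional $c - d$ beyond the bulk value $d$, yielding the three cases $d$, $c$, and $2c - d$ according to whether $0$, $1$, or $2$ boundary petals lie in $I''$.

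The main obstacle is the reduction step itself: ensuring that, despite the petals being potentially infinite, all the rank data required for Aikin and Oxley's finite computation can be extracted. \Cref{connectivityfornonnestedbases} provides exactly this, and once the reduction is in place, no further ideas are needed.
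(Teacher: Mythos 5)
Your overall plan---transfer the finite result of Aikin and Oxley to infinite matroids---matches the paper's intent, but the transfer mechanism you propose is not the paper's, and it is where your argument has a genuine gap. The paper does not re-run the Aikin--Oxley computation at all: it proves \cref{infmattofinmat}, which, for a set $X$ of finite connectivity, contracts a base $C$ of $M\contractonto X$ and deletes $X$ minus a base of $M\restrict X$, leaving a set $F$ of exactly $\lambda_M(X)$ elements while preserving the connectivity of, and the local connectivities between, all sets outside $X$ (and with $X$ replaced by $F$). Applying this to every petal turns the infinite matroid into an honest finite matroid in which $(F_1,\ldots,F_n)$ is a $k$-flower with the same parameters $c$ and $d$, so the finite result of \cite{AikinOxley08} applies verbatim and transfers back.

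Your route keeps the infinite matroid and asserts that \cref{connectivityfornonnestedbases} ``reduces the problem to finite rank data, where the Aikin--Oxley calculation applies directly.'' That is not right as stated: the rank data is not finite---$r(P_i)$ and $r(S(I'))$ may be infinite cardinals---and only certain differences (connectivities and local connectivities) are finite. The Aikin--Oxley proofs of Theorem 1.3 and Lemma 3.6 freely add and subtract ranks of many different unions of petals and use submodularity of the rank function itself, not merely identities among the finite quantities $\lambda$ and $\localconn$; every such step would have to be individually recast as a statement about finite base-differences of ``carefully aligned'' bases, and you would also have to produce those aligned bases (\cref{connectivityfornonnestedbases} needs $B\setminus B'$ finite, which is itself something to arrange for each pair of sets appearing in the computation). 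None of this is carried out, and it is the entire content of the transfer. Either supply those recast computations, or prove a reduction lemma in the spirit of \cref{infmattofinmat}; the latter is what the paper does, precisely to avoid the former.
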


In particular any two adjacent petals have the same local connectivity $c$ and any two non-adjacent petals have the same local connectivity $d$.
In order to be able to apply \cref{localconninfinflowers} in infinite matroids, we need the following lemma.
It tells us that when we work with (local) connectivity in a partition of $E$, we may assume every partition class to be finite:

\begin{lem}\label{infmattofinmat}
	Let $X\subseteq E$ be a set of finite connectivity.
	Then there are disjoint subsets $C$ and $D$ of $X$ such that for $F:=X\setminus (C\cup D)$ and $N:=M\contract C \delete D$ the equations $M\contract X=N\contract F$, $M\delete X = N\delete F$ and $|F|=\lambda_M(X)$ hold.
	Furthermore, for all disjoint subsets $Y$ and $Z$ of $E\setminus X$, the equations $\lambda_M(Y)=\lambda_N(Y)$, $\localconn_M(Y,Z)=\localconn_N(Y,Z)$ and $\localconn_M(Z, X \cup Y)=\localconn_N(Z,F\cup Y)$ hold.
\end{lem}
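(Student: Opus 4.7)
The plan is to choose $C$ to be a base of the contraction $M\contractonto X$, extend $C$ to a base $B$ of $M\restrict X$, and set $D:=X\setminus B$ and $F:=B\setminus C$. Then $C$ and $D$ are disjoint subsets of $X$ with $F=X\setminus(C\cup D)=B\setminus C$, and \cref{connectivityfornonnestedbases} applied to the nested bases $B$ of $M\restrict X$ and $C$ of $M\contractonto X$ yields $|F|=\lambda_M(X)$.

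For the three matroid identities I would use the standard commutation of disjoint contractions and deletions. For $M\contract X=N\contract F$: write $N\contract F=M\contract(C\cup F)\delete D=M\contract B\delete D$; since $B$ spans $X$ in $M$, the elements of $D=X\setminus B$ are loops of $M\contract B$, so contracting and deleting $D$ agree, giving $M\contract X$. For $M\delete X=N\delete F$: write $N\delete F=M\contract C\delete(X\setminus C)$ and show $(M\contract C)\restrict(E\setminus X)=M\restrict(E\setminus X)$ by checking that for any $Y\subseteq E\setminus X$ independent in $M$, extending $Y$ to a base $B_0$ of $M\restrict(E\setminus X)$ gives $C\cup B_0$ independent in $M$ (since $C$ is a base of $M\contract(E\setminus X)$), so $Y\cup C$ is independent as required.

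For the furthermore part, the identity $M\delete X=N\delete F$ at once yields $N\restrict(E\setminus X)=M\restrict(E\setminus X)$, hence $N\restrict Y=M\restrict Y$, $N\restrict(Y\cup Z)=M\restrict(Y\cup Z)$, and therefore $\localconn_M(Y,Z)=\localconn_N(Y,Z)$. For $\lambda_M(Y)=\lambda_N(Y)$, I would additionally establish $M\contractonto Y=N\contractonto Y$ by iterating contraction: $M\contractonto Y=(M\contract X)\contract(E\setminus X\setminus Y)=(N\contract F)\contract(E\setminus X\setminus Y)=N\contractonto Y$; combined with $M\restrict Y=N\restrict Y$, this forces $\lambda_M(Y)=\lambda_N(Y)$. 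The last identity $\localconn_M(Z,X\cup Y)=\localconn_N(Z,F\cup Y)$ I would handle by applying the same two tools inside the restrictions $M\restrict(Z\cup X\cup Y)$ and $N\restrict(Z\cup F\cup Y)$: the restrictions to $Z$ coincide trivially, and commuting operations reduces both contractions onto $Z$ to $M\contract(B\cup Y)\restrict Z$, using that $D$ is still a set of loops of $M\contract(B\cup Y)$ because $B$ continues to span $X$ in $M$.

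The main subtlety is that, since $M$ may have infinite rank, all arguments must be carried out at the level of bases and independent sets rather than via numerical ranks. \Cref{connectivityfornonnestedbases} is invoked only for the cardinality equation $|F|=\lambda_M(X)$, which is legitimate precisely because $\lambda_M(X)$ is assumed finite so that $B\setminus C$ is finite; the remaining connectivity equalities are then reduced to matroid equalities that hold regardless of finiteness.
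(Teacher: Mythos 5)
Your proposal is correct and follows essentially the same construction as the paper: choose $C$ a base of $M\contractonto X$ inside a base $B$ of $M\restrict X$, set $D=X\setminus B$, and observe that $D$ consists of loops of $M\contract B$. The only notable variation is that for $M\delete X=N\delete F$ the paper argues by duality ($D$ is a base of $M^*\contractonto X$ and $X\setminus C$ a base of $M^*\restrict X$) where you argue directly with independent sets, and your $|F|=\lambda_M(X)$ via \cref{connectivityfornonnestedbases} is just the definition of connectivity in disguise; both are fine.
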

\begin{proof}
	Let $B$ be a base of $M\restrict X$ and $C\subseteq B$ a base of $M.X$.
	Denote $X \setminus B$ by $D$.
	Then, by definition of connectivity, $X\setminus (C\cup D)$ has size $\lambda(X)$.
	Every element $d$ of $D$ is spanned by $B$ in $M$ and thus a loop of $M\contract B$.
	Hence $D$ is a union of connected components of $M\contract B$, so
	\begin{displaymath}
		M \contract X = M \contract B \delete D = M\contract C\delete D\contract F.
	\end{displaymath}
	Dually, $D$ is a base of $M^*\contractonto X$ and $X\setminus C$ is a base of $M^*\restrict X$, so
	\begin{displaymath}
		M\delete X = (M^*\contract X)^* = (M^*\contract D\delete C\contract F)^*=M\contract C \delete D \delete F.
	\end{displaymath}
	Furthermore, for every subset $S$ of $E\setminus X$, the equalities $M\restrict S = N\restrict S$ and $M\contractonto S$ and $N\contractonto S$ hold.
	In particular $M\restrict (Y\cap Z)=N\restrict (Y\cup Z)$, so
	\begin{displaymath}
		\localconn_M(Y,Z)=\lambda_{M\restrict (Y\cup Z)}(Y) = \lambda_{N\restrict (Y\cup Z)}(Y)=\localconn_N(Y,Z).
	\end{displaymath}
	Also the equations
	\begin{displaymath}
		(M\restrict (X\cup Y \cup Z))\restrict Z = M \restrict Z = N\restrict Z = (N\restrict(F\cup Y\cup Z))\restrict Z
	\end{displaymath}
	and
	\begin{displaymath}
		(M\restrict(X\cup Y\cup Z)).Z=(M\contract X\contract Y)\restrict Z = (N\contract F\contract Y)\restrict Z = (N\restrict (F\cup Y\cup Z)).Z
	\end{displaymath}
	hold and thus $\localconn_M(Z,X\cup Y)=\localconn_N(Z,F\cup Y)$.
	In particular
	\begin{displaymath}
		\lambda_M(Y)=\localconn_M(Y,E\setminus Y)=\localconn_N(Y,E(N)\setminus Y)=\localconn_N(Y).\qedhere
	\end{displaymath}
\end{proof}

By applying \cref{infmattofinmat} to every partition class, \cref{localconninfinflowers} can be applied to finite flowers in infinite matroids.
As the connectivity function of a matroid is the same as the connectivity function of its dual, every $k$-flower $\Phi$ of $M$ is also a $k$-flower of $M^*$.
We can now relate the parameters $c$ and $d$ of $\Phi$ in $M$ to those of $\Phi$ in $M^*$.
In order to do that, we use the following observation about the connectivity function:

\begin{lem}\label{connectivitybasic1}
	Let $C$ and $D$ be disjoint subsets of $E$.
	Then $\lambda_M(C\cup D)=\lambda_{M\contract C}(D) + \lambda_{M\delete D}(C)$.
\end{lem}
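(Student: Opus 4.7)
The plan is to prove the identity via the basis characterisation of connectivity in \cref{connectivityfornonnestedbases}. I will construct, for each of the three matroids $M$, $M\delete D$ and $M\contract C$, a base of the appropriate restriction together with a base of the appropriate contraction onto it, nested so that the three cardinal differences split additively along the partition of $C\cup D$ into $C$ and $D$.

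Concretely, I will begin by picking a base $B'_C$ of $(M\delete D)\contractonto C$. By commuting deletion and contraction on disjoint sets this matroid coincides with $(M\contractonto(C\cup D))\delete D$, so $B'_C$ is just a maximal independent subset of $C$ in $M\contractonto(C\cup D)$. Extending $B'_C$ to a base $B'_{CD} = B'_C\cup B'_D$ of $M\contractonto(C\cup D)$ and using that $B'_C$ spans $C$ there, the set $B'_D$ is a base of $(M\contractonto(C\cup D))\contract C = (M\contract C)\contractonto D$. I will then extend $B'_C$ to a base $B_C$ of $M\restrict C$ and $B'_D$ to a base $B_D$ of $(M\contract C)\restrict D$; the standard fact that a base of $N\restrict C$ together with a base of $N\contract C$ is a base of $N$, applied to $N = M\restrict(C\cup D)$, then ensures that $B_C\cup B_D$ is a base of $M\restrict(C\cup D)$.

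By construction $B'_C\cup B'_D\subseteq B_C\cup B_D$, and since $C$ and $D$ are disjoint the set difference splits as
\begin{displaymath}
(B_C\cup B_D)\setminus(B'_C\cup B'_D) \;=\; (B_C\setminus B'_C)\,\sqcup\,(B_D\setminus B'_D).
\end{displaymath}
When $\lambda_M(C\cup D)$ is finite, three applications of \cref{connectivityfornonnestedbases} translate this immediately into the desired identity. The main obstacle will be the case where a cardinal difference is infinite, since then \cref{connectivityfornonnestedbases} does not directly give a value for the corresponding $\lambda$; but additivity of cardinals shows that the left-hand side of the displayed equation is finite if and only if both right-hand summands are, and so the identity still holds in $\mathbb{N}\cup\{\infty\}$ once this boundary case is recorded.
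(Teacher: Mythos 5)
Your proof is correct, but it computes on the opposite side of the separation from the paper. The paper works entirely on the complement $A=E\setminus(C\cup D)$: it takes a single chain $B_3\subseteq B_2\subseteq B_1$ of bases of the three minors $M\contract(C\cup D)$, $M\contract C\delete D$ and $M\delete(C\cup D)$, all on ground set $A$, and telescopes $|B_1\setminus B_3|=|B_1\setminus B_2|+|B_2\setminus B_3|$, finishing with the symmetry $\lambda_{M\delete D}(A)=\lambda_{M\delete D}(C)$ and $\lambda_{M\contract C}(A)=\lambda_{M\contract C}(D)$. You instead build a two-by-two grid of bases on $C\cup D$ itself ($B'_C\subseteq B_C$ inside $C$, $B'_D\subseteq B_D$ inside $D$) and split the nested-base difference along the partition $C\sqcup D$. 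Both arguments rest on the same two ingredients — the nested-bases description of connectivity and additivity of cardinalities of disjoint differences — so neither is deeper than the other; the paper's chain is a little shorter to write down, while your version makes the additive decomposition of $\lambda_M(C\cup D)$ into a $C$-part and a $D$-part visible directly and needs no appeal to the symmetry of $\lambda$ at the end. Your handling of the infinite case (the disjoint union is finite iff both parts are) is sound and in fact slightly more explicit than the paper, which passes over this silently; since all your base pairs are nested, you are really using the definition of connectivity rather than the error-term version in \cref{connectivityfornonnestedbases}, but that is only a presentational point.
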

\begin{proof}
	Let $B_1$ be a base of $M\delete (C\cup D)$, $B_2$ a base of $M\contract C \delete D$ and $B_3$ a base of $M\contract (C\cup D)$ such that $B_3\subseteq B_2\subseteq B_1$.
	Denote $E\setminus (C\cup D)$ by $A$.
	Then
	\begin{align*}
		\lambda_M(C\cup D)&=\lambda_M(A) = |B_1\setminus B_3| =| B_1 \setminus B_2| + |B_2\setminus B_3|\\
		&= \lambda_{M\delete D}(A) + \lambda_{M\contract C}(A) = \lambda_{M\delete D}(C) + \lambda_{M\contract C}(D).\qedhere
	\end{align*}
\end{proof}

\begin{lem}\label{localconnindual}
	Let $\Phi$ be a finite $k$-flower of $M$ with at least five petals and parameters $\localconn_M(P_1,P_2)=c$ and $\localconn_M(P_1,P_3)=d$.
	Denote $\localconn_{M^*}(P_1,P_2)$ by $c^*$ and $\localconn_{M^*}(P_1,P_3)$ by $d^*$.
	Then $c+c^*=k-1$ and $c^*-d^*=c-d$.
\end{lem}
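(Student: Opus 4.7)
The plan is to reduce to the case of a finite matroid and then carry out the computation with the rank function. Since $\Phi$ has only finitely many petals, iterated application of \cref{infmattofinmat} produces a finite minor $N$ of $M$ in which each petal has been replaced by a set of size $k-1$. The lemma guarantees that all the connectivities and local connectivities of unions of petals are preserved, and its construction---contracting a subset $C \subseteq X$ and deleting a subset $D \subseteq X$---is self-dual in the sense that $N^*$ is obtained from $M^*$ by the same construction with the roles of $C$ and $D$ swapped. Consequently $c, d, c^*, d^*$ and $k$ are all unchanged by the reduction, and it suffices to prove the statement in the finite case.

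In the finite setting I would first derive the general identity
\[
\localconn_M(X, Y) + \localconn_{M^*}(X, Y) = \lambda(X) + \lambda(Y) - \lambda(X \cup Y)
\]
valid for any disjoint $X, Y \subseteq E$. This is a direct rank calculation: both local connectivities expand via their definition, substitution of $r^*(S) = |S| + r(E \setminus S) - r(M)$ introduces cardinality terms that cancel because $X$ and $Y$ are disjoint, and what remains reassembles into $\lambda(X) + \lambda(Y) - \lambda(X \cup Y)$. Applied to the adjacent petals $(P_1, P_2)$, where all three $\lambda$ values on the right are equal to $k-1$, this gives $c + c^* = k - 1$.

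For the second identity I would introduce $W := E \setminus (P_1 \cup P_2 \cup P_3) = P_4 \cup \cdots \cup P_n$, which is a non-trivial interval of petals since $|I| \geq 5$, so that $\{P_1, P_2, P_3, W\}$ partitions $E$. By \cref{localconninfinflowers} the four adjacent pairs in this cyclic arrangement each have local connectivity $c$, and the two opposite pairs $(P_1, P_3)$ and $(P_2, W)$ each have local connectivity $d$. Writing
\[
\lambda(P_1 \cup P_3) = r(P_1 \cup P_3) + r(P_2 \cup W) - r(M)
\]
and replacing each $r(A \cup B)$ for disjoint $A, B$ by $r(A) + r(B) - \localconn(A, B)$ yields $\lambda(P_1 \cup P_3) = \sum_{i=1}^{3} r(P_i) + r(W) - 2d - r(M)$, and the same calculation for $\lambda(P_1 \cup P_2)$ replaces $-2d$ by $-2c$. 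Since $\lambda(P_1 \cup P_2) = k-1$, subtracting these gives $\lambda(P_1 \cup P_3) = (k-1) + 2(c-d)$. Plugging into the general identity from the previous step applied to $(P_1, P_3)$ gives $d + d^* = (k-1) - 2(c-d)$, and combined with $c^* = (k-1) - c$ this forces $c^* - d^* = c - d$.

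The main obstacle I anticipate is the initial reduction: one has to verify that iterating \cref{infmattofinmat} petal by petal indeed preserves the primal $\lambda$ of every union of petals appearing in the argument (in particular $\lambda(P_1 \cup P_3)$) as well as the analogous dual quantities needed for $c^*$ and $d^*$. Once the reduction is in place the rest is routine rank-function manipulation; the only cute step is recognising that $\lambda(P_1 \cup P_3)$ and $\lambda(P_1 \cup P_2)$ must differ by precisely $2(c-d)$, which is exactly the ``rotation'' that converts the adjacent-pair local connectivity into the opposite-pair one.
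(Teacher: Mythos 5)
Your proof is correct, but it takes a genuinely different route from the paper's. The paper never leaves the infinite matroid: it writes $\localconn_{M^*}(P_i,P_j)=\lambda_{M\contractonto(P_i\cup P_j)}(P_j)$, splits this with \cref{connectivitybasic1} into $\lambda_M(E\setminus P_i)-\localconn_M(P_i,E\setminus(P_i\cup P_j))$, and then reads the values $c$ and $2c-d$ for the two relevant complements straight out of \cref{localconninfinflowers}; no rank function and no explicit passage to a finite minor is needed for the dual quantities. You instead reduce everything to a finite minor first (your observation that the construction of \cref{infmattofinmat} is self-dual, $(M\contract C\delete D)^*=M^*\delete C\contract D$ with $C$ and $D$ swapped, is exactly the point that makes the dual local connectivities survive the reduction, and it does hold since $D$ is a base of $M^*\contractonto X$ and $X\setminus C$ a base of $M^*\restrict X$), and then use the sum identity $\localconn_M(X,Y)+\localconn_{M^*}(X,Y)=\lambda(X)+\lambda(Y)-\lambda(X\cup Y)$ together with a separate computation of $\lambda(P_1\cup P_3)=(k-1)+2(c-d)$. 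The two identities are equivalent complementation/summation forms of the same fact, and both proofs ultimately lean on \cref{localconninfinflowers} for the adjacency case analysis (your values $\localconn(P_3,W)=\localconn(W,P_1)=c$ and $\localconn(P_2,W)=d$ all check out for $n\geq 5$). What your version buys is a reusable closed formula for $\localconn+\localconn^*$ and, as a by-product, the value of $\lambda(P_1\cup P_3)$; what it costs is the extra bookkeeping of verifying that the iterated reduction preserves the primal and dual data simultaneously, which the paper's minor-based identity sidesteps entirely.
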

\begin{proof}
	Let $i$ and $j$ be distinct indices. Then
	\begin{equation*}
		\begin{split}
			\localconn_{M^*}(P_i,P_j)&=\lambda_{M^*\restrict(P_i\cup P_j)}(P_j)=\lambda_{M\contractonto(P_i\cup P_j)}(P_j)\\
			&=\lambda_M(E\setminus P_i) - \lambda_{M\delete P_j}(E\setminus (P_i\cup P_j))\\
			&=k-1-\localconn_M(P_i,E\setminus (P_i\cup P_j))
		\end{split}
	\end{equation*}
	where the third equality holds by \cref{connectivitybasic1}.
	So
	\begin{equation*}
		c^*=\localconn_{M^*}(P_1,P_2) =k-1-\localconn_M(P_1,E\setminus(P_1\cup P_2)) =k-1-c
	\end{equation*}
	where the last equation holds by \cref{localconninfinflowers}.
	Furthermore,
	\begin{equation*}
		\begin{split}
			d^*&=\localconn_{M^*}(P_1,P_3)=k-1-\localconn_M(P_1,E\setminus (P_1\cup P_3))\\
			&=k-1-(2c-d)=k-1-c-(c-d)=c^*-(c-d),
		\end{split}
	\end{equation*}
	where again the third equality also holds by \cref{localconninfinflowers}.
\end{proof}

Now we can determine the effect of deleting or contracting a petal.
In both cases, the $k$-flower essentially becomes an $l$-flower for some $l\leq k$.
Deleting a petal reduces $k$ as far as possible while keeping $c$ and $d$ constant:

\begin{lem}\label{localconnindeletion}
	Let $(P_1,\ldots,P_n)$ be a $k$-flower with at least five petals and denote $\localconn_M(P_1,P_2)$ by $c$ and $\localconn_M(P_1,P_3)$ by $d$.
	Then $(P_n\cup P_2,P_3,\ldots,P_{n-1})$ is a $2c-d+1$-flower of $M\delete P_1$ in which adjacent petals have local connectivity $c$ and non-adjacent petals have local connectivity $d$.
\end{lem}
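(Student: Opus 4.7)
The plan is to prove three statements in sequence: (a) local connectivities in $M\delete P_1$ of disjoint sets contained in $E\setminus P_1$ agree with those in $M$; (b) every non-trivial union of an interval of the new cyclic order has connectivity $2c-d$ in $M\delete P_1$; and (c) the adjacent/non-adjacent pair connectivities in the new flower are $c$ and $d$. Step (a) is immediate because for disjoint $X,Y\subseteq E\setminus P_1$ we have $(M\delete P_1)\restrict(X\cup Y)=M\restrict(X\cup Y)$, so $\localconn_{M\delete P_1}(X,Y)=\localconn_M(X,Y)$, and $\lambda_{M\delete P_1}(X)=\localconn_M(X,(E\setminus P_1)\setminus X)$ for every $X\subseteq E\setminus P_1$.

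For (b), let $\tilde{I}'$ be a non-trivial interval of the new cyclic order. Splitting into two cases according to whether $\tilde{I}'$ contains the merged petal $P_n\cup P_2$, either $\tilde{I}'$ corresponds to an interval $I'_{\mathrm{orig}}\subseteq\{3,\ldots,n-1\}$ of the original flower and $(E\setminus P_1)\setminus S(\tilde{I}')$ is indexed by $I\setminus I'_{\mathrm{orig}}\setminus\{1\}$, or the complement $\tilde{I}\setminus\tilde{I}'$ plays the role of $I'_{\mathrm{orig}}$ and $\tilde{I}'$ the role of $I\setminus I'_{\mathrm{orig}}\setminus\{1\}$. In either case I would apply \cref{localconninfinflowers} in the original flower with $I'=I'_{\mathrm{orig}}$ and $I''=I\setminus I'_{\mathrm{orig}}\setminus\{1\}$, which is a non-trivial proper subset of $I\setminus I'_{\mathrm{orig}}$ since $1\notin I'_{\mathrm{orig}}$. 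Both endpoints of $I'_{\mathrm{orig}}$ lie in $\{3,\ldots,n-1\}$, so each of their cyclic neighbours lies in $\{2,\ldots,n\}\setminus I'_{\mathrm{orig}}\subseteq I''$. Thus two elements of $I''$ are adjacent to $I'_{\mathrm{orig}}$ and \cref{localconninfinflowers} yields local connectivity $2c-d$; by (a) this equals $\lambda_{M\delete P_1}(S(\tilde{I}'))$.

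For (c), pairs of petals $P_i,P_j$ both contained in $\{P_3,\ldots,P_{n-1}\}$ have the same local connectivity in $M\delete P_1$ as in $M$ by (a), and adjacency in the new cyclic order matches adjacency in the original on these indices, yielding $c$ or $d$ as appropriate. For pairs of the form $(P_n\cup P_2,P_i)$ with $i\in\{3,\ldots,n-1\}$, I would apply \cref{localconninfinflowers} with $I'=\{i\}$ and $I''=\{2,n\}$: if $i\in\{3,n-1\}$ then exactly one of $P_2,P_n$ is adjacent to $P_i$ in the original flower, giving $c$, which matches adjacency in the new flower; if $i\in\{4,\ldots,n-2\}$ then neither is, giving $d$, which matches non-adjacency. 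Here the hypothesis $n\geq 5$ is what ensures that $P_n$ is not adjacent to $P_3$ and $P_2$ is not adjacent to $P_{n-1}$.

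The main technical obstacle is the bookkeeping in (b): the new cyclic order arises by simultaneously deleting $P_1$ and merging the now-adjacent petals $P_2$ and $P_n$, so one must verify that every interval of the new cyclic order, translated back to the original index set, is either an interval in $\{3,\ldots,n-1\}$ or the complement within $I\setminus\{1\}$ of such an interval. Once this correspondence is set up, every application of \cref{localconninfinflowers} in the proof follows the same pattern, with $I''$ taken to be ``all of $I$ except $I'$ and the deleted index $1$''.
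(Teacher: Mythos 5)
Your proposal is correct and follows essentially the same route as the paper: reduce everything to local connectivities in $M$ of sets disjoint from $P_1$, parametrise the intervals of the new cyclic order by intervals of $\{3,\ldots,n-1\}$, and apply the $2c-d$ case of \cref{localconninfinflowers} with $I''=I\setminus I'\setminus\{1\}$. Your step (c) spells out the computation for pairs involving the merged petal $P_n\cup P_2$ (via $I''=\{2,n\}$), which the paper leaves implicit in its opening sentence, but the argument is the same.
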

\begin{proof}
	Deleting $P_1$ does not change the local connectivity of sets which are disjoint from $P_1$.
	So it suffices to show that $(P_n\cup P_2,P_3,P_4,\ldots,P_{n-1})$ is a $2c-d+1$-flower in $M\delete P_1$. In order to show that let $I'$ be a non-empty interval of the set $\{3,4,\ldots,n-1\}$.
	Then by \cref{localconninfinflowers}
	\begin{equation*}
		\lambda_{M\delete P_1}(S(I'))=\localconn_M(S(I'),E\setminus (P_1\cup S(I')))=2c-d.\qedhere
	\end{equation*}
\end{proof}

Contracting a petal reduces $d$ to $0$, while keeping $c-d$ and $k-c$ constant:

\begin{cor}\label{localconncontraction}
	The partition $(P_n\cup P_2,P_3,\ldots,P_{n-1})$ is a $k-d$-flower of $M\contract P_1$ in which adjacent petals have local connectivity $c-d$ and non-adjacent petals have local connectivity $0$.
\end{cor}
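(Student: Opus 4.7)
The plan is to deduce this corollary from \cref{localconnindeletion} by matroid duality, using the identity $M \contract P_1 = (M^* \delete P_1)^*$ together with the fact that the connectivity function, and hence the notion of being a $k$-flower, is preserved under passing to the dual of a matroid.

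First I would pass to $M^*$ and translate the flower's parameters via \cref{localconnindual}: $(P_1,\ldots,P_n)$ is a $k$-flower of $M^*$ with adjacent and non-adjacent local connectivities $c^\ast = k-1-c$ and $d^\ast = c^\ast - (c-d) = k-1-2c+d$. Applying \cref{localconnindeletion} inside $M^*$ then produces a $(2c^\ast - d^\ast + 1)$-flower $(P_n \cup P_2, P_3, \ldots, P_{n-1})$ of $M^* \delete P_1$ with adjacent local connectivity $c^\ast$ and non-adjacent local connectivity $d^\ast$; a short substitution gives $2c^\ast - d^\ast + 1 = k-d$. Because the connectivity function of $M^* \delete P_1$ agrees with that of its dual $M \contract P_1 = (M^* \delete P_1)^*$, the same partition is a $(k-d)$-flower of $M \contract P_1$, giving the claimed order.

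It remains to compute the adjacent and non-adjacent local connectivities in $M \contract P_1$; for this I would apply \cref{localconnindual} once more, now to the $(k-d)$-flower viewed both in $M^* \delete P_1$ (with parameters $c^\ast, d^\ast$) and in its dual $M \contract P_1$ (with parameters $c'', d''$ to be determined). The first identity of that lemma gives $c'' = (k-d) - 1 - c^\ast = c - d$, and the second identity (preservation of the difference between adjacent and non-adjacent local connectivity under dualisation) yields $c'' - d'' = c^\ast - d^\ast = c - d$, forcing $d'' = 0$. I do not anticipate a real obstacle; the only thing to keep an eye on is that \cref{localconnindual} presupposes at least five petals, so one should check that $n$ is large enough for the step involving the contracted flower (or handle the few small-$n$ cases by an ad hoc calculation using \cref{connectivitybasic1}).
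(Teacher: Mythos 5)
Your proposal is correct and follows essentially the same route as the paper: apply \cref{localconnindual} to pass to $M^*$, apply \cref{localconnindeletion} there to get the $(k-d)$-flower of $M^*\delete P_1$, and apply \cref{localconnindual} once more (using $M\contract P_1=(M^*\delete P_1)^*$) to read off the local connectivities $c-d$ and $0$. Your remark about verifying the five-petal hypothesis for the final application of \cref{localconnindual} is a reasonable point of care that the paper's own proof passes over silently.
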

\begin{proof}
	By \cref{localconnindual} the partition $(P_1,\ldots,P_n)$ is a $k$-flower of $M^*$ with local connectivities $k-1-c$ for adjacent petals and $(k-1-c)-(c-d)$ for non-adjacent petals. So by \cref{localconnindeletion} $(P_n\cup P_2,P_3,\ldots,P_{n-1})$ is a $k-d$-flower of $M^*\delete P_1$ with local connectivities $k-1-c$ for adjacent petals and $(k-1-c)-(c-d)$ for non-adjacent petals. Applying \cref{localconnindual} again yields that $(P_n\cup P_2,P_3,\ldots, P_{n-1})$ is a $k-d$-flower of $M\contract P_1$ with local connectivities $c-d$ for adjacent petals and $0$ for non-adjacent petals.
\end{proof}

Now we have the tools to prove that in a matroid there are no infinite daisies:

\begin{lem}\label{noinfinitedaisies}
	For every $k\in \mathbb{N}$: There are no infinite $k$-daisies.
\end{lem}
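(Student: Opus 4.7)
The plan is to induct on $k$. For any finite $k$-daisy with at least five petals, applying \cref{localconninfinflowers} to a non-trivial interval $I'$ together with its complementary interval $I'' = I \setminus I'$ (whose two endpoints are adjacent to $I'$) shows $\lambda(S(I')) = 2c - d$; since a $k$-daisy has $\lambda(S(I')) = k - 1$ on every non-trivial interval, this forces $2c - d = k - 1$, and the daisy condition gives $c > d \geq 0$. For $k \leq 2$ this system has no solution, so no $k$-daisy exists at all, handling the base case.

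For the inductive step with $k \geq 3$, assume there is no infinite $l$-daisy for $1 \leq l < k$. Suppose for contradiction that $\Phi = (P_i)_{i \in I}$ is an infinite $k$-daisy in $M$, and pick some finite concatenation of $\Phi$ with at least five petals to obtain parameters $c, d$ with $c > d \geq 0$ and $2c - d = k - 1$.

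In the case $d \geq 1$, fix any petal $P_{i_0}$ of $\Phi$ and consider the partition $\Phi'$ of $E \setminus P_{i_0}$ in $M / P_{i_0}$ inherited from $\Phi$, with the two ``neighbors'' of $P_{i_0}$ amalgamated (when they exist) as in \cref{localconncontraction}. I would verify that every finite concatenation of $\Phi'$ arises, up to amalgamation, from contracting $P_{i_0}$ in a finite concatenation of $\Phi$ in which $P_{i_0}$ appears as a distinguished petal (by splitting $P_{i_0}$ off from the block containing it, such concatenations are cofinal among all finite concatenations of $\Phi$). Then \cref{localconncontraction} shows each such contraction is a finite $(k-d)$-daisy in $M / P_{i_0}$, so $\Phi'$ is an infinite $(k - d)$-daisy, contradicting the induction hypothesis since $k - d < k$.

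In the case $d = 0$, the equations force $c = (k-1)/2 \geq 1$ and $k$ odd. Here I would apply \cref{infmattofinmat} iteratively to each petal of a finite concatenation $(Q_1, \dots, Q_n)$ to reach a finite matroid $N$ in which each $Q_j$ has been replaced by a set $F_j$ of size $\lambda(Q_j) = k - 1 = 2c$ while local connectivity between remaining subsets is preserved; in particular non-adjacent petals stay skew in $N$. Comparing $r_N(F_{j-1} \cup F_{j+1}) = r_N(F_{j-1}) + r_N(F_{j+1})$ (skew) with the interval rank formula $r_N(F_{j-1} \cup F_j \cup F_{j+1}) = \sum r_N(F_{j'}) - 2c$ then forces $r_N(F_j) \geq 2c$, so each $F_j$ is independent of full rank $2c$ in $N$. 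Since $\Phi$ is infinite, some $Q_j$ must contain infinitely many petals of $\Phi$ and can therefore be refined arbitrarily finely; iterating the interval-union rank estimate along a chain of refinements gives lower bounds on $r_M(Q_j)$ of the form $(m+1)c$ with $m$ unbounded, and combining this with a reduction of a sufficiently fine concatenation should violate the size bound $|F_j| = k-1$ forced by \cref{infmattofinmat}. I expect this $d = 0$ case to be the main obstacle: the reduced matroid from \cref{infmattofinmat} depends on the chosen concatenation, and comparing rank information across refinements will require carefully exploiting exactly which quantities---$\lambda$, local connectivity on disjoint subsets of the complement of the reduced petal---are preserved by the reduction.
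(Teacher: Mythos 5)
Your base case and your reduction of the case $d\geq 1$ are sound and run parallel to the first half of the paper's proof: the paper likewise uses \cref{localconnindeletion} and \cref{localconncontraction} (delete $P_1$, then contract $P_3$) to pass from an arbitrary infinite $k$-daisy to an infinite $(2(c-d)+1)$-daisy with local connectivities $c-d$ for adjacent and $0$ for non-adjacent petals. But that normalization is only the preamble; the entire substance of the lemma is the case $d=0$, and there your argument has a genuine gap. The rank bookkeeping you propose cannot produce a contradiction: every finite concatenation of the putative infinite daisy is an honest finite $k$-daisy, and finite $k$-daisies do exist in finite matroids (swirls), so every computation carried out inside a single finite reduction obtained from \cref{infmattofinmat} is internally consistent. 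Concretely, in the reduced matroid each $F_j$ has size $2c$ and rank $2c$, and the union of an interval of $m$ reduced petals has rank $c(m+1)$; this grows without bound as you refine, but it never conflicts with the size bound $|F_j|=k-1$, which applies to each reduced petal of the chosen concatenation separately, not to $r_M(Q_j)$ (which may simply be infinite). That no contradiction can be reached by inspecting finite concatenations one at a time is underscored by \cref{characteriseinfdaisy}: infinite $k$-daisies \emph{do} exist in general limit-closed connectivity systems, so any proof of their non-existence in matroids must invoke the matroid axioms across infinitely many petals simultaneously.

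That is what the paper's proof does after the normalization. It fixes bases $B_i'\subseteq B_i$ of $N\contractonto Q_i$ and $N\restrict Q_i$, uses skewness of non-adjacent petals and \cref{connectivityfornonnestedbases} to show that $B_i\cup B_{i+1}'\cup B_{i+2}$ is a base of $N\restrict Q_{[i,i+2]}$ and $B_i'\cup B_{i+1}\cup B_{i+2}'$ a base of $N\contractonto Q_{[i,i+2]}$, and then glues these into two genuinely infinite bases $B$ and $B'$ of $N$ by alternating $B_i$ and $B_i'$ along the parity of $i$ (the verification that $B$ is independent uses that a circuit meets each $Q_{[i,i+2]}$ in a scrawl of the contraction). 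The punchline is that a base of $N\restrict(Q_0\cup Q_1)$ squeezed between $B_0'\cup B_1$ and $B_0\cup B_1$ extends to a base of $N$ containing $B'$, and since a base cannot properly contain another base it must equal $B_0'\cup B_1$; this forces $\lambda_{N\contract Q_1}(Q_0)=0$ while a direct computation via \cref{connectivitybasic1} gives $c$, so $c=0$, contradicting the daisy property. You would need an argument of this essentially infinite character to close the $d=0$ case.
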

\begin{proof}
	Assume for a contradiction that there is an infinite $k$-daisy.
	Then there also is a $k$-daisy of the form $(P_i)_{i\in \mathbb{N}}$ where the cyclic order is the one induced by the usual linear order of $\mathbb{N}$.
	Denote the local connectivity of adjacent petals by $c$ and the local connectivity of non-adjacent petals by $d$.
	For all $n\geq 5$, apply \cref{localconnindeletion} to the $k$-flowers $(P_0,\ldots,P_n,E \setminus \bigcup_{0 \leq i \leq n} P_i)$.
	So the partitions $(P_0\cup P_2, P_3,,\ldots, P_n,E \setminus \bigcup_{0 \leq i \leq n} P_i)$ are $2c-d+1$-flowers of $M\delete P_1$ with local connectivities $c$ for adjacent petals and $d$ for non-adjacent petals.
	Thus the partition $(P_0\cup P_2,P_3,\ldots)$ is a $2c-d+1$-daisy for $M\delete P_1$ with the same local connectivities.
	Similarly, by applying \cref{localconncontraction} to finite flowers, $(P_0\cup P_2\cup P_4,P_5,\ldots)$ is a $2(c-d)+1$-flower of $M\delete P_1\contract P_3$ with local connectivities $c-d$ for adjacent petals and $0$ for non-adjacent petals.
	Thus there is a matroid $N$ with an infinite daisy of the form $(Q_i)_{i\in \mathbb{N}}$ and $c\in \mathbb{N}$ such that the connectivity of intervals is $2c$ and the local connectivity between adjacent and non-adjacent petals is $c$ and $0$ respectively.
	By \cref{localconnindual}, $(Q_i)_{i\in \mathbb{N}}$ is also a $2c+1$-flower in $N^*$ with local connectivities $c$ for adjacent petals and $0$ for non-adjacent petals.
	
	For each $i\in \mathbb{N}$ let $B_i$ be a base of $N\restrict Q_i$ and $B_i'$ a base of $N\contractonto Q_i$ such that $B_i'\subseteq B_i$.
	As the local connectivity of $Q_i$ and $Q_{i+2}$ is $0$, $B_i\cup B_{i+2}$ is a base of $N\restrict (Q_i\cup Q_{i+2})$.
	Thus $B_i\cup B_{i+1}'\cup B_{i+2}$ is independent in $N\restrict Q_{[i,i+2]}$.
	Dually, $B_i'\cup B_{i+1}\cup B_{i+2}'$ is spanning in $N.Q_{[i,i+2]}$.
	Let $X$ be a base of $N\restrict Q_{[i,i+2]}$ such that $B_i\cup B_{i+1}'\cup B_{i+2}\subseteq X\subseteq B_i\cup B_{i+1}\cup B_{i+2}$.
	Let $Y$ be a base of $N.Q_{[i,i+2]}$ such that $B_i'\cup B_{i+1}'\cup B_{i+2}'\subseteq Y\subseteq B_i'\cup B_{i+1}\cup B_{i+2}'$.
	Now $X\setminus Y$ is finite, so by \cref{connectivityfornonnestedbases}
	\begin{equation*}
		\begin{split}
			\lambda_N(Q_{[i,i+2]})&=\left|X\setminus Y\right|-\left|Y\setminus X\right|\\
			&=\left|B_i\setminus B_i'\right|+\left|B_{i+2}\setminus B_{i+2}'\right|+\left|(X\setminus Y)\cap Q_{i+1}\right|-\left|Y\setminus X\right|\\
			&\geq 4c+0-2c.
		\end{split}
	\end{equation*}
	As the connectivity of $Q_{[i,i+2]}$ in $N$ is $2c$, the inequality has to be an equality, so $X=B_i\cup B_{i+1}'\cup B_{i+2}$ and $Y=B_i'\cup B_{i+1}\cup B_{i+2}'$.
	In particular, $B_i\cup B_{i+1}'\cup B_{i+2}$ is a base of $N\restrict Q_{[i,i+2]}$ and $B_i'\cup B_{i+1}\cup B_{i+2}'$ is a base of $N\contractonto Q_{[i,i+2]}$.
	
	Denote the set $\bigcup_{i\text{ even}}B_i\cup \bigcup_{i\text{ odd}} B_i'$ by $B$.
	Then $B$ spans all sets of the form $Q_{[i,i+2]}$ where $i$ is even, so $B$ is spanning in $N$.
	Assume for a contradiction that $B$ contains a circuit $C$. Then for every odd $i\in \mathbb{N}$, the set $C\cap Q_{[i,i+2]}$ is a scrawl of $N\contractonto Q_{[i,i+2]}$.
	Also $C\cap Q_{[i,i+2]}$ is contained in $B_i'\cup B_{i+1}\cup B_{i+2}'$ which was already shown to be a base of $N\contractonto Q_{[i,i+2]}$.
	So $C\cap Q_{[i,i+2]}$ is empty for all odd $i\in \mathbb{N}$.
	Thus $C$ is a subset of $Q_0$, so $C\subseteq B_0$, which is a contradiction to the fact that $B_0$ is independent in $N$.
	So $B$ is a base of $N$.
	For every $i\in \mathbb{N}$, the set $E\setminus B_i'$ is a base of $N^*\restrict Q_i$ and the set $E\setminus B_i$ is a base of $N^*\contractonto Q_i$.
	Thus, just as $B$ is a base of $N$, the set $\bigcup_{i\text{ even}}E \setminus B_{i}'\cup \bigcup_{i\text{ odd}}E\setminus B_i$ is a base of $N^*$, and thus $B':=\bigcup_{i\text{ even}}B_{i}'\cup \bigcup_{i\text{ odd}}B_i$ is a base of $N$.
	
	As $B_0'\cup B_1$ is independent in $N\restrict (Q_0\cup Q_1)$ and $B_0\cup B_1$ is spanning in $N\restrict (Q_0\cup Q_1)$, there is a base $R$ of $N\restrict (Q_0\cup Q_1)$ such that $B_0'\cup B_1\subseteq R\subseteq B_0\cup B_1$.
	As the partition $(Q_0\cup Q_1,Q_2,Q_3,\ldots)$ with the induced cyclic order is also a $2c+1$-daisy of $N$ with local connectivities $c$ for adjacent petals and $0$ for non-adjacent petals, also
	\begin{displaymath}
		R\cup \bigcup_{i\text{ even, }i\neq 0}B_i'\cup \bigcup_{i\text{ odd, }i\neq 1}B_i
	\end{displaymath}
	is a base of $N$.
	As this base contains $B'$ as a subset, and a base cannot be properly contained in another one, $R=B_0'\cup B_1$.
	So $B_0'\cup B_1$ is a base of $N\restrict (Q_0\cup Q_1)$, implying that $B_0'$ is a base of $N\delete Q_{[2,\infty[}\contract Q_1$.
	As $B_0'$ is also a base of $N\contract Q_{[2,\infty[}\contract Q_1$, the connectivity of $Q_0$ in $N\contract Q_1$ is $0$.
	But by \cref{connectivitybasic1}
	\begin{align*}
		\lambda_{N\contract Q_1}(Q_0)& =\lambda_N(Q_0\cup Q_1)-\lambda_{N\delete Q_0}(Q_1)\\
		&=\lambda_N(Q_0\cup Q_1) -\localconn_N(Q_1,Q_{[2,\infty[})\\
		&= 2c - c = c.
	\end{align*}
	So $c=0$, contradicting the fact that $(Q_0,Q_1,\ldots)$ is a $2c+1$-daisy of $N$.
\end{proof}

The following corollary implies that finding $k$-daisies that distinguish as many profiles as possible in an infinite matroid is the same as doing so in a finite matroid.

\begin{cor}\label{noinfchainofdaisies}
	Let $\Phi$ be a $k$-daisy which distinguishes at least two profiles with the same truncation to $k-1$-profiles.
	Then the set of $k$-flowers extending $\Phi$ has no infinite increasing chain.
\end{cor}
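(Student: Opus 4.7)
The plan is a proof by contradiction. Suppose there is an infinite strictly $\leq$-increasing chain $\Phi = \Phi_0 < \Phi_1 < \Phi_2 < \cdots$ of $k$-flowers extending $\Phi$. My first step is to observe that each $\Phi_j$ is itself a $k$-daisy: since $\Phi$ is a daisy and $\Phi$ is a concatenation of $\Phi_j$, the remark following \cref{moststrongkpa} forbids $\Phi_j$ from being concatenated into any $k$-anemone with at least $k+1$ petals. Combined with \cref{daisyoranemone}, this forces every finite concatenation of $\Phi_j$ with at least $k+1$ petals (which is automatically a finite $k$-flower) to be a finite $k$-daisy, so $\Phi_j$ is a $k$-daisy by \cref{def:anemoneanddaisy}. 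Then \cref{noinfinitedaisies} gives that each $\Phi_j$ is finite, and the strictness of the chain, together with the fact that $\Phi_j \leq \Phi_{j+1}$ implies $|\Phi_j| \leq |\Phi_{j+1}|$ with equality only when the two flowers agree, forces $|\Phi_j| \to \infty$.

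Next I would apply \cref{limitofpseudoflowers} to obtain a $k$-pseudo\-flower $\Psi$ that is an upper bound of the chain. Since $\Psi$ refines every $\Phi_j$ and $|\Phi_j| \to \infty$, $\Psi$ has infinitely many petals. If $\Psi$ can be upgraded from a pseudo\-flower to a $k$-flower, then re-running the first paragraph's argument with $\Psi$ in place of $\Phi_j$ shows that $\Psi$ is a $k$-daisy, and this infinite $k$-daisy contradicts \cref{noinfinitedaisies}. So the whole argument reduces to verifying that every separation $S$ displayed by $\Psi$ has $\lambda(S) = k-1$.

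The main obstacle is precisely this equality: only the bound $\lambda(S) \leq k-1$ is built into the pseudo\-flower definition, and the two cuts of $\Psi$'s cyclic order bounding $S$ need not be realised in any single $\Phi_j$ — they can appear only as Dedekind-like cuts in the limit, so $S$ is not in general displayed by any $\Phi_j$. My plan is to approach each such cut from both sides by cuts realised along the chain, so that $S$ is squeezed between an ascending chain of subsets and a descending chain of supersets, each of order exactly $k-1$ as a separation displayed by some $\Phi_j$. By passing to a cofinal subchain one can stabilise the daisy parameters $c,d \in \{0,\ldots,k-1\}$ of the $\Phi_j$s; then a submodularity argument in the spirit of the proof of \cref{noinfinitedaisies}, together with \cref{connectivityfornonnestedbases} to control the discrepancy between nearby bases across the squeeze, should pin $\lambda(S)$ down at $k-1$. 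This approximation-plus-submodularity step is the hard part of the proof.
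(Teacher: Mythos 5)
Your first half matches the paper's opening moves: every $\Phi_j$ in the chain is a finite $k$-daisy by \cref{noinfinitedaisies}, the petal counts are unbounded, and the common refinement is a $k$-pseudo\-flower $\Psi$ with infinitely many petals which, were it a $k$-flower, would be an infinite $k$-daisy and hence impossible. The gap is exactly the step you flag as the hard part, and it cannot be closed the way you propose. The only separations displayed by $\Psi$ that are not displayed by some $\Phi_j$ are those bounded by a genuine limit cut, i.e.\ suprema of chains of separations each of order $k-1$. Limit-closedness and your squeeze give only the upper bounds $\lambda(S)\leq k-1$ and $\lambda(E\setminus S)\leq k-1$; neither submodularity nor the base-counting of \cref{connectivityfornonnestedbases} produces a matching lower bound. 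Worse, the first direction of the proof of \cref{characteriseinfdaisy} shows that in a matroid (where \cref{noinfinitedaisies} holds) a chain of order-$(k-1)$ sets displayed by $k$-daisies must have a supremum of order strictly less than $k-1$, since the alternative requires an anemone-type structure that is incompatible with the daisy setting. So ``$\Psi$ is a $k$-flower'' is not merely unproven here --- the order provably drops at the limit cut, and your target is false.

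The paper turns this around. It accepts that $\Psi$ is not a $k$-flower (that is precisely what \cref{noinfinitedaisies} forces), deduces that some initial-segment union $Q_0\cup\dots\cup Q_i$ has order less than $k-1$, and then shows by submodularity that all such unions do. The contradiction then comes from the hypothesis your proposal never invokes: $\Phi$ distinguishes two $k$-profiles $P_1,P_2$ with the same truncation to $(k-1)$-profiles. A short argument with consistency and the profile property shows that some $Q_0\cup\dots\cup Q_{i'}$ must distinguish $P_1$ from $P_2$ and therefore must have order exactly $k-1$, since profiles with the same truncation agree on every separation of order less than $k-1$ --- contradicting the drop in order. Without that hypothesis your reduction has no way to close; the endgame needs to be rebuilt around the two profiles rather than around upgrading $\Psi$ to a flower.
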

\begin{proof}
	Assume for a contradiction that there is an infinite increasing chain $(\Phi_j)_{j\in J}$ of $k$-flowers extending $\Phi$.
	As a concatenation of a $k$-anemone cannot be a $k$-daisy, all $\Phi_j$ are $k$-daisies.
	Given a $k$-pseudo\-flower $(Q_i)_{i\in \mathbb{N}}$, with cyclic order induced by the linear order of $\mathbb{N}$, denote for $i\geq 5$ the concatenation to $(Q_1,Q_2, \ldots, Q_{i-1}, Q_0\cup Q_i \cup Q_{i+1} \cup \cdots)$ by $\Psi_i$.
	Then there is a partition $(Q_i)_{i\in \mathbb{N}}$ such that all $\Psi_i$ are concatenations of some $\Phi_j$ and such that $\Phi$ equals some $\Psi_i$.
	As the order function is limit-closed, $(Q_i)_{i\in \mathbb{N}}$ is a $k$-pseudo\-flower.
	Consider the $k$-pseudo\-flower $\Psi$ which arises from $(Q_i)_{i\in \mathbb{N}}$ by concatenating $Q_0$ and $Q_1$ into one petal.
	As $(Q_2, Q_3, Q_4, Q_0\cup Q_1 \cup Q_5 \cup \cdots)$ is a concatenation of some $\Phi_j$ and thus is a $k$-daisy, $\Psi$ is not a $k$-anemone.
	By \cref{noinfinitedaisies} $\Psi$ is also not a $k$-daisy, so it is not a $k$-flower.
	As all the $\Psi_i$ are $k$-flowers, there is some $i\geq 1$ such that $Q_0\cup\cdots \cup Q_i$ has order less than $k-1$.
	Then for all $i'\geq i$
	\begin{equation*}
		\lambda(Q_0\cup \cdots \cup Q_{i'})\leq \lambda(Q_0\cup \cdots \cup Q_i)+\lambda(Q_i\cup \cdots\cup Q_{i'})-\lambda(Q_i)<k-1
	\end{equation*}
	and for all $i'<i$
	\begin{align*}
		\lambda(Q_0\cup\cdots\cup Q_{i'})&\leq \lambda(Q_0\cup \cdots \cup Q_i)+\lambda(Q_0\cup \cdots \cup Q_{i'}\cup Q_{i+2}\cup\cdots)\\
		&\quad-\lambda(Q_0\cup \cdots\cup Q_i\cup Q_{i+2}\cup \cdots)\\
		&=\lambda(Q_0\cup \cdots\cup Q_i)<k-1.
	\end{align*}
	As $\Phi$ distinguishes two profiles $P_1$ and $P_2$ with the same truncation to $k-1$-profiles and is a concatenation of some $\Psi_i$, all $\Psi_i$ with sufficiently large index $i$ distinguish $P_1$ and $P_2$.
	Then also there is some $i'$ such that $Q_0\cup Q_1\cup \cdots \cup Q_i$ distinguishes $P_1$ and $P_2$.
	That is a contradiction to $Q_0\cup \cdots \cup Q_i$ having order less than $k-1$.
\end{proof}

\Cref{characteriseinfdaisy} relates two problems:
The first problem is to determine when a general infinite connectivity system has infinite $k$-daisies.
The second problem is the question of under which circumstances a chain $(S_i)_{i\in I}$ of sets that all have the same finite connectivity $k$ can have a union whose connectivity is also $k$.
The answer to the second problem is that this can only happen if there are infinite $k$-flowers involved.

In order to prove \cref{characteriseinfdaisy} we need the following infinite version of Ramsey's Theorem, which can for example be found in \cite{DiestelBook5}:
\begin{thm}[{\cite[Theorem 9.1.2]{DiestelBook5}}]
	Let $X$ be an infinite set and $k\in \mathbb{N}$.
	Also let $l\in \mathbb{N}$ and let $c$ be a map from the subsets of $X$ of size $k$ to $\{1,\ldots,l\}$.
	Then there is an infinite set $Y\subseteq X$ such that $c$ maps all subsets of $Y$ of size $k$ to the same element of $\{1,\ldots,l\}$.
\end{thm}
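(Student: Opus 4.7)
The plan is to prove this by induction on $k$. The base case $k = 1$ is the infinite pigeonhole principle: the preimages $c^{-1}(i)$ for $i \in \{1, \ldots, l\}$ partition the infinite set $X$ into finitely many pieces, so at least one is infinite and can serve as $Y$.

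For the inductive step, assume the result for $k-1$. Given $c$ on the $k$-subsets of an infinite $X$, I would recursively construct three sequences: elements $x_1, x_2, \ldots$ of $X$, a nested chain $X = X_0 \supseteq X_1 \supseteq X_2 \supseteq \cdots$ of infinite subsets of $X$, and colours $i_1, i_2, \ldots \in \{1, \ldots, l\}$, with the property that $x_n \in X_{n-1}$, $X_n \subseteq X_{n-1} \setminus \{x_n\}$, and every $(k-1)$-subset $S$ of $X_n$ satisfies $c(S \cup \{x_n\}) = i_n$. The step itself is simple: given $X_{n-1}$, pick any $x_n \in X_{n-1}$, consider the colouring $c_n(S) := c(S \cup \{x_n\})$ on $(k-1)$-subsets of $X_{n-1} \setminus \{x_n\}$, and let the inductive hypothesis supply an infinite monochromatic $X_n \subseteq X_{n-1} \setminus \{x_n\}$ with associated colour $i_n$.

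Once the sequences are built, apply the base case to $(i_n)_{n \in \mathbb{N}}$ to obtain an infinite set of indices on which $i_n$ takes some constant value $i$, and let $Y$ be the set of the corresponding $x_n$. To verify monochromaticity, let $\{x_{n_1}, \ldots, x_{n_k}\} \subseteq Y$ with $n_1 < \cdots < n_k$. By construction $x_{n_2}, \ldots, x_{n_k}$ all lie in $X_{n_1}$, so $\{x_{n_2}, \ldots, x_{n_k}\}$ is a $(k-1)$-subset of $X_{n_1}$ and hence $c(\{x_{n_1}, \ldots, x_{n_k}\}) = c_{n_1}(\{x_{n_2}, \ldots, x_{n_k}\}) = i_{n_1} = i$.

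I do not expect a genuine obstacle here; the entire content is the diagonal construction, and the only care needed is in the order of quantifiers, namely that $x_n$ must be fixed \emph{before} invoking the inductive hypothesis, so that monochromaticity automatically holds for all $k$-subsets of $Y$ having $x_n$ as their least element. The use of dependent choice in the recursion is implicit and standard.
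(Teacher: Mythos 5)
Your proof is correct and is essentially the standard argument for the infinite Ramsey theorem (induction on $k$, infinite pigeonhole as base case, and the diagonal construction of nested infinite sets in the inductive step), which is exactly the proof given in the cited source; the paper itself only cites \cite[Theorem 9.1.2]{DiestelBook5} and does not reproduce a proof.
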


\begin{thm}\label{characteriseinfdaisy}
	The following are equivalent:
	\begin{itemize}
		\item There is no $k\in \mathbb{N}$ for which there is an infinite $k$-daisy.
		\item For all $k\in \mathbb{N}$ and for all chains $(S_i)_{i\in \mathbb{N}}$ of subsets of $E$ with connectivity $\leq k-1$ either the supremum $S$ has connectivity $<k-1$ or there is a $k$-pseudo\-anemone displaying infinitely many $S_i$ with two petals $P_1$ and $P_2$ such that combining $P_1$ and $P_2$ into one petal yields an infinite $l$-anemone with $l\leq k$.
	\end{itemize}
\end{thm}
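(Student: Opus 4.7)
The plan is to establish the equivalence by proving each implication separately.

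For the reverse direction $(\neg(i))\Rightarrow(\neg(ii))$, I would start with an infinite $k$-daisy $\Phi=(P_i)_{i\in I}$ and fix a strictly increasing sequence of proper non-trivial sub-intervals $I_0\subsetneq I_1\subsetneq\cdots$ of $I$, all sharing one common endpoint, whose union $I^{\ast}$ is again a proper non-trivial sub-interval of $I$. Setting $S_n:=\bigcup_{i\in I_n}P_i$, the flower property applied to each $I_n$ and to $I^{\ast}$ together with limit-closedness yields $\lambda(S_n)=k-1$ for every $n$ and $\lambda(S)=k-1$ for $S:=\bigcup_nS_n$. Assuming for contradiction that (ii) holds, let $\Psi$ be the $k$-pseudoanemone displaying infinitely many of the $S_n$, and concatenate it into a $k$-anemone $\Psi^{a}$. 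Pick four indices $m_1<m_2<m_3<m_4$ with all four $S_{m_j}$ displayed by $\Psi^{a}$ and set $A:=S_{m_2}\setminus S_{m_1}$ and $B:=S_{m_4}\setminus S_{m_3}$. Both are non-trivial unions of petals of $\Psi^{a}$, so \cref{infanemonedef} gives $\lambda(A\cup B)=k-1$. On the other hand, the finite concatenation of $\Phi$ whose petals are $S_{m_1}$, $A$, $S_{m_3}\setminus S_{m_2}$, $B$, and $E\setminus S_{m_4}$ is a finite $k$-flower, hence a $k$-daisy (since every finite concatenation of a $k$-daisy is a $k$-daisy), and inside this daisy the two petals $A$ and $B$ are non-adjacent. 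The daisy property (\cref{daisyoranemone}) then forces $\lambda(A\cup B)\neq k-1$, contradicting the anemone computation.

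For the forward direction $(i)\Rightarrow(ii)$, given a chain $(S_n)_{n\in\mathbb{N}}$ with $\lambda(S_n)\leq k-1$ and $\lambda(S)=k-1$ for $S:=\bigcup_nS_n$ (the case $\lambda(S)<k-1$ being vacuous), I would iterate the infinite Ramsey theorem --- first colouring pairs $\{m,n\}$ with $m<n$ by the tuple $(\lambda(S_m),\lambda(S_n),\lambda(S_n\setminus S_m))$ and then colouring indices $n$ by $\lambda(S\setminus S_{i_n})$ --- to extract an infinite subsequence $(S_{i_n})$ on which $\lambda(S_{i_n})=d$, $\lambda(S_{i_n}\setminus S_{i_m})=c$ for $m<n$, and $\lambda(S\setminus S_{i_n})=e$ are all constants. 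Limit-closedness (Lemmas \ref{fintoinfconn} and \ref{minimalsetsmall}) then forces $d=k-1$ and $e\leq c$. I would next form the partition $\mathcal{R}$ with petals $Q_0:=S_{i_0}$, $Q_n:=S_{i_n}\setminus S_{i_{n-1}}$ for $n\geq 1$, and $Q_{\infty}:=E\setminus S$, cyclically ordered as $Q_0,Q_1,Q_2,\ldots,Q_{\infty}$, and verify via the Ramsey constants that $\mathcal{R}$ is a $k$-pseudoflower. Each finite concatenation $\Phi_N:=\{Q_0,Q_1,\ldots,Q_N,R_N\}$ with $R_N:=E\setminus S_{i_N}$ is then a finite $k$-flower, and by the consistency of the daisy/anemone dichotomy across finite concatenations, either every $\Phi_N$ is a $k$-daisy --- producing an infinite $k$-daisy that contradicts (i) --- or every $\Phi_N$ is a $k$-anemone, in which case $\mathcal{R}$ is a $k$-pseudoanemone satisfying (ii): combining any two adjacent petals yields a coarser infinite $k$-anemone, so one may take $l=k$.

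The principal obstacle lies in the two degenerate cases $c>k-1$ and $e<k-1$, in which $\mathcal{R}$ fails to be a $k$-flower (and in the first case fails even to be a $k$-pseudoflower). In the case $c>k-1$, I would rerun the construction at level $c+1$, obtaining either an infinite $(c+1)$-daisy (contradicting (i) applied to $c+1$) or refinement data that allow one to pass to a further subsequence with $c\leq k-1$. In the case $e<k-1$, the half-infinite intervals of $\mathcal{R}$ have order $e$, so the correct two petals to combine are $Q_0$ and $Q_{\infty}$; I would show, using submodularity and limit-closedness, that the resulting partition is an infinite $(e+1)$-anemone with $e+1\leq k$, providing the $l$-anemone required by (ii). Both reductions hinge on careful submodularity calculations and on the equivalence, established earlier in the paper, between the daisy property of an infinite $k$-flower and that of each of its finite concatenations.
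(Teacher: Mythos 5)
Your overall strategy matches the paper's in both directions, but each direction has a concrete gap. In the backward direction, the step ``pick four indices $m_1<m_2<m_3<m_4$ with all four $S_{m_j}$ displayed by $\Psi^{a}$'' is not available: hypothesis (ii) only makes the $S_i$ displayed by the pseudoanemone $\Psi$ itself, and a concatenation of $\Psi$ into a $k$-anemone displays fewer separations --- possibly none of the $S_i$ (for instance, if cofinally many displayed $S_i$ contain $P_1$ but none contains $P_2$, then no $S_i$ is a union of petals of any partition in which $P_1$ and $P_2$ lie in a common petal). What is true, and what the paper exploits, is that after choosing $m_1$ so large that $S_{m_1}$ already contains every petal among $P_1,P_2$ that is contained in any displayed $S_i$, the differences $A$ and $B$ are disjoint from $P_1\cup P_2$ and hence are unions of petals of the infinite $l$-anemone obtained by merging $P_1$ and $P_2$. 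Then \cref{infanemonedef} gives $\lambda(A\cup B)=l-1\le k-1$ (not $=k-1$, since $l$ may be smaller than $k$), so on the daisy side you need the strict inequality $\lambda(A\cup B)>k-1$ rather than the $\neq k-1$ supplied by \cref{daisyoranemone}; that strict inequality does hold, via the submodular estimate $\lambda(A\cup B)\ge \lambda(A\cup C\cup B)+\lambda(A)-\lambda(A\cup C)=k-1$ with $C$ the petal between $A$ and $B$, but it is a step you must supply.

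In the forward direction your main line is the paper's, but the ``degenerate cases'' are where the real content lies and your proposed fixes do not go through as stated. Rerunning the Ramsey extraction at level $c+1$ on the chain of differences returns the same constant $c$ --- passing to a further subsequence cannot change a value that is already constant on all pairs --- so it yields no ``refinement data'', and without ruling out $c>k-1$ your $\mathcal{R}$ is not even a $k$-pseudoflower. The paper's resolution is uniform: first, $e<c$ already forces $\lambda(S)<k-1$ by one application of submodularity (landing in the first alternative of (ii)), so one may assume $e=c$; second, the partition $(Q_0\cup Q_\infty,\,Q_1,\,Q_2,\ldots)$ is then an $(e+1)$-flower, hence by hypothesis (i) an $(e+1)$-anemone, and \cref{muforpetal} applied to the petal $Q_0\cup Q_\infty$ gives $e\le\mu(Q_0)=\lambda(Q_0\cup Q_1)=\lambda(S_{i_1})=k-1$. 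This single computation disposes of $c>k-1$ and settles your case $e<k-1$ simultaneously (the two petals to merge are indeed $Q_0$ and $Q_\infty$, as you say); it, rather than a re-run of Ramsey, is the missing mechanism. It also covers the case $c=e<k-1$, which your case list omits: there $\mathcal{R}$ is a $k$-pseudoflower but its finite concatenations are not $k$-flowers, so the daisy/anemone dichotomy must be invoked at order $e+1$, not $k$.
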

\begin{proof}
	First assume that there is an infinite $k$-daisy for some $k\in \mathbb{N}$.
	Then there also is an infinite $k$-daisy $(P_i)_{i\in \mathbb{N}}$ such that the cyclic order is the one induced by the natural linear order of $\mathbb{N}$.
	In this case, $(S_i)_{i\in \mathbb{N}}$ with $S_i=P_{i+1}\setminus P_0$ form a chain of sets of connectivity $k-1$ whose union is $E\setminus P_0$ and thus also has connectivity $k$.
	Assume for a contradiction that there is a $k$-pseudo\-flower $\Phi$ that displays infinitely many $S_i$ and has two petals $P_1$ and $P_2$ such that combining $P_1$ and $P_2$ into one petal yields an infinite $l$-anemone $\Psi$ for some $l\leq k$.
	Let $i_1$ be an index such that $S_{i_1}$ is displayed by $\Phi$ and such that if any $S_i$ displayed by $\Phi$ contains $P_1$ then so does $S_{i_1}$, and similarly for $P_2$.
	Let $i_2$ and $i_3$ be indices such that $i_1<i_2<i_3$ and such that $S_{i_2}$ and $S_{i_3}$ are displayed by $\Phi$.
	Then $S_{i_2}\setminus S_{i_1}$ and $S_{i_3}\setminus S_{i_2}$ are disjoint from $P_1\cup P_2$ and hence unions of petals of $\Psi$, and infinitely many petals of $\Psi$ are disjoint from $S_{i_3}$.
	Hence
	\begin{displaymath}
		\lambda(S_{i_1}\cup (S_{i_3}\setminus S_{i_2})) = \mu(S_{i_1}) = \lambda(S_{i_2})=k-1,
	\end{displaymath}
	where $\mu$ is the function from \cref{muforpetal} defined for the concatenation of $\Psi$ that arises from combining $S_{i_1}$ into one petal.
	But also $S_{i_1}\cup (S_{i_3}\setminus S_{i_2})$ is a union of petals of a concatenation of $(P_i)_{i\in \mathbb{N}}$ that is a finite $k$-daisy, and thus its connectivity is bigger than $k-1$, a contradiction.
	
	In the other direction, assume that there is no $k\in \mathbb{N}$ for which there is an infinite $k$-daisy.
	Let $k\in \mathbb{N}$ and let $(S_i)_{i\in \mathbb{N}}$ be a chain of subsets of $E$ that all have connectivity at most $k-1$, such that the union $S$ of all $S_i$ has connectivity $k-1$.
	Then at most finitely many $S_i$ have connectivity less than $k-1$, so without loss of generality all $S_i$ have connectivity exactly $k-1$.
	By Ramsey's theorem there is a subchain $(T_i)_{i\in \mathbb{N}}$ such that $\lambda(T_i\cap T_j^*)$ (for $j<i$) and $\lambda(S\cap T_i^*)$ do not depend on $i$ and $j$.
	As $\lambda$ is limit-closed, $\lambda(S\cap T_j^*)\leq \lambda(T_i\cap T_j^*)$ for $j<i$.
	If $\lambda(S\cap T_j^*)<\lambda(T_i\cap T_j^*)$, then
	\begin{align*}
		\lambda(S)&\leq \lambda(T_2) + \lambda(S \cap T_1^*) - \lambda(T_2 \cap T_1^*)<\lambda(T_2)=k-1
	\end{align*}
	and we are done.
	So assume that $\lambda(S\cap T_j^*)=\lambda(T_i\cap T_j^*)$ for $j<i$.
	
	Then the partition $\Phi$ that has $T_0\cup S^*$ as one partition class and the sets $T_{i+1} \cap T_i^*$ as its other partition classes is a $\lambda(S\cap T_0^*)+1$-flower with the cyclic order induced by $\mathbb{N}$.
	As there are no infinite daisies, it is a $\lambda(S\cap T_0^*)+1$-anemone.
	Then $T_1$ is the union of a petal of $\Phi$ together with a subset of another petal, and hence by \cref{muforpetal} $k-1=\lambda(T_2)\geq \lambda(S\cap T_0^*)$.
	So $\Phi$ is an infinite $l$-anemone for some $l\leq k$.
	Let $\Psi$ be the partition obtained from $\Phi$ by replacing $T_0\cup S^*$ with $T_0$ and $S^*$.
	It now suffices to show that any union of $T_0$ together with finitely many petals $R_1,\ldots,R_n$ of $\Phi$ not containing $T_0$ has connectivity at most $k-1$.
	For one petal $T_{i+1}\cap T_i^*$ with $i\geq 1$ this follows from
	\begin{align*}
		\lambda(T_0\cup (T_{i+1}\cap T_i^*)) \leq \lambda(T_{i+1}) + \lambda(T_0\cup S^*\cup (T_{i+1}\cap T_i^*)) - \lambda(T_{i+1}\cup S^*)=\lambda(T_{i+1})=k-1.
	\end{align*}
	For several petals it follows from submodularity of the connectivity function by induction on the number of petals, as $\lambda(T_0) = k - 1$.	 
\end{proof}

This result can be combined with the fact that there are no infinite daisies in a matroid:

\begin{cor}
	Let $\lambda$ be the connectivity function of a matroid on ground set $E$.
	Let $k\in \mathbb{N}$ and let $(S_i)_{i\in \mathbb{N}}$ be a strictly increasing chain of subsets of $E$ of connectivity at most $k-1$.
	If the union $S$ of the $S_i$ has connectivity $k-1$, then there is a $k$-pseudo\-flower $\Phi$ that displays infinitely many $S_i$ and that has petals $P_1$ and $P_2$ such that combining $P_1$ and $P_2$ into one petal yields an infinite $l$-anemone for some $l\leq k$.\qed
\end{cor}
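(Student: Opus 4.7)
The plan is to observe that this corollary is a direct consequence of combining the two preceding main results: \cref{noinfinitedaisies} (no infinite daisies exist in a matroid) and \cref{characteriseinfdaisy} (the dichotomy characterising when infinite daisies exist via chains of separations).

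First I would invoke \cref{noinfinitedaisies}, which tells us that for the connectivity function $\lambda$ of the given matroid, there is no $k \in \mathbb{N}$ for which an infinite $k$-daisy exists. This is exactly the first of the two equivalent conditions in \cref{characteriseinfdaisy}, so the second condition also holds: for every $k \in \mathbb{N}$ and every chain $(S_i)_{i \in \mathbb{N}}$ of subsets of $E$ of connectivity at most $k-1$, either the supremum $S$ has connectivity strictly less than $k-1$, or there is a $k$-pseudo\-anemone displaying infinitely many $S_i$ with two petals $P_1$ and $P_2$ such that combining $P_1$ and $P_2$ into one petal yields an infinite $l$-anemone for some $l \leq k$.

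Applied to the strictly increasing chain $(S_i)_{i \in \mathbb{N}}$ in the hypothesis, the first alternative is ruled out by the assumption that $\lambda(S) = k-1$, so the second alternative must hold. This produces the required $k$-pseudo\-flower (which is in fact a $k$-pseudo\-anemone) with the stated property. There is no real obstacle here beyond noting that a $k$-pseudo\-anemone is a $k$-pseudo\-flower, so the conclusion of the corollary is immediate. The whole argument is essentially a one-line deduction, which is why the author writes $\qed$ directly after the statement rather than providing a separate proof.
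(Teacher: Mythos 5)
Your proposal is correct and is exactly the argument the paper intends: the sentence introducing the corollary ("This result can be combined with the fact that there are no infinite daisies in a matroid") together with the \qed after the statement signals precisely this one-line combination of \cref{noinfinitedaisies} and \cref{characteriseinfdaisy}. Your reading of which alternative in the dichotomy is excluded, and the remark that a $k$-pseudo\-anemone is in particular a $k$-pseudo\-flower, match the paper's implicit reasoning.
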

\bibliographystyle{amsplain}
\bibliography{infiniteconnectivitysystems_flowers}
\end{document}